\newcommand{\E}{\mathbb{E}}
\newcommand{\X}{\mathbf{X}} 
\newcommand{\Y}{\mathbf{Y}}
\newcommand\st{{\operatorname{st}}}
\theoremstyle{plain}
\newtheorem{theorem}{Theorem}[section]
\newtheorem{proposition}[theorem]{Proposition}
\newtheorem{lemma}[theorem]{Lemma}
\newtheorem{corollary}[theorem]{Corollary}
\theoremstyle{definition}
\newtheorem{definition}[theorem]{Definition}
\newtheorem{remark}[theorem]{Remark}
\newtheorem{example}[theorem]{Example}
\newtheorem{examples}[theorem]{Examples}
\newcommand\F{\mathbb{F}}
\newcommand\R{\mathbb{R}}
\newcommand\Z{\mathbb{Z}}
\newcommand\N{\mathbb{N}}
\newcommand\C{\mathbb{C}}
\newcommand\eps{\varepsilon}
\newcommand\ZZ{\mathbf{Z}}
\begin{document}

\begin{frontmatter}[classification=text]

\title{Concatenation Theorems for Anti-Gowers-Uniform Functions and Host-Kra Characteristic Factors} 

\author[tt]{Terence Tao\thanks{TT is supported by NSF grant DMS-0649473 and by a Simons Investigator Award.}}
\author[tz]{Tamar Ziegler\thanks{TZ is supported by ISF grant 407/12.}}

\begin{abstract}
We establish a number of ``concatenation theorems'' that assert, roughly speaking, that if a function exhibits ``polynomial'' (or ``Gowers anti-uniform'', ``uniformly almost periodic'', or ``nilsequence'') behaviour in two different directions separately, then it also exhibits the same behavior (but at higher degree) in both directions jointly.  Among other things, this allows one to control averaged local Gowers uniformity norms by global Gowers uniformity norms.  In a sequel to this paper, we will apply such control to obtain asymptotics for ``polynomial progressions'' $n+P_1(r),\dots,n+P_k(r)$ in various sets of integers, such as the prime numbers.
\end{abstract}
\end{frontmatter}

\section{Introduction}

\subsection{Concatenation of polynomiality}

Suppose $P\colon \Z^2 \to \R$ is a function with the property that $n \mapsto P(n,m)$ is an (affine-)linear function of $n$ for each $m$, and $m \mapsto P(n,m)$ is an (affine-)linear function of $m$ for each $n$.  Then it is easy to see that $P$ is of the form
\begin{equation}\label{pnm}
 P(n,m) = \alpha nm + \beta n + \gamma m + \delta
\end{equation}
for some coefficients $\alpha,\beta,\gamma,\delta \in \R$.  In particular, $(n,m) \mapsto P(n,m)$ is a polynomial of degree at most $2$.

The above phenomenon generalises to higher degree polynomials.  Let us make the following definition:

\begin{definition}[Polynomials]\label{poly-def}  Let $P\colon G \to K$ be a function from one additive group $G = (G,+)$ to another $K = (K,+)$.  For any subgroup $H$ of $G$ and for any integer $d$, we say that $P$ is a \emph{polynomial of degree $<d$ along $H$} according to the following recursive definition\footnote{Our conventions here are somewhat nonstandard, but are chosen so that the proofs of our main theorems, such as Theorem \ref{concat-uap}, resemble the proofs of warmup results such as Proposition \ref{concat}.}:
\begin{itemize}
\item[(i)]  If $d \leq 0$, we say that $P$ is of degree $<d$ along $H$ if and only if it is identically zero.
\item[(ii)]  If $d \geq 1$, we say that $P$ is of degree $<d$ along $H$ if and only if, for each $h \in H$, there exists a polynomial $P_h: G \to K$ of degree $<d-1$ along $H$ such that 
\begin{equation}\label{pxh}
P(x+h) = P(x) + P_h(x)
\end{equation}
for all $x \in G$.
\end{itemize}
\end{definition}

We then have

\begin{proposition}[Concatenation theorem for polynomials]\label{concat}  Let $P\colon G \to K$ be a function from one additive group $G$ to another $K$, let $H_1,H_2$ be subgroups of $G$, and let $d_1,d_2$ be integers.  Suppose that
\begin{itemize}
\item[(i)] $P$ is a polynomial of degree ${<}d_1$ along $H_1$.
\item[(ii)] $P$ is a polynomial of degree ${<}d_2$ along $H_2$.
\end{itemize}
Then $P$ is a polynomial of degree ${<}d_1+d_2-1$ along $H_1+H_2$.
\end{proposition}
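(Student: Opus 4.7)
The plan is to recast the recursive Definition~\ref{poly-def} in terms of iterated additive derivatives and then use a simple expand-and-pigeonhole argument.

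First, I would prove the characterization: writing $\partial_h P(x) := P(x+h) - P(x)$, the function $P$ is a polynomial of degree $<d$ along $H$ if and only if $\partial_{h_1}\cdots\partial_{h_d} P \equiv 0$ for all $h_1,\dots,h_d \in H$. This follows by induction on $d$ from the recursive definition: the auxiliary function $P_h$ in \eqref{pxh} is uniquely forced to equal $\partial_h P$, so $P$ has degree $<d$ along $H$ iff $\partial_h P$ has degree $<d-1$ along $H$ for every $h \in H$, and iteration combined with the commutativity of the $\partial_h$'s gives the claim.

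Next, I would record the elementary identity
\[ \partial_{a+b} = \partial_a + T_a \partial_b, \]
where $T_a$ denotes translation by $a$, together with the routine commutation relations $\partial_a \partial_b = \partial_b \partial_a$, $T_a T_b = T_b T_a$, and $T_a \partial_b = \partial_b T_a$, all immediate from the definitions.

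The main step is then as follows. Given $g_1,\dots,g_{d_1+d_2-1} \in H_1+H_2$, choose decompositions $g_i = g_i^{(1)} + g_i^{(2)}$ with $g_i^{(j)} \in H_j$ and expand
\[ \partial_{g_1}\cdots\partial_{g_{d_1+d_2-1}} \;=\; \prod_{i=1}^{d_1+d_2-1} \bigl(\partial_{g_i^{(1)}} + T_{g_i^{(1)}}\partial_{g_i^{(2)}}\bigr) \;=\; \sum_{S} A_S, \]
where $A_S$ is the term in which $\partial_{g_i^{(1)}}$ is chosen for $i \in S$ and $T_{g_i^{(1)}}\partial_{g_i^{(2)}}$ is chosen for $i \notin S$. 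By pigeonhole, every $S \subseteq \{1,\dots,d_1+d_2-1\}$ satisfies $|S| \geq d_1$ or $|S^c| \geq d_2$. Since all the $T$'s and $\partial$'s commute, the operator $A_S$ can be reordered so that the full block of derivatives along $H_1$ (respectively $H_2$) acts directly on $P$ first; by hypothesis (i) or (ii) and the characterization above, this block already annihilates $P$. Hence $A_S P \equiv 0$ for every $S$, the full $(d_1+d_2-1)$-fold derivative of $P$ along $H_1+H_2$ vanishes, and the characterization then yields the desired degree bound.

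There is no serious obstacle; the argument is essentially formal once the derivative characterization is in hand. The mildest subtlety is handling the degenerate cases $d_1 \leq 0$ or $d_2 \leq 0$ (where $P \equiv 0$ and the conclusion is trivial), and verifying that the pigeonhole count at the critical value $d_1+d_2-1$ is sharp in the sense that any fewer derivatives would fail to force one of the two alternatives.
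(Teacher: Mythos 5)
Your proof is correct. The iterated-derivative characterization is valid (and follows cleanly by induction from the fact that the $P_h$ in \eqref{pxh} is forced to equal $\partial_h P := P(\cdot+h)-P$), the identity $\partial_{a+b}=\partial_a + T_a\partial_b$ is elementary, the operators all commute, and the pigeonhole count is right: $|S|+|S^c|=d_1+d_2-1$, so $|S|\le d_1-1$ forces $|S^c|\ge d_2$.

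Your route differs from the paper's in structure, though not in the underlying mechanism. The paper inducts on $d_1+d_2$: it writes $\partial_{h_1+h_2}P = \partial_{h_1}P + T_{h_1}\partial_{h_2}P$ (that is exactly identity \eqref{phh}, which coincides with your key identity), observes that each summand has degree $<d_1+d_2-2$ along $H_1+H_2$ by the induction hypothesis, and concludes. You instead iterate that same identity $d_1+d_2-1$ times to fully expand the mixed derivative into $2^{d_1+d_2-1}$ terms and then kill each term by a pigeonhole on how many derivatives fall along $H_1$ versus $H_2$. The expansion-plus-pigeonhole presentation is arguably cleaner and more transparent for this algebraic warmup, and makes the sharpness of the degree bound $d_1+d_2-1$ visually obvious. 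The paper's inductive phrasing is deliberately chosen because it transfers, step by step, to the analytical concatenation theorems (Theorems \ref{concat-uap}, \ref{concat-uap-erg}, etc.) later in the paper, where ``degree $<d$'' is replaced by dual-norm or characteristic-factor conditions that do not admit an iterated-derivative characterization, and a single global expansion is no longer available. So both proofs are valid; yours is a tidy self-contained argument, while the paper's is a template for what follows.
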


The degree bound here is sharp, as can for instance be seen using the example $P\colon \Z \times \Z \to \R$ of the monomial $P(n,m) \coloneqq  n^{d_1-1} m^{d_2-1}$, with $H_1 \coloneqq  \Z \times \{0\}$ and $H_2 \coloneqq  \{0\} \times \Z$.

\begin{proof}  The claim is trivial if $d_1 \leq 0$ or $d_2 \leq 0$, so we suppose inductively that $d_1,d_2 \geq 1$ and that the claim has already been proven for smaller values of $d_1+d_2$.

Let $h_1 \in H_1$ and $h_2 \in H_2$.  By (i), there is a polynomial $P_{h_1}\colon G \to K$ of degree ${<}d_1-1$ along $H_1$ such that
\begin{equation}\label{px}
 P(x+h_1) = P(x) + P_{h_1}(x)
\end{equation}
for all $x \in G$.  Similarly, by (ii), there is a polynomial $P_{h_2}\colon G \to K$ of degree ${<}d_2-1$ along $H_2$ such that
\begin{equation}\label{py}
 P(x+h_2) = P(x) + P_{h_2}(x)
\end{equation}
for all $x \in G$.  Replacing $x$ by $x+h_1$ and combining with \eqref{px}, we see that
\begin{equation}\label{pz}
 P(x+h_1+h_2) = P(x) + P_{h_1,h_2}(x)
\end{equation}
for all $x \in G$, where
\begin{equation}\label{phh}
 P_{h_1,h_2}(x) \coloneqq  P_{h_1}(x) + P_{h_2}(x+h_1).
\end{equation}
Since $P$ is of degree ${<}d_2$ along $H_2$, and $P_{h_1}$ is the difference of two translates of $P$, we see that $P_{h_1}$ is of degree ${<}d_2$ along $H_2$, in addition to being of degree ${<}d_1-1$ along $H_1$.  By induction hypothesis, $P_{h_1}$ is thus of degree ${<}d_1+d_2-2$ along $H_1+H_2$.  A similar argument shows that $P_{h_2}$ is of degree ${<}d_1+d_2-2$ along $H_1+H_2$, which implies from \eqref{phh} that $P_{h_1,h_2}$ is also of degree ${<}d_1+d_2-2$ along $H_1+H_2$.  This implies that $P$ is of degree ${<}d_1+d_2-1$ along $H_1+H_2$ as required.
\end{proof}

Note how this proposition ``concatenates'' the polynomial structure along the direction $H_1$ with the polynomial structure along the direction $H_2$ to obtain a (higher degree) polynomial structure along the combined direction $H_1+H_2$ variable.  In this paper we develop a number of further concatenation theorems in this spirit, with an eye towards applications in ergodic theory, additive combinatorics, and analytic number theory.  The proofs of these theorems will broadly follow the same induction strategy that appeared in the above proof of Proposition \ref{concat}, but there will be significant additional technical complications.

\subsection{Concatenation of low rank}

We begin with a variant of Proposition \ref{concat} in which the concept of a polynomial is replaced by that of a \emph{low rank function}. 

\begin{definition}[Rank]\label{rank-def} Given a function $P: G \to K$ between additive groups $G,K$, as well as subgroups $H_1,\dots,H_d$ of $G$ for some non-negative integer $d$, we say that $P$ has \emph{rank ${<}(H_1,\dots,H_d)$} or \emph{rank ${<}(H_i)_{1 \leq i \leq d}$} according to the following recursive definition:
\begin{itemize}
\item[(i)]  If $d = 0$, we say that $P$ is of rank ${<}()$ if and only if it is identically zero.
\item[(ii)]  If $d \geq 1$, we say that $P$ is of rank ${<}(H_1,\dots,H_d)$ if and only if, for every $1 \leq i \leq d$ and $h \in H_i$, there exists a polynomial $P_h\colon G \to K$ of rank ${<}(H_1,\dots,H_{i-1},H_{i+1},\dots,H_d)$ such that $P(x+h) = P(x) + P_h(x)$ for all $x \in G$.
\end{itemize}
\end{definition}

\begin{examples} The property of being a polynomial of degree ${<}d$ along $H$ is the same as having rank ${<}(H)_{1 \leq i \leq d}$.  As another example, a function $P: \Z^2 \to \R$ is of rank ${<}(\R \times \{0\}, \{0\} \times \R)$ if and only if it takes the form
$$ P(n,m) = f(n) + g(m)$$
for some functions $f,g: \Z \to \R$; a function $P: \Z^3 \to \R$ is of rank ${<}(\R \times \{0\} \times \{0\}, \{0\} \times \R \times \{0\}, \{0\} \times \{0\} \times \R)$ if and only if it takes the form
$$ P(n,m,k) = f(n,m) + g(n,k) + h(m,k)$$
for some functions $f,g,h: \Z^2 \to \R$; and so forth.
\end{examples}

\begin{proposition}[Concatenation theorem for low rank functions]\label{concat-lowrank}  Let $P\colon G \to K$ be a function between additive groups, let $d_1$, $d_2$ be non-negative integers, and let $H_{1,1}$, $\dots$, $H_{1,d_1}$, $H_{2,1}$, $\dots$, $H_{2,d_2}$ be subgroups of $G$.  Suppose that
\begin{itemize}
\item[(i)] $P$ has rank ${<}(H_{1,i})_{1 \leq i \leq d_1}$.
\item[(ii)] $P$ has rank ${<}(H_{2,j})_{1 \leq j \leq d_2}$.
\end{itemize}
Then $P$ has rank ${<}(H_{1,i}+H_{2,j})_{1 \leq i \leq d_1; 1 \leq j \leq d_2}$.
\end{proposition}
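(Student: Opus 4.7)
The plan is to mimic the inductive strategy of Proposition \ref{concat}, inducting on $d_1 + d_2$; the base case in which either $d_1 = 0$ or $d_2 = 0$ is trivial because then $P \equiv 0$. In the inductive step, fix indices $1 \leq i_0 \leq d_1$, $1 \leq j_0 \leq d_2$ and an element $h \in H_{1,i_0} + H_{2,j_0}$, and write $h = h_1 + h_2$ with $h_1 \in H_{1,i_0}$ and $h_2 \in H_{2,j_0}$. Setting $P_{h_1}(x) := P(x+h_1) - P(x)$ and $P_{h_2}(x) := P(x+h_2) - P(x)$, the same telescoping identity as in the proof of Proposition \ref{concat} yields
\[
P(x+h) - P(x) = P_{h_1}(x) + P_{h_2}(x+h_1),
\]
so the task reduces to verifying that the right-hand side has rank $<(H_{1,i} + H_{2,j})_{(i,j) \neq (i_0, j_0)}$.

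Before the main argument, I would record three routine closure properties of the rank, each proved by a straightforward induction on the length of the tuple: (a) sums of functions of rank $<(H_i)_{i \in I}$ again have rank $<(H_i)_{i \in I}$; (b) the map $x \mapsto P(x+k)$ preserves rank for any $k \in G$; and (c) (\emph{monotonicity}) if $P$ has rank $<(H_i)_{i \in I}$ and $J \supseteq I$, then $P$ also has rank $<(H_i)_{i \in J}$ for any supplementation of the tuple by further subgroups indexed by $J \setminus I$. Monotonicity is the only one of these without an obvious counterpart in the proof of Proposition \ref{concat}; the cleanest way to obtain all three properties at once is to first observe that rank $<(H_1, \ldots, H_d)$ is equivalent to the vanishing of every iterated finite difference of $P$ along $H_1, \ldots, H_d$, from which additivity, translation-invariance, and monotonicity are all immediate (for monotonicity, applying extra commuting difference operators to the already-zero function still gives zero).

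With these lemmas in hand, the verification proceeds in parallel with Proposition \ref{concat}. By hypothesis (i), $P_{h_1}$ already has rank $<(H_{1,i})_{i \neq i_0}$; and because hypothesis (ii) gives $P$ rank $<(H_{2,j})_{1 \leq j \leq d_2}$, closure properties (a) and (b) show that the difference of translates $P_{h_1}$ also has rank $<(H_{2,j})_{1 \leq j \leq d_2}$. The inductive hypothesis applied to $P_{h_1}$ with parameters $(d_1 - 1, d_2)$ then produces rank $<(H_{1,i} + H_{2,j})_{i \neq i_0;\, 1 \leq j \leq d_2}$, and monotonicity upgrades this to $<(H_{1,i} + H_{2,j})_{(i,j) \neq (i_0, j_0)}$, the additional indices being the pairs $(i_0, j)$ with $j \neq j_0$. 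A symmetric argument, in which translation-invariance is used to handle the shift of $P_{h_2}$ by $h_1$, shows that $P_{h_2}(\cdot + h_1)$ has the same rank, and additivity assembles $P_h$.

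The only genuine obstacle is the monotonicity lemma (c): it has no analogue in the original proposition, and while it can be extracted directly from the recursive definition, taking the brief detour through the iterated-difference characterization makes it (and the other two closure lemmas) transparent. Everything else is a faithful translation of the telescoping argument that proved Proposition \ref{concat}.
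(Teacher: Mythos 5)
Your proposal is correct and follows the same inductive strategy and telescoping decomposition as the paper's own argument; the monotonicity observation you flag is indeed used in the paper, though only implicitly in the phrase ``which in particular implies,'' and your detour through the iterated-finite-difference characterization of rank is a clean way to make it (and the other closure lemmas) explicit. No gaps.
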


In this case, the concatenated rank $d_1 d_2$ is the \emph{product} of the individual ranks $d_1,d_2$, in contrast to the concatenation in Proposition \ref{concat} which \emph{adds} the degrees together.  Thus this proposition is inferior to Proposition \ref{concat} in the case when $d_1,d_2 > 1$, and $H_{1,i} = H_1$ and $H_{2,j} = H_2$ for all $i=1,\dots,d_1$ and $j=1,\dots,d_2$. However, in other cases the dependence on parameters can be optimal.  For instance, if $G = \Z^{d_1} \times \Z^{d_2}$, and we take $H_{1,1}$, $\dots$, $H_{1,d_1}$, $H_{2,1}$, $\dots$, $H_{2,d_2}$ to be the cyclic groups generated by the standard basis elements $e_{1,1}$, $\dots$, $e_{1,d_1}$, $e_{2,1}$, $\dots$, $e_{2,d_2}$ of $G$ respectively, and take $P$ to be a function of the form
\begin{align*}
&P(n_{1,1},\dots,n_{1,d_1},n_{2,1},\dots,n_{2,d_2}) \\
&\quad \coloneqq  \sum_{i=1}^{d_1} \sum_{j=1}^{d_2}
f_{i,j}( n_{1,1},\dots, n_{1,i-1}, n_{1,i+1}, \dots, n_{1,d_1} ) g_{i,j}( n_{2,1},\dots, n_{2,j-1}, n_{2,j+1}, \dots, n_{2,d_2} ) 
\end{align*}
for arbitrary functions $f_{i,j}\colon \Z^{d_1-1} \to \R$, $g_{i,j}\colon \Z^{d_2-1} \to \R$, then one verifies the hypotheses (i) and (ii) of Proposition \ref{concat-lowrank}, but in general one does not expect any better low rank properties for $P$ with regards to the groups $H_{1,i}+H_{2,j}$ than the one provided by that proposition.

\begin{proof}  The claim is again trivial when $d_1=0$ or $d_2=0$, so we may assume inductively that $d_1,d_2 \geq 1$ and that the claim has already been proven for smaller values of $d_1+d_2$.

Let $1 \leq i_0 \leq d_1, 1 \leq j_0 \leq d_2$, $h_1 \in H_{1,i_0}$ and $h_2 \in H_{2,j_0}$ be arbitrary.  As before we have the identities \eqref{px}, \eqref{py}, \eqref{pz} for all $x \in G$ and some functions $P_{h_1}\colon G \to K$, $P_{h_2}\colon G \to K$, with $P_{h_1,h_2}$ defined by \eqref{phh}.  By (i), the function $P_{h_1}$ has rank 
${<}(H_{1,i})_{1 \leq i \leq d_1; i \neq i_0}$; from (ii) it also has rank ${<}(H_{2,j})_{1 \leq j \leq d_2}$.  By induction hypothesis, $P_{h_1}$ thus has rank ${<}(H_{1,i}+H_{2,j})_{1 \leq i \leq d_1; 1 \leq j \leq d_2; i \neq i_0}$, which in particular implies that $P_{h_1}$ has rank ${<}(H_{1,i}+H_{2,j})_{1 \leq i \leq d_1; 1 \leq j \leq d_2; (i,j) \neq (i_0,j_0)}$.  Similarly for $P_{h_2}$, which by \eqref{phh} implies that $P_{h_1,h_2}$ also has rank
${<}(H_{1,i}+H_{2,j})_{1 \leq i \leq d_1; 1 \leq j \leq d_2; (i,j) \neq (i_0,j_0)}$.  This implies that $P$ has rank ${<}(H_{1,i}+H_{2,j})_{1 \leq i \leq d_1; 1 \leq j \leq d_2}$ as required.
\end{proof}

\subsection{Concatenation of anti-uniformity}

Now we turn to a more non-trivial variant of Proposition \ref{concat}, in which the property of being polynomial in various directions is replaced by that of being \emph{anti-uniform} in the sense of being almost orthogonal to Gowers uniform functions.  To make this concept precise, and to work at a suitable level of generality, we need some notation.  Recall that a \emph{finite multiset} is the same concept as a finite set, but in which elements are allowed to appear with multiplicity.

\begin{definition}[Sumset and averaging]  If $A \coloneqq  \{a_1,\dots,a_n\}$ and $B \coloneqq  \{b_1,\dots,b_m\}$ are finite non-empty multisets in an additive group $G = (G,+)$, we define the sumset $A+B \coloneqq  \{ a_i + b_j: 1 \leq i \leq n, 1 \leq j \leq m \}$ and difference set $A-B \coloneqq  \{a_i-b_j: 1 \leq i \leq n, 1 \leq j \leq m \}$, where multiplicity is counted; thus for instance we always have $|A+B|=|A-B| = |A| |B|$, where $|A|$ denotes the cardinality of $A$, counting multiplicity.  Note that $A+B$ or $A-B$ may contain multiplicity even if $A$ and $B$ does not, for instance $\{1,2\}+\{1,2\}=\{2,3,3,4\}$.  If $f\colon G \to \C$ is a function, we use the averaging notation
$$ \E_{a \in A} f(a) \coloneqq  \frac{1}{|A|} \sum_{a \in A} f(a)$$
where the sum $\sum_{a \in A}$ is also counting multiplicity.  For instance, $\E_{a \in \{1,2,2\}} a = \frac{5}{3}$.
\end{definition}

\begin{definition}[$G$-system]\label{gsys-def}  Let $G = (G,+)$ be an at most countable additive group.  A \emph{$G$-system} $(\X, T)$ is a probability space $\X = (X, {\mathcal B}, \mu)$, together with a collection $T = (T^g)_{g \in G}$ of invertible measure-preserving maps $T^g\colon X \to X$, such that $T^0$ is the identity and $T^{g+h} = T^g T^h$ for all $g,h \in G$.  For technical reasons we will require that the probability space $\X$ is countably generated modulo null sets (or equivalently, that the Hilbert space $L^2(\X)$ is separable).  Given a measurable function $f\colon \X \to \C$ and $g \in G$, we define $T^g f \coloneqq  f \circ T^{-g}$.  We shall often abuse notation and abbreviate $(\X,T)$ as $\X$.  
\end{definition}

\begin{remark} As it turns out, a large part of our analysis would be valid even when $G$ was an uncountable additive group (in particular, no amenability hypothesis on $G$ is required); however the countable case is the one which is the most important for applications, and so we shall restrict to this case to avoid some minor technical issues involving measurability.  Once the group $G$ is restricted to be countable, the requirement that $\X$ is countably generated modulo null sets is usually harmless in applications, as one can often easily reduce to this case.  In combinatorial applications, one usually works with the case when $G$ is a finite group, and $\X$ is $G$ with the uniform probability measure and the translation action $T^g x \coloneqq x+g$, but for applications in ergodic theory, and also because we will eventually apply an ultraproduct construction to the combinatorial setting, it will be convenient to work with the more general setup in Definition \ref{gsys-def}.
\end{remark}

\begin{definition}[Gowers uniformity norm]\label{gaw}  Let $G$ be an at most countable additive group, and let $(\X, T)$ be a $G$-system.  If $f \in L^\infty(\X)$, $Q$ is a non-empty finite multiset and $d$ is a positive integer, we define the \emph{Gowers uniformity (semi-)norm} $\| f \|_{U^d_Q(\X)}$ by the formula
\begin{equation}\label{boxnorm-def}
 \|f\|_{U^d_Q(\X)}^{2^d} \coloneqq  \E_{h_1,\dots,h_d,h'_1,\dots,h'_d \in Q} \int_X \Delta_{h_1,h'_1} \dots \Delta_{h_d,h'_d} f\ d\mu
\end{equation}
where $\Delta_{h,h'}$ is the nonlinear operator
\begin{equation}\label{deltah-def}
 \Delta_{h,h'} f \coloneqq  (T^h f) \overline{(T^{h'} f)}.
\end{equation}
More generally, given $d$ non-empty finite multisets $Q_1,\dots,Q_d$, we define the \emph{Gowers box (semi-)norm}
$\| f \|_{\Box^d_{Q_1,\dots,Q_d}(\X)}$ by the formula
$$ \|f\|_{\Box^d_{Q_1,\dots,Q_d}(\X)}^{2^d} \coloneqq  \E_{h_i,h'_i \in Q_i \forall i=1,\dots,d} \int_X \Delta_{h_1,h'_1} \dots \Delta_{h_d,h'_d} f\ d\mu,$$
so in particular
$$ \| f \|_{U^d_Q(\X)} = \|f\|_{\Box^d_{Q,\dots,Q}(\X)}.$$
Note that the $\Delta_{h,h'}$ commute with each other, and so the ordering of the $Q_i$ is irrelevant.  
\end{definition}

It is well known that $\| f \|_{U^d_Q(\X)}$ and $\| f \|_{\Box^d_{Q_1,\dots,Q_d}(\X)}$ are indeed semi-norms; see \cite[Appendix B]{gt-linear}.  We will be primarily interested in the Gowers norms for a specific type of multiset $Q$, namely the \emph{coset progressions} that arise in additive combinatorics (see \cite{tao-vu}).  The original Gowers uniformity norms from  \cite{gow-szem-4, gow-szem} correspond to the case when $Q = G = \Z/N\Z$ is a finite cyclic group, and $\X = G$ is equipped with the uniform probability measure and the translation action of $G$.

\begin{definition}[Coset progression]  A (symmetric) \emph{coset progression} is a (multi-)subset\footnote{Strictly speaking, one should refer to the tuple $(Q, H, r, (v_i)_{i=1}^r, (N_i)_{i=1}^r)$ as the coset progression, rather than just the multi-set $Q$, as one cannot define key concepts such as rank or the dilates $\eps Q$ without this additional data.  However, we shall abuse notation and use the multi-set $Q$ as a metonym for the entire coset progression.} $Q$ of an additive group $G$ of the form
$$ Q = H + \{n_1 v_1 + \dots + n_r v_r\colon |n_i| \leq N_i \forall i=1,\dots,r\}$$
where $H$ is a finite subgroup of $G$, $r \geq 0$ is a non-negative integer (which we call the \emph{rank} of $Q$), $v_1,\dots,v_r$ are elements of $G$, and $N_1,\dots,N_r > 0$ are real numbers.  Here we count the sums $n_1 v_1 + \dots + n_r v_r$ with multiplicity, and we view $H$ as a multiset in which each element has an equal multiplicity (not necessarily $1$).  Given any $\eps > 0$, we define the \emph{dilate} $\eps Q$ of $Q$ by the formula
$$ \eps Q \coloneqq  H + \{n_1 v_1 + \dots + n_r v_r: |n_i| \leq \eps N_i \forall i=1,\dots,r\}.$$
\end{definition}

Note with our conventions on multiplicity, that if $Q_1, Q_2$ are coset progressions of rank $r_1,r_2$ respectively, then $Q_1+Q_2$ is a coset progression of rank $r_1+r_2$.  The regime of interest for this paper is when the coset progressions involved have bounded rank $r$, but their dimensions $N_i$ and torsion group $H$ are allowed to be arbitrarily large.

One can form dual (semi-)norms in the usual fashion, for instance setting
$$ \|f\|_{U^d_Q(\X)^*} \coloneqq  \sup \{ |\langle f, g \rangle_{L^2(\X)}|: g \in L^\infty(\X), \|g\|_{U^d_Q(\X)} \leq 1 \}$$
for all $f \in L^\infty(\X)$, where
$$ \langle f, g \rangle_{L^2(\X)} \coloneqq  \int_X f \overline{g}\ d\mu.$$
However, for technical reasons we will need to use a variant of these norms, namely the quantities
\begin{equation}\label{upqd}
 \|f\|_{U^d_Q(\X)^*,\eps} \coloneqq  \sup \{ |\langle f, g \rangle_{L^2(\X)}|: g \in L^\infty(\X), \|g\|_{L^\infty(\X)} \leq 1, \|g\|_{U^d_{\eps Q}(\X)} \leq \eps \}
\end{equation}
and more generally
$$ \|f\|_{\Box^d_{Q_1,\dots,Q_d}(\X)^*,\eps} \coloneqq  \sup \{ |\langle f, g \rangle_{L^2(\X)}|: g \in L^\infty(\X), \|g\|_{L^\infty(\X)} \leq 1, \|g\|_{\Box^d_{\eps Q_1, \dots, \eps Q_d}(\X)} \leq \eps \}.$$

Our first main theorem is analogous to Proposition \ref{concat}, and is stated as follows.

\begin{theorem}[Concatenation theorem for anti-uniformity norms]\label{concat-uap}  Let $Q_1, Q_2$ be coset progressions of ranks $r_1,r_2$ respectively in an at most countable additive group $G$, let $(\X,T)$ be a $G$-system, let $d_1,d_2$ be positive integers, and let $f$ lie in the closed unit ball of $L^\infty(\X)$.  Let $c_1,c_2\colon (0,+\infty) \to (0,+\infty)$ be functions such that $c_i(\eps) \to 0$ as $\eps \to 0$ for $i=1,2$.  We make the following hypotheses:
\begin{itemize}
\item[(i)] $\| f \|_{U^{d_1}_{Q_1}(\X)^*, \eps} \leq c_1(\eps)$ for all $\eps > 0$.
\item[(ii)] $\| f \|_{U^{d_2}_{Q_2}(\X)^*, \eps} \leq c_2(\eps)$ for all $\eps > 0$.
\end{itemize}
Then there exists a function $c\colon (0,\infty) \to (0,+\infty)$ with $c(\eps) \to 0$ as $\eps \to 0$, which depends only on $d_1,d_2,r_1,r_2,c_1,c_2$, such that
$$ \| f \|_{U^{d_1+d_2-1}_{Q_1+Q_2}(\X)^*, \eps} \leq c(\eps)$$
for all $\eps>0$.
\end{theorem}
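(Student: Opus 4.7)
The plan is to mimic the proof of Proposition~\ref{concat}, inducting on $d_1 + d_2$ but at the more delicate quantitative level afforded by the modified dual norms \eqref{upqd}. The degenerate cases $d_1 = 1$ or $d_2 = 1$ should be handled separately and are essentially direct from the hypotheses, since the conclusion then degenerates to one of the assumptions (up to replacing $Q_i$ by $Q_1 + Q_2$, which only costs a harmless dilation factor).

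For the inductive step, I would first reformulate the conclusion in primal form: given $\delta > 0$, the task is to find $\eps_0 > 0$ such that for all $0 < \eps < \eps_0$ and all $g \in L^\infty(\X)$ with $\|g\|_{L^\infty(\X)} \leq 1$ and $\|g\|_{U^{d_1+d_2-1}_{\eps(Q_1+Q_2)}(\X)} \leq \eps$, one has $|\langle f, g \rangle_{L^2(\X)}| \leq \delta$. The key structural move is to exploit the sumset structure: every element of $Q_1 + Q_2$ decomposes (non-canonically) as $h_1 + h_2$ with $h_i \in Q_i$, and via iterated Gowers--Cauchy--Schwarz (van der Corput) inequalities, the hypothesis $\|g\|_{U^{d_1+d_2-1}_{\eps(Q_1+Q_2)}(\X)} \leq \eps$ should be converted into a statement that a certain mixed Gowers box seminorm of $g$ involving the $Q_1$- and $Q_2$-directions separately is small.

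To close the induction, I would strengthen the theorem statement to allow mixed box norms in both the hypotheses and the conclusion. The inductive step then runs in exact parallel with Proposition~\ref{concat}: given parameters $h_1 \in Q_1$, $h_2 \in Q_2$, one uses the anti-uniformity hypotheses to extract "dual difference" functions of $f$ with reduced effective degrees in the relevant directions, and the inductive hypothesis at smaller $d_1 + d_2$ controls the resulting pieces. The final conclusion is reassembled as in \eqref{phh}, where $P_{h_1,h_2}$ was built from $P_{h_1}$ and $P_{h_2}$.

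The main obstacle is the Cauchy--Schwarz / van der Corput step: the decomposition $h = h_1 + h_2$ is not unique, so naive applications of Cauchy--Schwarz tend to double the number of Gowers differencings at each step, and it is essential to arrange the inequalities so that the resulting mixed box norm has order at most $d_1 + d_2 - 1$ (matching the sharpness of the polynomial warmup). A secondary difficulty is the quantitative bookkeeping required to preserve the modulus-of-continuity bound $c(\eps) \to 0$ through the induction, so that the final $c$ depends only on the allowed data $d_1, d_2, r_1, r_2, c_1, c_2$ and not on the particular system $\X$ or function $f$; this likely forces one to work with the dilated variant $\eps Q_i$ throughout, absorbing the losses from each van der Corput step into a further shrinkage of $\eps$.
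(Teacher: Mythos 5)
There is a genuine gap at the core of your proposed argument. You propose to use iterated Gowers--Cauchy--Schwarz (van der Corput) to convert the hypothesis that $\|g\|_{U^{d_1+d_2-1}_{\eps(Q_1+Q_2)}(\X)}$ is small into a statement that a mixed Gowers box seminorm of $g$ whose directions are drawn separately from $Q_1$ and $Q_2$ is small. No such implication is available: the monotonicity for box seminorms, \eqref{dod}, lets one pass from a smaller index set of shifts to a larger one while preserving an upper bound on the seminorm, but that inequality goes the wrong way for your purposes, and Remark \ref{besu-box} and the remark following it in effect exhibit functions whose $U^{d_1+d_2-1}_{\eps Q_1+\eps Q_2}$ norm is small while their localized Gowers norms along $Q_1$ or $Q_2$ alone are not controlled. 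Even the $d_1=d_2=1$ case already breaks this reduction: $\|g\|_{U^1_{Q_1+Q_2}}$ is a quantity involving two copies of $g$, whereas $\|g\|_{\Box^2_{Q_1,Q_2}}$ involves four, and there is no general inequality from the former to the latter. In the paper's argument the smallness of $\|g\|_{U^{d_1+d_2-1}_{Q_1+Q_2}}$ is only ever used via its recursive definition, producing control on box norms of products $T^{n}\mathbf{g}\,T^{n'}\mathbf{g}$ whose directions remain the sums $Q_1+Q_2$ (or $Q_{1,i}+Q_{2,j}$ in the box version), never $Q_1$ or $Q_2$ in isolation.

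The missing structural idea is that the Cauchy--Schwarz must be applied to a recursive representation of the anti-uniform function $f$, not to $g$. The paper first passes to a nonstandard/ultraproduct model so that anti-uniformity becomes exact orthogonality to a characteristic factor $\ZZ^{<d}_{o(Q)}(\X)$ (Theorem \ref{charf}); it then uses the algebra structure and shift-invariance of dual functions, together with the crucial localisation Corollary \ref{local-concat}, to write $T^{n+n'}\mathbf{f}\approx \E_{h,h'}\mathbf{c}''_{n,n',h,h'}\mathbf{g}'_{h,h'}$, where the coefficients $\mathbf{c}''$ inherit \emph{both} kinds of structure and, by the inductive hypothesis, lie in a lower-order characteristic factor for the combined $Q_1+Q_2$ direction. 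Cauchy--Schwarz on this expansion is exactly what produces the expression $\langle \mathbf{c}''\overline{\mathbf{c}''}, T^{n_1+n'_1}\mathbf{g}\,T^{n_2+n'_2}\mathbf{g}\rangle$, at which point the hypothesis on $g$ can be invoked. You do correctly anticipate that the quantitative bookkeeping (uniformity of $c(\eps)$ over $\X,f$, and the need to shrink $Q$ at each step) is a serious difficulty, but the mechanism the paper uses to tame it is the nonstandard translation with overspill/underspill (Section \ref{ultra-sec}), which is not something your outline supplies a substitute for.
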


Heuristically, hypothesis (i) (resp. (ii)) is asserting that $f$ ``behaves like'' a function of the form $x \mapsto e^{2\pi i P(x)}$ for some function $P\colon \X \to \R/\Z$ that ``behaves like'' a polynomial of degree ${<}d_1$ along $Q_1$ (resp. ${<}d_2$ along $Q_2$), thus justifying the analogy between Theorem \ref{concat-uap} and  Proposition \ref{concat}.  The various known \emph{inverse theorems for the Gowers norms} \cite{gt-u3, btz, gtz-u4, gtz-uk, szegedy} make this heuristic more precise in some cases; however, our proof of the above theorem does not require these (difficult) theorems (which are currently unavailable for the general Gowers box norms).

We prove Theorem \ref{concat-uap} in Sections \ref{dual-sec}, \ref{boxproof}, after translating these theorems to a nonstandard analysis setting in Section \ref{nonst-sec}.  The basic idea is to use the hypothesis (i) to approximate $f$ by a linear combination of ``dual functions'' along the $Q_1$ direction, and then to use (ii) to approximate the arguments of those dual functions in turn by dual functions along the $Q_2$ direction.  This gives a structure analogous to the identities \eqref{px}-\eqref{phh} obeyed by the function $P$ considered in Proposition \ref{concat}, and one then uses an induction hypothesis to conclude.  To obtain the desired approximations, one could either use structural decomposition theorems (as in \cite{gow}) or nonstandard analysis (as is used for instance in \cite{gtz-uk}).  We have elected to use the latter approach, as the former approach becomes somewhat messy due to the need to keep quantitative track of a number of functions such as $\eps \mapsto c(\eps)$, whereas these functions are concealed to the point of invisibility by the nonstandard approach.  We give a more expanded sketch of Theorem \ref{concat-uap} in Section \ref{sketch-sec} below.

\begin{remark}
It may be possible to establish a version of Theorem \ref{concat-uap} in which one does not shrink the coset progressions $Q$ by a small parameter $\eps$, so that the appearance of $\eps Q$ in \eqref{upqd} is replaced by $Q$.  This would give the theorem a more ``combinatorial'' flavor, as opposed to an ``ergodic'' one (if one views the limit $\eps \to 0$ as being somewhat analogous to the ergodic limit $n \to \infty$ of averaging along a F{\o}lner sequence $\Phi_n$).  Unfortunately, our methods rely heavily on techniques such as the van der Corput inequality, which reflects the fact that $Q$ is almost invariant with respect to translations in $\eps Q$ when $\eps$ is small.  As such, we do not know how to adapt our methods to remove this shrinkage of the coset progressions $Q$.  Similarly for Theorem \ref{concat-box} below.
\end{remark}

We also have an analogue of Proposition \ref{concat-lowrank}:

\begin{theorem}[Concatenation theorem for anti-box norms]\label{concat-box}  Let $d_1,d_2$ be positive integers.  For any $i=1,2$ and $1 \leq j \leq d_i$, let $Q_{i,j}$ be a coset progression of rank $r_{i,j}$ in an at most countable additive group $G$.  Let $(\X,T)$ be a $G$-system, let $d_1,d_2$ be positive integers, and let $f$ lie in the unit ball of $L^\infty(\X)$.  Let $c_1,c_2\colon (0,+\infty) \to (0,+\infty)$ be functions such that $c_i(\eps) \to 0$ as $\eps \to 0$ for $i=1,2$.  We make the following hypotheses:
\begin{itemize}
\item[(i)] $\| f \|_{\Box^{d_1}_{Q_{1,1},\dots,Q_{1,d_1}}(\X)^*, \eps} \leq c_1(\eps)$ for all $\eps > 0$.
\item[(ii)] $\| f \|_{\Box^{d_2}_{Q_{2,1},\dots,Q_{2,d_2}}(\X)^*, \eps} \leq c_2(\eps)$ for all $\eps > 0$.
\end{itemize}
Then there exists a function $c\colon (0,\infty) \to (0,+\infty)$ with $c(\eps) \to 0$ as $\eps \to 0$, which depends only on $d_1,d_2,c_1,c_2$ and the $r_{1,j}, r_{2,j}$, such that
$$ \| f \|_{\Box^{d_1 d_2}_{(Q_{1,i}+Q_{2,j})_{1 \leq i \leq d_1, 1 \leq j \leq d_2}}(\X)^*, \eps} \leq c(\eps)$$
for all $\eps>0$.
\end{theorem}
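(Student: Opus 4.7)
The plan is to follow the dual-function approximation strategy sketched in the paper for Theorem \ref{concat-uap}, now organized along the induction of Proposition \ref{concat-lowrank}. The first move is to transfer to the nonstandard setting of Section \ref{nonst-sec}, so that each hypothesis $\|f\|_{\Box^{d_i}(\X)^*,\eps}\to 0$ becomes the assertion that $f$ lies in the $L^2$-closure of the span of $\Box^{d_i}_{Q_{i,1},\dots,Q_{i,d_i}}$-dual functions along the $Q_{i,*}$ directions. This removes the need to track the decay rates $c_1,c_2,c$ explicitly, and reduces the theorem to a measurability statement with respect to appropriate $\sigma$-algebras generated by dual functions.

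I would then induct on $d_1+d_2$, handling the base cases $d_1=1$ or $d_2=1$ by reduction to Theorem \ref{concat-uap} (with one side reduced to a single-progression Gowers norm). For the inductive step, pick indices $i_0\in\{1,\dots,d_1\}$, $j_0\in\{1,\dots,d_2\}$, and shift parameters $h,h'\in\eps(Q_{1,i_0}+Q_{2,j_0})=\eps Q_{1,i_0}+\eps Q_{2,j_0}$. Approximating $f$ in $L^2$ by a $\Box^{d_1}_{Q_{1,*}}$-dual function (via hypothesis (i)), and then approximating the arguments of that dual function by $\Box^{d_2}_{Q_{2,*}}$-dual functions (via hypothesis (ii)), produces a decomposition of $f$ and hence of its derivatives $\Delta_{h,h'}f$ whose combinatorial structure is exactly analogous to the identities \eqref{px}--\eqref{phh} driving the proof of Proposition \ref{concat}. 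Applying the induction hypothesis to the resulting factors, which after rearrangement involve only $(d_1-1)d_2$ or $d_1(d_2-1)$ of the sumset directions $Q_{1,i}+Q_{2,j}$, yields the desired smallness of $\|f\|_{\Box^{d_1d_2}_{(Q_{1,i}+Q_{2,j})_{i,j}}(\X)^*,\eps}$.

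Throughout, the van der Corput inequality (exploiting the near-invariance of $Q$ under translation by $\eps Q$) is the workhorse that allows migration of shift parameters between different slots of the multilinear dual-function expressions without losing control of the box norms. The main obstacle is the two-step dual substitution: applying hypothesis (ii) to the arguments of a $\Box^{d_1}$-dual function extracted from hypothesis (i) is not directly justified, because those arguments are dynamically produced by the first approximation rather than given in advance. The fix, as in Theorem \ref{concat-uap}, is to apply (ii) to the entire dual-function expression and then use a Gowers--Cauchy--Schwarz type argument to transfer the resulting smallness into each factor slot in turn; the bookkeeping of the $d_1d_2$ sumset progressions $Q_{1,i}+Q_{2,j}$ (each of rank $r_{1,i}+r_{2,j}$) and verification that the induction hypothesis applies correctly at each step is what I expect to constitute the technical bulk of the proof.
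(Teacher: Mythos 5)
Your high-level plan matches the paper's: nonstandard transfer (Section \ref{nonst-sec}) to convert the anti-uniformity hypotheses into measurability statements w.r.t.\ characteristic factors, dual-function machinery (Section \ref{dual-sec}), induction on $d_1+d_2$, a nested dual-function representation of $T^{n+n'}f$ analogous to \eqref{px}--\eqref{phh}, and a Cauchy--Schwarz/Markov argument to finish (Section \ref{boxproof}). Two specific points in your plan need repair, however.

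First, your base case reduction of ``$d_1=1$ or $d_2=1$'' to Theorem \ref{concat-uap} does not go through: when $d_1=1$ and $d_2>1$, hypothesis (ii) and the desired conclusion still involve box norms over $d_2$ distinct progressions $Q_{2,1},\dots,Q_{2,d_2}$, which is not the uniformity-norm setting of Theorem \ref{concat-uap}. The paper's actual base case is only $d_1=d_2=1$, handled directly by observing that $f$ is then invariant under $o(Q_{1,1})$ and $o(Q_{2,1})$; the cases $d_1=1,\,d_2>1$ are absorbed by the general inductive step, where the auxiliary coefficient function $\mathbf{c}_{n,h}$ degenerates to $1$.

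Second, the fix for your ``two-step dual substitution'' obstacle is not ``apply (ii) to the entire dual-function expression and transfer via Gowers--Cauchy--Schwarz''. What actually legitimises the nested approximation is the localisation Corollary \ref{local}: writing $\ZZ:=\ZZ^{<d_1}_{o(Q_{1,*})}(\X)\wedge\ZZ^{<d_2}_{o(Q_{2,*})}(\X)$, one has $\ZZ\equiv\ZZ^{<d_1}_{o(Q_{1,*})}(\ZZ)\equiv\ZZ^{<d_2}_{o(Q_{2,*})}(\ZZ)$, so Theorem \ref{charf}(ii) applied to the factor $\ZZ$ lets one approximate $f$ by $\Box^{d_1}$-dual functions whose \emph{arguments} $f_\omega$ already lie in $L^\infty(\ZZ)$; since those $f_\omega$ then lie in $L^\infty(\ZZ^{<d_2}_{o(Q_{2,*})}(\ZZ))$, each can be re-expanded as a $\Box^{d_2}$-dual function, and multilinearity/continuity of the dual operator carries the approximation through. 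Without this localisation one cannot control the inner arguments, and the Gowers--Cauchy--Schwarz step you sketch has nothing to grip. You should also make explicit use of the monotonicity \eqref{mono} of box norms: the induction hypothesis places $\st\mathbf{c}_{n,h}$ in the factor indexed by $\{(i,j):i\geq 2\}$ and $\st\mathbf{c}'_{n',h'}$ in the factor indexed by $\{(i,j):j\geq 2\}$, and it is monotonicity that upgrades both to the factor indexed by all $(i,j)\neq(1,1)$, which is what the final Cauchy--Schwarz/Markov step against $g$ requires.
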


The proof of Theorem \ref{concat-box} is similar to that of Theorem \ref{concat-uap}, and is given at the end of Section \ref{boxproof}.

\subsection{Concatenation of characteristic factors}

Analogues of the above results can be obtained for characteristic factors of the Gowers-Host-Kra seminorms \cite{hk} in ergodic theory.  To construct these factors for arbitrary abelian group actions (including uncountable ones), it is convenient to introduce the following notation (which should be viewed as a substitute for the machinery of F{\o}lner sequences that does not require amenability).  Given an additive group $H$, we consider the set ${\mathcal F}[H]$ of non-empty finite multisets $Q$ in $H$.  We can make ${\mathcal F}[H]$ a directed set by declaring $Q_1 \leq Q_2$ if one has $Q_2 = Q_1 + R$ for some non-empty finite multiset $R$; note that any two $Q_1,Q_2$ have a common upper bound $Q_1+Q_2$. One can then define convergence along nets in the usual fashion: given a sequence of elements $x_Q$ of a Hausdorff topological space indexed by the non-empty finite multisets $Q$ in $H$, we write $\lim_{Q \to H} x_Q = x$ if for every neighbourhood $U$ of $x$, there is a finite non-empty multiset $Q_0$ in $H$ such that $x_Q \in U$ for all $Q \geq Q_0$.  Similarly one can define joint limits $\lim_{(Q_1,\dots,Q_k) \to (H_1,\dots,H_k)} x_{Q_1,\dots,Q_k}$, where each $Q_i$ ranges over finite non-empty multisets in $H_i$, using the product directed set ${\mathcal F}[H_1] \times \dots \times {\mathcal F}[H_d]$.  Thus for instance $\lim_{(Q_1,Q_2) \to (H_1,H_2)} x_{Q_1,Q_2} = x$ if, for every neighbourhood $U$ of $x$, there exist $Q_{1,0}, Q_{2,0}$ in $H_1,H_2$ respectively such that $x_{Q_1,Q_2} \in U$ whenever $Q_1 \geq Q_{1,0}$ and $Q_2 \geq Q_{2,0}$.  If the $x_Q$ or $x_{Q_1,\dots,Q_k}$ take values in $\R$, we can also define limit superior and limit inferior in the usual fashion.

\begin{remark}[Amenable case]
If $G$ is an amenable (discrete) countable additive group with a F{\o}lner sequence $\Phi_n$, and $a_g$ is a bounded sequence of complex numbers indexed by $g \in G$, then we have the relationship
\begin{equation}\label{fol}
 \lim_{n \to \infty} \E_{g \in \Phi_n} a_g = \lim_{Q \to G} \E_{g \in Q} a_g
\end{equation}
between traditional averages and the averages defined above, whenever the right-hand side limit exists.  Indeed, for any $\eps>0$, we see from the F{\o}lner property that for any given $Q \in {\mathcal F}[G]$, that $\E_{g \in \Phi_n} a_g$ and $\E_{g \in \Phi_n+Q} a_g$ differ by at most $\eps$ if $n$ is large enough; while from the convergence of the right-hand side limit we see that $\E_{g \in \Phi_n+Q} a_g$ and $\E_{g \in Q} a_g$ differ by at most $\eps$ for all $n$ if $Q$ is large enough, and the claim follows.  A similar result holds for joint limits, namely that
\begin{equation}\label{fol-joint}
 \lim_{n_1,\dots,n_d \to \infty} \E_{h_1 \in \Phi_{n_1,1}, \dots, h_d \in \Phi_{n_i,d}} a_{h_1,\dots,h_d} = \lim_{(Q_1,\dots,Q_d) \to (H_1,\dots,H_d)} \E_{h_1 \in Q_1,\dots,h_d \in Q_d} a_{h_1,\dots,h_d}
\end{equation}
whenever $\Phi_{n,i}$ is a F{\o}lner sequence for $H_i$ and the $a_{h_1,\dots,h_d}$ are a bounded sequence of complex numbers.  
\end{remark}

Given a $G$-system $(\X,T)$, a natural number $d$, a subgroup $H$ of $G$, and a function $f \in L^\infty(\X)$, we define the \emph{Gowers-Host-Kra seminorm}
\begin{equation}\label{ghk-def} 
\| f \|_{U^d_H(\X)} \coloneqq  \lim_{Q \to H} \| f \|_{U^d_Q(\X)};
\end{equation}
we will show in Theorem \ref{gawd} below that this limit exists, and agrees with the more usual definition of the Gowers-Host-Kra seminorm from \cite{hk}; in fact the definition given here even extends to the case when $G$ and $H$ are uncountable abelian groups.  More generally, given subgroups $H_1,\dots,H_d$ of $G$, we define the \emph{Gowers-Host-Kra box seminorm}
$$ 
\| f \|_{\Box^d_{H_1,\dots,H_d}(\X)} \coloneqq  \lim_{(Q_1,\dots,Q_d) \to (H_1,\dots,H_d)} \| f \|_{\Box^d_{Q_1,\dots,Q_d}(\X)}.$$
again, the existence of this limit will be shown in Theorem \ref{gawd} below.  

Define a \emph{factor} of a $G$-system $(\X,T)$ with $\X = (X, {\mathcal B}, \mu)$ to be a $G$-system $(\Y,T)$ with $\Y = (Y, {\mathcal Y}, \nu)$ together with a measurable factor map $\pi\colon X \to Y$ intertwining the group actions (thus $T^g \circ \pi = \pi \circ T^g$ for all $g \in G$) such that $\nu$ is equal to the pushforward $\pi_* \mu$ of $\mu$, thus $\mu(\pi^{-1}(E)) = \nu(E)$ for all $E \in {\mathcal Y}$.  By abuse of notation we use $T$ to denote the action on the factor $\Y$ as well as on the original space $\X$.   Note that $L^\infty(\Y)$ can be viewed as a ($G$-invariant) subalgebra of $L^\infty(\X)$, and similarly $L^2(\Y)$ is a ($G$-invariant) closed subspace of the Hilbert space $L^2(\X)$; if $f \in L^2(\X)$, we write $\E(f|\Y)$ for the orthogonal projection onto $L^2(\Y)$.    We also call $\X$ an \emph{extension} of $\Y$.  Note that any subalgebra ${\mathcal Y}$ of ${\mathcal B}$ can be viewed as a factor of $\X$ by taking $Y=X$ and $\nu = \mu\downharpoonright_{{\mathcal Y}}$.  For instance, given a subgroup $H$ of $G$, the invariant $\sigma$-algebra ${\mathcal B}^H$ consisting of sets $E \in {\mathcal B}$ such that $T^h E = E$ up to null sets for any $h \in H$ generates a factor $\X^H$ of $\X$, and so we can meaningfully define the conditional expectation $\E(f|\X^H)$ for any $f \in L^2(\X)$.

Two factors $\Y, \Y'$ of $\X$ are said to be \emph{equivalent} if the algebras $L^\infty(\Y)$ and $L^\infty(\Y')$ agree (using the usual convention of identifying functions in $L^\infty$ that agree almost everywhere), in which case we write $\Y \equiv \Y'$.  We partially order the factors of $\X$ up to equivalence by declaring $\Y \leq \Y'$ if $L^\infty(\Y) \subset L^\infty(\Y')$, thus for instance $\Y \leq \Y'$ if $\Y$ is a factor of $\Y'$.  This gives the factors of $\X$ up to equivalence the structure of a lattice: the meet $\Y \wedge \Y'$ of two factors is given (up to equivalence) by setting $L^\infty(\Y \wedge \Y') = L^\infty(\Y) \cap L^\infty(\Y')$, and the join $\Y \vee \Y'$ of two factors is given by setting $L^\infty(\Y \vee \Y')$ to be the von Neumann algebra generated by $L^\infty(\Y) \cup L^\infty(\Y')$ (i.e., the smallest von Neumann subalgebra of $L^\infty(\X)$ containing $L^\infty(\Y) \cup L^\infty(\Y')$).

We say that a $G$-system $\X$ is \emph{$H$-ergodic} for some subgroup $H$ of $G$ if the invariant factor $\X^H$ is trivial (equivalent to a point).  Note that if a system is $G$-ergodic, it need not be $H$-ergodic for subgroups $H$ of $G$.  Because of this, it will be important to not assume $H$-ergodicity for many of the results and arguments below, which will force us to supply new proofs of some existing results in the literature that were specialised to the ergodic case.

For the seminorm $U^d_H(\X)$, it is known\footnote{The factor $\ZZ^{{<}d}$ here is often denoted $\ZZ^{d-1}$ in the literature.  As such, the reader should be cautioned that some of the indices here are shifted by one from those in other papers.} (see \cite{hk} or Theorem \ref{charfac} below) that there exists a \emph{characteristic factor} $(\ZZ^{{<}d}_H(\X),T) = (Z^{{<}d}_H, {\mathcal Z}^{{<}d}_H, \nu,T)$ of $(\X,T)$, unique up to equivalence, with the property that
\begin{equation}\label{fuz}
 \|f\|_{U^d_H(\X)} = 0 \iff \E( f | \ZZ^{{<}d}_H(\X) ) = 0
\end{equation}
for all $f \in L^\infty(\X)$; for instance, $\ZZ^{{<}1}_H(\X)$ can be shown to be equivalent to the invariant factor $\X^H$, and the factors $\ZZ^{{<}d}_H(\X)$ are non-decreasing in $d$.  In the case when $H$ is isomorphic to the integers $\Z$, and assuming for simplicity that the system $\X$ is $H$-ergodic, the characteristic factor was studied by Host and Kra \cite{hk}, who obtained the important result that the characteristic factor $\ZZ^{{<}d}_H(\X)$ was isomorphic to a limit of $d-1$-step nilsystems (see also \cite{ziegler} for a related result regarding characteristic factors of multiple averages); the analogous result for actions of infinite-dimensional vector spaces $\F^\omega \coloneqq  \bigcup_n \F^n$ was obtained in \cite{btz}.  More generally, given subgroups $H_1,\dots,H_d$, there is a unique (up to equivalence) characteristic factor $\ZZ^{{<}d}_{H_1,\dots,H_d}(\X) = (Z^{{<}d}_{H_1,\dots,H_D}, {\mathcal Z}^{{<}d}_{H_1,\dots,H_d}, \nu)$ with the property that
$$ \|f\|_{\Box^d_{H_1,\dots,H_d}(\X)} = 0 \iff \E( f | \ZZ^{{<}d}_{H_1,\dots,H_d}(\X) ) = 0$$
for all $f \in L^\infty(\X)$; this was essentially first observed in \cite{host}, and we establish it in Theorem \ref{charfac}.  However, a satisfactory structural description of the factors $\ZZ^{<d}_{H_1,\dots,H_d}(\X)$ (in the spirit of \cite{hk}) is not yet available; see \cite{austin-sated} for some recent work in this direction.

We can now state the ergodic theory analogues of Theorems \ref{concat-uap} and \ref{concat-box}.  In these results $G$ is always understood to be an at most countable additive group. Because our arguments will require a F{\o}lner sequence of coset progressions of bounded rank, we will also have to temporarily make a further technical restriction on $G$, namely that $G$ be the sum of a finitely generated group and a profinite group (or equivalently, a group which becomes finitely generated after quotienting by a profinite subgroup).  This class of groups includes the important examples of lattices $\Z^d$ and vector spaces ${\mathbb F}^\omega = \bigcup_n {\mathbb F}^n$ over finite fields, but excludes the infinitely generated torsion-free group $\Z^\omega = \bigcup_n \Z^n$.  Observe that this class of groups is also closed under quotients and taking subgroups.  

\begin{theorem}[Concatenation theorem for characteristic factors]\label{concat-uap-erg}  Suppose that $G$ is the sum of a finitely generated group and a profinite group.  Let $\X$ be a $G$-system, let $H_1,H_2$ be subgroups of $G$, and let $d_1,d_2$ be positive integers.  
Then
$$ L^\infty(\ZZ^{{<}d_1}_{H_1}(\X)) \cap L^\infty(\ZZ^{{<}d_2}_{H_2}(\X)) \subset L^\infty( \ZZ^{{<}d_1+d_2-1}_{H_1+H_2}(\X) ).$$
Equivalently, using the lattice structure on factors discussed previously,
$$ \ZZ^{{<}d_1}_{H_1}(\X) \wedge \ZZ^{{<}d_2}_{H_2}(\X) \leq \ZZ^{{<}d_1+d_2-1}_{H_1+H_2}(\X).$$
\end{theorem}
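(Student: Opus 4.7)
The plan is to deduce Theorem \ref{concat-uap-erg} from the combinatorial Theorem \ref{concat-uap} via a F{\o}lner approximation. Using the structural hypothesis on $G$, which is inherited by the subgroups $H_1, H_2$ and by $H_1+H_2$, I would fix F{\o}lner sequences $(Q_{i,n})_{n \geq 1}$ of coset progressions of bounded rank $r_i$ in $H_i$ for $i=1,2$, chosen so that each fixed dilate $\eps Q_{i,n}$ is also asymptotically F{\o}lner in $H_i$. Then $(Q_{1,n}+Q_{2,n})$ is automatically a F{\o}lner sequence of coset progressions of rank at most $r_1+r_2$ in $H_1+H_2$, which will be the F{\o}lner sequence against which we measure the concatenated conclusion.

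The crux is an auxiliary \emph{characterization lemma}: for $f$ in the closed unit ball of $L^\infty(\X)$, one has $f \in L^\infty(\ZZ^{{<}d}_H(\X))$ if and only if there exists some $c\colon(0,\infty) \to (0,\infty)$ with $c(\eps) \to 0$ as $\eps \to 0$ such that $\|f\|_{U^d_{Q_n}(\X)^*,\eps} \leq c(\eps)$ for all sufficiently large $n$, for any F{\o}lner sequence $(Q_n)$ of bounded-rank coset progressions in $H$. The ``only if'' direction should capture the standard principle that $L^\infty(\ZZ^{{<}d}_H(\X))$ is the closed algebra generated by $U^d_H$-dual functions; such dual functions and their finite linear combinations automatically possess uniformly controlled anti-uniformity dual norms along any F{\o}lner sequence via Cauchy--Schwarz / van der Corput manipulations of the Gowers norms. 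The ``if'' direction follows from \eqref{fuz} and \eqref{ghk-def}: for any $g \in L^\infty(\X)$ with $\|g\|_{U^d_H(\X)} = 0$, the definition \eqref{ghk-def} yields $\|g\|_{U^d_{\eps Q_n}} \leq \eps \|g\|_\infty$ once $n$ is large, whence $|\langle f, g\rangle| \leq \|g\|_\infty \, c(\eps) \to 0$, forcing $\langle f,g\rangle=0$ and hence $f \in L^\infty(\ZZ^{{<}d}_H(\X))$.

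Granted this characterization, the concatenation is immediate. The hypothesis that $f$ lies in both $L^\infty(\ZZ^{{<}d_i}_{H_i}(\X))$ supplies functions $c_1, c_2$ verifying hypotheses (i), (ii) of Theorem \ref{concat-uap} for each pair $(Q_1, Q_2) = (Q_{1,n}, Q_{2,n})$, uniformly in $n$ large. That theorem outputs a function $c$, depending only on $d_1, d_2, r_1, r_2, c_1, c_2$, with
$$\|f\|_{U^{d_1+d_2-1}_{Q_{1,n}+Q_{2,n}}(\X)^*,\eps} \leq c(\eps)$$
for all large $n$ and all $\eps > 0$, and the characterization lemma applied in reverse to the F{\o}lner sequence $(Q_{1,n}+Q_{2,n})$ in $H_1+H_2$ then yields $f \in L^\infty(\ZZ^{{<}d_1+d_2-1}_{H_1+H_2}(\X))$ as desired. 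The main obstacle is the ``only if'' direction of the characterization lemma: one must extract a quantitative, uniform-in-$n$ anti-uniformity bound from the qualitative $L^2$-identity $f = \E(f|\ZZ^{{<}d}_H(\X))$. The natural route passes through the nonstandard / ultraproduct framework that the paper develops in Section \ref{nonst-sec}, which lets one absorb quantitative rates into a single limit object without tracking the functions $c(\eps)$ by hand; the shrinkage factor $\eps$ in the dual norm \eqref{upqd} (and the associated van der Corput technique) is essential, since it is precisely what allows the F{\o}lner approximation to transfer bounds between $U^d_{Q_n}$ and $U^d_H$.
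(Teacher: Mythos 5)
Your proposal captures the same strategy as the paper: build F{\o}lner sequences of bounded-rank coset progressions in $H_1$, $H_2$ using the structural hypothesis on $G$, translate membership in the factors into uniform anti-uniformity dual-norm bounds along those sequences, apply Theorem \ref{concat-uap}, and then translate back. The overall architecture is right, and the ``if'' direction of your characterization lemma (using \eqref{fol-joint} to pass from $\|g\|_{U^{d_1+d_2-1}_{H_1+H_2}(\X)}=0$ to smallness of $\|g\|_{U^{d_1+d_2-1}_{\eps(Q_{1,n}+Q_{2,n})}}$ for large $n$) is exactly what the paper does.

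Where your sketch understates the situation is the ``only if'' direction, which you flag as the main obstacle and propose routing through the ultraproduct machinery. In fact the paper's argument here is short and entirely standard. By Theorem \ref{charfac}, a function $f$ in $L^\infty(\ZZ^{{<}d}_H(\X))$ is approximable in $L^2$ to any accuracy $\delta$ by $F(\delta)\cdot B$, where $B$ is the closed convex hull of $U^d_H$-dual functions; the Cauchy--Schwarz--Gowers inequality \eqref{csg-3} shows each element of $B$ pairs against any $f'$ in the unit ball of $L^\infty$ with absolute value at most $\|f'\|_{U^d_H(\X)}$; and the monotonicity inequality \eqref{dod}, namely $\|f'\|_{U^d_H(\X)} \le \|f'\|_{U^d_{Q}(\X)}$ valid for \emph{every} finite nonempty multiset $Q \subset H$ (not just along a F{\o}lner sequence), immediately yields
$$\|f\|_{U^d_{Q}(\X)^*,\eps} \le \inf_{\delta>0}\bigl(\delta + F(\delta)\eps\bigr)$$
uniformly in $Q$, and the right-hand side tends to $0$ as $\eps \to 0$. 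So no nonstandard analysis and no van der Corput is needed at this stage; the uniformity in $n$ (indeed in $Q$) is a one-line consequence of \eqref{dod}. The nonstandard framework in the paper is used only to prove the underlying combinatorial Theorem \ref{concat-uap}, not to carry out this ergodic deduction.
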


\begin{theorem}[Concatenation theorem for low rank characteristic factors]\label{concat-box-erg}  Suppose that $G$ is the sum of a finitely generated group and a profinite group.  Let $\X$ be a $G$-system, let $d_1,d_2$ be positive integers, and let $H_{1,1},\dots,H_{1,d_1}$,$H_{2,1},\dots,H_{2,d_2}$ be subgroups of $G$.  
Then
$$ L^\infty(\ZZ^{{<}d_1}_{H_{1,1},\dots,H_{1,d_1}}(\X)) \cap L^\infty(\ZZ^{{<}d_2}_{H_{2,1},\dots,H_{2,d_2}}(\X)) \subset L^\infty( \ZZ^{{<}d_1 d_2}_{(H_{1,i}+H_{2,j})_{1 \leq i \leq d_1, 1 \leq j \leq d_2}}(\X) )$$
or equivalently,
$$ \ZZ^{{<}d_1}_{H_{1,1},\dots,H_{1,d_1}}(\X) \wedge \ZZ^{{<}d_2}_{H_{2,1},\dots,H_{2,d_2}}(\X) \leq \ZZ^{{<}d_1 d_2}_{(H_{1,i}+H_{2,j})_{1 \leq i \leq d_1, 1 \leq j \leq d_2}}(\X).$$
\end{theorem}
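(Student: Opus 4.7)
The plan is to deduce Theorem \ref{concat-box-erg} from the combinatorial Theorem \ref{concat-box} via a dictionary that converts ergodic measurability with respect to the characteristic factor $\ZZ^{<d}_{H_1,\ldots,H_d}(\X)$ into uniform dual box-norm bounds along F{\o}lner sequences of coset progressions. The hypothesis that $G$ is the sum of a finitely generated group and a profinite group is inherited by each subgroup $H_{i,j}$, and ensures that each $H_{i,j}$ admits a F{\o}lner sequence $(Q_{i,j}^{(n)})_n$ of coset progressions of a common rank $r_{i,j}$. This uniform bound on ranks is essential, since the sums $Q_{1,i}^{(n)}+Q_{2,j}^{(n)}$ must have rank uniformly bounded in $n$ in order for a single application of Theorem \ref{concat-box} to yield a rate $c(\eps)$ that is independent of $n$.

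I would next establish a translation lemma of the following form: if $f$ lies in the closed unit ball of $L^\infty(\ZZ^{<d}_{H_1,\ldots,H_d}(\X))$, then there exists $c_f\colon (0,\infty)\to(0,\infty)$ with $c_f(\eps)\to 0$ such that
$$\|f\|_{\Box^d_{Q_1,\ldots,Q_d}(\X)^*,\eps} \leq c_f(\eps)$$
for all $\eps>0$ and all sufficiently F{\o}lner-large finite multisets $Q_i \subseteq H_i$. The idea is to approximate $f$ in $L^2$ by dual functions of the box form, i.e., expressions of the shape $\E_{h_i,h'_i \in Q_i}\prod_{\omega \in\{0,1\}^d\setminus\{0\}} C^{|\omega|} T^{\omega\cdot(h-h')}F$ with $C$ denoting complex conjugation. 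Such dual functions carry automatic $\Box^d$-anti-norm bounds via the Cauchy--Schwarz--Gowers inequality, and approximability is supplied by the defining property of the characteristic factor established in Theorem \ref{charfac}, together with the F{\o}lner-limit identification of $\|\cdot\|_{\Box^d_{H_1,\ldots,H_d}(\X)}$ given in Theorem \ref{gawd}.

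Applying the translation lemma to both hypotheses of Theorem \ref{concat-box-erg} produces rate functions $c_1,c_2$ of the type required by Theorem \ref{concat-box}. For each fixed $n$, Theorem \ref{concat-box} applied to $\X$ and the coset progressions $Q_{i,j}^{(n)}$ then yields
$$\|f\|_{\Box^{d_1d_2}_{(Q_{1,i}^{(n)}+Q_{2,j}^{(n)})_{i,j}}(\X)^*,\eps} \leq c(\eps),$$
where $c(\eps)\to 0$ depends only on $d_1,d_2,c_1,c_2$ and the ranks $r_{i,j}$, and in particular is independent of $n$. To conclude $f \in L^\infty(\ZZ^{<d_1d_2}_{(H_{1,i}+H_{2,j})_{i,j}}(\X))$, it suffices to verify that $\langle f,g\rangle_{L^2(\X)}=0$ for every $g$ in the unit ball of $L^\infty(\X)$ with $\|g\|_{\Box^{d_1d_2}_{(H_{1,i}+H_{2,j})_{i,j}}(\X)}=0$. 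For such a $g$, Theorem \ref{gawd} identifies the seminorm as the F{\o}lner limit, so for each $\eps>0$ and all sufficiently large $n$ (using that $\eps(Q_{1,i}^{(n)}+Q_{2,j}^{(n)})$ is again F{\o}lner in $H_{1,i}+H_{2,j}$), we have $\|g\|_{\Box^{d_1d_2}_{\eps(Q_{1,i}^{(n)}+Q_{2,j}^{(n)})_{i,j}}(\X)} \leq \eps$, whence the dual norm bound above forces $|\langle f,g\rangle|\leq c(\eps)$; sending $\eps\to 0$ finishes the proof.

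The main obstacle is the translation lemma, specifically ensuring that the rate $c_f(\eps)$ is robust under enlargement of the F{\o}lner sets $Q_i$. The delicate point is that a dual function tailored to specific sets $Q_i^{(0)}$ must continue to approximate $f$ \emph{and} carry controlled anti-norm when one passes to larger coset progressions $Q_i \supseteq Q_i^{(0)}$; this is handled by fixing the approximating dual function once $\eps$ is chosen and invoking the monotonicity and F{\o}lner-convergence properties of the box seminorms encapsulated in Theorem \ref{gawd}. Once this technical bridge is in place, the rest of the argument is a clean splice of Theorem \ref{concat-box} with the characteristic-factor machinery, and the proof of Theorem \ref{concat-uap-erg} proceeds along exactly parallel lines, replacing Theorem \ref{concat-box} by Theorem \ref{concat-uap}.
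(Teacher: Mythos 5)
Your proposal is correct and takes essentially the same approach as the paper: the authors prove Theorem \ref{concat-uap-erg} in exactly this way and explicitly leave the completely analogous derivation of Theorem \ref{concat-box-erg} from Theorem \ref{concat-box} to the reader, which is what you have supplied. The ingredients you identify---dual-function approximation from Theorem \ref{charfac}, the Cauchy--Schwarz--Gowers inequality together with the monotonicity of box seminorms under enlargement of the averaging sets to obtain uniform anti-norm rates, F{\o}lner sequences of bounded-rank coset progressions, and the F{\o}lner-limit identification from Theorem \ref{gawd}---are precisely those the paper uses.
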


In Section \ref{erg-sec}, we deduce these results from the corresponding combinatorial results in Theorems \ref{concat-uap}, \ref{concat-box}.

\begin{example}\label{stex}  Let $\X \coloneqq  (\R/\Z)^3$ with the uniform probability measure, and define the shifts $T^{(1,0)}, T^{(0,1)}\colon \X \to \X$ by
$$ T^{(1,0)}(x,y,z) \coloneqq  (x+\alpha,y,z+y)$$
and
$$ T^{(0,1)}(x,y,z) \coloneqq  (x,y+\alpha,z+x)$$
where $\alpha \in \R/\Z$ is a fixed irrational number.  These shifts commute and generate a $\Z^2$-system
$$ T^{(n,m)}(x,y,z) \coloneqq  (x+n\alpha, y+m\alpha,z + ny + mx + nm \alpha)$$
(compare with \eqref{pnm}).
The shift $T^{(1,0)}$ does not act ergodically on $\X$, but one can perform an ergodic decomposition into ergodic components $\R/\Z \times \{y\} \times \R/\Z$ for almost every $y \in \R/\Z$, with $T^{(1,0)}$ acting as a circle shift $(x,z) \mapsto (x+\alpha,z+y)$ on each such component.  From this one can easily verify that $\ZZ^{{<}2}_{\Z \times \{0\}}(\X) = \X$. Similarly $\ZZ^{{<}2}_{\{0\} \times \Z}(\X) = \X$.  On the other hand, $\ZZ^{{<}2}_{\Z^2}(\X) \neq \X$, as there exist functions in $L^\infty(\Z)$ whose $U^2_{\Z^2}(\X)$ norm vanish (for instance the function $(x,y,z) \mapsto e^{2\pi i z}$).  Nevertheless, Corollary \ref{concat-uap-erg} concludes that $\ZZ^{{<}3}_{\Z^2}(\X) = \X$ (roughly speaking, this means that $\X$ exhibits ``quadratic'' or ``$2$-step'' behaviour as a $\Z^2$-system, despite only exhibiting ``linear'' or ``$1$-step'' behaviour as a $\Z \times \{0\}$-system or $\{0\} \times \Z$-system).
\end{example}

\begin{remark}  In the case that $H$ is an infinite cyclic group acting ergodically, Host and Kra \cite{hk}  show that the characteristic factor $\ZZ^{{<}d}_H(\X) = \ZZ^{{<}d}_{H,\dots,H}(\X)$ is an inverse limit of $d-1$-step nilsystems.  If $H$ does not act ergodically, then (assuming some mild regularity on the underlying measure space $X$) one has a similar characterization of $\ZZ^{{<}d}_{H}(\X)$ on each ergodic component.  The arguments in \cite{hk} were extended to finitely generated groups $H$ acting ergodically in \cite{griesmer}; see also \cite{btz} for an analogous result in the case of actions of infinite-dimensional vector spaces $\F^\omega$ over a finite field. Theorem \ref{concat-uap-erg} can then be interpreted as an assertion that if $\X$ acts as an inverse limit of nilsystems of step $d_1-1$ along the components of one group action $H_1$, and as an inverse limit of nilsystems of step $d_2-1$ along the components of another (commuting) group action $H_2$, then $\X$ is an inverse limit of nilsystems of step at most $d_1+d_2-2$ along the components of the joint $H_1+H_2$ action.  It seems of interest to obtain a more direct proof of this assertion.  A related question would be to establish a nilsequence version of Proposition \ref{concat}.  For instance one could conjecture that whenever a sequence $f: \Z \times \Z \to \C$ was such that $n_1 \mapsto f(n_1,n_2)$ was a Lipschitz nilsequence of step $d_1-1$ uniformly in $n_2$ (as defined for instance in \cite{gt-mobius}), and $n_2 \mapsto f(n_1,n_2)$ was a Lipschitz nilsequence of step $d_2-1$ uniformly in $n_1$, then $f$ itself would be a Lipschitz nilsequence jointly on $\Z^2$ of step $d_1+d_2-2$.  It seems that Proposition \ref{concat-uap} is at least able to show that $f$ can be locally approximated (in, say, an $L^2$ sense) by such nilsequences on arbitrarily large scales, but some additional argument is needed to obtain the conjecture as stated.
\end{remark}

We are able to remove the requirement that $G$ be the sum of a finitely generated group and a profinite group from Theorem \ref{concat-uap-erg}:

\begin{theorem}[Concatenation theorem for characteristic factors]\label{concat-uap-erg-weak}  Let $G$ be an at most countable additive group.  Let $(\X,T)$ be a $G$-system, let $H_1$, $H_2$ be subgroups of $G$, and let $d_1$, $d_2$ be positive integers.  
Then
$$ L^\infty(\ZZ^{{<}d_1}_{H_1}(\X)) \cap L^\infty(\ZZ^{{<}d_2}_{H_2}(\X)) \subset L^\infty( \ZZ^{{<}d_1+d_2-1}_{H_1+H_2}(\X) )$$
or equivalently
$$ \ZZ^{{<}d_1}_{H_1}(\X) \wedge \ZZ^{{<}d_2}_{H_2}(\X) \leq \ZZ^{{<}d_1+d_2-1}_{H_1+H_2}(\X).$$
\end{theorem}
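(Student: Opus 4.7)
The strategy is to reduce to Theorem \ref{concat-uap-erg} by approximating $H_1$ and $H_2$ by finitely generated subgroups and then passing to a limit. Since $G$ is countable, I write $H_i = \bigcup_n H_i^{(n)}$ as an increasing union of finitely generated subgroups, so that each $H_1^{(n)} + H_2^{(n)}$ is finitely generated and therefore lies in the class of groups to which Theorem \ref{concat-uap-erg} applies. Since the characteristic factor $\ZZ^{<d}_H(\X)$ depends only on the action of $H$ on $\X$, I view $\X$ as an $(H_1^{(n)} + H_2^{(n)})$-system and apply that theorem to obtain
\[
\ZZ^{<d_1}_{H_1^{(n)}}(\X) \wedge \ZZ^{<d_2}_{H_2^{(n)}}(\X) \leq \ZZ^{<d_1+d_2-1}_{H_1^{(n)}+H_2^{(n)}}(\X)
\]
for every $n$.

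The characteristic factor is easily seen to be antitone in its subgroup: if $H' \leq H$ then $\ZZ^{<d}_H(\X) \leq \ZZ^{<d}_{H'}(\X)$ (at the level of the GHK seminorm this amounts to $\|g\|_{U^d_H(\X)} \leq \|g\|_{U^d_{H'}(\X)}$, which for $d=1$ reduces to the contractivity of conditional expectations given $\X^{H'} \geq \X^H$, and extends by the standard recursive formula for the GHK seminorm). Consequently the hypotheses $f \in L^\infty(\ZZ^{<d_i}_{H_i}(\X))$ give $f \in L^\infty(\ZZ^{<d_i}_{H_i^{(n)}}(\X))$ for every $n$, and combining with the previous display,
\[
f \in \bigcap_n L^\infty\bigl(\ZZ^{<d_1+d_2-1}_{H_1^{(n)}+H_2^{(n)}}(\X)\bigr).
\]

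The theorem now reduces to the \emph{continuity lemma}: for every increasing union $H = \bigcup_n H^{(n)}$ of subgroups of $G$,
\[
\ZZ^{<d}_H(\X) \equiv \bigwedge_n \ZZ^{<d}_{H^{(n)}}(\X).
\]
The inclusion $\leq$ is immediate from antitonicity. For the reverse, the starting observation is that the GHK seminorms themselves are continuous, $\lim_n \|g\|_{U^d_{H^{(n)}}(\X)} = \|g\|_{U^d_H(\X)}$, which follows directly from the net definition \eqref{ghk-def}: any finite multiset $Q \subset H$ eventually lies in some $H^{(n)}$, so the tail of the net defining the right-hand side is approximated arbitrarily well by the tails of the nets defining the left-hand sides.

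Upgrading this seminorm-level continuity to factor-level continuity is the main obstacle, since for $d \geq 2$ the GHK seminorm is strictly weaker than $L^2$ on the characteristic factor, and hence a small seminorm need not correspond to a small $L^2$-projection. My plan is to induct on $d$: the base case $d=1$ reduces to the routine identity $\X^H = \bigcap_n \X^{H^{(n)}}$ for invariant factors, and for the inductive step I use the recursive/structural description of $\ZZ^{<d}_H(\X)$ as a relatively independent extension of $\ZZ^{<d-1}_H(\X)$ along the $H$-action developed in Theorem \ref{gawd} and the surrounding ergodic material. Combined with reverse martingale convergence along the decreasing sequence of $\sigma$-algebras $\{\ZZ^{<d}_{H^{(n)}}(\X)\}_n$, continuity of the base factor in $H$ should lift to continuity of the extension, identifying $\bigcap_n L^\infty(\ZZ^{<d}_{H^{(n)}}(\X))$ with $L^\infty(\ZZ^{<d}_H(\X))$ and completing the proof.
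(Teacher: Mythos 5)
Your reduction to a \emph{continuity lemma} is a genuinely different route from the paper's, and the preliminary steps are sound: the antitonicity $\ZZ^{<d}_H(\X) \leq \ZZ^{<d}_{H'}(\X)$ for $H' \leq H$ does follow from \eqref{dod} (the Gowers--Host--Kra seminorm is the infimum over finite multisets, hence non-increasing as $H$ grows), and the seminorm-level limit $\|g\|_{U^d_{H^{(n)}}(\X)} \to \|g\|_{U^d_{H}(\X)}$ follows from the same monotonicity together with the fact that every finite $Q \subset H$ eventually lies in some $H^{(n)}$. So the statement does reduce cleanly to showing $\ZZ^{<d}_H(\X) \equiv \bigwedge_n \ZZ^{<d}_{H^{(n)}}(\X)$ for increasing unions $H = \bigcup_n H^{(n)}$.

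The gap is that this continuity lemma is where essentially all of the difficulty resides, and the sketch you give for it does not close. Reverse martingale convergence along the decreasing chain $\{\ZZ^{<d}_{H^{(n)}}(\X)\}_n$ only tells you that $\E(g\,|\,\ZZ^{<d}_{H^{(n)}}(\X)) \to \E(g\,|\,\bigwedge_n \ZZ^{<d}_{H^{(n)}}(\X))$; it provides no mechanism to convert the seminorm information $\|g\|_{U^d_{H^{(n)}}(\X)} \to 0$ into smallness of those projections, and for $d \geq 2$ there is no such implication in general (you yourself note that the seminorm is strictly weaker than $L^2$ on the characteristic factor). The natural quantitative route --- approximate $f$ in $L^2$ by $F(\delta)$ times a convex combination of dual functions for $H^{(n)}$ and apply Cauchy--Schwarz--Gowers --- runs into the same uniformity problem that the paper's proof of Theorem \ref{concat-uap-erg} handles by fixing the sets $B_1, B_2$ once and for all and varying only the F{\o}lner progressions: here the convex hulls $B^{(n)}$ vary with $n$, and there is no a priori control keeping the approximation constants $F_n(\delta)$ bounded as $n \to \infty$. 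The remaining appeal to a ``recursive/structural description of $\ZZ^{<d}_H(\X)$ as a relatively independent extension of $\ZZ^{<d-1}_H(\X)$'' is not actually furnished by Theorem \ref{gawd} or Theorem \ref{charfac}; obtaining such a structural description in the non-$H$-ergodic setting is precisely what the paper's Section \ref{erg2-sec} accomplishes via compact extensions, Mackey theory, abelianization, and the cocycle type calculus (Propositions \ref{type}, \ref{type-concat}, \ref{type-concat-variant}, Lemma \ref{L-proj}). So the plan, if carried out, would require machinery of comparable depth to the paper's argument; as written, the proof is incomplete at its central step.
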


We prove this result in Section \ref{erg2-sec} using an ergodic theory argument that relies on the machinery of cubic measures and cocycle type that was introduced by Host and Kra \cite{hk}, rather than on the combinatorial arguments used to establish Theorems \ref{concat-uap}, \ref{concat-box}.  It is at this point that we use our requirement that $G$-systems be countably generated modulo null sets, in order to apply the ergodic decomposition (after first passing to a compact metric space model), as well as the Mackey theory of isometric extensions.  It is likely that a similar strengthening of Theorem \ref{concat-box-erg} can be obtained, but this would require extending much of the Host-Kra machinery to tuples of commuting actions, which we will not do here.

\subsection{Globalizing uniformity}

We have seen that anti-uniformity can be ``concatenated'', in that functions which are approximately orthogonal to functions that are locally Gowers uniform in two different directions are necessarily also approximately orthogonal to more globally Gowers uniform functions.  By duality, one then expects to be able to decompose a globally Gowers uniform function into functions that are locally Gowers uniform in different directions.  For instance, in the ergodic setting, one has the following consequence of Theorem \ref{concat-uap-erg}:

\begin{corollary}\label{concat-uap-split}  Let $(\X,T)$ be a $G$-system, let $H_1$, $H_2$ be subgroups of $G$, and let $d_1$, $d_2$ be positive integers.   If $f \in L^\infty(\X)$ is orthogonal to $L^\infty( \ZZ^{<d_1+d_2-1}_{H_1+H_2}(\X) )$ (with respect to the $L^2$ inner product), then one can write $f=f_1+f_2$, where $f_1 \in L^\infty(\X)$ is orthogonal to
$L^\infty(\ZZ^{<d_1}_{H_1}(\X))$ and $f_2 \in L^\infty(\X)$ is orthogonal to $L^\infty(\ZZ^{<d_2}_{H_2}(\X))$; furthermore, $f_1$ and $f_2$ are orthogonal to each other.  
\end{corollary}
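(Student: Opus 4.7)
My plan is to use the natural Hilbert-space decomposition via conditional expectation onto the first characteristic factor. Write $\mathcal{A}_i := \ZZ^{{<}d_i}_{H_i}(\X)$ for $i=1,2$ and $\mathcal{A} := \ZZ^{{<}d_1+d_2-1}_{H_1+H_2}(\X)$, so the hypothesis reads $f \perp L^\infty(\mathcal{A})$ and Theorem~\ref{concat-uap-erg-weak} supplies $L^\infty(\mathcal{A}_1) \cap L^\infty(\mathcal{A}_2) \subseteq L^\infty(\mathcal{A})$. Set
\[ f_2 := \E(f \mid \mathcal{A}_1), \qquad f_1 := f - f_2, \]
so that both pieces lie in $L^\infty(\X)$ by $L^\infty$-contractivity of conditional expectation. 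Three of the four requirements are then immediate: the identity $f=f_1+f_2$; the defining orthogonality $f_1 \perp L^\infty(\mathcal{A}_1)$; and the mutual orthogonality $\langle f_1,f_2\rangle_{L^2(\X)}=0$, which follows at once from $f_2 \in L^2(\mathcal{A}_1)$ while $f_1 \in L^2(\mathcal{A}_1)^\perp$. In particular the ``furthermore'' clause of the corollary comes for free from the construction.

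The nontrivial step is verifying $f_2 \perp L^\infty(\mathcal{A}_2)$. Testing against an arbitrary $g \in L^\infty(\mathcal{A}_2)$, self-adjointness of conditional expectation gives
\[ \langle f_2,g\rangle_{L^2(\X)} = \langle f,\E(g \mid \mathcal{A}_1)\rangle_{L^2(\X)}, \]
so by the hypothesis $f \perp L^\infty(\mathcal{A})$ it is enough to show that the auxiliary function $h := \E(g \mid \mathcal{A}_1)$ lies in $L^\infty(\mathcal{A})$. Since $h \in L^\infty(\mathcal{A}_1)$ automatically, Theorem~\ref{concat-uap-erg-weak} reduces matters to the further claim
\[ \E(g \mid \mathcal{A}_1) \in L^\infty(\mathcal{A}_2) \qquad \text{for every } g \in L^\infty(\mathcal{A}_2). \]

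The hard part will be this last inclusion, which is a commutation-type statement asserting that conditional expectation onto $\mathcal{A}_1$ preserves $\mathcal{A}_2$-measurability, or equivalently that $\mathcal{A}_1$ and $\mathcal{A}_2$ are relatively independent over their meet $\mathcal{A}_1 \wedge \mathcal{A}_2$. I expect this to be extracted from the structural description of Host-Kra characteristic factors developed in Section~\ref{erg2-sec}---in particular from the cubic-measure, cocycle-type, and Mackey-theoretic ingredients that underpin the proof of Theorem~\ref{concat-uap-erg-weak}---which should supply precisely the relative-independence statement needed to close the argument.
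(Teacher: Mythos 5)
Your decomposition $f_2 := \E(f\mid\ZZ^{{<}d_1}_{H_1}(\X))$, $f_1 := f - f_2$, is the same one that underlies the paper's proof, and you correctly isolate $f_2 \perp L^\infty(\ZZ^{{<}d_2}_{H_2}(\X))$ as the only nontrivial point. The gap is in how you verify it. After moving the conditional expectation onto $g$, what you actually need is $\E(g\mid\ZZ^{{<}d_1}_{H_1}(\X)) \in L^\infty(\ZZ^{{<}d_1+d_2-1}_{H_1+H_2}(\X))$; but via Theorem~\ref{concat-uap-erg-weak} you reduce instead to $\E(g\mid\ZZ^{{<}d_1}_{H_1}(\X)) \in L^\infty(\ZZ^{{<}d_2}_{H_2}(\X))$, i.e.\ relative independence of $\ZZ^{{<}d_1}_{H_1}(\X)$ and $\ZZ^{{<}d_2}_{H_2}(\X)$ over their meet. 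This overshoots: you would be forcing $\E(g\mid\ZZ^{{<}d_1}_{H_1}(\X))$ into $\ZZ^{{<}d_1}_{H_1}(\X)\wedge\ZZ^{{<}d_2}_{H_2}(\X)$, whereas the factor it can actually be placed in is $\ZZ^{{<}d_1}_{H_1}(\X)\wedge\ZZ^{{<}d_1+d_2-1}_{H_1+H_2}(\X)$, which is a priori larger. You then leave the overshot claim unproven, appealing to the cubic-measure/cocycle/Mackey apparatus of Section~\ref{erg2-sec}; that machinery plays no role here --- the corollary only needs Theorem~\ref{concat-uap-erg-weak} as a black box together with soft facts about characteristic factors.

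The idea you are missing is to relativize to the factor $\Y := \ZZ^{{<}d_1}_{H_1}(\X)$. There $\ZZ^{{<}d_1}_{H_1}(\Y)=\Y$, so Theorem~\ref{concat-uap-erg-weak} applied to $\Y$, combined with Corollary~\ref{locx}, gives $\ZZ^{{<}d_2}_{H_2}(\Y) \leq \ZZ^{{<}d_1+d_2-1}_{H_1+H_2}(\Y) = \Y\wedge\ZZ^{{<}d_1+d_2-1}_{H_1+H_2}(\X)$. Now $f_2 \in L^\infty(\Y)$, and the tower property together with $\E(f\mid\ZZ^{{<}d_1+d_2-1}_{H_1+H_2}(\X))=0$ forces $\E\bigl(f_2\mid \Y\wedge\ZZ^{{<}d_1+d_2-1}_{H_1+H_2}(\X)\bigr)=0$, hence $\E\bigl(f_2\mid\ZZ^{{<}d_2}_{H_2}(\Y)\bigr)=0$. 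By \eqref{fuz} applied in the system $\Y$ this says $\|f_2\|_{U^{d_2}_{H_2}}=0$, and \eqref{fuz} applied in $\X$ then yields $f_2\perp L^\infty(\ZZ^{{<}d_2}_{H_2}(\X))$. No relative independence over the meet and no structure theory are required.
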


Of course, by \eqref{fuz}, one can replace ``orthogonal to $L^\infty( \ZZ^{{<}d_1+d_2-1}_{H_1+H_2}(\X) )$'' in the above corollary with ``having vanishing $U^{d_1+d_2-1}_{H_1+H_2}(\X)$ norm'', and similarly for being orthogonal to $L^\infty(\ZZ^{{<}d_1}_{H_1}(\X))$ or $L^\infty(\ZZ^{{<}d_2}_{H_2}(\X))$.

\begin{proof}  By Theorem \ref{concat-uap-erg-weak} and \eqref{fuz} applied to the system $\ZZ^{{<}d_1}_{H_1}(\X)$, any function in $L^\infty(\ZZ^{{<}d_1}_{H_1}(\X))$ orthogonal to $L^\infty(\ZZ^{{<}d_1+d_2-1}_{H_1+H_2}(\X))$ will have vanishing $U^{d_2}_{H_2}( \ZZ^{{<}d_1}_{H_1}(\X) )$ seminorm.  This seminorm is the same as the $U^{d_2}_{H_2}(\X)$ seminorm, and so by \eqref{fuz} again this function must be necessarily orthogonal to $L^\infty(\ZZ^{{<}d_2}_{H_2}(\X))$.  We conclude that the restrictions of the spaces $L^\infty(\ZZ^{{<}d_1}_{H_1}(\X))$ and $L^\infty( \ZZ^{{<}d_2}_{H_2}(\X) )$ to $L^\infty(\ZZ^{{<}d_1+d_2-1}_{H_1+H_2}(\X))$ are orthogonal, and the claim follows.
\end{proof}

One can similarly use Theorem \ref{concat-box-erg} to obtain 

\begin{corollary}\label{concat-box-split}  Suppose that $G$ is the sum of a finitely generated group and a profinite group. Let $(\X,T)$ be a $G$-system, let $d_1$, $d_2$ be positive integers, and let $H_{1,1}$, $\dots$, $H_{1,d_1}$, $H_{2,1}$, $\dots$, $H_{2,d_2}$ be subgroups of $G$. If $f \in L^\infty(\X)$ is orthogonal to $L^\infty( \ZZ^{<d_1 d_2}_{(H_{1,i}+H_{2,j})_{1 \leq i \leq d_1, 1 \leq j \leq d_2}}(\X) )$, then one can write $f=f_1+f_2$, where $f_1 \in L^\infty(\X)$ is orthogonal to
$L^\infty(\ZZ^{<d_1}_{H_{1,1},\dots,H_{1,d_1}}(\X))$ and $f_2 \in L^\infty(\X)$ is orthogonal to $L^\infty(\ZZ^{<d_2}_{H_{2,1},\dots,H_{2,d_2}}(\X))$; furthermore, $f_1$ and $f_2$ are orthogonal to each other.
\end{corollary}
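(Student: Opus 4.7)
The plan is to mimic the proof of Corollary~\ref{concat-uap-split}, with Theorem~\ref{concat-box-erg} playing the role of Theorem~\ref{concat-uap-erg-weak}. Write $V_1 \coloneqq L^\infty(\ZZ^{{<}d_1}_{H_{1,1},\dots,H_{1,d_1}}(\X))$, $V_2 \coloneqq L^\infty(\ZZ^{{<}d_2}_{H_{2,1},\dots,H_{2,d_2}}(\X))$, and $W \coloneqq L^\infty(\ZZ^{{<}d_1 d_2}_{(H_{1,i}+H_{2,j})_{1 \leq i \leq d_1, 1 \leq j \leq d_2}}(\X))$; let $\Y \coloneqq \ZZ^{{<}d_1}_{H_{1,1},\dots,H_{1,d_1}}(\X)$ so that $V_1 = L^\infty(\Y)$. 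I will set $f_2 \coloneqq \E(f \mid V_1)$ and $f_1 \coloneqq f - f_2$. Then $f_1 \perp V_1$ is automatic from the definition of conditional expectation, and $f_1 \perp f_2$ follows from $f_2 \in V_1$ and $f_1 \in V_1^\perp$. The task reduces to verifying $f_2 \perp V_2$.

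To this end, I would apply Theorem~\ref{concat-box-erg} to the system $\Y$ rather than $\X$. Since the box characteristic factor is idempotent (so the ``$H_{1,\cdot}$-factor'' of $\Y$ is $\Y$ itself), the theorem collapses to
\[ \ZZ^{{<}d_2}_{H_{2,1},\dots,H_{2,d_2}}(\Y) \;\leq\; \ZZ^{{<}d_1 d_2}_{(H_{1,i}+H_{2,j})}(\Y) \;\leq\; \ZZ^{{<}d_1 d_2}_{(H_{1,i}+H_{2,j})}(\X) \;=\; W, \]
where the middle inequality uses monotonicity of the characteristic factor under passage to a subfactor. Since $f \perp W$, projecting onto the smaller factor gives $\E(f \mid \ZZ^{{<}d_2}_{H_{2,1},\dots,H_{2,d_2}}(\Y)) = 0$. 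Because this factor sits inside $\Y$, the tower property for conditional expectations identifies this with $\E(f_2 \mid \ZZ^{{<}d_2}_{H_{2,1},\dots,H_{2,d_2}}(\Y))$, which therefore also vanishes. The box analogue of \eqref{fuz} supplied by Theorem~\ref{charfac} (applied inside $\Y$) then forces $\|f_2\|_{\Box^{d_2}_{H_{2,1},\dots,H_{2,d_2}}(\Y)} = 0$; since $f_2 \in L^\infty(\Y)$ and the box seminorm depends only on the group action, this equals $\|f_2\|_{\Box^{d_2}_{H_{2,1},\dots,H_{2,d_2}}(\X)}$, and one final application of the defining property of $V_2$ yields $f_2 \perp V_2$.

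The only slightly non-routine step is the invocation of the two auxiliary facts above---idempotency of the box characteristic factor and its monotonicity under factor inclusions---both of which follow directly from the defining orthogonality characterization of $\ZZ^{{<}d}_{H_1,\dots,H_d}$ and the invariance of the $\Box$-seminorm under restriction to a subfactor. With these in hand, no step differs in substance from the proof of Corollary~\ref{concat-uap-split}, and the hypothesis that $G$ is the sum of a finitely generated group and a profinite group is inherited from that of Theorem~\ref{concat-box-erg}.
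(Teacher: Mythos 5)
Your argument is correct and is the natural explicit unpacking of the route the paper takes for Corollary~\ref{concat-uap-split}, which the paper just says can be mirrored for the box-norm case. The only place you add detail beyond the paper's terse ``the claim follows'' is the crucial observation that $\ZZ^{{<}d_2}_{H_{2,1},\dots,H_{2,d_2}}(\Y)$ sits inside \emph{both} $\Y$ and $W$ (via idempotency, Corollary~\ref{locx}, and Theorem~\ref{concat-box-erg} applied to $\Y$), so that the tower property transfers the vanishing of $\E(f \mid \ZZ^{{<}d_2}_{H_{2,1},\dots,H_{2,d_2}}(\Y))$ directly to $f_2 = \E(f \mid \Y)$; this neatly sidesteps the need to verify that $f_2$ itself is orthogonal to all of $L^\infty(W)$, which would not follow from $f \perp L^\infty(W)$ alone.
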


We can use the orthogonality in Corollary \ref{concat-uap-split} to obtain a Bessel-type inequality:

\begin{corollary}[Bessel inequality]\label{bescor} Let $G$ be an at most countable additive group.  Let $(\X,T)$ be a $G$-system, let $(H_i)_{i \in I}$ be a finite family of subgroups of $G$, and let $(d_i)_{i \in I}$ be a family of positive integers.
Then for any $f \in L^\infty(\X)$, we have
\begin{equation}\label{bes}
\sum_{i \in I} \| \E( f | \ZZ^{{<}d_i}_{H_i}(\X) ) \|_{L^2(\X)}^2 \leq \|f\|_{L^2(\X)} \left( \sum_{i,j \in I} \| \E( f | \ZZ^{{<}d_i+d_j-1}_{H_i+H_j}(\X) ) \|_{L^2(\X)}^2\right)^{1/2}.
\end{equation}
\end{corollary}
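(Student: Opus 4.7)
The plan is a standard Bessel-type duality argument, reducing the target $L^2$ inequality to a pointwise estimate on the inner products $\langle F_i, F_j\rangle$, which will then be controlled by Corollary \ref{concat-uap-split}. Write $F_i := \E(f|\ZZ^{{<}d_i}_{H_i}(\X))$ and $F_{ij} := \E(f|\ZZ^{{<}d_i+d_j-1}_{H_i+H_j}(\X))$.

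The elementary Bessel step uses self-adjointness of conditional expectation: $\|F_i\|_{L^2}^2 = \langle f, F_i\rangle_{L^2}$, so Cauchy--Schwarz gives
\begin{equation*}
\sum_{i \in I} \|F_i\|_{L^2}^2 = \Bigl\langle f, \sum_{i \in I} F_i\Bigr\rangle_{L^2} \leq \|f\|_{L^2} \Bigl\|\sum_{i \in I} F_i\Bigr\|_{L^2}.
\end{equation*}
Squaring, the target inequality reduces to the bilinear estimate $\|\sum_i F_i\|_{L^2}^2 \leq \sum_{i,j} \|F_{ij}\|_{L^2}^2$, and expanding the left-hand side as $\sum_{i,j} \langle F_i, F_j\rangle$, it will suffice (after taking real parts if needed) to establish the pointwise bound $\langle F_i, F_j\rangle \leq \|F_{ij}\|_{L^2}^2$ for each pair $(i,j)$.

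To obtain this pointwise bound, I would apply Corollary \ref{concat-uap-split} to $f - F_{ij}$, which lies in the orthogonal complement of $L^\infty(\ZZ^{{<}d_i+d_j-1}_{H_i+H_j}(\X))$. The corollary yields a decomposition $f = F_{ij} + g_i + g_j$ with $g_i$ orthogonal to $L^\infty(\ZZ^{{<}d_i}_{H_i})$, $g_j$ orthogonal to $L^\infty(\ZZ^{{<}d_j}_{H_j})$, and $g_i \perp g_j$. Conditioning and using that $g_i, g_j$ are annihilated by their respective projections gives $F_i = \E(F_{ij}|\ZZ^{{<}d_i}_{H_i}) + \E(g_j|\ZZ^{{<}d_i}_{H_i})$ and similarly for $F_j$. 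The bilinear expansion of $\langle F_i, F_j\rangle$ produces a ``main'' term $\langle \E(F_{ij}|\ZZ^{{<}d_i}_{H_i}), \E(F_{ij}|\ZZ^{{<}d_j}_{H_j})\rangle$, which is at most $\|F_{ij}\|_{L^2}^2$ by Cauchy--Schwarz and the fact that conditional expectations are contractions, together with three cross terms involving the $g$'s.

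The main obstacle will be verifying that these three cross terms vanish (or combine non-positively on summation). This should follow from the bilateral orthogonality underlying Corollary \ref{concat-uap-split}---namely, that a function in $L^\infty(\ZZ^{{<}d_i}_{H_i})$ orthogonal to $L^\infty(\ZZ^{{<}d_i+d_j-1}_{H_i+H_j})$ is automatically orthogonal to $L^\infty(\ZZ^{{<}d_j}_{H_j})$, and the symmetric statement with $i,j$ swapped. The cleanest route is likely to refine the decomposition so that $g_i, g_j$ can additionally be chosen inside $L^\infty(\ZZ^{{<}d_i+d_j-1}_{H_i+H_j}(\X))^\perp$, after which each cross term becomes a pairing of something $\ZZ^{{<}d_i+d_j-1}_{H_i+H_j}$-measurable against something in its orthogonal complement and thus vanishes. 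Once the pointwise bound $\langle F_i, F_j\rangle \leq \|F_{ij}\|_{L^2}^2$ is in hand, summing over $(i,j)$ and combining with the Cauchy--Schwarz reduction delivers \eqref{bes}.
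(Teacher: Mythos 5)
Your Cauchy--Schwarz reduction and the target estimate $|\langle F_i, F_j\rangle| \leq \|F_{ij}\|_{L^2}^2$ are exactly the route the paper takes; where you differ is in how that bilinear estimate is obtained. The paper's argument is a one-liner: the relative orthogonality extracted in the proof of Corollary~\ref{concat-uap-split} shows that $L^\infty(\ZZ^{{<}d_i}_{H_i})$ and $L^\infty(\ZZ^{{<}d_j}_{H_j})$ are relatively orthogonal over their meet $\ZZ^{{<}d_i}_{H_i} \wedge \ZZ^{{<}d_j}_{H_j}$, which forces the composition of the two conditional expectations to equal $\E(\cdot\,|\,\ZZ^{{<}d_i}_{H_i}\wedge\ZZ^{{<}d_j}_{H_j})$, hence $\langle F_i, F_j\rangle = \|\E(f\,|\,\ZZ^{{<}d_i}_{H_i}\wedge\ZZ^{{<}d_j}_{H_j})\|_{L^2}^2$; the concatenation theorem $\ZZ^{{<}d_i}_{H_i}\wedge\ZZ^{{<}d_j}_{H_j} \leq \ZZ^{{<}d_i+d_j-1}_{H_i+H_j}$ then gives $\leq \|F_{ij}\|_{L^2}^2$.

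Your four-term expansion via the decomposition $f = F_{ij}+g_i+g_j$ is a more roundabout computation of the same quantity, and the cross-term cancellation you correctly flag as the obstacle does in fact hold, but not for the reason your proposed refinement suggests. The ingredient you actually need is the relative-independence statement: for $h \in L^2(\ZZ^{{<}d_i}_{H_i})$ one has $\E(h\,|\,\ZZ^{{<}d_j}_{H_j}) = \E(h\,|\,\ZZ^{{<}d_i}_{H_i}\wedge\ZZ^{{<}d_j}_{H_j})$, which lies back in $L^2(\ZZ^{{<}d_i}_{H_i})$. This follows from Corollary~\ref{locx} and \eqref{fuz} exactly as in the proof of Corollary~\ref{concat-uap-split}, applied inside the system $\ZZ^{{<}d_i}_{H_i}$. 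Once you have it, $\E(g_i|\ZZ^{{<}d_j}_{H_j})$ is automatically orthogonal to $L^\infty(\ZZ^{{<}d_i}_{H_i})$ (pair it against $h$ and move the conditional expectation across), and since $\E(F_{ij}|\ZZ^{{<}d_i}_{H_i})$ and $\E(g_j|\ZZ^{{<}d_i}_{H_i})$ both lie in $L^2(\ZZ^{{<}d_i}_{H_i})$, all three cross terms vanish with no further refinement of the decomposition. The refinement you propose ($g_i, g_j$ orthogonal to $L^\infty(\ZZ^{{<}d_i+d_j-1}_{H_i+H_j})$) can be arranged, but justifying it already uses the same relative-independence fact, so it is an unnecessary detour; the paper's direct computation avoids the decomposition entirely.
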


\begin{proof}  Write $f_i \coloneqq  \E( f | \ZZ^{{<}d_i}_{H_i}(\X) )$.  We can write the left-hand side of \eqref{bes} as
$$ \left \langle f, \sum_{i \in I} f_i \right \rangle $$
which by the Cauchy-Schwarz inequality is  bounded by
$$ \|f\|_{L^2(\X)} \left| \sum_{i,j \in I} \langle f_i, f_j \rangle \right|^{1/2}.$$
But from Corollary \ref{concat-uap-split}, $L^\infty(\ZZ^{{<}d_i}_{H_i}(\X))$ and $L^\infty(\ZZ^{{<}d_j}_{H_j}(\X))$ are orthogonal on the orthogonal complement of 
$L^\infty( \ZZ^{{<}d_i+d_j-1}_{H_i+H_j}(\X) )$, hence
$$ \langle f_i, f_j \rangle = \left\| \E( f | \ZZ^{{<}d_i+d_j-1}_{H_i+H_j}(\X) ) \right\|_{L^2(\X)}^2$$
and the claim follows.
\end{proof}

One can use Corollary \ref{concat-box-split} to obtain an analogous Bessel-type inequality involving finitely generated subgroups $H_{i,k}$, $k=1,\dots,d_i$, which we leave to the interested reader.

Returning now to the finitary Gowers norms, one has a qualitative analogue of the Bessel inequality involving the Gowers uniformity norms:

\begin{theorem}[Qualitative Bessel inequality for uniformity norms]\label{besu}  Let $(Q_i)_{i \in I}$ be a finite non-empty family of coset progressions $Q_i$, all of rank at most $r$, in an additive group $G$. Let $(\X,T)$ be a $G$-system, and let $d$ be a positive integer.  Let $f$ lie in the unit ball of $L^\infty(\X)$, and suppose that
\begin{equation}\label{bes-1}
\E_{i,j \in I} \|f\|_{U^{2d-1}_{\eps Q_i+\eps Q_j}(\X)} \leq \eps 
\end{equation}
for some $\eps > 0$.  Then
\begin{equation}\label{bes-2}
\E_{i \in I} \|f\|_{U^{d}_{Q_i}(\X)} \leq c(\eps) 
\end{equation}
where $c\colon (0,+\infty) \to (0,+\infty)$ is a function such that $c(\eps) \to 0$ as $\eps \to 0$.  Furthermore, $c$ depends only on $r$ and $d$. (In particular, $c$ is independent of the size of $I$.)
\end{theorem}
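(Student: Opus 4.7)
The plan is to prove this by contradiction, transferring the finitary statement into an ergodic one via the ultraproduct framework of Section \ref{nonst-sec}, and then invoking the (already-established) ergodic Bessel inequality of Corollary \ref{bescor}.

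Suppose the theorem fails. Then there exist $\delta > 0$ and, for each $n$, a $G_n$-system $\X_n$, a family $(Q_{n,i})_{i \in I_n}$ of coset progressions of rank at most $r$, a function $f_n$ in the unit ball of $L^\infty(\X_n)$, and $\eps_n \downarrow 0$, such that
$$\E_{i,j \in I_n} \|f_n\|_{U^{2d-1}_{\eps_n Q_{n,i} + \eps_n Q_{n,j}}(\X_n)} \leq \eps_n, \qquad \E_{i \in I_n} \|f_n\|_{U^d_{Q_{n,i}}(\X_n)} \geq \delta.$$
Pass to the ultraproduct along a non-principal ultrafilter to obtain a limit $G$-system $(\X,T)$, an internal $f$ in the unit ball of $L^\infty(\X)$, and an internal family of coset progressions $(Q_i)_{i \in I}$ of bounded rank, where the parameter $i$ now ranges over a Loeb probability space inherited from the normalized counting measures on $I_n$.

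The bounded-rank hypothesis ensures that each $Q_i$ generates a subgroup $H_i \leq G$ admitting a F\o{}lner sequence of dilates of $Q_i$. Combined with the machinery of Section \ref{nonst-sec} (and Theorem \ref{gawd} for the existence of the defining limit in \eqref{ghk-def}), this yields the identification
$$\lim_{\eps \to 0^+} \|f\|_{U^d_{\eps Q}(\X)} = \|f\|_{U^d_H(\X)}$$
between the shrinking finitary norms of internal functions and the Gowers-Host-Kra seminorm of the external group action. Under this correspondence, the hypothesis on $\eps_n$ becomes $\E_{i,j} \|f\|_{U^{2d-1}_{H_i + H_j}(\X)} = 0$ for Loeb-almost every pair $(i,j)$, while the putative lower bound yields $\E_i \|f\|_{U^d_{H_i}(\X)} \geq \delta$. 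Applying (a Loeb-measurable version of) Corollary \ref{bescor} to the family $(H_i, d)_{i \in I}$ then forces $\E_i \|\E(f | \ZZ^{{<}d}_{H_i}(\X))\|_{L^2(\X)}^2 = 0$, so $\|f\|_{U^d_{H_i}(\X)} = 0$ for Loeb-almost every $i$ by the characteristic-factor property \eqref{fuz}, yielding the desired contradiction.

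The principal obstacle is the bridge between internal finitary norms at infinitesimal scales and external Gowers-Host-Kra seminorms. This is exactly where the bounded-rank hypothesis is indispensable, since it supplies a canonical F\o{}lner-type sequence exhausting $H_i$; without it the identification above may fail to hold uniformly in $i$. A secondary issue is that Corollary \ref{bescor} is stated for \emph{finite} index families, so one must either promote it to Loeb-measurable families (a routine monotone-class argument using the quadratic dependence on conditional expectations) or apply it to finite sub-families exhausted along the ultrafilter. Finally, one must verify that the function $c(\eps)$ produced by unravelling this compactness argument depends only on $r$ and $d$, which follows from the fact that the ultraproduct setup is insensitive to the sizes of $I_n$ or $\X_n$ once the rank bound is fixed.
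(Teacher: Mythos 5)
Your proposal has the right high-level shape (proof by contradiction via ultraproduct, then appeal to a Bessel inequality over characteristic factors), but it diverges from the paper's argument in a way that leaves two concrete gaps.

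First, the paper does not pass directly to the ultraproduct of the full families $(Q_{n,i})_{i \in I_n}$. It first proves a variant (Theorem \ref{besu-2}) in which the index set has a \emph{fixed} bounded size $M$, and reduces Theorem \ref{besu} to that variant by a random-sampling argument (drawing $M$ indices from $I$ and using Markov and Chebyshev to find a good subsample). This reduction is what keeps the index set standard finite after taking ultraproducts, so that the pairwise-orthogonality/Cauchy--Schwarz computation gives the explicit bound $2M^{-1/2^d}$, which is then optimized in $M$ against $\eps$. Your approach skips this step and lands on a hyperfinite (Loeb) index set; you acknowledge that one would then need a Loeb-measurable version of Corollary \ref{bescor}, or exhaustion by finite subfamilies, but you do not carry either of these out, and the finite-$M$ sampling is precisely what the paper uses to sidestep the issue. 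This is a genuine missing piece, not a routine verification.

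Second, and more seriously, the claimed identification $\lim_{\eps\to 0^+}\|f\|_{U^d_{\eps Q}(\X)} = \|f\|_{U^d_H(\X)}$ (with $H$ the external subgroup $o(Q)$ generated by the dilates) is asserted but not established, and the justification offered is off: you invoke a ``F{\o}lner sequence of dilates of $Q_i$,'' but the dilates $\eps Q_i$ \emph{shrink} as $\eps\to 0$, so they cannot exhaust a subgroup in the F{\o}lner sense. The paper's Lemma \ref{lard} defines $\|f\|_{U^d_{o(Q)}(\X)}$ as the simultaneous sup over standard $\eps$ and inf over infinitesimal $\eps$, and the associated characteristic factor $\ZZ^{{<}d}_{o(Q)}(\X)$ is constructed from internal dual functions (Theorem \ref{charf}), \emph{not} identified with the ergodic Gowers--Host--Kra factor $\ZZ^{{<}d}_H(\X)$ over the external subgroup $H$. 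Consequently, the paper cannot (and does not) apply Corollary \ref{bescor} as stated; instead, it re-derives the relevant orthogonality directly from the nonstandard concatenation theorem (Theorem \ref{concat-uap-nonst}) and carries out the Bessel-type Cauchy--Schwarz computation by hand over the finite family $f_1,\dots,f_M$. Your argument would need to justify that the two kinds of seminorm (and hence the two kinds of characteristic factor) coincide, which is not obvious and is precisely what the paper's formalism is designed to avoid having to prove.
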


We prove this theorem in Section \ref{bes-sec}.  We remark that the theorem is only powerful when the cardinality of the set $I$ is large compared to $\eps$, otherwise the claim would easily follow from considering the diagonal contribution $i=j$ to \eqref{bes-1}.

Theorem \ref{besu} has an analogue for the Gowers box norms:

\begin{theorem}[Qualitative Bessel inequality for box norms]\label{besu-box}  Let $d$ be a positive integer.  For each $1 \leq j \leq d$, let $(Q_{i,j})_{i \in I}$ be a finite family of coset progressions $Q_{i,j}$, all of rank at most $r$, in an additive group $G$. Let $(\X,T)$ be a $G$-system.  Let $f$ lie in the unit ball of $L^\infty(\X)$, and suppose that
$$
\E_{i,j \in I} \|f\|_{\Box^{d^2}_{(\eps Q_{i,k}+\eps Q_{j,l})_{1 \leq k \leq d, 1 \leq l \leq d}}(\X)} \leq \eps $$
for some $\eps > 0$.  Then
$$
\E_{i \in I} \|f\|_{\Box^{d}_{Q_{i,1},\dots,Q_{i,d}}(\X)} \leq c(\eps) 
$$
where $c\colon (0,+\infty) \to (0,+\infty)$ is a function such that $c(\eps) \to 0$ as $\eps \to 0$.  Furthermore, $c$ depends only on $r$ and $d$.
\end{theorem}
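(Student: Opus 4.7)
The plan is to adapt the proof of Theorem \ref{besu} (Section \ref{bes-sec}), replacing the uniformity concatenation Theorem \ref{concat-uap} with its box analogue Theorem \ref{concat-box}. The overall strategy is a contradiction/compactness argument combined with a dual function expansion and Cauchy-Schwarz, with the concatenation theorem absorbing the residual cross-terms.

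First, I would argue by contradiction: suppose the conclusion fails, extract a sequence of counterexamples with $\eps_n \to 0$, and pass to the ultraproduct as in Section \ref{nonst-sec}. This yields an internal $G$-system $\X$, a function $f$ in the unit ball, an internal index set $I$, internal coset progressions $(Q_{i,j})_{i \in I, 1 \le j \le d}$, and an infinitesimal $\eps^* > 0$, satisfying $\E_{i,j \in I}\|f\|_{\Box^{d^2}_{(\eps^* Q_{i,k}+\eps^* Q_{j,l})_{k,l}}(\X)} \le \eps^*$, but with $\E_{i \in I}\|f\|_{\Box^d_{Q_{i,1},\ldots,Q_{i,d}}(\X)} \ge \delta$ for some standard $\delta > 0$. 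A Loeb-measure Fubini then produces a Loeb-positive set $I_0 \subset I$ on which $\|f\|_{\Box^d_{Q_{i,1},\ldots,Q_{i,d}}(\X)} \ge \delta/2$, and a Loeb-a.e.\ set of pairs $(i,j) \in I \times I$ for which $\|f\|_{\Box^{d^2}_{(\eps Q_{i,k}+\eps Q_{j,l})_{k,l}}(\X)}$ is infinitesimal for every standard $\eps > 0$.

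Next, I would dualize and Cauchy-Schwarz. For each $i \in I_0$, let $F_i$ be a $\Box^d_{Q_{i,1},\ldots,Q_{i,d}}$-dual function of $f$ (so that $\langle f, F_i\rangle_{L^2(\X)} = \|f\|_{\Box^d_{Q_{i,1},\ldots,Q_{i,d}}(\X)}^{2^d}$ with $F_i$ bounded in $L^\infty(\X)$). Then
$$ \bigl(\E_{i \in I_0}\|f\|_{\Box^d_{Q_{i,1},\ldots,Q_{i,d}}(\X)}^{2^d}\bigr)^2 \le \bigl\|\E_{i \in I_0} F_i\bigr\|_{L^2(\X)}^2 = \E_{i,j \in I_0}\langle F_i, F_j\rangle_{L^2(\X)}, $$
reducing the problem to showing that $\E_{i,j \in I_0}\langle F_i, F_j\rangle_{L^2(\X)}$ is infinitesimal. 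A direct expansion of $\langle F_i, F_j\rangle_{L^2(\X)}$ using the cube-product structure of the dual functions produces a $(2^{d+1}-2)$-fold product of translates of $f$, not the $2^{d^2}$-fold product inherent to a $\Box^{d^2}$ norm. Theorem \ref{concat-box}, applied with $d_1 = d_2 = d$, is what bridges the gap: it upgrades two anti-box-norm vanishing conditions on partial ``diagonal'' subfamilies to an anti-box-norm bound on the full cross family $(Q_{i,k}+Q_{j,l})_{1 \le k, l \le d}$. Combining with the box-norm/anti-box-norm duality developed in Section \ref{dual-sec} and the Loeb-a.e.\ vanishing from the previous step, this forces $\E_{i,j \in I_0}\langle F_i, F_j\rangle_{L^2(\X)}$ to be infinitesimal, contradicting the standard-positive lower bound on $\E_{i \in I_0}\|f\|_{\Box^d_{Q_{i,\cdot}}(\X)}^{2^d}$.

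The principal obstacle is this bridging step: the combinatorial gap between $2^{d+1}-2$ and $2^{d^2}$ is exactly the concatenation phenomenon encoded by Theorem \ref{concat-box} (the analogue of how Theorem \ref{concat-uap} bridges the $2d-1$ versus $d$ gap in Theorem \ref{besu}). A secondary bookkeeping issue is that Theorem \ref{concat-box} requires two separate anti-box-norm vanishing hypotheses, which must be extracted from the single cross-pair hypothesis via a pigeonhole argument on the Loeb product measure --- fixing a ``generic'' pivot index $j_0$ that is simultaneously favorable for both anti-norm conditions --- so that Theorem \ref{concat-box} can be invoked on a single pair of families rather than on the joint cross-pair family.
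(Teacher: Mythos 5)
Your high-level strategy --- contradiction, ultraproducts, and substituting Theorem \ref{concat-box} for Theorem \ref{concat-uap} --- is the right one, but the dualization step does not work as proposed. In the paper's proof of Theorem \ref{besu-2} (the bounded-index-set version of Theorem \ref{besu}, which the proof of Theorem \ref{besu-box} mirrors), the Bessel argument is run with the \emph{conditional expectations} $f_i := \E(f \mid \ZZ^{<d}_{Q_{i,1},\dots,Q_{i,d}}(\X))$, not with the dual functions $F_i := {\mathcal D}^d_{Q_{i,\cdot}}(f,\dots,f)$ you propose, and this distinction is essential. Conditional expectation is linear, so $f$ being orthogonal to the meet factor $\ZZ^{<d}_{Q_{i,\cdot}}(\X)\wedge\ZZ^{<d}_{Q_{j,\cdot}}(\X)$ (which is what the hypothesis plus Theorem \ref{concat-box-nonst} supplies) is inherited by $f_i$ and $f_j$, and Corollary \ref{local-concat} then yields the exact pairwise orthogonality $\langle f_i,f_j\rangle=0$; moreover one has $\|f\|_{\Box^d_{o(Q_{i,1}),\dots,o(Q_{i,d})}(\X)}=\|f_i\|_{\Box^d_{o(Q_{i,1}),\dots,o(Q_{i,d})}(\X)}\le\|f_i\|_{L^2(\X)}^{1/2^{d-1}}$, which lets the Bessel bound $\sum_i\|f_i\|_{L^2(\X)}^2\le 1$ control the average of box norms. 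The dual operator ${\mathcal D}^d$ is nonlinear, so $F_i$ does \emph{not} inherit $f$'s orthogonality to the meet, and your claim that the concatenation machinery ``forces $\E_{i,j\in I_0}\langle F_i,F_j\rangle_{L^2(\X)}$ to be infinitesimal'' is not justified by anything in Section \ref{dual-sec}. This is precisely what you flag as the principal obstacle, but the bridge you describe does not close it.

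There are two further issues. You skip the paper's reduction to a bounded index set: Theorem \ref{besu} is first reduced to Theorem \ref{besu-2}, in which $|I|=M$ is a fixed standard integer, by a random sampling argument, and only then does the ultraproduct/orthogonality argument enter, producing the quantitative bound $2M^{-1/2^d}$. Without this reduction, the diagonal contribution $\frac{1}{|I_0|}\E_{i}\langle F_i,F_i\rangle_{L^2(\X)}$ in your Cauchy--Schwarz expansion is not infinitesimal when $|I_0|$ is bounded, so the contradiction does not materialize; the reduction also removes the need for the Loeb--Fubini and pigeonhole machinery you invoke. Finally, the ``pivot index $j_0$'' maneuver misreads the direction in which the concatenation theorem is used: Theorem \ref{concat-box-nonst} is applied for each off-diagonal pair $(i,j)$ to show that $\ZZ^{<d}_{Q_{i,\cdot}}(\X)\wedge\ZZ^{<d}_{Q_{j,\cdot}}(\X)\le\ZZ^{<d^2}_{(Q_{i,k}+Q_{j,l})_{k,l}}(\X)$, so that $f$'s orthogonality to the right-hand factor transfers to the meet; nothing in the argument requires decomposing the cross-pair hypothesis into two single-family hypotheses.
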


\begin{remark}  In view of Theorem \ref{besu}, one may ask if the smallness of $\|f\|_{U^{d_1+d_2-1}_{\eps Q_1 + \eps Q_2}(\X)}$ implies the smallness of $\min( \|f\|_{U^{d_1}_{Q_1}(\X)}, \|f\|_{U^{d_2}_{Q_2}(\X)})$ (or, taking contrapositives, the largeness of $\|f\|_{U^{d_1}_{Q_1}(\X)}$ and $\|f\|_{U^{d_2}_{Q_2}(\X)}$ implies some non-trivial lower bound for $\|f\|_{U^{d_1+d_2-1}_{\eps Q_1 + \eps Q_2}(\X)}$).  Unfortunately this is not the case; a simple counterexample is provided by a function $f$ of the form $f_1+f_2$, where $f_1$ is a bounded function with large $U^{d_1}_{Q_1}(\X)$ norm but small $U^{d_2}_{Q_2}(\X)$ and $U^{d_1+d_2-1}_{\eps Q_1 + \eps Q_2}(\X)$ norm (which is possible for instance if $Q_1,Q_2$ are large subgroups of $G$, $\X = G$ with the translation action, and $f_1$ is constant in the $Q_1$ direction but random in the $Q_2$ direction), and vice versa for $f_2$.  
\end{remark}

\subsection{A sample application}

In a sequel to this paper \cite{tz-2}, we will use the concatenation theorems proven in this paper to study polynomial patterns in sets such as the primes.  Here, we will illustrate how this is done with a simple example, namely controlling the average
$$
A_{N,M}(f_1,f_2,f_3,f_4) \coloneqq  \E_{x \in \Z/N\Z} \E_{n,m,k \in [M]} f_1(x) f_2(x+nm) f_3(x+nk) f_4(x+nm+nk)
$$
for functions $f_1,f_2,f_3,f_4\colon \Z/N\Z \to \C$ on a cyclic group $\Z/N\Z$, where $[M] \coloneqq  \{ n \in \N: n \leq M \}$.  The natural range of parameters is when $M$ is comparable to $\sqrt{N}$.  In that case we will be able to control this expression by the global Gowers $U^3$ norm:

\begin{proposition}\label{cprop}  Let $N \geq 1$ be a natural number, let $M$ be such that $C^{-1} \sqrt{N} \leq M \leq C \sqrt{N}$ for some $C>0$, and let $f_1,f_2,f_3,f_4\colon \Z/N\Z \to \C$ be functions bounded in magnitude by $1$.  Suppose that
$$
\| f_i \|_{U^3_{\Z/N\Z}(\Z/N\Z)} \leq \eps $$
for some $\eps>0$ and $i=1,\dots,4$, where we give $\Z/N\Z$ the uniform probability measure.  Then
$$ |A_{N,M}(f_1,f_2,f_3,f_4)| \leq c(\eps)$$
for some quantity $c(\eps)$ depending only on $\eps$ and $C$ that goes to zero as $\eps \to 0$.
\end{proposition}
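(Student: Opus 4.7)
The proof follows a two-layer reduction: a single Gowers--Cauchy--Schwarz step reduces the count to a local $U^2$ norm on the rank-$1$ progressions $Q_n := n \cdot [M]$, after which the qualitative Bessel inequality (Theorem~\ref{besu}) lifts this to the global $U^3$ norm.

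For the first layer, for each fixed $n \in [M]$ the substitution $u = nm$, $v = nk$ identifies the inner average as a Gowers inner product on the $2$-dimensional cube with directions in $Q_n$, namely
\[
A_{N,M}^{(n)} := \E_x \E_{u,v \in Q_n} f_1(x) f_2(x+u) f_3(x+v) f_4(x+u+v).
\]
The Gowers--Cauchy--Schwarz inequality, combined with the bounds $\|f_i\|_{L^\infty} \leq 1$ for $i=1,2,3$, yields $|A_{N,M}^{(n)}| \leq \|f_4\|_{U^2_{Q_n}(\Z/N\Z)}$, and averaging over $n$ gives
\[
|A_{N,M}| \leq \E_{n \in [M]} \|f_4\|_{U^2_{Q_n}(\Z/N\Z)}.
\]
Applying the contrapositive of Theorem~\ref{besu} with $d=2$, $I = [M]$, and the rank-$1$ family $(Q_n)_{n \in [M]}$ now shows that whenever this average exceeds $c(\eta)$ for some $\eta > 0$, there exist $n, n' \in [M]$ such that $\|f_4\|_{U^3_{Q}(\Z/N\Z)} > \eta$ for the rank-$\leq 2$ sumset progression $Q := \eta Q_n + \eta Q_{n'} = n \cdot [\eta M] + n' \cdot [\eta M]$.

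The main obstacle is the remaining \emph{globalization} step: one must convert the non-smallness of $\|f_4\|_{U^3_Q(\Z/N\Z)}$ into non-smallness of $\|f_4\|_{U^3_{\Z/N\Z}(\Z/N\Z)}$. Here the hypothesis $M \asymp \sqrt{N}$ is crucial, since it forces $|Q| \asymp \eta^2 N$, so $Q$ has density bounded below in $\Z/N\Z$ by a quantity depending only on $\eta$ and $C$. I expect to handle this by expanding $\|f_4\|_{U^3_Q(\Z/N\Z)}^8$ as an $8$-fold cube integral over shifts $h_i - h_i' = n(a_i - a_i') + n'(b_i - b_i')$ with $a_i, a_i', b_i, b_i' \in [\eta M]$, and showing via a standard Fourier/majorant analysis for bounded-rank coset progressions of positive density that these shifts approximately equidistribute in $\Z/N\Z$. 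This yields a bound of the form $\|f_4\|_{U^3_Q(\Z/N\Z)} \leq G_\eta\bigl(\|f_4\|_{U^3_{\Z/N\Z}(\Z/N\Z)}\bigr)$ for some function $G_\eta$ with $G_\eta(t) \to 0$ as $t \to 0$, which, combined with the hypothesis $\|f_4\|_{U^3_{\Z/N\Z}} \leq \eps$, closes the chain and produces the desired bound $|A_{N,M}| \leq c(\eps)$.
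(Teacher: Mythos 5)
Your overall strategy — a local Gowers--Cauchy--Schwarz step controlling $A_{N,M}$ by averaged $U^2$ norms along the dilated progressions $n \cdot [M]$, then Theorem~\ref{besu} to lift to $U^3$ on rank-two sumset progressions, then a Fourier/density argument to globalize to $U^3_{\Z/N\Z}$ — is precisely the route the paper takes. One phrasing quibble in the first step: $A_{N,M}^{(n)} = \E_x \E_{u,v \in Q_n} f_1(x)f_2(x+u)f_3(x+v)f_4(x+u+v)$ is not literally a Gowers inner product (which would average over two shift variables per direction, not one); nevertheless a direct two-fold Cauchy--Schwarz argument does yield $|A_{N,M}^{(n)}| \le \|f_4\|_{U^2_{Q_n}}$, so the conclusion you want is correct. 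The paper instead smooths the average by inserting extra variables from a smaller progression $Q = \{a : |a| \le \kappa\sqrt{N}\}$ so as to literally produce a $\Box^2_{n\cdot Q, n\cdot Q}$ inner product, but that is a cosmetic difference.

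There is, however, a genuine gap in your globalization step. After Theorem~\ref{besu} you select a single pair $(n,n')$ with $\|f_4\|_{U^3_{\eta Q_n + \eta Q_{n'}}} > \eta$, and then assert that $Q := n\cdot[\eta M] + n'\cdot[\eta M]$ has cardinality $\asymp \eta^2 N$ and hence positive density. This is false when $\gcd(n,n')$ is large; for instance when $n' = n$ (or $n' = 2n$, etc.) the multiset $Q$ concentrates on a sparse arithmetic progression with multiplicity as large as $\asymp \eta M$, so its support has density far below $\eta^2$, and the Fourier/majorant argument you sketch cannot proceed. The Bessel average over $(n,n') \in [M]^2$ does include all such degenerate pairs (diagonal ones in particular), so the naive pigeonhole does not give you what you claim. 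The fix, as the paper carries out, is to first observe that the proportion of pairs $(n,n') \in [M]^2$ with $\gcd(n,n') > A$ or $\min(n,n') < M/A$ is $O(1/A)$, so that their contribution to the Bessel average is negligible once $A$ is chosen large depending on $\delta$; one can then pigeonhole to a pair with $\gcd(n,n') \le A$, $n,n' \ge M/A$, and $\|f_4\|_{U^3_Q} \ge 1/A$. Only for such pairs is $Q$ a positive-density multiset with $O(1)$ multiplicity at each point, at which stage the planned Fourier expansion of the multiplicity function of $Q-Q$, combined with $\sum_\xi |c_\xi| \ll 1$, a pigeonhole over frequencies, and a final Gowers--Cauchy--Schwarz, closes the argument.
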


For instance, using this proposition (embedding $[N]$ in, say, $\Z/5N\Z$) and the known uniformity properties of the M\"obius function $\mu$ (see \cite{gt-quadratic}) we can now obtain the asymptotic
$$ \E_{x \in [N]} \E_{n,m,k \in [\sqrt{N}]} \mu(x) \mu(x+nm) \mu(x+nk) \mu(x+nm+nk) = o(1)$$
as $N \to \infty$; we leave the details to the interested reader.  As far as we are able to determine, this asymptotic appears to be new.  In the sequel \cite{tz-2} to this paper we will consider similar asymptotics involving various polynomial averages such as $\E_{x \in [N]} \E_{n \in [\sqrt{N}]} f(x) f(x+n^2) f(x+2n^2)$, and arithmetic functions such as the von Mangoldt function $\Lambda$.

We prove Proposition \ref{cprop} in Section \ref{cprop-sec}.  Roughly speaking, the strategy is to observe that $A_{N,M}(f_1,f_2,f_3,f_4)$ can be controlled by averages of ``local'' Gowers norms of the form $U^2_Q$, where $Q$ is an arithmetic progression in $\Z/N\Z$ of length comparable to $M$.  Each individual such norm is not controlled directly by the $U^3_{\Z/N\Z}$ norm, due to the sparseness of $Q$; however, after invoking Theorem \ref{besu}, we can control the averages of the $U^2_Q$ norms with averages of $U^3_{Q+Q'}$ norms, where $Q,Q'$ are two arithmetic progressions of length comparable to $M$.  For typical choices of $Q,Q'$, the rank two progerssion $Q+Q'$ will be quite dense in $\Z/N\Z$, allowing one to obtain the proposition.

One would expect that the $U^3$ norm in Proposition \ref{cprop} could be replaced with a $U^2$ norm. Indeed this can be done invoking the  inverse theorem for the $U^3$ norm \cite{gt-u3} as well as equidistribution results for nilsequences \cite{gt-ratner}:

\begin{proposition}\label{cprop-2}  The $U^3_{\Z/N\Z}(\Z/N\Z)$ norm in Proposition \ref{cprop} may be replaced with $U^2_{\Z/N\Z}(\Z/N\Z)$.
\end{proposition}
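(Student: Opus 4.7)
The plan is to upgrade Proposition~\ref{cprop} from $U^3$-control to $U^2$-control by combining the arithmetic regularity lemma derived from the $U^3$ inverse theorem of \cite{gt-u3} with the quantitative equidistribution theory of \cite{gt-ratner}. Assume $\|f_i\|_{U^2_{\Z/N\Z}(\Z/N\Z)} \leq \eps$ for $i=1,\dots,4$; the goal is to show $|A_{N,M}(f_1,f_2,f_3,f_4)| \leq c(\eps)$ with $c(\eps) \to 0$ as $\eps \to 0$.

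First I would apply the arithmetic regularity lemma at the $U^3$ level to each $f_i$, obtaining decompositions $f_i = f_i^\str + f_i^\sml + f_i^\unf$, where $f_i^\str = F_i(g_i(\cdot)\Gamma_i)$ is a Lipschitz $2$-step nilsequence of complexity depending only on a small parameter $\eta$, $\|f_i^\sml\|_{L^2(\Z/N\Z)} \leq \eta$, and $\|f_i^\unf\|_{U^3_{\Z/N\Z}(\Z/N\Z)} \leq \eta$. Expanding $A_{N,M}$ multilinearly yields $3^4$ terms: any term involving a $f_i^\unf$ factor is bounded by some $c_0(\eta)$ via Proposition~\ref{cprop}, and any term involving a $f_i^\sml$ but no $f_j^\unf$ is $O(\eta)$ by Cauchy--Schwarz. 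Thus, up to an error vanishing as $\eta \to 0$, it suffices to control the purely structured contribution $A_{N,M}(f_1^\str,f_2^\str,f_3^\str,f_4^\str)$; crucially each $f_i^\str$ inherits the bound $\|f_i^\str\|_{U^2_{\Z/N\Z}} \leq \eps + O(\eta)$, since $\|f_i - f_i^\str\|_{U^2} \leq \|f_i^\sml\|_{L^2} + \|f_i^\unf\|_{U^3} \leq 2\eta$.

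Next I would invoke the Green--Tao factorization theorem \cite{gt-ratner} to arrange that each polynomial orbit $g_i(\cdot)\Gamma_i$ equidistributes in a subnilmanifold of $G_i/\Gamma_i$, and then Fourier-decompose each $F_i$ along the vertical characters of the central torus of $G_i$ into $F_i = F_i^{\mathrm{hor}} + \sum_{\chi \neq 0} F_{i,\chi}$. The horizontal part $F_i^{\mathrm{hor}}$ factors through the abelianization $G_i/[G_i,G_i]\Gamma_i$ and hence is a Lipschitz function of linear phases on $\Z/N\Z$. Contributions to $A_{N,M}$ coming from combinations of purely horizontal components are then bounded by $O(\eps)$: expanding each horizontal piece as a Fourier series in its linear phases and invoking the Gowers--Cauchy--Schwarz inequality for the pattern $(x,x+nm,x+nk,x+nm+nk)$ reduces the estimate to a sum over linear-phase Fourier coefficients, which are bounded by the $U^2_{\Z/N\Z}$ norms of the $f_i^\str$.

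For any term in the vertical-character expansion involving at least one nontrivial $F_{i,\chi}$, substitution of the genuinely quadratic nilsequence into the pattern, together with the equidistribution of $g_i$, yields, after a further Cauchy--Schwarz in the variables $(n,m,k)$, a correlation of a nonzero quadratic phase with an average over $[M]^3$ of the bilinear expressions $nm,nk$. A quantitative Weyl-type estimate (available from \cite{gt-ratner}) then shows this correlation is $o(1)$ as $N \to \infty$, uniformly in the bounded nilpotent complexity. The main obstacle is executing this last step carefully: tracking the quadratic phase through the Cauchy--Schwarz and verifying uniform Weyl cancellation across all admissible complexities and base points $x$, and ensuring that the quadratic content contributed by a nontrivial vertical character really is nondegenerate along the $(nm,nk)$ directions. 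Combining all contributions yields $|A_{N,M}| \leq O(\eps) + c_0(\eta) + o_\eta(1)$ as $N \to \infty$, and letting $\eta$ tend to zero slowly produces the desired estimate.
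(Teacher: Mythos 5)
Your high-level plan (arithmetic regularity at the $U^3$ level, then analyze the structured part via equidistribution, following \cite[\S 7]{gt-reg}) matches the paper's framework, but two of your specific steps are problematic. For the horizontal part, you claim the contribution from purely horizontal Fourier components is $O(\eps)$ by applying Gowers--Cauchy--Schwarz to the pattern $(x,x+nm,x+nk,x+nm+nk)$ and reducing to linear-phase Fourier coefficients controlled by $U^2_{\Z/N\Z}$ norms. This does not work as stated: for a fixed $n$ the Gowers--Cauchy--Schwarz inequality for this pattern controls the average by \emph{local} $U^2_{n\cdot Q}$ norms with $|Q| \sim \sqrt{N}$, not by the global $U^2_{\Z/N\Z}$ norm --- bridging that gap is precisely the content of Proposition~\ref{cprop} and needs the $U^3$ input, so invoking it here would be circular. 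The correct and much more direct observation, used in the paper via \cite[Proposition 7.2]{gt-reg}, is that $U^2_{\Z/N\Z}$-smallness of $f^\str = F(g(\cdot)\Gamma)$ together with the irrationality/equidistribution of $g$ forces $F$ to have essentially mean zero along $G_{(2)}$, i.e.\ $F^{\mathrm{hor}}$ is essentially zero; one then renormalizes $F$ to have exactly mean zero, and there is nothing left to estimate in the horizontal sector.

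The vertical part is where the real work lies, and your sketch leaves the crucial step open --- as you acknowledge, the "main obstacle" is verifying nondegeneracy of the quadratic content along the $(nm,nk)$ directions and carrying out the Weyl estimate uniformly. The paper isolates and resolves exactly this in Proposition~\ref{stairs}. Rather than chasing explicit quadratic bracket phases through a direct Weyl analysis, it observes that for $\Psi(x,m,k)=(x,x+m,x+k,x+m+k)$ the Leibman group $G^\Psi$ contains $G_{(2)}^4$ (because $x^2,(x+m)^2,(x+k)^2,(x+m+k)^2$ are linearly independent), so the counting lemma \cite[Theorem 1.11]{gt-reg} applied to $g_{n,a}\colon m\mapsto g(nm+a)$ for each fixed $n$ shows the mean-zero structured contribution vanishes, \emph{unless} $g_{n,a}$ fails to be sufficiently irrational. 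The genuinely new technical ingredient is then a pigeonhole over $n\in[M_1]$: if $g_{n,a}$ fails irrationality for $\gg M_1$ values of $n$, the identity $\xi_i(g_{n,a,i}) = n^i\xi_i(g_i)$ plus the Vinogradov lemma forces $\|q\,\xi_i(g_i)\|_{\R/\Z}\ll N^{-i}$ for some bounded $q$, contradicting the $(A,N)$-irrationality of $g$. This step is exactly where the scaling $M_1\asymp\sqrt{N}$ enters (so that $M_1^{-2i}\ll N^{-i}$), and your proposal does not supply a substitute for it.
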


This result can be proven by adapting the arguments based on the arithmetic regularity lemma in \cite[\S 7]{gt-reg}; we sketch the key new input required in Section \ref{cprop2-sec}.
In the language of Gowers and Wolf \cite{gow-wolf}, this proposition asserts that the \emph{true complexity} of the average $A_{N,M}$ is $1$, rather than $2$ (the latter being roughly analogous to the ``Cauchy-Schwarz complexity'' discussed in \cite{gow-wolf}).  This drop in complexity is consistent with similar results established in the ergodic setting in \cite{bll}, and in the setting of linear patterns in \cite{gow-wolf,gow-wolf-2,gow-wolf-3,gow-wolf-4,gt-reg}, and is proven in a broadly similar fashion to these results.  In principle, Proposition \ref{cprop-2} is purely an assertion in ``linear'' Fourier analysis, since it only involves the $U^2$ Gowers norm, but we do not know of a way to establish it without exploiting both the concatenation theorem and the inverse $U^3$ theorem.

We thank the anonymous referees for a careful reading of the paper and for many useful suggestions and corrections.

\section{The ergodic theory limit}\label{erg-sec}

In this section we show how to obtain the main ergodic theory results of this paper (namely, Theorem \ref{concat-uap-erg} and Theorem \ref{concat-box-erg}) as a limiting case of their respective finitary results, namely Theorem \ref{concat-uap} and Theorem \ref{concat-box}.  The technical hypothesis that the group $G$ be the sum of a finitely generated group and a profinite group will only be needed near the end of the section.  Readers who are only interested in the combinatorial assertions of this paper can skip ahead to Section \ref{sketch-sec}.

We first develop some fairly standard material on the convergence of the Gowers-Host-Kra norms, and on the existence of characteristic factors.  Given a $G$-system $(\X,T)$, finite non-empty multisets $Q_1,\dots,Q_d$ of $G$, and elements $f_\omega$ of $L^{2^d}(\X)$ for each $\omega \in \{0,1\}^d$, define the \emph{Gowers inner product}
\begin{equation}\label{gip}
\langle (f_\omega)_{\omega \in\{0,1\}^d} \rangle_{\Box^d_{Q_1,\dots,Q_d}(\X)} 
\coloneqq  \E_{h^0_i,h^1_i \in Q_i \forall i=1,\dots,d} \int_X \prod_{\omega \in \{0,1\}^d} {\mathcal C}^{|\omega|} T^{h^{\omega_1}_1 + \dots + h^{\omega_d}_d} f_\omega\ d\mu
\end{equation}
where $\omega = (\omega_1,\dots,\omega_d)$, $|\omega| \coloneqq  \omega_1 + \dots + \omega_d$, and ${\mathcal C}\colon f \mapsto \overline{f}$ is the complex conjugation operator; the absolute convergence of the integral is guaranteed by H\"older's inequality.  Comparing this with Definition \ref{gaw}, we see that
$$
\|f\|_{\Box^d_{Q_1,\dots,Q_d}(\X)}^{2^d} = \langle (f)_{\omega \in\{0,1\}^d} \rangle_{\Box^d_{Q_1,\dots,Q_d}(\X)}.$$ 
We also recall the \emph{Cauchy-Schwarz-Gowers inequality}
\begin{equation}\label{csg}
|\langle (f_\omega)_{\omega \in\{0,1\}^d} \rangle_{\Box^d_{Q_1,\dots,Q_d}(\X)}|
\leq \prod_{\omega \in \{0,1\}^d} \|f\|_{\Box^d_{Q_1,\dots,Q_d}(\X)}^{2^d} 
\end{equation}
(see \cite[Lemma B.2]{gt-primes}).  By setting $f_\omega$ to equal $f$ when $\omega_{d'+1}=\dots=\omega_d=0$, and equal to $1$ otherwise, we obtain as a corollary the monotonicity property
\begin{equation}\label{mono}
\| f\|_{\Box^{d'}_{Q_1,\dots,Q_{d'}}(\X)} \leq 
\| f\|_{\Box^{d}_{Q_1,\dots,Q_{d}}(\X)} 
\end{equation}
for all $1 \leq d' \leq d$ and $f \in L^{2^d}(\X)$.

We have the following convergence result:

\begin{theorem}[Existence of Gowers-Host-Kra seminorm]\label{gawd} Let $(\X,T)$ be a $G$-system, let $d$ be a natural number, and let $H_1,\dots,H_d$ be subgroups of $G$.  For each $\omega \in \{0,1\}^d$, let $f_\omega$ be an element of $L^{2^d}(\X)$.  Then the limit
$$ \langle (f_\omega)_{\omega \in\{0,1\}^d} \rangle_{\Box^d_{H_1,\dots,H_d}(\X)}  \coloneqq 
\lim_{(Q_1,\dots,Q_d) \to (H_1,\dots,H_d)} \langle (f_\omega)_{\omega \in\{0,1\}^d} \rangle_{\Box^d_{Q_1,\dots,Q_d}(\X)} $$
exists.  In particular, the limit
$$ \| f \|_{\Box^d_{H_1,\dots,H_d}(\X)} \coloneqq  \lim_{(Q_1,\dots,Q_d) \to (H_1,\dots,H_d)} \|f\|_{\Box^d_{Q_1,\dots,Q_d}(\X)}$$
exists for any $f \in L^{2^d}(\X)$.
\end{theorem}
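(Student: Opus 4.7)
The plan is to induct on $d$, first establishing convergence of the diagonal Gowers seminorm $\|f\|_{\Box^d_{Q_1,\dots,Q_d}(\X)}$ along the net via a monotonicity argument, and then deducing convergence of the full Gowers inner product by polarization. For the base case $d=1$, expanding the definition \eqref{boxnorm-def} and applying Fubini gives the identity
$$
\|f\|_{\Box^1_Q(\X)}^2 = \bigl\|\E_{h \in Q} T^h f\bigr\|_{L^2(\X)}^2.
$$
If $Q' \geq Q$ in the directed set ${\mathcal F}[H_1]$, we can write $Q' = Q + R$ for some non-empty finite multiset $R$, so that $\E_{h \in Q'} T^h f = \E_{r \in R} T^r (\E_{h \in Q} T^h f)$ is an average of unitary translates of a fixed $L^2$ vector; the triangle inequality then yields $\|\E_{h \in Q'} T^h f\|_{L^2} \leq \|\E_{h \in Q} T^h f\|_{L^2}$. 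Hence $Q \mapsto \|f\|_{\Box^1_Q(\X)}^2$ is monotone non-increasing along the directed set and bounded below by $0$, so the limit exists.

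For the inductive step, a similar Fubini expansion yields the more general identity
$$
\|f\|_{\Box^d_{Q_1,\dots,Q_d}(\X)}^{2^d} = \E_{h_i,h'_i \in Q_i;\, 2 \leq i \leq d} \bigl\|\E_{h_1 \in Q_1} T^{h_1}\bigl(\Delta_{h_2,h'_2} \cdots \Delta_{h_d,h'_d} f\bigr)\bigr\|_{L^2(\X)}^2,
$$
in which the function $\Delta_{h_2,h'_2} \cdots \Delta_{h_d,h'_d} f$ is a product of $2^{d-1}$ translates of $f$ and so lies in $L^2(\X)$ by H\"older's inequality (since $f \in L^{2^d}(\X)$). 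The base case applied pointwise in $(h_2,h'_2,\dots,h_d,h'_d)$ shows the inner $L^2$-norm squared is monotone non-increasing as $Q_1$ grows, hence so is the outer average. Since the $\Delta_{h,h'}$-operators commute (as noted after \eqref{deltah-def}), the roles of the coordinates are symmetric and the same argument shows monotonicity in any single $Q_i$; chaining then gives joint monotonicity along the product directed set, and hence convergence.

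For the Gowers inner product I would invoke polarization. The multilinear form $\langle (f_\omega)_\omega \rangle_{\Box^d_{Q_1,\dots,Q_d}(\X)}$ is linear in $f_\omega$ when $|\omega|$ is even and conjugate-linear when $|\omega|$ is odd. Applying the standard sesquilinear polarization identity independently in each of the $2^d$ slots expresses this form as a finite linear combination (with coefficients in $\{\pm 1,\pm i\}$ up to a normalising factor) of diagonal values $\|g\|_{\Box^d_{Q_1,\dots,Q_d}(\X)}^{2^d}$, where $g$ ranges over appropriate signed and phased sums of the $f_\omega$'s, each of which still lies in $L^{2^d}(\X)$. Convergence of each such diagonal term was just established, so the Gowers inner product converges as a finite linear combination of convergent nets.

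The main obstacle I anticipate is the routine but tedious polarization bookkeeping in the last step. A cleaner alternative is to first upgrade the base case to a net-version of the mean ergodic theorem, $\E_{h \in Q} T^h g \to \E(g \mid \X^H)$ strongly in $L^2$ as $Q \to H$ (using the monotonicity of $\|\E_{h \in Q} T^h g\|_{L^2}^2$ together with a density argument on coboundaries $g - T^h g$ spanning the orthogonal complement of the $H$-invariants), and then unfold the Gowers inner product one direction at a time; the Cauchy--Schwarz--Gowers inequality \eqref{csg} supplies the uniform bounds needed to justify passing to joint limits by dominated convergence.
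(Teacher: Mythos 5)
Your monotonicity argument for the diagonal case is correct and is essentially what the paper does for the fully symmetric tuples: indeed, rewriting $\|f\|_{\Box^d_{Q_1,\dots,Q_d}}^{2^d}$ as $\E_{h_i,h'_i\in Q_i,\,i\geq 2}\|\E_{h_1\in Q_1}T^{h_1}(\Delta_{h_2,h'_2}\cdots\Delta_{h_d,h'_d}f)\|_{L^2}^2$ and using unitarity plus the triangle inequality gives exact monotone decrease in each $Q_i$, hence joint monotonicity. This matches the special case $d'=d$ of the paper's downward induction and also gives the inequality \eqref{dod}.

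The polarization step, however, has a genuine gap. The Gowers inner product $\langle(f_\omega)_\omega\rangle_{\Box^d_{Q_1,\dots,Q_d}}$ is \emph{not} symmetric under arbitrary bijections of the $2^d$ slots: expanding $\|\sum_\nu t_\nu f_\nu\|_{\Box^d_{Q_1,\dots,Q_d}}^{2^d}$ as a polynomial in the $t_\nu,\bar{t}_\nu$ and extracting the coefficient of $\prod_{|\nu|\text{ even}}t_\nu\prod_{|\nu|\text{ odd}}\bar{t}_\nu$ produces $\sum_{\sigma}\langle(f_{\sigma(\omega)})_\omega\rangle_{\Box^d_{Q_1,\dots,Q_d}}$, where $\sigma$ ranges over all $(2^{d-1}!)^2$ bijections preserving the even/odd partition of the cube. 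The Gowers inner product is invariant only under the hyperoctahedral symmetries of the cube (and a few extra operations), which fall far short of this permutation group as soon as $d\geq 2$ for the box norms. A concrete check: with $\X=\Z/N\Z$, $Q_1=\{0\}$, $Q_2=G$, one computes $\langle f_{00},f_{10},f_{01},f_{11}\rangle_{\Box^2_{Q_1,Q_2}}=(\int f_{00}\overline{f_{10}})(\int\overline{f_{01}}f_{11})$, while the permutation swapping $f_{10}\leftrightarrow f_{01}$ gives $(\int f_{00}\overline{f_{01}})(\int\overline{f_{10}}f_{11})$; these are generically different. So polarization recovers a symmetrized sum of Gowers inner products rather than the individual one, and convergence of that sum does not (by itself) imply convergence of the single term you want.

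Your ``cleaner alternative'' via a net mean ergodic theorem and one-direction-at-a-time unfolding is in fact much closer to what the paper does, but it too elides the real difficulty: as one $Q_i$ tends to $H_i$, the inner functions being averaged over the remaining $Q_j$ themselves change, so there is no fixed dominating object and no direct appeal to dominated convergence. The paper resolves this with a downward induction on a ``$d'$-symmetry'' parameter (tracking how many initial coordinates of $\omega$ the tuple $(f_\omega)$ is symmetric in): for the symmetric indices one gets exact monotonicity as in your diagonal argument, while for the asymmetric indices one gets approximate monotonicity (an $\eps$-loss) by comparing to the already-established $(d'+1)$-symmetric case through the parallelogram law and Cauchy–Schwarz. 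That parallelogram-law step is the key idea missing from your sketch, and it is precisely what replaces the broken polarization.
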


It is likely that one can deduce this theorem from the usual ergodic theorem, by adapting the arguments in \cite{hk}, but we will give a combinatorial proof here, which applies even in cases in which $G$ or $H_1,\dots,H_d$ are uncountable.

\begin{proof}
By multilinearity we may assume that the $f_\omega$ are all real-valued, so that we may dispense with the complex conjugation operations.  We also normalise the $f_\omega$ to lie in the closed unit ball of $L^{2^d}(\X)$.

If $\vec f \coloneqq  (f_\omega)_{\omega \in \{0,1\}^d}$ is a tuple of functions $f_\omega \in L^{2^d}(\X)$ and $0 \leq d' \leq d$, we say that $\vec f$ is \emph{$d'$-symmetric} if we have $f_\omega = f_{\omega'}$ whenever $\omega = (\omega_1,\dots,\omega_d)$ and $\omega' = (\omega'_1,\dots,\omega'_d)$ agree in the first $d'$ components (that is, $\omega_i = \omega'_i$ for $i=1,\dots,d'$).  We will prove Theorem \ref{gawd} by downward induction on $d'$, with the $d'=0$ case establishing the full theorem.

Thus, assume that $0 \leq d' \leq d$ and that the claim has already been proven for larger values of $d'$ (this hypothesis is vacuous for $d'=d$).
We will show that for any given (and sufficiently small) $\eps > 0$, and for sufficiently large $(Q_1,\dots,Q_d)$ (in the product directed set ${\mathcal F}[H_1] \times \dots \times {\mathcal F}[H_d]$), the quantity $\langle \vec f \rangle_{\Box^d_{Q_1,\dots,Q_d}(\X)}$ only increases by at most $\eps$ if one increases any of the $Q_i$, that is to say
\begin{equation}\label{fow}
\langle \vec f \rangle_{\Box^d_{Q_1,\dots,Q_{i-1},Q_i + R, Q_{i+1},\dots,Q_d}(\X)}
\leq \langle \vec f \rangle_{\Box^d_{Q_1,\dots,Q_d}(\X)} + \eps
\end{equation}
for any $i=1,\dots,d$ and any finite non-empty multiset $R$.  Applying this once for each $i$, we see that the limit superior of the $\langle \vec f \rangle_{\Box^d_{Q_1,\dots,Q_d}(\X)}$ does not exceed the limit inferior by more than $d\eps$, and sending $\eps \to 0$ (and using the boundedness of the $\langle \vec f \rangle_{\Box^d_{Q_1,\dots,Q_d}(\X)}$, from H\"older's inequality) we obtain the claim.

It remains to establish \eqref{fow}.  There are two cases, depending on whether $i \leq d'$ or $i > d'$.  First suppose that $i \leq d'$; by relabeling we may take $i=1$.  Using \eqref{gip} and the $d'$-symmetry (and hence $1$-symmetry) of $\vec f$, we may rewrite $\langle \vec f \rangle_{\Box^d_{Q_1,\dots,Q_d}(\X)}$ as
$$ \E_{h^0_i,h^1_i \in Q_i \forall i=2,\dots,d} \| \E_{h_1 \in Q_1} T^{h_1} f_{h^0_2,h^1_2,\dots,h^0_d,h^1_d} \|_{L^2(\X)}^2$$
where
\begin{align*}
 f_{h^0_2,h^1_2,\dots,h^0_d,h^1_d} &\coloneqq  \prod_{\omega_2,\dots,\omega_d \in \{0,1\}} T^{h^{\omega_2}_2 + \dots + h^{\omega_d}_d} f_{(0,\omega_2,\dots,\omega_d)}\\
& \quad \prod_{\omega_2,\dots,\omega_d \in \{0,1\}} T^{h^{\omega_2}_2 + \dots + h^{\omega_d}_d} f_{(1,\omega_2,\dots,\omega_d)}.
\end{align*}
From the unitarity of shift operators and the triangle inequality, we have
$$
\| \E_{h_1 \in Q_1+R} T^{h_1} f_{h^0_2,h^1_2,\dots,h^0_d,h^1_d} \|_{L^2(\X)} \leq \| \E_{h_1 \in Q_1} T^{h_1} f_{h^0_2,h^1_2,\dots,h^0_d,h^1_d} \|_{L^2(\X)} $$
for any finite non-empty multiset $R$ in $H_1$.  This gives \eqref{fow} (without the epsilon loss!) in the case $i \leq d'$.

Now suppose $i>d'$.  By relabeling we may take $i=d$.  In this case, we rewrite $\langle \vec f \rangle_{\Box^d_{Q_1,\dots,Q_d}(\X)}$ as
\begin{equation}\label{shirt}
 \E_{h^0_i,h^1_i \in Q_i \forall i=1,\dots,d-1} \left\langle \E_{h_d \in Q_d} T^{h_d} f^0_{h^0_1,h^1_1,\dots,h^0_{d-1},h^1_{d-1}}, \E_{h_d \in Q_d} T^{h_d} f^1_{h^0_1,h^1_1,\dots,h^0_{d-1},h^1_{d-1}} \right\rangle_{L^2(\X)}
\end{equation}
where
$$ f^{\omega_d}_{h^0_1,h^1_1,\dots,h^0_{d-1},h^1_{d-1}} \coloneqq  \prod_{\omega_1,\dots,\omega_{d-1} \in \{0,1\}} T^{h^{\omega_1}_1 + \dots + h^{\omega_{d-1}}_{d-1}} f_{(\omega_1,\dots,\omega_d)}.$$
Note from H\"older's inequality that the $f^{\omega_d}_{h^0_1,h^1_1,\dots,h^0_{d-1},h^1_{d-1}}$ all lie in the closed unit ball of $L^2(\X)$.

A similar rewriting shows that the quantity
$$ \E_{h^0_i,h^1_i \in Q_i \forall i=1,\dots,d-1} \| \E_{h_d \in Q_d} T^{h_d}  f^0_{h^0_1,h^1_1,\dots,h^0_{d-1},h^1_{d-1}} \|_{L^2(\X)}^2 $$
is the Gowers product $\langle (f_{(\omega_1,\dots,\omega_{d-1},0)})_{\omega \in\{0,1\}^d} \rangle_{\Box^d_{Q_1,\dots,Q_d}(\X)}$.  This tuple of functions is $d'+1$-symmetric after rearrangement, and so by induction hypothesis this expression converges to a limit as $(Q_1,\dots,Q_d) \to (H_1,\dots,H_d)$.  In particular, for $(Q_1,\dots,Q_d)$ sufficiently large, we have
\begin{align*}
& \E_{h^0_i,h^1_i \in Q_i \forall i=1,\dots,d-1} \| \E_{h_d \in Q_d + \{0,n\}} T^{h_d} f^0_{h^0_1,h^1_1,\dots,h^0_{d-1},h^1_{d-1}} \|_{L^2(\X)}^2 \\
&\quad \geq \E_{h^0_i,h^1_i \in Q_i \forall i=1,\dots,d-1} \| \E_{h_d \in Q_d} T^{h_d} f^0_{h^0_1,h^1_1,\dots,h^0_{d-1},h^1_{d-1}} \|_{L^2(\X)}^2 - \frac{\eps}{10}
\end{align*}
for any $n \in H_d$.  By the parallelogram law, this implies that
$$ \E_{h^0_i,h^1_i \in Q_i \forall i=1,\dots,d-1} \left\| \E_{h_d \in Q_d + \{n\}} T^{h_d} f^0_{h^0_1,h^1_1,\dots,h^0_{d-1},h^1_{d-1}} 
- \E_{h_d \in Q_d} T^{h_d} f^0_{h^0_1,h^1_1,\dots,h^0_{d-1},h^1_{d-1}} \right \|_{L^2(\X)}^2 
\leq \frac{4\eps}{10}
$$
for all $n \in H_d$, which by the triangle inequality implies that
$$ \E_{h^0_i,h^1_i \in Q_i \forall i=1,\dots,d-1} \left \| \E_{h_d \in Q_d + R} T^{h_d} f^0_{h^0_1,h^1_1,\dots,h^0_{d-1},h^1_{d-1}} 
- \E_{h_d \in Q_d} T^{h_d} f^0_{h^0_1,h^1_1,\dots,h^0_{d-1},h^1_{d-1}} \right \|_{L^2(\X)}^2 
\leq \frac{4\eps}{10}
$$
for any finite non-empty multiset $R$ in $H_d$.  By Cauchy-Schwarz and \eqref{shirt}, this implies (for $\eps$ small enough) that
$$ \left| \langle \vec{f} \rangle_{\Box^d_{Q_1,\dots,Q_{d-1},Q_d+R}(\X)} - \langle \vec{f} \rangle_{\Box^d_{Q_1,\dots,Q_d}(\X)} \right| \leq \eps
$$
which gives \eqref{fow}.
\end{proof}

We observe that the above argument (specialised to the $d$-symmetric case) also gives that
\begin{equation}\label{dod}
 \| f \|_{\Box^d_{H_1,\dots,H_d}(\X)} \leq  \|f\|_{\Box^d_{Q_1,\dots,Q_d}(\X)}
\end{equation}
whenever $Q_i$ are finite non-empty multisets in $H_i$ for $i=1,\dots,d$.  Taking limits in \eqref{csg}, we also have
\begin{equation}\label{csg-2}
|\langle (f_\omega)_{\omega \in\{0,1\}^d} \rangle_{\Box^d_{H_1,\dots,H_d}(\X)}|
\leq \prod_{\omega \in \{0,1\}^d} \|f\|_{\Box^d_{H_1,\dots,H_d}(\X)}^{2^d}; 
\end{equation}
in particular, writing $\langle \rangle_{U^d_H(\X)} \coloneqq \langle \rangle_{\Box^d_{H,\dots,H}(\X)}$, we have
\begin{equation}\label{csg-3}
|\langle (f_\omega)_{\omega \in\{0,1\}^d} \rangle_{U^d_H(\X)}|
\leq \prod_{\omega \in \{0,1\}^d} \|f\|_{U^d_{H}(\X)}^{2^d}
\end{equation}
for any subgroup $H$ of $G$.

Let $\X$ be a $G$-system, and let $H_1,\dots,H_d$ be subgroups of $G$.  From H\"older's inequality we see that
$$
\left|\langle (f_\omega)_{\omega \in\{0,1\}^d} \rangle_{\Box^d_{H_1,\dots,H_d}(\X)}\right| \leq
\prod_{\omega \in \{0,1\}^d} \|f_\omega\|_{L^{2^d}(\X)}$$
for all $f_\omega \in L^{2^d}(\X)$.  By duality, we can thus define a bounded multilinear \emph{dual operator}
$$ {\mathcal D}^d_{H_1,\dots,H_d}\colon L^{2^d}(\X)^{\{0,1\}^d \backslash \{0\}^d} \to L^{2^d/(2^d-1)}(\X) $$
by requiring that
\begin{equation}\label{boxh}
\left\langle (f_\omega)_{\omega \in\{0,1\}^d} \right\rangle_{\Box^d_{H_1,\dots,H_d}(\X)} = \left\langle f_{0^d}, {\mathcal D}^d_{H_1,\dots,H_d}( (f_\omega)_{\omega \in \{0,1\}^d \backslash \{0\}^d} ) \right\rangle_{L^2(\X)}.
\end{equation}
In the degenerate case $d=0$ we adopt the convention ${\mathcal D}^0 () = 1$.  We also abbreviate ${\mathcal D}^d_{H,\dots,H}$ as ${\mathcal D}^d_H$.

We can similarly define the local dual operators
$$ {\mathcal D}^d_{Q_1,\dots,Q_d}\colon L^{2^d}(\X)^{\{0,1\}^d \backslash \{0\}^d} \to L^{2^d/(2^d-1)}(\X) $$
for finite non-empty multisets $Q_i$ in $H_i$.  Theorem \ref{gawd} asserts that ${\mathcal D}^d_{Q_1,\dots,Q_d}$ converges to ${\mathcal D}^d_{H_1,\dots,H_d}$ in the weak operator topology.  We can upgrade this convergence to the strong operator topology:

\begin{proposition}\label{strong}  Let $\X$ be a $G$-system, let $H_1,\dots,H_d$ be subgroups of $G$, and let $f_\omega \in L^{2^d}(\X)$ for all $\omega \in \{0,1\}^d \backslash \{0\}^d$.  Then ${\mathcal D}^d_{Q_1,\dots,Q_d}( (f_\omega)_{\omega \in \{0,1\}^d \backslash \{0\}^d} )$ converges strongly in $L^{2^d/(2^d-1)}(\X)$ to ${\mathcal D}^d_{H_1,\dots,H_d}( (f_\omega)_{\omega \in \{0,1\}^d \backslash \{0\}^d} )$ as $(Q_1,\dots,Q_d) \to (H_1,\dots,H_d)$.
\end{proposition}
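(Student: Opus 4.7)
The strategy is to upgrade the weak convergence given by Theorem \ref{gawd} to strong convergence in $L^{2^d/(2^d-1)}(\X)$ via a Cauchy-net argument in $L^2(\X)$, after density and interpolation reductions.

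\emph{Reductions.} By density of $L^\infty(\X)$ in $L^{2^d}(\X)$ together with the uniform-in-$Q$ multilinear continuity of the operators ${\mathcal D}^d_{Q_1,\dots,Q_d}\colon(L^{2^d})^{2^d-1}\to L^{2^d/(2^d-1)}$ (which follows by H\"older from the definition \eqref{boxh}), a standard $3\varepsilon$-argument reduces the claim to the case where every $f_\omega$ lies in $L^\infty(\X)$. Under this hypothesis $A_P := {\mathcal D}^d_{Q_1,\dots,Q_d}(\vec f)$ is uniformly bounded in $L^\infty(\X)$, and since $p:=2^d/(2^d-1)\le 2$ on the probability space $\X$, H\"older gives $\|u\|_{L^p(\X)}\le\|u\|_{L^2(\X)}$. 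Thus it suffices to prove that $(A_Q)$ is a Cauchy net in $L^2(\X)$.

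\emph{Reformulation as a $\Box^{2d}$ inner product.} Inverting the duality \eqref{boxh} yields the explicit formula $A_P = \overline{\E_{k\in\prod_i P_i^2}\,B(k)}$ with $B(k):=\prod_{\omega\neq 0^d}{\mathcal C}^{|\omega|}T^{\omega\cdot k}f_\omega$, from which a direct calculation gives
\[
\langle A_P, A_R\rangle_{L^2}
= \E_{k\in\prod_i P_i^2,\; h\in\prod_i R_i^2} \int_X \prod_{\omega\neq 0^d}\bigl({\mathcal C}^{|\omega|}T^{\omega\cdot h}f_\omega\bigr)\bigl({\mathcal C}^{|\omega|+1}T^{\omega\cdot k}f_\omega\bigr)\,d\mu.
\]
The right-hand side is exactly the $\Box^{2d}$ Gowers inner product $\langle(\tilde f_{\tilde\omega})\rangle_{\Box^{2d}_{P_1,\dots,P_d,R_1,\dots,R_d}(\X)}$, where for $\tilde\omega=(\omega,\omega')\in\{0,1\}^d\times\{0,1\}^d$ one sets $\tilde f_{(\omega,0^d)}:=\overline{f_\omega}$ when $\omega\neq 0^d$, $\tilde f_{(0^d,\omega')}:=f_{\omega'}$ when $\omega'\neq 0^d$, and $\tilde f_{(\omega,\omega')}:=1$ otherwise; the extra conjugation needed for the $k$-factors is absorbed via $\overline{{\mathcal C}^{|\omega|}(\cdot)}={\mathcal C}^{|\omega|+1}(\cdot)$, and the shift $\omega\cdot k+\omega'\cdot h$ assigned to $\tilde\omega$ in the $\Box^{2d}$ cube lines up with the shifts appearing on each factor.

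\emph{Conclusion and obstacle.} By Theorem \ref{gawd} applied in dimension $2d$ to the bounded functions $\tilde f_{\tilde\omega}\in L^\infty(\X)\subset L^{2^{2d}}(\X)$, this inner product converges, as $(P_1,\dots,P_d,R_1,\dots,R_d)\to(H_1,\dots,H_d,H_1,\dots,H_d)$, to a single limit $L$ depending only on $\vec f$ and $H_1,\dots,H_d$. In particular each of $\langle A_Q,A_Q\rangle$, $\langle A_{Q'},A_{Q'}\rangle$, and $\langle A_Q,A_{Q'}\rangle$ tends to the same $L$ as $(Q,Q')\to(H,H)$, so expanding
\[
\|A_Q-A_{Q'}\|_{L^2}^2=\langle A_Q,A_Q\rangle+\langle A_{Q'},A_{Q'}\rangle-2\operatorname{Re}\langle A_Q,A_{Q'}\rangle
\]
and passing to the limit gives $L+L-2L=0$, establishing the Cauchy property. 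The strong $L^2$-limit must then coincide with the weak limit $A_H$ identified in Theorem \ref{gawd}, whence strong convergence in $L^{2^d/(2^d-1)}(\X)$ follows after undoing the reductions. The one step requiring genuine verification is the identification with the $\Box^{2d}$ inner product via the auxiliary functions $\tilde f$: this is a routine (though easy-to-miscount) bookkeeping of conjugation patterns and index decompositions, and is the only place where the argument could conceivably slip.
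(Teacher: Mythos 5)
Your proof is correct and is essentially the same as the paper's: after reducing to $L^\infty$ inputs, reduce strong $L^{2^d/(2^d-1)}$ convergence to $L^2$-Cauchyness, expand $\|A_Q - A_{Q'}\|_{L^2}^2$ via the cosine rule, identify each cross term $\langle A_Q, A_{Q'}\rangle$ with a $\Box^{2d}$ Gowers inner product of auxiliary functions supported on two opposite $d$-dimensional faces of $\{0,1\}^{2d}$, and invoke Theorem \ref{gawd} in dimension $2d$. The only cosmetic difference is that the paper first reduces to real-valued $f_\omega$ by multilinearity, which suppresses the conjugation bookkeeping that you instead carry out explicitly (and correctly) via ${\mathcal C}^{|\omega|+1}$.
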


\begin{proof}  The claim is trivially true for $d=0$, so assume $d \geq 1$.  By multilinearity we may assume the $f_\omega$ are real.  By a limiting argument using H\"older's inequality, we may assume without loss of generality that $f_\omega$ all lie in $L^\infty(\X)$ and not just in $L^{2^{d}}(\X)$, in which case the ${\mathcal D}^d_{Q_1,\dots,Q_d}( (f_\omega)_{\omega \in \{0,1\}^d \backslash \{0\}^d} )$ are uniformly bounded in $L^\infty(\X)$.  Theorem \ref{gawd} already gives weak convergence, so it suffices to show that the limit
$$ \lim_{(Q_1,\dots,Q_d) \to (H_1,\dots,H_d)} {\mathcal D}^d_{Q_1,\dots,Q_d}( (f_\omega)_{\omega \in \{0,1\}^d \backslash \{0\}^d} ) $$
exists in the strong $L^2(\X)$ topology.

By the cosine rule (and the completeness of $L^2(\X)$), it suffices to show that the joint limit
$$ \lim_{(Q_1,\dots,Q_d,Q'_1,\dots,Q'_d) \to (H_1,\dots,H_d,H_1,\dots,H_d)} 
\left\langle {\mathcal D}^d_{Q_1,\dots,Q_d}( (f_\omega)_{\omega \in \{0,1\}^d \backslash \{0\}^d} ), {\mathcal D}^d_{Q'_1,\dots,Q'_d}( (f_\omega)_{\omega \in \{0,1\}^d \backslash \{0\}^d} ) \right\rangle_{L^2(\X)}$$
exists.  But the expression inside the limit can be written as an inner product
$$ \langle (\tilde f_{\omega'})_{\omega' \in \{0,1\}^{2d}} \rangle_{\Box^{2d}_{Q_1,\dots,Q_d,Q_1,\dots,Q_d}}$$
where $\tilde f_{(0,\omega)} = \tilde f_{(\omega,0)} \coloneqq  f_\omega$ for $\omega \coloneqq  \{0,1\}^d \backslash \{0\}^d$, and $\tilde f_{\omega'} \coloneqq  1$ for all other $\omega'$, and the claim then follows from Theorem \ref{gawd} (with $d$ replaced by $2d$).
\end{proof}

In the $d=1$ case, we have
$$ {\mathcal D}^1_H f = \lim_{Q \to H} \E_{a \in Q} T^a f $$
in the strong topology of $L^2(\X)$ for $f \in L^2(\X)$.  From this definition it is easy to see that ${\mathcal D}^1_H f$ is $H$-invariant, and that $f - {\mathcal D}^1_H f$ is orthogonal to all $H$-invariant functions; thus we obtain the mean ergodic theorem
\begin{equation}\label{met}
 {\mathcal D}^1_H f = \E( f | \X^H )
\end{equation}
In particular, we have
$$ \| f \|_{\Box^1_H(\X)} = \| \E( f | \X^H )\|_{L^2(\X)}$$
for all $f \in L^2(\X)$.  For $d > 1$, we have the easily verified identity
$$ \| f \|_{\Box^d_{Q_1,\dots,Q_d}(\X)}^{2^d} = \E_{h \in Q_d - Q_d} \| f \overline{T^h f} \|_{\Box^{d-1}_{Q_1,\dots,Q_{d-1}}(\X)}^{2^{d-1}} $$
which on taking limits using Theorem \ref{gawd} and dominated convergence implies that
$$ \| f \|_{\Box^d_{H_1,\dots,H_d}(\X)}^{2^d} = \lim_{Q_d \to H_d} \E_{h \in Q_d - Q_d} \| f \overline{T^h f} \|_{\Box^{d-1}_{H_1,\dots,H_{d-1}}(\X)}^{2^{d-1}} $$
for all $f \in L^{2^d}(\X)$. In particular
$$ \| f \|_{U^d_H(\X)}^{2^d} = \lim_{Q \to \infty} \E_{h \in Q - Q} \| f \overline{T^h f} \|_{U^{d-1}_H(\X)}^{2^{d-1}}.$$
From this, we see that the seminorm $U^d_H(\X)$ defined here agrees with the Gowers-Host-Kra seminorm from \cite{hk}; see \cite[Appendix A]{btz} for details.

A key property concerning dual functions is that they are closed under multiplication after taking convex closures:

\begin{proposition}\label{convex}  Let $\X$ be a $G$-system, and let $H_1,\dots,H_d$ be subgroups of $G$.  Let $B$ be the closed convex hull (in $L^2(\X)$) of all functions of the form ${\mathcal D}^d_{H_1,\dots,H_d}( (f_\omega)_{\omega \in \{0,1\}^d \backslash \{0\}^d} )$, where the $f_\omega$ all lie in the closed unit ball of $L^\infty(\X)$.  Then $B$ is closed under multiplication: if $F,F' \in B$, then $FF' \in B$.
\end{proposition}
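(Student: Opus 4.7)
My plan has two steps: first reduce to the case $F, F'$ being generators of $B$; then explicitly exhibit $FF'$ as a limit of convex combinations of other generators via a change-of-variables in the local dual approximations from Proposition~\ref{strong}.

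For the reduction, I would first show that every element of $B$ lies in the closed unit ball of $L^\infty$. Each generator ${\mathcal D}^d_{H_1,\dots,H_d}((f_\omega))$ is by Proposition~\ref{strong} the strong $L^2$-limit of local dual functions, each of which is a convex combination of products of shifts of unit-ball $L^\infty$ functions and hence is pointwise bounded by $1$; the pointwise bound survives the strong $L^2$-limit (pass to an a.e.\ convergent subsequence), convex combinations, and further $L^2$-closures. Given this uniform $L^\infty$-bound, multiplication is jointly $L^2$-continuous on $B\times B$ via the triangle inequality, so by bilinearity and $L^2$-density it suffices to prove $FF' \in B$ when $F, F'$ are generators.

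For the main step, let $F = {\mathcal D}^d_{H_1,\dots,H_d}((f_\omega))$ and $F' = {\mathcal D}^d_{H_1,\dots,H_d}((f'_\omega))$ with parameters in the unit ball of $L^\infty$, and write $F_Q, F'_{Q'}$ for the local duals. By Proposition~\ref{strong}, $F_Q F'_{Q'} \to FF'$ strongly in $L^2$ as $(Q,Q') \to (H_1\times\cdots\times H_d)^2$. Expanding both local duals and using that ${\mathcal C}^{|\omega|}$ is multiplicative, the integrand of $F_Q F'_{Q'}$ (for fixed shift parameters giving rise to $\vec k$ and $\vec k'$) simplifies to
\[
\prod_{\omega\neq 0^d} {\mathcal C}^{|\omega|}\bigl(T^{\omega\cdot\vec k} f_\omega \cdot T^{\omega\cdot\vec k'} f'_\omega\bigr).
\]
Setting $\vec m \coloneqq \vec k' - \vec k$ and using $T^{\omega\cdot\vec k} f_\omega \cdot T^{\omega\cdot\vec k'} f'_\omega = T^{\omega\cdot\vec k}\bigl(f_\omega \cdot T^{\omega\cdot\vec m} f'_\omega\bigr)$, the integrand becomes $\prod_{\omega\neq 0^d} {\mathcal C}^{|\omega|} T^{\omega\cdot\vec k} g_\omega^{(\vec m)}$, where $g_\omega^{(\vec m)} \coloneqq f_\omega \cdot T^{\omega\cdot\vec m} f'_\omega$ lies in the unit ball of $L^\infty$. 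For each fixed $\vec m$, averaging over $\vec k$ exactly reproduces the local dual function ${\mathcal D}^d_{Q_1,\dots,Q_d}((g_\omega^{(\vec m)}))$; passing to the limit via Proposition~\ref{strong}, these become genuine dual functions ${\mathcal D}^d_{H_1,\dots,H_d}((g_\omega^{(\vec m)}))$, and $FF'$ is realized as a convex combination over $\vec m$ of such generators, placing it in $B$.

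The main obstacle is that $\vec k$ and $\vec m$ are coupled in the finite averaging: the substitution only parametrizes pairs $(\vec h, \vec h')$ whose shift differences satisfy $\vec k' = \vec k + \vec m$, and the joint distribution of $(\vec k, \vec m)$ does not cleanly factor into independent components. To decouple I would exploit the F\o lner-like regularity of the averaging nets $(Q'_i) \in {\mathcal F}[H_i]$---the same asymptotic translation-insensitivity used in the proof of Theorem~\ref{gawd}. For sufficiently large $(Q, Q')$, a bounded translation of $Q'$ perturbs the local dual function in $L^2$ only by a vanishing amount, which lets me replace the coupled weight by a factored limit up to $L^2$-negligible error and apply Fubini. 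Combined with Proposition~\ref{strong} and the $L^2$-closedness of $B$, this yields $FF' \in B$.
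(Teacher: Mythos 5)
Your proposal is correct and follows essentially the same route as the paper. The change of variables $\vec m = \vec k' - \vec k$ is the paper's shift of the $k_i$ variable by $h_i$, your appeal to F\o lner-type translation insensitivity to decouple $(\vec k,\vec m)$ is exactly the paper's "averaging over $Q'_i$ and over $Q'_i + h$ are approximately the same," and your invocation of Proposition~\ref{strong} and $L^2$-closedness of $B$ to push the rewriting through the limit is the paper's limit interchange via Proposition~\ref{strong} applied with $d$ replaced by $2d$. Your preliminary step showing $B$ sits inside the closed unit ball of $L^\infty$ (so that multiplication is jointly $L^2$-continuous on $B \times B$, justifying the reduction to generators) is a worthwhile elaboration of the paper's terse "by convexity and a density argument," and is handled correctly via a.e.\ convergent subsequences.
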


\begin{proof}  We may assume $d \geq 1$, as the $d=0$ case is trivial.  By convexity and a density argument, we may assume that $F,F'$ are themselves dual functions, thus
$$
F = {\mathcal D}^d_{H_1,\dots,H_d}( (f_\omega)_{\omega \in \{0,1\}^d \backslash \{0\}^d} )$$
and
$$
F' = {\mathcal D}^d_{H_1,\dots,H_d}( (f'_\omega)_{\omega \in \{0,1\}^d \backslash \{0\}^d} )$$
for some $f_\omega, f'_\omega$ in the closed unit ball of $L^\infty(\X)$.

By Proposition \ref{strong}, we can write $FF'$ as 
\begin{align*}
&\lim_{(Q_1,\dots,Q_d) \to (H_1,\dots,H_d)} \lim_{(Q'_1,\dots,Q'_d) \to (H_1,\dots,H_d)}
\E_{h_i \in Q_i-Q_i \forall i=1,\dots,d} 
\E_{k_i \in Q'_i-Q'_i \forall i=1,\dots,d}  \\
&\quad \prod_{\omega \in \{0,1\}^d \backslash \{0\}^d} {\mathcal C}^{|\omega|-1} 
(T^{\omega_1 h_1 + \dots + \omega_d h_d} f_\omega T^{\omega_1 k_1 + \dots + \omega_d k_d} f'_\omega)
\end{align*}
where the limits are in the $L^2(\X)$ topology.
For any given $h \in G$, averaging a bounded sequence over $Q'_i$ and averaging over $Q'_i+h$ are approximately the same if $Q'_i$ is sufficiently large (e.g. if $Q'_i$ is larger than the progression $\{ 0, h, \dots, Nh \}$ for some large $h$).  Because of this, we can shift the $k_i$ variable by $h_i$ in the above expression without affecting the limit.  In other words, $FF'$ is equal to
\begin{align*}
&\lim_{(Q_1,\dots,Q_d) \to (H_1,\dots,H_d)} \lim_{(Q'_1,\dots,Q'_d) \to (H_1,\dots,H_d)}
\E_{h_i \in Q_i-Q_i \forall i=1,\dots,d} 
\E_{k_i \in Q'_i-Q'_i \forall i=1,\dots,d} \\
&\quad \prod_{\omega \in \{0,1\}^d \backslash \{0\}^d} {\mathcal C}^{|\omega|-1} 
(T^{\omega_1 h_1 + \dots + \omega_d h_d} f_\omega T^{\omega_1 (h_1+k_1) + \dots + \omega_d (h_d+k_d)} f'_\omega).
\end{align*}
By Proposition \ref{strong} (with $d$ replaced by $2d$), the above expression remains convergent if we work with the joint limit
$\lim_{(Q_1,\dots,Q_d,Q'_1,\dots,Q'_d) \to (H_1,\dots,H_d,H_1,\dots,H_d)}$.  In particular, we may interchange limits and write the above expression as
\begin{align*}
&\lim_{(Q'_1,\dots,Q'_d) \to (H_1,\dots,H_d)}
\lim_{(Q_1,\dots,Q_d) \to (H_1,\dots,H_d)} 
\E_{h_i \in Q_i-Q_i \forall i=1,\dots,d} 
\E_{k_i \in Q'_i-Q'_i \forall i=1,\dots,d} \\
&\quad \prod_{\omega \in \{0,1\}^d \backslash \{0\}^d} {\mathcal C}^{|\omega|-1} 
(T^{\omega_1 h_1 + \dots + \omega_d h_d} f_\omega T^{\omega_1 (h_1+k_1) + \dots + \omega_d (h_d+k_d)} f'_\omega).
\end{align*}
Computing the inner limit, this simplifies to
$$
\lim_{(Q'_1,\dots,Q'_d) \to (H_1,\dots,H_d)}
\E_{k \in Q'_i-Q'_i \forall i=1,\dots,d} 
{\mathcal D}^d_{H_1,\dots,H_d}\left( ( f_\omega T^{\omega_1 k_1 + \dots + \omega_d k_d} f'_\omega )_{\omega \in \{0,1\}^d \backslash \{0\}^d} \right).$$
This is the strong limit of convex averages of elements of $B$, and thus lies in $B$ as required.
\end{proof}

We can now construct the characteristic factor (cf. \cite[Lemma 4.3]{hk}):

\begin{theorem}[Existence of characteristic factor]\label{charfac}  Let $\X$ be a $G$-system, and let $H_1,\dots,H_d$ be subgroups of $G$.  
Then there is a unique $G$-system $\ZZ^{{<}d}_{H_1,\dots,H_d}(\X) = (X, {\mathcal Z}^{{<}d}_{H_1,\dots,H_d}(\X), \mu)$ with the property that
$$ \|f\|_{\Box^d_{H_1,\dots,H_d}(\X)} = 0 \iff \E( f | \ZZ^{{<}d}_{H_1,\dots,H_d}(\X) ) = 0$$
for all $f \in L^\infty(\X)$.  

Furthermore, a function $f \in L^\infty(\X)$ lies in $L^\infty( \ZZ^{{<}d}_{H_1,\dots,H_d}(\X) )$ if and only if it is the limit (in $L^2(\X)$) of a sequence of linear combinations of dual functions ${\mathcal D}^d_{H_1,\dots,H_d}( (f_\omega)_{\omega \in \{0,1\}^d \backslash \{0\}^d} )$ with $f_\omega \in L^\infty(\X)$, with the sequence uniformly bounded in $L^\infty(\X)$.
\end{theorem}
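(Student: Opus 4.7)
The plan is to take $\ZZ^{{<}d}_{H_1,\dots,H_d}(\X)$ to be the factor associated to the $\sigma$-algebra generated by all dual functions $\mathcal{D}^d_{H_1,\dots,H_d}((f_\omega)_{\omega \in \{0,1\}^d \setminus \{0^d\}})$ with $f_\omega \in L^\infty(\X)$, and then verify each of the stated properties. More precisely, let $\mathcal{A} \subset L^\infty(\X)$ denote the set of all $f \in L^\infty(\X)$ arising as an $L^2$-limit of a uniformly $L^\infty$-bounded sequence of finite complex linear combinations of such dual functions; each such dual function itself lies in $L^\infty(\X)$ with norm at most $\prod_\omega \|f_\omega\|_{L^\infty(\X)}$, by inspection of the defining formula \eqref{gip}-\eqref{boxh}, so $\mathcal{A}$ is well-defined. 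Let $\mathcal{Z}^{{<}d}_{H_1,\dots,H_d}(\X)$ be the $\sigma$-algebra generated by $\mathcal{A}$, giving the candidate factor.

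First I would check that $\mathcal{A}$ is a $G$-invariant, conjugation-closed subalgebra of $L^\infty(\X)$ containing $1$. The $G$-invariance follows from the commutativity of $G$, which yields $T^g \mathcal{D}^d_{H_1,\dots,H_d}((f_\omega)) = \mathcal{D}^d_{H_1,\dots,H_d}((T^g f_\omega))$ directly from \eqref{gip}-\eqref{boxh}; conjugation-closure follows from the identity $\overline{\mathcal{D}^d_{H_1,\dots,H_d}((f_\omega))} = \mathcal{D}^d_{H_1,\dots,H_d}((\overline{f_\omega}))$, a direct consequence of $\overline{\langle (f_\omega)\rangle_{\Box^d_{H_1,\dots,H_d}(\X)}} = \langle (\overline{f_\omega})\rangle_{\Box^d_{H_1,\dots,H_d}(\X)}$; and the constant $1$ is the dual function with all $f_\omega = 1$. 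The crucial algebra property is exactly what Proposition \ref{convex} is designed to supply: products of two dual functions with inputs in the unit $L^\infty(\X)$-ball lie in the closed $L^2(\X)$-convex hull of such dual functions, with uniform $L^\infty(\X)$-bound, and a diagonal argument lifts this to multiplicative closure of $\mathcal{A}$ itself. Consequently $\mathcal{Z}^{{<}d}_{H_1,\dots,H_d}$ is a $G$-invariant sub-$\sigma$-algebra.

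Next I would verify the characterising equivalence. If $\E(f \mid \ZZ^{{<}d}_{H_1,\dots,H_d}(\X)) = 0$, then $f$ is $L^2$-orthogonal to the dual function $\mathcal{D}^d_{H_1,\dots,H_d}((f)_{\omega \neq 0^d}) \in \mathcal{A}$, whose pairing with $f$ equals $\|f\|_{\Box^d_{H_1,\dots,H_d}(\X)}^{2^d}$ by \eqref{boxh}, so this seminorm vanishes. Conversely, if $\|f\|_{\Box^d_{H_1,\dots,H_d}(\X)} = 0$, the Cauchy-Schwarz-Gowers inequality \eqref{csg-2} (taking $f_{0^d} = f$) gives $\langle f, \mathcal{D}^d_{H_1,\dots,H_d}((f_\omega)_{\omega \neq 0^d})\rangle_{L^2(\X)} = 0$ for every choice of $f_\omega \in L^\infty(\X)$, so $f$ is $L^2$-orthogonal to $\mathcal{A}$. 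A standard monotone-class argument, using that $\mathcal{A}$ is a conjugation-closed subalgebra of $L^\infty(\X)$ containing $1$ and closed under uniformly $L^\infty$-bounded $L^2$-limits, then extends this orthogonality to all of $L^2(\ZZ^{{<}d}_{H_1,\dots,H_d}(\X))$; the same argument also identifies $\mathcal{A}$ with $L^\infty(\ZZ^{{<}d}_{H_1,\dots,H_d}(\X))$, yielding the $L^\infty$-characterisation. Uniqueness is automatic: any other factor $\Y$ with the stated property has $L^\infty(\X) \cap L^2(\Y)^\perp = L^\infty(\X) \cap L^2(\ZZ^{{<}d}_{H_1,\dots,H_d}(\X))^\perp$, and density of $L^\infty(\X)$ in $L^2(\X)$ forces $L^2(\Y) = L^2(\ZZ^{{<}d}_{H_1,\dots,H_d}(\X))$, whence $\Y \equiv \ZZ^{{<}d}_{H_1,\dots,H_d}(\X)$. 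The main obstacle is the multiplicative closure of $\mathcal{A}$, which is already supplied by Proposition \ref{convex}; once that is in hand the monotone-class steps are routine, though one has to execute them carefully so as to preserve the uniform $L^\infty$-bound and stay inside $\mathcal{A}$ rather than wandering into the merely $L^2$-closed linear span of dual functions.
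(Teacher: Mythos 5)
Your proposal is correct and follows essentially the same path as the paper's own proof: build the factor from (closures of) linear combinations of dual functions, invoke Proposition~\ref{convex} for multiplicative closure, use the self-pairing identity~\eqref{boxh} and the Cauchy--Schwarz--Gowers inequality~\eqref{csg-2} for the two directions of the characterising equivalence, and a Weierstrass/monotone-class step to identify the algebra with $L^\infty$ of the generated $\sigma$-algebra. The only cosmetic difference is that the paper defines the $\sigma$-algebra directly via indicator functions being bounded $L^2$-limits of $\R B$ and then checks $\R B$ is measurable via Weierstrass, whereas you generate the $\sigma$-algebra from the algebra $\mathcal{A}$ and then match $\mathcal{A}$ with $L^\infty$ of the factor; these are interchangeable arrangements of the same argument.
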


\begin{proof} The uniqueness is clear.  To prove existence, let $B \subset L^\infty(\X)$ be the set in Proposition \ref{convex}.
Define ${\mathcal Z}^{{<}d}_{H_1,\dots,H_d}(\X)$ to be the set of measurable sets $E$ such that $1_E$ is expressible as the limit (in $L^2(\X)$) of a uniformly bounded sequence in the set $\R B \coloneqq  \bigcup_{\lambda > 0} \lambda \cdot B$.  It is easy to see that ${\mathcal Z}^{{<}d}_{H_1,\dots,H_d}(\X)$ is a $G$-invariant $\sigma$-algebra, so that $\ZZ^{{<}d}_{H_1,\dots,H_d}(\X) \coloneqq  (X, {\mathcal Z}^{{<}d}_{H_1,\dots,H_d}(\X), \mu)$ is a $G$-system.  If $f \in \R B$, then from Proposition \ref{convex} any polynomial combination of $f$ lies in $\R B$ also; from the Weierstrass approximation theorem we conclude that the level sets $\{ f \geq \lambda \}$ are measurable in ${\mathcal Z}^{{<}d}_{H_1,\dots,H_d}(\X)$ for almost every $\lambda$.  In particular, $f$ itself is measurable in this factor, and hence $\R B \subset L^\infty( \ZZ^{{<}d}_{H_1,\dots,H_d}(\X) )$.

Let $f \in L^\infty(\X)$.  If $\|f\|_{\Box^d_{H_1,\dots,H_d}(\X)} > 0$, then by \eqref{boxh} $f$ is not orthogonal to the dual function ${\mathcal D}^d_{H_1,\dots,H_d}( (f)_{\omega \in \{0,1\}^d \backslash \{0\}^d} )$, which lies in $\R B$ and is hence in $L^\infty( \ZZ^{{<}d}_{H_1,\dots,H_d}(\X) )$.  This implies that $\E( f | \ZZ^{{<}d}_{H_1,\dots,H_d}(\X) ) \neq 0$.

Conversely, suppose that $\|f\|_{\Box^d_{H_1,\dots,H_d}(\X)} = 0$.  From the Cauchy-Schwarz-Gowers inequality \eqref{csg-3} we thus have
$$
\langle (f_\omega)_{\omega \in\{0,1\}^d} \rangle_{\Box^d_{H_1,\dots,H_d}(\X)} = 0$$
whenever $f_\omega \in L^\infty(\X)$ is such that $f_{0^d} = f$.  From \eqref{boxh} we conclude that $f$ is orthogonal to $B$, and hence to
$L^\infty( \ZZ^{{<}d}_{H_1,\dots,H_d}(\X) )$.  Thus $\E( f | \ZZ^{{<}d}_{H_1,\dots,H_d}(\X) ) = 0$.
\end{proof}

We have a basic corollary of Theorem \ref{charfac} (cf. \cite[Proposition 4.6]{hk}):

\begin{corollary}\label{locx}  Let $\X$ be a $G$-system, let $\Y$ be a factor of $\X$, and let $H_1,\dots,H_d$ be subgroups of $G$.  Then
$$
L^\infty( \ZZ^{{<}d}_{H_1,\dots,H_d}(\Y) ) = L^\infty(\Y) \cap L^\infty( \ZZ^{{<}d}_{H_1,\dots,H_d}(\X) )$$
or equivalently
$$ \ZZ^{{<}d}_{H_1,\dots,H_d}(\Y)  = \Y \wedge \ZZ^{{<}d}_{H_1,\dots,H_d}(\X).$$
\end{corollary}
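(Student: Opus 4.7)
The plan is to exploit two basic compatibility facts between $\Y$ and $\X$: first, the Gowers box inner products and seminorms agree on $L^\infty(\Y)$ whether computed inside $\Y$ or $\X$; and second, the dual functions $\mathcal{D}^d_{H_1,\dots,H_d}((f_\omega))$ coincide whenever all $f_\omega$ lie in $L^\infty(\Y)$. The first fact is immediate from \eqref{gip}, since $\nu$ is the pushforward of $\mu$ and the actions on $\Y$ are intertwined by the factor map, so the integrand is literally the same function integrated against the same measure. The second requires slightly more care: given $F := \mathcal{D}^{d,\Y}_{H_1,\dots,H_d}((f_\omega)_{\omega \neq 0^d}) \in L^\infty(\Y) \subset L^\infty(\X)$ constructed inside $\Y$, one tests \eqref{boxh} against an arbitrary $f_{0^d} \in L^\infty(\X)$, observes that replacing $f_{0^d}$ by $\E(f_{0^d}|\Y)$ leaves the Gowers inner product on $\X$ unchanged (the remaining product of shifts of the $\Y$-measurable $f_\omega$'s is itself $\Y$-measurable), then applies the first fact to pass to the inner product on $\Y$ and invokes the defining property of $F$ there. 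This verifies $F = \mathcal{D}^{d,\X}_{H_1,\dots,H_d}((f_\omega)_{\omega \neq 0^d})$.

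With these two compatibility ingredients in hand, the forward inclusion $L^\infty(\ZZ^{{<}d}_{H_1,\dots,H_d}(\Y)) \subseteq L^\infty(\Y) \cap L^\infty(\ZZ^{{<}d}_{H_1,\dots,H_d}(\X))$ follows directly from the characterization in Theorem \ref{charfac}: any $f \in L^\infty(\ZZ^{{<}d}_{H_1,\dots,H_d}(\Y))$ is an $L^2(\Y)$-limit of a uniformly bounded sequence of linear combinations of dual functions built from functions in $L^\infty(\Y)$; by the equality of dual functions just established, these are the same dual functions when viewed inside $\X$, so Theorem \ref{charfac} places the limit in $L^\infty(\ZZ^{{<}d}_{H_1,\dots,H_d}(\X))$, while membership in $L^\infty(\Y)$ is automatically preserved under uniformly bounded $L^2$-limits.

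For the reverse inclusion, let $f \in L^\infty(\Y) \cap L^\infty(\ZZ^{{<}d}_{H_1,\dots,H_d}(\X))$ and set $g := f - \E(f | \ZZ^{{<}d}_{H_1,\dots,H_d}(\Y))$. The already-established forward inclusion places $\E(f|\ZZ^{{<}d}_{H_1,\dots,H_d}(\Y))$ in the intersection $L^\infty(\Y) \cap L^\infty(\ZZ^{{<}d}_{H_1,\dots,H_d}(\X))$, so $g$ lies there as well. Applying Theorem \ref{charfac} inside $\Y$ to $g \in L^\infty(\Y)$ yields $\|g\|_{\Box^d_{H_1,\dots,H_d}(\Y)} = 0$, which by the compatibility of the seminorms translates to $\|g\|_{\Box^d_{H_1,\dots,H_d}(\X)} = 0$; Theorem \ref{charfac} inside $\X$ then forces $\E(g|\ZZ^{{<}d}_{H_1,\dots,H_d}(\X)) = 0$. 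Since $g$ is already $\ZZ^{{<}d}_{H_1,\dots,H_d}(\X)$-measurable, we conclude $g = 0$, so $f = \E(f|\ZZ^{{<}d}_{H_1,\dots,H_d}(\Y)) \in L^\infty(\ZZ^{{<}d}_{H_1,\dots,H_d}(\Y))$, as required.

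The main subtle point is the verification that dual functions built inside $\Y$ remain dual functions inside $\X$; everything else is essentially a formal consequence of the characterizations provided by Theorem \ref{charfac}. In particular, no new ergodic or combinatorial input is needed beyond the material already established, and the argument goes through uniformly in the group $G$ (no countability or amenability assumptions beyond those already in force).
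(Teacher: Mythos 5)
Your proof is correct and takes essentially the same route as the paper's, invoking Theorem \ref{charfac} in both directions. The only notable differences are that you carefully verify the compatibility claim that dual functions formed from $\Y$-measurable inputs agree whether computed in $\Y$ or in $\X$ (a point the paper leaves implicit), and your reverse inclusion routes through $\X$ via the forward inclusion before killing $g := f - \E(f\mid\ZZ^{<d}_{H_1,\dots,H_d}(\Y))$, whereas the paper kills it directly through orthogonality inside $\Y$ --- a trivially equivalent rearrangement.
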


\begin{proof}  If $f \in L^\infty( \ZZ^{{<}d}_{H_1,\dots,H_d}(\Y) )$, then $f \in L^\infty(\Y)$ and (by Theorem \ref{charfac}) $f$ can be expressed as the limit of dual functions of functions in $L^\infty(\Y)$, and hence in $L^\infty(\X)$, and so the inclusion
$$
L^\infty( \ZZ^{{<}d}_{H_1,\dots,H_d}(\Y) ) \subset L^\infty(\Y) \cap L^\infty( \ZZ^{{<}d}_{H_1,\dots,H_d}(\X) )$$
then follows from another application of Theorem \ref{charfac}.  Conversely, if $f \in L^\infty(\Y) \cap L^\infty( \ZZ^{{<}d}_{H_1,\dots,H_d}(\X) )$, then by Theorem \ref{charfac}, $f$ is orthogonal to all functions in $L^\infty(\X)$ of vanishing $\Box^d_{H_1,\dots,H_d}$ norm, and hence also to all functions in $L^\infty(\Y)$ of vanishing $\Box^d_{H_1,\dots,H_d}$ norm.  The claim then follows from another application of Theorem \ref{charfac}.
\end{proof}

We now can deduce Theorem \ref{concat-uap-erg} and Theorem \ref{concat-box-erg} from Theorem \ref{concat-uap} and Theorem \ref{concat-box} respectively.  At this point we will begin to need the hypothesis that $G$ is the sum of a finitely generated group and a profinite group.  We just give the argument for Theorem \ref{concat-uap-erg}; the argument for Theorem \ref{concat-box-erg} is completely analogous and is left to the reader.  Let $(\X,T)$, $G$, $H_1, H_2, d_1, d_2$ be as in Theorem \ref{concat-uap-erg}.  By Corollary \ref{locx}, we may assume without loss of generality that
\begin{equation}\label{xop}
 \X \equiv \ZZ^{{<}d_1}_{H_1}(\X) \equiv \ZZ^{{<}d_2}_{H_2}(\X).
\end{equation}
Indeed, if we set 
$$ \X' := \ZZ^{{<}d_1}_{H_1}(\X) \wedge \ZZ^{{<}d_2}_{H_2}(\X),$$
we see from Corollary \ref{locx} that $\X'$ obeys the condition \eqref{xop}, and that $\ZZ^{{<}d_1+d_2-1}_{H_1+H_2}(\X')$ is a factor of $\ZZ^{{<}d_1+d_2-1}_{H_1+H_2}(\X)$, so the general case of Theorem \ref{concat-box-erg} can be derived from this special case.
By Theorem \ref{charfac}, it suffices to show that if $f,g \in L^\infty(\X)$ and $\|g\|_{U^{d_1+d_2-1}_{H_1+H_2}(\X)}=0$, then $f,g$ are orthogonal.

For $i=1,2$, let $B_i$ be the closed convex hull (in $L^2(\X)$) of the dual functions
${\mathcal D}^{d_i}_{H_i}( (f_\omega)_{\omega \in \{0,1\}^{d_i} \backslash \{0\}^{d_i}} )$ with $f_\omega$ in the closed unit ball of $L^\infty(\X)$.  By Theorem \ref{charfac}, we see that for every $\delta>0$, there exists a real number $F(\delta)$ such that $f$ lies within $\delta$ in $L^2(\X)$ norm of both $F(\delta) \cdot B_1$ and $F(\delta) \cdot B_2$.  On the other hand, from the Cauchy-Schwarz-Gowers inequality \eqref{csg} one has
$$ |\langle f_i, f' \rangle_{L^2(\X)}| \leq \|f'\|_{U^{d_i}_{H_i}(\X)}$$
whenever $i=1,2$ and $f_i \in B_i$, and $f'$ is in the closed unit ball of $L^\infty(\X)$.  We conclude that
$$ |\langle f, f' \rangle_{L^2(\X)}| \leq \delta + F(\delta) \eps$$
whenever $f'$ is in the closed unit ball of $L^\infty(\X)$ with $\|G\|_{U^{d_i}_{H_i}(\X)} \leq \eps$ for some $i=1,2$.  By \eqref{dod} and \eqref{upqd} we conclude that
$$
\| f\|_{U^{d_i}_{Q_i}(\X)^*,\eps} \leq \inf_{\delta>0} (\delta + F(\delta) \eps) $$
for any coset progression $Q_i$ in $H_i$.  The right-hand side goes to zero as $\eps \to 0$. 

Since $G$ is the sum of a finitely generated group and a profinite group, the subgroups $H_1,H_2$ are also.  In particular, for each $i=1,2$, we may obtain a F{\o}lner sequence $Q_{i,n}$ for $H_i$ of coset progressions of bounded rank (thus for any $g \in H_i$, $Q_{i,n}$ and $Q_{i,n}+h$ differ (as multisets) by $o( |Q_{i,n}|)$ elements as $n \to \infty$).  (Indeed, if $H_i$ is finitely generated, one can use ordinary progressions as the F{\o}lner sequence, whereas if $H_i$ is at most countable and bounded torsion, one can use subgroups for the F{\o}lner sequence, and the general case follows by addition.)  Applying Theorem \ref{concat-uap}, we conclude that
$$
\| f\|_{U^{d_1+d_2-1}_{Q_{1,n}+Q_{2,n}}(\X)^*,\eps} \leq c(\eps)$$
for some $c(\eps)$ independent of $n$ that goes to zero as $\eps \to 0$.  Since the $Q_{i,n}$ are F{\o}lner sequences for $H_i$, $Q_{1,n}+Q_{2,n}$ is a F{\o}lner sequence for $H_1+H_2$.  In particular, by \eqref{fol-joint} one has
$$
\lim_{n \to \infty} \| g \|_{U^{d_1+d_2-1}_{Q_{1,n}+Q_{2,n}}(\X)} = \lim_{Q \to H_1+H_2} \| g \|_{U^{d_1+d_2-1}_Q(\X)}
= \|g\|_{U^{d_1+d_2-1}_{H_1+H_2}(\X)} = 0$$
and thus
$$ |\langle f,g \rangle_{L^2(\X)}| \leq c(\eps)$$
for every $\eps>0$.  Sending $\eps \to 0$, we obtain the claim.

\section{An ergodic theory argument}\label{erg2-sec}

We now give an ergodic theory argument that establishes Theorem \ref{concat-uap-erg-weak}.  The arguments here rely heavily on those in \cite{hk}, but are not needed elsewhere in this paper. 

For this section it will be convenient to restrict attention to $G$-systems $(\X,T)$ in which $\X$ is a compact metric space with the Borel $\sigma$-algebra, in order to access tools such as disintegration of measure.  The requirement of being a compact metric space is stronger than our current hypothesis that $\X$ is countably generated modulo null sets; however, it is known (see \cite[Proposition 5.3]{furst} that every $G$-system that is countably generated modulo null sets is equivalent (modulo null sets) to another $G$-system $(\X',T')$ in which $\X'$ is a compact metric space with the Borel $\sigma$-algebra.  The corresponding characteristic factors such as $\ZZ^{{<}d_1}_{H_1}(\X)$ are also equivalent up to null sets (basically because the Gowers-Host-Kra seminorms are equivalent).  Because of this, we see that to prove Theorem \ref{concat-uap-erg-weak} it suffices to do so when $\X$ is a compact metric space.

We now recall the construction of cubic measures from \cite{hk}, which in \cite{host} was generalised\footnote{Strictly speaking, the constructions in \cite{hk}, \cite{host} focus on the special case when $H_1,\dots,H_d$ are cyclic groups, but they extend without much difficulty to the more general setting described here.} to our current setting of arbitrary actions of multiple subgroups of an at most countable additive group.  

\begin{definition}[Cubic measures]  Let $(\X,T)$ be a $G$-system with $\X = (X, {\mathcal B}, \mu)$, and let $H_1,\dots,H_d$ be subgroups of $G$.  We define the $G$-system $(\X^{[d]}_{H_1,\dots,H_d}, T)$ by setting $\X^{[d]}_{H_1,\dots,H_d} \coloneqq  (X^{[d]}, {\mathcal B}^{[d]}, \mu^{[d]}_{H_1,\dots,H_d})$, where $X^{[d]} \coloneqq  X^{\{0,1\}^d}$ is the set of tuples $(x_\omega)_{\omega \in \{0,1\}^d}$ with $x_\omega\in X$ for all $\omega \in \{0,1\}^d$, ${\mathcal B}^{[d]}$ is the product measure, and $\mu^{[d]}_{H_1,\dots,H_d}$ is the unique probability measure such that
\begin{equation}\label{foa}
 \int_{X^{[d]}} \bigotimes_{\omega \in \{0,1\}^d} {\mathcal C}^{|\omega|} f_\omega\ d\mu^{[d]}_{H_1,\dots,H_d} = \langle (f_\omega)_{\omega \in \{0,1\}^d} \rangle_{\Box^d_{H_1,\dots,H_d}(\X)}
\end{equation}
for all $f_\omega \in L^\infty(\X)$, where the tensor product $\bigotimes_{\omega \in \{0,1\}^d} {\mathcal C}^{|\omega|} f_\omega$ is defined as
$$ \bigotimes_{\omega \in \{0,1\}^d} {\mathcal C}^{|\omega|} f_\omega( (x_\omega)_{\omega \in \{0,1\}^d} ) \coloneqq  
\prod_{\omega \in \{0,1\}^d} {\mathcal C}^{|\omega|} f_\omega(x_\omega).$$
Finally, the shift $T$ on $\X^{[d]}_{H_1,\dots,H_d}$ is defined via the diagonal action:
$$ T^g( x_\omega)_{\omega \in \{0,1\}^d} \coloneqq  (T^g x_\omega)_{\omega \in \{0,1\}^d}.$$
If $H_1=\dots=H_d=H$, we abbreviate $\X^{[d]}_{H_1,\dots,H_d}$ as $\X^{[d]}_H$, and $\mu^{[d]}_{H_1,\dots,H_d}$ as $\mu^{[d]}_H$.
\end{definition}

The uniqueness of the measure $\mu^{[d]}_{H_1,\dots,H_d}$ is clear.  To construct the measure $\mu^{[d]}_{H_1,\dots,H_d}$, one can proceed inductively as in \cite{hk}, setting $\mu^{[0]} = \mu$ and then defining $\mu^{[d]}_{H_1,\dots,H_d}$ for $d \geq 1$ to be the relative product of two copies of $\mu^{[d-1]}_{H_1,\dots,H_{d-1}}$ over the $H_d$-invariant factor of ${\mathcal B}^{[d-1]}$; this relative product can be constructed if $\X$ is a compact metric space.  Alternatively, one can verify that the formula \eqref{foa} defines a probability premeasure on the Boolean subalgebra of ${\mathcal B}^{[d]}$ generated by the product sets $\prod_{\omega \in \{0,1\}^d} E_\omega$ with $E_\omega \in {\mathcal B}$, extending this premeasure to a probability measure $\mu^{[d]}_{H_1,\dots,H_d}$ on ${\mathcal B}^{[d]}$ by the Carath\'eodory extension theorem, and then verifying that the resulting measure continues to obey \eqref{foa}.  We leave the details of these arguments to the interested reader.  Once this measure is constructed, it is easy to see that the diagonal action of $T$ preserves the measure $\mu^{[d]}_{H_1,\dots,H_d}$, and so $(\X^{[d]}_{H_1,\dots,H_d},T)$ is indeed a $G$-system.  Observe also that every $k$-dimensional face of $\{0,1\}^d$ with $0 \leq k \leq d$ induces an obvious factor map from $\X^{[d]}_{H_1,\dots,H_d}$ to $\X^{[k]}_{H_{i_1},\dots,H_{i_k}}$, where $1 \leq i_1 < \dots < i_k \leq d$ are the indices of the coordinate vectors parallel to this face.  Also, from Theorem \ref{gawd} we see that permutation of the groups $H_1,\dots,H_d$ leaves the systems $(\X^{[d]}_{H_1,\dots,H_d},T)$ unchanged up to the obvious relabeling isomorphism, thus for instance $\X^{[2]}_{H_1,H_2} \equiv \X^{[2]}_{H_2,H_1}$, with the isomorphism given by the map $(x_{00}, x_{10}, x_{01}, x_{11}) \mapsto (x_{00}, x_{01}, x_{10}, x_{11})$.

One can informally view the probability space $\X^{[d]}_{H_1,\dots,H_d}$ as describing the distribution of certain $d$-dimensional ``parallelopipeds'' in $X$, where the $d$ ``directions'' of the parallelopiped are ``parallel'' to $H_1,\dots,H_d$.  We will also need the following variant of these spaces, which informally describes the distribution of ``$L$-shaped'' objects in $X$.

\begin{definition}[$L$-shaped measures]\label{L-def}  Let $(\X,T)$ be a $G$-system with $\X = (X,{\mathcal B},\mu)$, and let $H,K$ be subgroups of $G$.  We define the system $(\X^L_{H,K},T)$ by setting $\X^{L}_{H,K} \coloneqq  (X^{L}, {\mathcal B}^{L}, \mu^{L}_{H,K})$, where $X^{L} \coloneqq  X^{\{00,01,10\}}$ is the set of tuples $(x_{00}, x_{10}, x_{01})$ with $x_{00}, x_{10}, x_{01} \in X$, ${\mathcal B}^{L}$ is the product measure, and $\mu^{L}_{H,K}$ is the unique probability measure such that
\begin{equation}\label{foa-l}
 \int_{X^{L}} f_{00} \otimes \overline{f_{10}} \otimes \overline{f_{01}}\ d\mu^{L}_{H,K} = \langle f_{00}, f_{10}, f_{01},1 \rangle_{\Box^2_{H,K}(\X)}
\end{equation}
for all $f_{00}, f_{01}, f_{10} \in L^\infty(\X)$.  The shift $T$ on $\X^L_{H,K}$ is defined by the diagonal action.
\end{definition}

The system $\X^L_{H,K}$ is clearly a factor of $\X^{[2]}_{H,K}$, and also has factor maps to $\X^{[1]}_H$ and $\X^{[1]}_K$ given by $(x_{00},x_{10}, x_{01}) \mapsto (x_{00}, x_{10})$ and $(x_{00}, x_{10}, x_{01}) \mapsto (x_{00}, x_{01})$ respectively.  A crucial fact for the purposes of establishing concatenation is that $\X^L_{H,K}$ additionally has a \emph{third} factor map to the space $\X^{[1]}_{H+K}$:

\begin{lemma}[Third factor of $L$-shapes]\label{L-proj}  Let $(\X,T)$ be a $G$-system, and let $H,K$ be subgroups of $G$.  Then the map $(x_{00}, x_{10}, x_{01}) \mapsto (x_{10}, x_{01})$ is a factor map from $(\X^L_{H,K},T)$ to $(\X^{[1]}_{H+K},T)$.
\end{lemma}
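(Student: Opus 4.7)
The plan is to verify the two defining properties of a factor map for the coordinate projection $\pi\colon \X^L_{H,K} \to \X^{[1]}_{H+K}$, $(x_{00}, x_{10}, x_{01}) \mapsto (x_{10}, x_{01})$. Measurability and $T$-equivariance are immediate, since both shifts are diagonal and $\pi$ simply drops a coordinate; the only real content is the pushforward identity $\pi_* \mu^L_{H,K} = \mu^{[1]}_{H+K}$. By the characterisation \eqref{foa} of $\mu^{[1]}_{H+K}$ in terms of the Gowers inner product, this reduces to the assertion
$$ \langle 1, \overline{g_0}, g_1, 1 \rangle_{\Box^2_{H,K}(\X)} = \langle g_0, g_1 \rangle_{\Box^1_{H+K}(\X)} $$
for all $g_0, g_1 \in L^\infty(\X)$; the left side arises by applying \eqref{foa-l} with $f_{00} = 1$, $f_{10} = \overline{g_0}$, $f_{01} = g_1$, which yields $\int g_0(x_{10}) \overline{g_1(x_{01})}\, d\mu^L_{H,K}$, the pushforward integral testing $\pi_* \mu^L_{H,K}$ against the tensor $g_0 \otimes \overline{g_1}$.

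To prove this Gowers identity, I will expand the left-hand side using \eqref{gip} with $d = 2$. The $f_{00} = f_{11} = 1$ factors drop out, leaving
$$ \langle 1, \overline{g_0}, g_1, 1 \rangle_{\Box^2_{H,K}(\X)} = \lim_{(Q_1,Q_2) \to (H,K)} \E_{h^0_i, h^1_i \in Q_i} \int T^{h^1_1 + h^0_2} g_0 \cdot \overline{T^{h^0_1 + h^1_2} g_1}\, d\mu. $$
Translation-invariance of $\mu$ lets me shift the integrand by $-h^0_1 - h^0_2$, reducing this in the limit to $\int (A_1 g_0)\overline{(A_2 g_1)}\, d\mu$, where $A_i \coloneqq \E_{k_i \in Q_i - Q_i} T^{k_i}$ with $k_1 = h^1_1 - h^0_1$ and $k_2 = h^1_2 - h^0_2$. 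As $(Q_1,Q_2) \to (H, K)$, the operators $A_1, A_2$ converge strongly in $L^2(\X)$ to the orthogonal projections $P_H, P_K$ onto $L^2(\X^H), L^2(\X^K)$ respectively, by the mean ergodic theorem \eqref{met} applied along the (still F\o lner-type) nets $Q_i - Q_i$. Consequently the left side of the claimed identity equals $\int g_0 \cdot \overline{P_K P_H g_1}\, d\mu$.

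The right side, by \eqref{gip} at $d = 1$ and the same mean ergodic argument, equals $\int g_0 \cdot \overline{P_{H+K} g_1}\, d\mu$. Thus the identity reduces to the projection equality $P_H P_K = P_{H+K}$, which is the main substantive point. The key observation is that the finite-scale averaging operators $A_1$ and $A_2$ commute (being averages of commuting translations), so passing to the strong limit gives $P_H P_K = P_K P_H$; hence $P_H P_K$ is an orthogonal projection whose range is $L^2(\X^H) \cap L^2(\X^K)$. Since a function is $(H+K)$-invariant iff it is simultaneously $H$-invariant and $K$-invariant (any $h+k \in H+K$ factors as $T^h T^k$, and conversely $H, K \subset H+K$), this intersection equals $L^2(\X^{H+K})$, giving $P_H P_K = P_{H+K}$ as required. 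The main obstacle is precisely this commuting-projection identity together with careful bookkeeping in the $Q_i - Q_i$ averaging step; both are short, but it is essential to notice that the special structure with two of the four Gowers slots filled by $1$ is what collapses the $d=2$ box norm into a product of two independent $d=1$ averages.
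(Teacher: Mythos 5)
Your proof is correct and follows essentially the same route as the paper: expand the Gowers inner product and pass to the limit via Proposition \ref{strong} and \eqref{met} to reduce everything to the conditional-expectation identity $\E(\E(\cdot\mid\X^K)\mid\X^H) = \E(\cdot\mid\X^{H+K})$, that is, $P_H P_K = P_{H+K}$. The only real difference is that you supply a proof of that identity via the commuting-projections argument (noting $T^k$ preserves the $H$-invariant subspace and hence commutes with $P_H$), whereas the paper merely asserts that it can be established from \eqref{met}.
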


Informally, this lemma reflects the obvious fact that if $x_{00}$ and $x_{10}$ are connected to each other by an element of the $H$ action, and $x_{00}$ and $x_{01}$ are connected to each other by an element of the $K$ action, then $x_{10}$ and $x_{01}$ are connected to each other by an element of the $H+K$ action.

\begin{proof}  If $f_{00}, f_{10}, f_{01} \in L^\infty(\X)$ are real-valued, then by Theorem \ref{gawd}, Definition \ref{gaw}, and Definition \ref{L-def} we have
\begin{align*}
 \int_{X^L} f_{00} \otimes f_{10} \otimes f_{01}\ d\mu^L_{H,K} &= \lim_{(Q,R) \to (H,K)} \E_{h,h' \in Q} \E_{k,k' \in R} \int_X T^{h+k} f_{00} T^{h'+k} f_{10} T^{h+k'} f_{01}\ d\mu \\
&=
\lim_{(Q,R) \to (H,K)} \E_{a \in Q-Q} \E_{b \in R-R} \int_X f_{00} T^a f_{10} T^b f_{01}\ d\mu \\
&=
\lim_{(Q,R) \to (H,K)} \int_X f_{00} (\E_{a \in Q-Q} T^a f_{10}) (\E_{b \in R-R} T^b f_{01})\ d\mu\\
&=
\lim_{(Q,R) \to (H,K)} \int_X f_{00} {\mathcal D}^{[1]}_Q f_{10} {\mathcal D}^{[1]}_R f_{01}\ d\mu \\
&= \int_X f_{00} {\mathcal D}^{[1]}_H f_{10} {\mathcal D}^{[1]}_K f_{01}\ d\mu\\
&= 
\int_X f_{00} \E(f_{10}|\X^H) \E(f_{01}|\X^K)\ d\mu
\end{align*}
where we use Proposition \ref{strong} and \eqref{met} in the last two lines.  Specialising to the case $f_{00}=1$, we conclude in particular that
\begin{align*}
 \int_{X^L} 1 \otimes f_{10} \otimes f_{01}\ d\mu^L_{H,K} &= \int_X \E(f_{10}|\X^H) \E(f_{01}|\X^K)\ d\mu  \\
&= \int_X \E(f_{10}|\X^H) \E( \E(f_{01}|\X^{K}) | \X^H )\ d\mu \\
&= \int_X \E(f_{10}|\X^H) \E(f_{01}|\X^{H+K})\ d\mu \\
&= \int_X \E(\E(f_{10}|\X^H)|\X^{H+K}) \E(f_{01}|\X^{H+K})\ d\mu \\
&= \int_X \E(f_{10}|\X^{H+K}) \E(f_{01}|\X^{H+K})\ d\mu\\
& = \int_X f_{10} {\mathcal D}^{[1]}_{H+K} f_{01}\ d\mu \\
&= \langle f_{10}, f_{01} \rangle_{\Box^{[1]}_{H+K}(\X)} \\
&= \int_{X^{[1]}} f_{10} \otimes f_{01}\ d\mu^{[1]}_{H+K}(\X),
\end{align*}
where the key identity $\E( \E(f_{01}|\X^{K}) | \X^H ) = \E(f_{01}|\X^{H+K})$ can be established for instance from \eqref{met}, and \eqref{met} is also used in the third to last line.  The claim follows.
\end{proof}

We now begin the proof of Theorem \ref{concat-uap-erg-weak}.
Let $(\X,T)$, $G$, $H_1,H_2$, $d_1,d_2$ be as in Theorem \ref{concat-uap-erg}.  We begin with a few reductions.  By induction we may assume that the claim is already proven for smaller values of $d_1+d_2$.
By shrinking $G$ if necessary, we may assume that $G = H_1 + H_2$ (note that replacing $G$ with $H_1+H_2$ does not affect factors such as $\ZZ^{{<}d_1}_{H_1}(\X)$). 

Next, we observe that we may reduce without loss of generality to the case where the action of $G$ on $(\X,T)$ is ergodic.  To see this, we argue as follows.  As $\X$ was assumed to be a compact metric space, we have an ergodic decomposition $\mu = \int_Y \mu_y\ d\nu(y)$ for some probability space $(Y,\nu)$ (the invariant factor $\X^G$ of $X$), and some probability measures $\mu_y$ on $\X$ depending measurably on $y$, and ergodic in $G$ for almost every $y$; see \cite[Theorem 5.8]{furst} or \cite[Theorem 6.2]{eins}.  Let $\X_y = (X, {\mathcal B}, \mu_y)$ denote the components of this decomposition.  From \eqref{ghk-def}, \eqref{boxnorm-def} we have the identity
$$ \| f \|_{U^{d_1}_{H_1}(\X)}^{2^{d_1}} = \int_Y \| f \|_{U^{d_1}_{H_1}(\X_y)}^{2^{d_1}}\ d\nu(y)$$
for any $f \in L^\infty(\X)$.  We conclude that a bounded measurable function $f \in L^\infty(\X)$ vanishes in $U^{d_1}_{H_1}(\X)$ if and only if it vanishes in $U^{d_1}_{H_1}(\X_y)$ for almost every $y$.  By \eqref{fuz}, this implies that $f$ is measurable (modulo null sets) with respect to $\ZZ^{{<}d_1}_{H_1}(\X)$ if and only if it is measurable (modulo null sets) with respect to $\ZZ^{{<}d_1}_{H_1}(\X_y)$ for almost every $y$.  Similarly for $\ZZ^{{<}d_2}_{H_2}(\X)$ and $\ZZ^{{<}d}_{H_1+H_2}(\X)$.  From this it is easy to see that Theorem \ref{concat-uap-erg} for $\X$ will follow from Theorem \ref{concat-uap-erg} for almost every $\X_y$.  Thus we may assume without loss of generality that the system $(\X,T)$ is $G$-ergodic.

Next, if we set $\X' \coloneqq  \ZZ^{{<}d_1}_{H_1}(\X) \wedge \ZZ^{{<}d_2}_{H_2}(\X)$, we see from Corollary \ref{locx} (as in the proof of Theorem \ref{concat-uap-erg}) that
$$ \ZZ^{{<}d_1+d_2-1}_{H_1+H_2}(\X') = \X' \wedge \ZZ^{{<}d_1+d_2-1}_{H_1+H_2}(\X).$$
Thus we may replace $\X$ by $\X'$ and assume without loss of generality that
$$ \X \equiv \ZZ^{{<}d_1}_{H_1}(\X) \equiv \ZZ^{{<}d_2}_{H_2}(\X).$$
Following \cite{hk} (somewhat loosely\footnote{In particular, what we call order $<d$ here would be called order $d-1$ in \cite{hk}.}), let us say that a $G$-system $\X$ is of \emph{$H$-order ${<} d$} if $\X \equiv \ZZ^{{<}d}(\X)$; thus we are assuming that $\X$ has $H_1$-order ${<}d_1$ and $H_2$-order ${<}d_2$.   Our task is now to show that $\X$ has $G$-order ${<}d_1+d_2-1$.  For future reference we note from Corollary \ref{locx} that any factor of a $G$-system with $H$-order ${<}d$ also has $H$-order ${<}d$.

For future reference, we observe that the property of having $G$-order ${<}d$ is also preserved under taking Host-Kra powers:

\begin{lemma}\label{glob}  Let $H,H'$ be subgroups of $G$. Let $\Y$ be a $G$-system with $H$-order ${<}d$ for some $d \geq 1$.  Then $\Y^{[1]}_{H'}$ is also of $H$-order ${<}d$.
\end{lemma}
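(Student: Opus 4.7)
The plan is to deduce Lemma \ref{glob} directly from Corollary \ref{locx}, using the two coordinate projections $\pi_0, \pi_1 \colon \Y^{[1]}_{H'} \to \Y$. My first step is to verify that both $\pi_0\colon (y_0,y_1) \mapsto y_0$ and $\pi_1\colon (y_0,y_1) \mapsto y_1$ are factor maps of $G$-systems: $G$-equivariance is automatic from the fact that the $G$-action on $\Y^{[1]}_{H'}$ is diagonal, and the pushforward identity $(\pi_i)_* \mu^{[1]}_{H'} = \mu$ follows from the defining formula \eqref{foa} at $d=1$ with one slot specialised to the constant function $1$, using that $\langle f, 1 \rangle_{\Box^1_{H'}(\Y)} = \int f\, d\mu$ (immediate from the definition together with measure-preservation of the $G$-action).

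Given these factor maps, I would apply Corollary \ref{locx} with the ambient system $\X = \Y^{[1]}_{H'}$ and $\Y$ viewed as a factor via $\pi_0$, obtaining $L^\infty(\ZZ^{{<}d}_H(\Y)) = L^\infty(\Y) \cap L^\infty(\ZZ^{{<}d}_H(\Y^{[1]}_{H'}))$. Since $\Y$ is of $H$-order ${<}d$ by hypothesis, the left-hand side collapses to $L^\infty(\Y)$, forcing every $\phi \otimes 1$ with $\phi \in L^\infty(\Y)$ to lie in $L^\infty(\ZZ^{{<}d}_H(\Y^{[1]}_{H'}))$. Repeating the argument with $\pi_1$ places every $1 \otimes \overline{\psi}$ in the same algebra.

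To finish, I would use that $L^\infty(\ZZ^{{<}d}_H(\Y^{[1]}_{H'}))$ is an algebra, so it contains every pointwise product $(\phi \otimes 1)(1 \otimes \overline{\psi}) = \phi \otimes \overline{\psi}$ and hence every finite linear combination of such rectangles. Because ${\mathcal B}^{[1]}$ is the product $\sigma$-algebra ${\mathcal B} \otimes {\mathcal B}$, uniformly bounded linear combinations of rectangles are $L^2$-dense in the unit ball of $L^\infty(\Y^{[1]}_{H'})$; and a uniformly bounded $L^2$-limit of ${\mathcal Z}^{{<}d}_H$-measurable functions is itself ${\mathcal Z}^{{<}d}_H$-measurable (by passing to an almost-everywhere convergent subsequence). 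This yields $L^\infty(\Y^{[1]}_{H'}) \subseteq L^\infty(\ZZ^{{<}d}_H(\Y^{[1]}_{H'}))$, which is the desired conclusion that $\Y^{[1]}_{H'}$ is of $H$-order ${<}d$.

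No step in this plan is genuinely hard; the only mild subtlety is confirming that $\pi_0$ and $\pi_1$ are measure-preserving, which is a short direct computation from \eqref{foa}. All the substantive content has already been packaged into Corollary \ref{locx}, and what remains is a formal algebra-and-density argument.
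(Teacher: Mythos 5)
Your proof is correct and takes essentially the same route as the paper: it identifies the two coordinate projections as factor maps onto copies of $\Y$, applies Corollary \ref{locx} to each, and then observes that the join of the two copies is all of $\Y^{[1]}_{H'}$. The paper phrases the last step in the lattice language of factors ($\Y_1 \vee \Y_2 = \Y^{[1]}_{H'}$) rather than spelling out the density-of-rectangles argument, but the content is identical; you also supply the measure-preservation check for the projections, which the paper leaves implicit.
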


\begin{proof}  The space $\Y^{[1]}_{H'}$ contains two copies of $\Y$ as factors, which we will call $\Y_1$ and $\Y_2$.  By Corollary \ref{locx}, we have $\Y_i = \ZZ^{{<}d}_H(\Y_i) \leq \ZZ^{{<}d}_H(\Y)$ for $i=1,2$, so that $\Y_1 \vee \Y_2 \leq \ZZ^{{<}d}_H(\Y)$. But $\Y_1 \vee \Y_2 = \Y$, and the claim follows.
\end{proof}

If $d_1=1$, then every function in $L^\infty(\X)$ is $H_1$-invariant, and it is then easy to see that the $U^{d_2}_{H_2}(\X)$ and $U^{d_2}_{H_1+H_2}(\X)$ seminorms agree.  By \eqref{fuz}, we conclude that $\ZZ^{{<}d_2}_{H_2}(\X)$ is equivalent to $\ZZ^{{<}d_2}_{H_1+H_2}(\X)$, and the claim follows.  Similarly if $d_2=1$.  Thus we may assume that $d_1,d_2 > 1$.  

We now set
\begin{equation}\label{ydef}
 \Y \coloneqq  \ZZ^{{<}d_1+d_2-2}_{H_1+H_2}(\X); 
\end{equation}
From the induction hypothesis we have
\begin{equation}\label{yo}
\ZZ^{{<}d_1-1}_{H_1}(\X), \ZZ^{{<}d_2-1}_{H_2}(\X) \leq \Y \leq \X.
\end{equation}

We now analyse $\X$ as an extension over $\Y$, following the standard path in \cite{fw}, \cite{hk}.  Given a subgroup $H$ of $G$, we say (as in \cite{fk}) that $\X$ is a \emph{compact extension} of $\Y$ with respect to the $H$ action if any function in $L^\infty(\X)$ can be approximated to arbitrary accuracy (in $L^2(\X)$) by an $H$-invariant finite rank module over $L^\infty(\Y)$.  We have

\begin{proposition}[Compact extension]  Let $i=1,2$.  Then $\X$ is a compact extension of $\Y$ with respect to the $H_i$ action.
 \end{proposition}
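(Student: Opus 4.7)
By symmetry between $H_1$ and $H_2$ I focus on the case $i=1$. The strategy is to invoke a Host--Kra type structural fact: any $G$-system of $H$-order ${<}d$ is a compact (in fact isometric) extension of $\ZZ^{{<}d-1}_H$ with respect to the $H$ action. Applied to $\X$, which has $H_1$-order ${<}d_1$ by assumption, this gives that $\X$ is a compact extension of $\ZZ^{{<}d_1-1}_{H_1}(\X)$ over $H_1$. Combined with the inclusion $\ZZ^{{<}d_1-1}_{H_1}(\X) \leq \Y$ from \eqref{yo}, the desired compact extension over $\Y$ follows immediately, since any $H_1$-invariant finite rank module over the subalgebra $L^\infty(\ZZ^{{<}d_1-1}_{H_1}(\X))$ is automatically such a module over the larger algebra $L^\infty(\Y)$.

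For the structural claim itself I would proceed as follows. By Theorem \ref{charfac}, $L^\infty(\X) = L^\infty(\ZZ^{{<}d_1}_{H_1}(\X))$ is the $L^2$-closure of uniformly bounded linear combinations of dual functions $F = {\mathcal D}^{d_1}_{H_1}((f_\omega)_{\omega \in \{0,1\}^{d_1} \setminus \{0^{d_1}\}})$ with $f_\omega$ in the unit ball of $L^\infty(\X)$; it therefore suffices to place each such $F$ inside an $H_1$-invariant finite rank $L^\infty(\ZZ^{{<}d_1-1}_{H_1}(\X))$-module. For a given $h \in H_1$, the diagonal equivariance $T^h {\mathcal D}^{d_1}_{H_1} = {\mathcal D}^{d_1}_{H_1} \circ T^h$ combined with the telescoping $T^h f_\omega = f_\omega + (T^h - 1) f_\omega$ expands $T^h F$ multilinearly as $F$ plus a finite sum of dual functions ${\mathcal D}^{d_1}_{H_1}((g_\omega)_\omega)$ in which at least one entry $g_{\omega_0}$ equals $(T^h-1)f_{\omega_0}$. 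The key input is the Host--Kra lemma that any dual function with such a ``derivative'' entry in some coordinate $\omega_0 \neq 0^{d_1}$ is measurable with respect to $\ZZ^{{<}d_1-1}_{H_1}(\X)$; this can be derived from the recursive identity
\[
\| g \|_{U^{d_1}_{H_1}(\X)}^{2^{d_1}} = \lim_{Q \to H_1} \E_{h' \in Q-Q} \bigl\| g\, \overline{T^{h'} g} \bigr\|_{U^{d_1-1}_{H_1}(\X)}^{2^{d_1-1}}
\]
together with the Cauchy--Schwarz--Gowers inequality \eqref{csg-2} applied in the coordinate carrying the derivative. Consequently the $H_1$-orbit of $F$ is contained in the finite rank $L^\infty(\ZZ^{{<}d_1-1}_{H_1}(\X))$-module generated by $F$ together with the finitely many lower-order dual functions arising from this expansion, as required.

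The main obstacle is the absence of $H_1$-ergodicity (we only have $G$-ergodicity), since the classical Host--Kra structure theorem is usually stated in the ergodic setting. I would address this by passing to the $H_1$-ergodic decomposition of $\X$, available because we have reduced to $\X$ being a compact metric probability space, carrying out the analysis above on each ergodic component, and reassembling: the characteristic factors $\ZZ^{{<}d}_{H_1}(\X)$ and the compact extension property respect ergodic decompositions, and the rank of the resulting $H_1$-invariant module is bounded uniformly in the component by the number of ``derivative'' terms in the multilinear expansion, a quantity depending only on $d_1$.
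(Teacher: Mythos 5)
Your opening reduction is the paper's exact strategy: the paper's proof is a single line, citing \cite[Lemma 6.2]{hk} for the statement that a system of $H_i$-order ${<}d_i$ is a compact extension of $\ZZ^{{<}d_i-1}_{H_i}(\X)$ over $H_i$, and then invoking $\ZZ^{{<}d_i-1}_{H_i}(\X) \leq \Y$ from \eqref{yo}. Your observation that an $H_1$-invariant finite rank module over a subalgebra remains one over the larger algebra is correct and completes this step.

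The gap is in your sketch of the Host--Kra lemma itself. The claimed ``key input'' --- that a dual function ${\mathcal D}^{d_1}_{H_1}\bigl((g_\omega)\bigr)$ in which some entry $g_{\omega_0}$ with $\omega_0 \neq 0^{d_1}$ is a derivative $(T^h-1)f_{\omega_0}$ is automatically measurable with respect to $\ZZ^{{<}d_1-1}_{H_1}(\X)$ --- is not how \cite[Lemma 6.2]{hk} proceeds, and I do not believe it is true. The recursive identity you quote and the Cauchy--Schwarz--Gowers inequality control pairings against a function placed in the \emph{distinguished} coordinate $0^{d_1}$ when that function has small $U^{d_1-1}_{H_1}$ norm; they say nothing about the measurability class of a dual function with a derivative inserted into one of the \emph{other} coordinates. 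Indeed, shifting the averaging variable dual to $\omega_0$ shows that inserting $(T^h-1)$ in coordinate $\omega_0$ is equivalent (up to negligible edge effects in the averaging) to moving the derivative to a complementary coordinate, but none of these manipulations lower the degree of the relevant characteristic factor, so the correction terms are in general only $\ZZ^{{<}d_1}_{H_1}$-measurable. The actual argument in \cite[Lemma 6.2]{hk} does something structurally different: it represents the dual function through the cubic measure $\mu^{[d_1]}$ as a conditional expectation of a side-face integrand, and runs a Hilbert-space compactness argument to show the $T^h$-orbit of the dual function is conditionally precompact over $\ZZ^{{<}d_1-1}_{H_1}(\X)$. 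That conditional precompactness, not a measurability claim for multilinear corrections, is what yields the finite rank modules.

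On ergodicity: your worry is reasonable, but your proposed fix is not the paper's. The paper's footnote asserts that the Host--Kra argument for Lemma 6.2 goes through without modification for subgroups $H_i$ of a countable abelian $G$, and the paper deliberately avoids $H_i$-ergodic decomposition throughout (it states this as a design principle early on). Passing to the $H_1$-ergodic decomposition and ``reassembling'' would require additional measurability lemmas --- measurable dependence of the compact extension data on the ergodic component, uniformity of the module ranks, compatibility of the characteristic factors with the disintegration --- which you assert but do not supply, and which the paper's approach is specifically engineered to sidestep.
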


\begin{proof}  Since $\X$ is of $H_i$-order ${<}d_i$ and $\Y \geq \ZZ^{{<}d_i-1}_{H_i}(\X)$ as a factor, the claim follows from \cite[Lemma 6.2]{hk}.  (The argument there is stated in the case $H_i=G=\Z$, but it extends without any modification of the argument to the case when $H_i$ is a subgroup of an arbitrary countable abelian group $G$.)
\end{proof}

We now invoke \cite[Proposition 2.3]{fk}, which asserts that if one system $\X$ is a compact extension of another $\Y$ for two commuting group actions $H, K$, then it is also a compact extension for the combined action of $H+K$.  Since we are assuming $G = H_1+H_2$, we conclude that $\X$ is a compact extension of $\Y$ as a $G$-system.

Since $\X$ is also assumed to be $G$-ergodic, we may now use the Mackey theory of isometric extensions from \cite[\S 5]{fw}, and conclude that $\X$ is an \emph{isometric extension} of $\Y$ in the sense that $\X$ is equivalent to a system of the form $\Y \times_\rho K/L$ where $K = (K,\cdot)$ is a compact group, $L$ is a closed subgroup of $K$, $\rho\colon G \times \Y \to K$ is a measurable function obeying the \emph{cocycle equation}
\begin{equation}\label{rho}
 \rho(g + h, x) = \rho(g,T^h x) \rho(h, x)
\end{equation}
and the $\Y \times_\rho K/L$ is the product of the probability spaces $\Y$ and $K/L$ (the latter being given the Haar measure) with action given by the formula
$$ T^g (y, t) \coloneqq  (T^g y, \rho(g,y) t)$$
for all $y \in Y$ and $t \in K/L$.

We now give the standard abelianisation argument, originating from \cite{fw} and used also in \cite{hk}, that allows us to reduce to the case of abelian extensions.

\begin{proposition}[Abelian extension]  With $\rho, K, L$ as above, $L$ contains the commutator group $[K,K]$.  In particular, after quotienting out by $L$ we may assume without loss of generality that $K$ is abelian and $L$ is trivial.
\end{proposition}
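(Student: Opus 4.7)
The plan is to follow the classical abelianisation argument of Furstenberg-Weiss \cite{fw} and Host-Kra \cite[\S 10]{hk}, adapted to our two-action setting. By the Peter-Weyl theorem, the claim $[K,K] \subseteq L$ is equivalent to showing that every irreducible unitary representation $\pi \colon K \to U(V)$ trivial on $L$ satisfies $\dim V = 1$. I would assume for contradiction that there is such a $\pi$ with $\dim V \geq 2$ and derive a contradiction from the order hypotheses.

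Through the identification $\X \cong \Y \times_\rho K/L$, the matrix coefficients of $\pi \circ \rho$ generate a finite-dimensional $G$-invariant $L^\infty(\Y)$-module $W_\pi \subset L^2(\X)$, not contained in $L^\infty(\Y)$ (since $\pi$ is nontrivial on $K/L$). The $G$-action on $W_\pi$ is governed by the cocycle $g \mapsto \pi(\rho(g,\cdot))$, taking values in the potentially nonabelian image $\pi(K) \subset U(V)$.

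The heart of the argument is to study this cocycle separately with respect to $H_1$ and $H_2$. For each $i = 1, 2$, recall from \eqref{yo} that $\ZZ^{{<}d_i-1}_{H_i}(\X) \leq \Y$, while $\X$ has $H_i$-order ${<}d_i$. Consequently, the compact extension $\X \to \Y$ is of minimal Host-Kra complexity along $H_i$, and by the cocycle-type analysis of \cite[\S 8, \S 10]{hk}, the cocycle $\pi \circ \rho|_{H_i}$ must, after a suitable coboundary correction, take values in a maximal torus (hence an abelian subgroup) of $\pi(K)$. Applying this to both $i = 1$ and $i = 2$ and then combining via the cocycle equation \eqref{rho} and the fact that $G = H_1 + H_2$, one deduces that $\pi \circ \rho$ itself is cohomologous to an abelian-valued cocycle. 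By Mackey minimality of $K$, the essential image of $\pi \circ \rho$ generates a dense subgroup of $\pi(K)$, so $\pi(K)$ must be abelian, contradicting irreducibility of $\pi$ with $\dim V \geq 2$. Once $[K,K] \subseteq L$ is established, replacing $K$ by the abelian compact group $K/L$ (and $L$ by the trivial subgroup) yields the second claim.

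The main technical obstacle is the cocycle-type analysis invoked in the middle step, since \cite{hk} treats only the ergodic $\Z$-action case. Carrying it out for the present not-necessarily-ergodic action of a subgroup $H_i$ of a countable abelian group requires generalizing \cite[Proposition 10.1]{hk} using the Gowers-Host-Kra box seminorm machinery of Theorem \ref{gawd} and Corollary \ref{locx}, the ergodic decomposition into $H_i$-ergodic components (available since $\X$ is a compact metric space), and a careful bookkeeping of the Mackey groups on each ergodic component to combine the $H_1$-data and $H_2$-data coherently.
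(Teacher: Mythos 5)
Your proposal follows the classical Furstenberg--Weiss/Host--Kra route (Peter--Weyl, irreducible representations, Mackey cocycle analysis), whereas the paper takes a self-contained combinatorial route, due essentially to Szegedy: one introduces the vertical rotations $\tau_k$, notes that $f - \tau_k f$ has zero conditional expectation on $\Y$ and hence vanishing $U^{d_1-1}_{H_1}(\X)$ seminorm (since $\Y \geq \ZZ^{<d_1-1}_{H_1}(\X)$), feeds this into the Cauchy--Schwarz--Gowers inequality and the recursive structure of the Gowers inner product, and then runs a commutator argument using two intersecting edges of the cube $\{0,1\}^{d_1}$ (using $d_1 > 1$) to conclude that $[K,K]$ acts trivially. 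Crucially, the paper's argument uses only the $H_1$-action and makes no appeal to ergodicity along $H_1$ — the authors explicitly flag this as the reason for choosing this arrangement. Your route has to pay for ergodicity twice: once to invoke the Host--Kra cocycle-type analysis and once to run Mackey theory across the $H_i$-ergodic components.

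Beyond the extra generality cost, there are two concrete gaps in your sketch. First, the central claim that ``$\pi \circ \rho|_{H_i}$ must, after a suitable coboundary correction, take values in a maximal torus of $\pi(K)$'' is not what the Host--Kra cocycle-type machinery gives: that machinery is developed for cocycles into abelian groups, and what the original argument actually establishes is a contradiction with ergodicity/minimality for a non-abelian $\pi(K)$, not a reduction to a torus. Second, even granting the per-$H_i$ abelianisation, the step where you ``combine via the cocycle equation and $G = H_1+H_2$'' to get a single coboundary correction for the whole of $G$ does not glue: the coboundary functions you get from $H_1$ and from $H_2$ are a priori different, and reconciling them is itself a concatenation-type problem — essentially the content of the rest of Section~\ref{erg2-sec} — so you would be assuming something close to what you are trying to set up. The paper's commutator argument on Gowers inner products sidesteps both issues and shows that the $H_2$-action is not even needed for this proposition.
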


\begin{proof}  This is essentially \cite[Proposition 6.3]{hk}, but for the convenience of the reader we provide an arrangement (essentially due to Szegedy \cite{szegedy-old}) of the argument here, which uses the action of $H_1$ but does not presume $H_1$-ergodicity.  

We identify $\X$ with $\Y \times_\rho K/L$. 
For any $k \in K$, we define the rotation actions $\tau_k$ on $L^\infty(\X)$ by
$$ \tau_k f( y, t ) \coloneqq  f( y, k^{-1} t ).$$
At present we do \emph{not} know that these actions commute with the shift $T^g$; however they are certainly measure-preserving, so in particular $\E( f - \tau_k f | \Y ) = 0$ for all $f \in L^\infty(\X)$.  Since $\Y \geq \ZZ^{{<}d_1-1}_{H_1}(\X)$, this implies from \eqref{fuz} that $\|f-\tau_k f\|_{U^{d_1-1}_{H_1}(\X)}=0$ for all $f \in L^\infty(\X)$ and $k \in K$.  From the Cauchy-Schwarz-Gowers inequality \eqref{csg-2} we conclude that the inner product $\langle (f_\omega)_{\omega \in \{0,1\}^{d_1-1}} \rangle_{U^{d_1-1}_{H_1}(\X)}$ vanishes whenever one of the functions $f_\omega \in L^\infty(\X)$ is of the form $f - \tau_k f$ for some $f \in L^\infty(\X)$ and $k \in K$.  By linearity, this implies that the inner product $\langle (f_\omega)_{\omega \in \{0,1\}^{d_1-1}}\rangle_{U^{d_1-1}_{H_1}(\X)}$ is unchanged if $\tau_k$ is applied to one of the functions $f_\omega \in L^\infty(\X)$ for some $k \in K$.  Using the recursive relations between the Gowers inner products, this implies that $\langle (f_\omega)_{\omega \in \{0,1\}^{d_1}} \rangle_{U^{d_1}_{H_1}(\X)}$ is unchanged if $\tau_k$ is applied to two adjacent functions $f_\omega \in L^\infty(\X)$ connected by an edge, for any $k \in K$.  Taking the commutator of this fact using two intersecting edges on $\{0,1\}^{d_1}$ (recalling that $d_1>1$), we conclude that $\langle (f_\omega)_{\omega \in \{0,1\}^{d_1}} \rangle_{U^{d_1}_{H_1}(\X)}$ is unchanged if a single $f_\omega$ is shifted by $\tau_k$ for some $k$ in the commutator group $[K,K]$.  Equivalently, for $f \in L^\infty(\X)$ and $k \in [K,K]$, $f - \tau_k f$ is orthogonal to all dual functions for $U^{d_1}_{H_1}(\X)$; since $\X \equiv \ZZ^{{<}d_1}_{H_1}(\X)$, this implies that $f-\tau_k f$ is trivial.  Thus the action of $[K,K]$ on $\Y \times_\rho K/L$ is trivial, and so $L$ lies in $[K,K]$ as required.
\end{proof}

With this proposition, we may thus write $\X = \Y \times_\rho K$ for some compact \emph{abelian} group $K = (K,\cdot)$; our task is now to show that $\Y \times_\rho K$ has $G$-order ${<}d_1+d_2-1$.

Define an \emph{$S^1$-cocycle} (or \emph{cocycle}, for short) of a $G$-system $(\Y,T)$ to be a map $\eta\colon G \times \Y \to S^1$ taking values in the unit circle $S^1 \coloneqq  \{ z \in \C: |z|=1\}$ obeying the cocycle equation \eqref{rho}; this is clearly an abelian group.  Observe that for any character $\chi\colon K \to S^1$ in the Pontryagin dual of $K$, $\chi \circ \rho$ is a cocycle.  We say that a cocycle is an \emph{$H$-coboundary} if there is a measurable $F\colon X \to S^1$ such that $\eta(h,x) = F(T^h x) / F(x)$ for all $h \in H$ and almost every $x \in X$; this is a subgroup of the space of all cocycles for each $H$.  Given a cocycle $\eta\colon G \times \Y \to S^1$ on $\Y$ and a subgroup $H$ of $G$, we define the cocycle $d^{[1]}_H \eta\colon G \times \Y^{[1]}_H \to S^1$ on the Host-Kra space $\Y^{[1]}_H$ by the formula
$$ d^{[1]}_{H} \eta(g,x,x') \coloneqq  \eta(g,x) \overline{\eta(g,x')}$$
for all $g \in G$ and $x,x' \in X$; it is easy to see that $d^{[1]}_H$ is a homomorphism from cocycles on $\Y$ to cocycles on $\Y^{[1]}_H$, which maps $H'$-coboundaries to $H'$-coboundaries for any subgroup $H'$ of $G$.
We may iterate this construction to define a homomorphism $d^{[k]}_{H_1,\dots,H_k}$ from cocycles on $\Y$ to cocycles on $\Y^{[k]}_{H_1,\dots,H_k}$ by setting
$$ d^{[k]}_{H_1,\dots,H_k} \coloneqq  d^{[1]}_{H_1} \dots d^{[1]}_{H_k}.$$
Note that $d^{[1]}_H$ and $d^{[1]}_K$ commute for any $H,K$ (after identifying $\Y^{[2]}_{H,K}$ with $\Y^{[2]}_{K,H}$ in the obvious fashion), and so the operator $d^{[k]}_{H_1,\dots,H_k}$ is effectively unchanged with respect to permutation by $H_1,\dots,H_k$.  If $H_1=\dots=H_k=H$, we abbreviate $d^{[k]}_{H_1,\dots,H_k}$ as $d^{[k]}_H$.

We say that a cocycle $\sigma\colon G \times \Y \to S^1$ is of \emph{$H$-type $d$} if $d^{[d]}_H \sigma$ is a $H$-coboundary.  Because the operator $d^{[1]}_{H'}$ maps $H$-coboundaries to $H$-coboundaries for any $H' \leq G$, we see that $d^{[1]}_{H'}$ maps cocycles of $H$-type $d$ to cocycles of $H$-type $d$, and that any cocycle of $H$-type $d$ is also of $H$-type $d'$ for any $d'>d$.

We have a fundamental connection between type and order from \cite{hk}:

\begin{proposition}[Type equation]\label{type}  Let $d \geq 1$ be an integer, let $H$ be a subgroup of an at most countable additive group $G$, and let $\Y$ be an $G$-system of $H$-order ${<}d$ that is $G$-ergodic. Let $K$ be a compact abelian group, and let $\rho\colon H \times Y \to K$ be a cocycle.
\begin{itemize}
\item[(i)]  If the system $\Y \times_\rho K$ has $H$-order ${<}d$, then $\chi \circ \rho$ has $H$-type $d-1$ for all characters $\chi\colon K \to S^1$.
\item[(ii)]  Conversely, if $\chi \circ \rho$ has $H$-type $d-1$ for all characters $\chi\colon K \to S^1$, then $\Y \times_\rho K$ has $H$-order ${<}d$.
\end{itemize}
\end{proposition}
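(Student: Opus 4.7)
The plan is to analyze $\X \coloneqq \Y \times_\rho K$ via Fourier decomposition on the compact abelian fibre $K$. The rotation action $\tau_k(y,t) \coloneqq (y, kt)$ commutes with the $G$-action (the cocycle identity \eqref{rho} governs only how $\rho$ interacts with $T$, not with $\tau_k$), so it preserves both $\ZZ^{<d}_H(\X)$ and the seminorm $\|\cdot\|_{U^d_H(\X)}$. Decomposing $L^2(\X) = \bigoplus_{\chi \in \hat K} V_\chi$ into isotypic components $V_\chi = \{a \otimes \chi : a \in L^2(\Y)\}$ (writing $(a \otimes \chi)(y,t) \coloneqq a(y)\chi(t)$), the question of whether $\X$ has $H$-order ${<}d$ reduces, via \eqref{fuz}, to whether $\|\cdot\|_{U^d_H(\X)}$ is a norm on each $V_\chi$.

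The key computation is that for $f = a \otimes \chi \in V_\chi$ the factors of $\chi(t)$ cancel in conjugate pairs inside $\Delta_{h,h'} f = (T^h f)\overline{(T^{h'} f)}$, so $\Delta_{h_1,h_1'}\cdots\Delta_{h_d,h_d'}f$ is independent of $t$ and equals an explicit product of translates of $a$ weighted by translates of $\sigma_\chi \coloneqq \chi \circ \rho$. Passing to the cubic-measure limit via Theorem \ref{gawd} yields
\begin{equation*}
\|a \otimes \chi\|_{U^d_H(\X)}^{2^d} = \int_{Y^{[d]}} \Bigl(\prod_{\omega \in \{0,1\}^d}{\mathcal C}^{|\omega|}a(y_\omega)\Bigr)\,\Psi_\chi\bigl((y_\omega)\bigr)\, d\mu^{[d]}_H(\Y),
\end{equation*}
where $\Psi_\chi\colon Y^{[d]} \to S^1$ is the ``cocycle cube'' assembled from translates of $\sigma_\chi$ around the $d$-cube. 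A direct rearrangement using \eqref{rho} identifies $\Psi_\chi$ with (a pullback through the last coordinate direction of) the cocycle $d^{[d-1]}_H\sigma_\chi$ on $\Y^{[d-1]}_H$; in particular $\Psi_\chi$ is an $S^1$-coboundary on $\Y^{[d]}_H$ if and only if $d^{[d-1]}_H\sigma_\chi$ is an $H$-coboundary on $\Y^{[d-1]}_H$, with measurable cochains transferable between the two.

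For \textbf{part (ii)}, suppose each $\sigma_\chi$ has $H$-type $d-1$. The coboundary relation $d^{[d-1]}_H\sigma_\chi = F_\chi(T^h\cdot)/F_\chi$ translates, via the identification above, to a coboundary representation for $\Psi_\chi$; substitution into the integral formula together with a change of variables expresses $\|a \otimes \chi\|_{U^d_H(\X)}^{2^d}$ in a form that, for $\chi \neq 1$, reduces to the Gowers-Host-Kra machinery on $\Y$ applied to $F_\chi$-modified versions of $a$, while the case $\chi = 1$ reduces directly to the hypothesis that $\Y$ has $H$-order ${<}d$. In either case $\|a \otimes \chi\|_{U^d_H(\X)}>0$ whenever $a \neq 0$, so $\X$ has $H$-order ${<}d$. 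For \textbf{part (i)}, assume $\X$ has $H$-order ${<}d$ and apply the identity to the nonzero function $1 \otimes \chi \in L^\infty(\X) = L^\infty(\ZZ^{<d}_H(\X))$; by \eqref{fuz}, $\|1 \otimes \chi\|_{U^d_H(\X)} > 0$, so $\int_{Y^{[d]}}\Psi_\chi\,d\mu^{[d]}_H(\Y) \neq 0$. Since $|\Psi_\chi| = 1$, an ergodicity argument on $\Y^{[d]}_H$ exploiting $G$-ergodicity of $\Y$ and the machinery of cubic measures from \cite{hk} forces $\Psi_\chi$ to be an $H$-coboundary, and the identification above then gives that $\sigma_\chi$ is of $H$-type $d-1$.

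The main obstacle is the explicit combinatorial identification of $\Psi_\chi$ with $d^{[d-1]}_H\sigma_\chi$ and, especially in part (i), the measurable extraction of a cochain $F_\chi$ from mere non-vanishing of the integral. This is precisely the point where the structural theory of cubic measures from \cite{hk} must be invoked in an essential way: the cubic system $\Y^{[d]}_H$ carries richer structure than a naive product, and the disintegration of $\mu^{[d]}_H(\Y)$ over $\mu^{[d-1]}_H(\Y)$ together with the compact-metric-space hypothesis on $\Y$ is needed for the measurable selection of $F_\chi$.
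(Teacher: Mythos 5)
Your opening move — Fourier decomposition of $L^2(\X)$ into $K$-isotypic components $V_\chi$ and the observation that $\|\cdot\|_{U^d_H(\X)}$ is $\tau_k$-invariant — coincides with the paper's, and this correctly reduces the problem to the positivity of the seminorm on each $V_\chi$. From that point onward, however, your argument diverges and the central step is not established.

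The formula
$$\|a \otimes \chi\|_{U^d_H(\X)}^{2^d} = \int_{Y^{[d]}} \Bigl(\prod_{\omega \in \{0,1\}^d}{\mathcal C}^{|\omega|}a(y_\omega)\Bigr)\,\Psi_\chi\, d\mu^{[d]}_H(\Y)$$
with $\Psi_\chi$ an $S^1$-valued function on $Y^{[d]}$ identified with ``a pullback of $d^{[d-1]}_H\sigma_\chi$'' is not something one can simply write down. What one gets for free by disintegrating $\mu^{[d]}_H(\X)$ over $\mu^{[d]}_H(\Y)$ is a conditional expectation $\Psi_\chi = \E\bigl[\prod_\omega{\mathcal C}^{|\omega|}\chi(t_\omega)\mid\Y^{[d]}\bigr]$, which a priori satisfies only $|\Psi_\chi|\leq 1$. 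Having $|\Psi_\chi|=1$ a.e.\ is equivalent to a nontrivial structural statement about the $K^{2^d}$-fibers of the cubic measure on $\X^{[d]}$ (roughly, that those fibers are supported on cosets annihilated by $\bigotimes_\omega{\mathcal C}^{|\omega|}\chi$), and this is essentially the content of part (i) rather than a starting point for it. There is also a type mismatch in the proposed identification: $d^{[d-1]}_H\sigma_\chi$ is a cocycle $H\times Y^{[d-1]}\to S^1$, not a function on $Y^{[d]}$. The identification with a function on $Y^{[d]}$ only becomes well-posed once you already know $d^{[d-1]}_H\sigma_\chi$ is an $H$-coboundary $B(T^h\cdot)\overline{B(\cdot)}$ (so that $\Psi_\chi(y'',y') = B(y'')\overline{B(y')}$), which is precisely the conclusion of (i) and a hypothesis only in (ii). Your final paragraph acknowledges that this identification and the measurable selection are where the work lies, but acknowledging a gap at the single crucial step is not the same as closing it.

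For comparison: the paper's proof of (ii) sidesteps all of this by a contrapositive argument carried out in $(\Y\times_\rho K)^{[d]}_H$ rather than in $\Y^{[d]}_H$. Assuming $\Y\times_\rho K$ is not of $H$-order ${<}d$, one extracts a nonzero isotypic $F\chi$ with vanishing $U^d_H$ seminorm, uses the hypothesis that $\Y$ has $H$-order ${<}d$ together with ergodic averaging to reduce to $F=1$ (i.e.\ to the function $u:(y,k)\mapsto\chi(k)$), and then shows via the Gowers--Cauchy--Schwarz inequality that $\int \tilde F\, d^{[d]}_H u = 0$ for every $\tilde F\in L^\infty(\Y^{[d]}_H)$. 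The type hypothesis produces the specific test function $\tilde F=\overline B\otimes B$ for which the integral equals $1$ by the relative-product construction of the cubic measure and the ergodic theorem, giving a contradiction. No explicit identification of a cocycle cube is ever needed. For part (i) the paper simply cites \cite{hk} and \cite{btz}; your sketch of (i) would ultimately have to reproduce exactly the structure theory those references supply. If you want to repair the proposal, the cleanest path is to abandon the attempt to push everything down to $\Y^{[d]}_H$ and instead mirror the paper's contrapositive in $\X^{[d]}_H$.
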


One can use a result of Moore and Schmidt \cite{moore-schmidt} to conclude that $\rho$ itself is of $H$-type $d-1$ in conclusion (i), but we will not need to do so here.  A key technical point to note is that no $H$-ergodicity hypothesis is imposed.

\begin{proof}  For part (i), see\footnote{Again, the argument in \cite{hk} is stated only for $G=\Z$, but extends to other at most countable additive groups $G$ without any modification of the argument.  In any event, the version of the argument in \cite{btz} is explicitly stated for all such groups $G$.}  \cite[Proposition 6.4]{hk} or \cite[Proposition 4.4]{btz}. For part (ii), we argue as follows\footnote{One can also establish (ii) by using \cite[Proposition 7.6]{hk} to handle the case when $\Y \times_\rho K$ is ergodic, and Mackey theory and the ergodic decomposition to extend to the non-ergodic case; we leave the details to the interested reader.}. Suppose for contradiction that $\Y \times_\rho K$ did not have $H$-order ${<}d$, then  by \eqref{fuz} there is a non-zero function $f \in L^\infty(\Y \times_\rho K)$ whose $U^d_H(\Y \times_\rho K)$ seminorm vanished.  The $U^d_H(\Y \times_\rho K)$ norm is invariant with respect to the $K$-action, since this action commutes with the $H$-action.  Using Fourier inversion in the $K$ direction (i.e. decomposition of $L^2(Y \times_\rho K)$ into $K$-isotypic components of the $K$-action) and the triangle inequality, we may assume that $f$ takes the form $f(y,k) = F(y) \chi(k)$ for some $F \in L^\infty(\Y)$ and some character $\chi\colon K \to S^1$, with $F$ not identically zero.  By \eqref{fuz} and the hypothesis that $\Y$ has $H$-order ${<}d$, the property of having  vanishing $U^d_H$ norm is unaffected by multiplication by functions in $L^\infty(\Y)$, as well as shifts by $G$, thus $(y,k) \mapsto (T_g |F(y)|) \chi(k)$ also has vanishing $U^d_H$ norm for $g \in G$.  Averaging and using the ergodic theorem, we conclude that the function $u\colon (y,k) \mapsto \chi(k)$ has vanishing $U^d_H(\Y \times_\rho K)$ seminorm, and hence so does $\tilde F u$ for any $\tilde F \in L^\infty(\Y)$.  By the Gowers-Cauchy-Schwarz inequality, this implies that
$$ \int_{(\Y \times_\rho K)^{[d]}_H} \bigotimes_{\omega \in \{0,1\}^d} {\mathcal C}^{|\omega|}(\tilde F_\omega u) = 0 $$
for all $\tilde F_\omega \in L^\infty(\Y)$, and thus
\begin{equation}\label{yak}
\int_{(\Y \times_\rho K)^{[d]}_H} \tilde F d^{[d]}_H u = 0 
\end{equation}
for all $\tilde F \in L^\infty(\Y^{[d]}_H)$, where the integrals are understood to be with respect to the cubic measure $\mu^{[d]}_H$ of $\Y \times_\rho K$.  On the other hand, by hypothesis we have 
$$d^{[d-1]}_H(\chi \circ \rho)(h,x) = B(T^h y) \overline{B(y)}$$ 
for some measurable $B\colon \Y^{[d-1]}_H \to S^1$ and all $h \in H$ and almost every $y \in \Y^{[d-1]}_H$, which implies that
$$ (\overline{B} \otimes B) d^{[d]}_H u( T^h (y,k), (y,k) ) = 1$$
for all $h \in H$ and almost every $(y,k) \in (\Y \times_\rho K)^{[d-1]}_H$, where by abuse of notation we write $\overline{B} \otimes B$ for the function $((y',k'),(y,k)) \mapsto \overline{B}(y') B(y)$.  By construction of the cubic measure on $(\Y \times_\rho K)^{[d]}_H$ and the ergodic theorem, this implies that
$$ \int_{(\Y \times_\rho K)^{[d]}_H} (\overline{B} \otimes B) d^{[d]}_H u = 1, $$
which contradicts \eqref{yak} since $\overline{B} \otimes B \in L^\infty(\Y^{[d]}_H)$.
\end{proof}

In our current context, Proposition \ref{type}(i) shows that $\chi \circ \rho$ is of $H_i$-type $d_i-1$ for all $i=1,2$ and all characters $\chi\colon K \to S^1$.

We will shortly establish the following proposition:

\begin{proposition}[Concatenation of type]\label{type-concat}  Let $\Y$ be a $G$-system of $G$-order ${<}d_1+d_2-2$, of $H_1$-order ${<}d_1$, and $H_2$-order ${<}d_2$.  Let $\sigma\colon G \times \Y \to S^1$ be a cocycle which is of $H_1$-type $d_1-1$ and $H_2$-type $d_2-1$.  Then $\Y \times_\sigma S^1$ is of $G$-order ${<}d_1+d_2-1$.  In particular, by Proposition \ref{type}(i), $\sigma$ is of $G$-type $d_1+d_2-2$.
\end{proposition}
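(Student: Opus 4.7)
The plan is to first reduce, via Proposition~\ref{type}(ii), to showing that $\sigma$ is of $G$-type $d_1+d_2-2$, i.e.\ that $d^{[d_1+d_2-2]}_G\sigma$ is a $G$-coboundary on the cube $\Y^{[d_1+d_2-2]}_G$. The $G$-order hypothesis on $\Y$ is stronger than the ${<}d_1+d_2-1$ needed to feed Proposition~\ref{type}(ii), and since raising a cocycle to an integer power preserves $H$-type, the character hypothesis $\chi\circ\sigma$ follows from the corresponding statement for $\sigma$ itself.

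For the reduction's main content, I would proceed by induction on $d_1+d_2$, modelled on the inductive proof of Proposition~\ref{concat}. The base cases $d_i=1$ use that $\sigma$ is cohomologous to a cocycle vanishing on $H_i$, reducing the problem to a cocycle on the $H_i$-invariant factor where only a single type hypothesis is in play. The inductive step mirrors the identities \eqref{px}--\eqref{phh}: the cocycle $d^{[1]}_{H_1}\sigma$, viewed on the enlarged Host-Kra system $\Y^{[1]}_{H_1}$, is of $H_1$-type $d_1-2$ and remains of $H_2$-type $d_2-1$ (since $d^{[1]}_{H_1}$ commutes with $d^{[k]}_{H_2}$ and preserves coboundaries), and symmetrically for $d^{[1]}_{H_2}\sigma$ on $\Y^{[1]}_{H_2}$. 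One may feed these into the induction hypothesis, after first descending to a suitable joint characteristic factor so that the strict decrease in $H_i$- and $G$-orders demanded by the induction is achieved---using Lemma~\ref{glob} to propagate $H$-order conditions through the Host-Kra construction, and Corollary~\ref{locx} to check that the relevant type statements survive the descent. The conclusion is that $d^{[d_1+d_2-2]}_G\sigma$ is a $G$-coboundary along each of the ``partial'' cubes $\Y^{[d_1+d_2-2]}_{H_1,G,\dots,G}$ and $\Y^{[d_1+d_2-2]}_{H_2,G,\dots,G}$.

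The main obstacle is then to fuse these two partial-coboundary conclusions into a single coboundary on the full cube $\Y^{[d_1+d_2-2]}_G$. Here the hypothesis $G=H_1+H_2$ is essential, and the bridge is the $L$-shape machinery of Definition~\ref{L-def} together with the factor map of Lemma~\ref{L-proj}. Concretely, each $G$-parallelepiped in $\Y^{[d_1+d_2-2]}_G$ can be built by first traversing an $H_1$-edge and then an $H_2$-edge in each coordinate direction, and the cocycle identity $\sigma(h_1+h_2,y)=\sigma(h_2,T^{h_1}y)\,\sigma(h_1,y)$ expresses the sought $G$-transfer function $F$ as the product of an $H_1$-transfer function $F_1$ and an $H_2$-transfer function $F_2$ composed with an $H_1$-shift. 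The discrepancy arising in the gluing is itself a cocycle of strictly lower type on a system of strictly smaller $G$-order, and so it can be trivialized by Proposition~\ref{type}(ii) applied one step further down in the induction. This splicing step is the direct analogue of the identity $P_{h_1,h_2}(x)=P_{h_1}(x)+P_{h_2}(x+h_1)$ in the proof of Proposition~\ref{concat}, and it constitutes the technical heart of the argument.
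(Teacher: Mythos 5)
You have correctly identified the two structural pillars of the paper's proof---working with $d^{[1]}_{H_i}\sigma$ on $\Y^{[1]}_{H_i}$ to reduce type, and using the third factor map of Lemma~\ref{L-proj} on $\Y^L_{H_1,H_2}$ to join the two directions. But the ``splicing step'' you describe as the technical heart contains a genuine gap, and you also omit an intermediate result the paper needs to make the descent work.

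Your splicing proposal is to directly manufacture a $G$-transfer function on $\Y^{[d_1+d_2-2]}_G$ by multiplying an $H_1$-transfer function $F_1$ with an $H_1$-shifted $H_2$-transfer function $F_2$, in imitation of $P_{h_1,h_2}(x)=P_{h_1}(x)+P_{h_2}(x+h_1)$. This does not work as stated: $F_1$ and $F_2$ live on \emph{different cube measure spaces} $(\Y^{[1]}_{H_1})^{[d_1+d_2-3]}_G$ and $(\Y^{[1]}_{H_2})^{[d_1+d_2-3]}_G$, neither of which is $\Y^{[d_1+d_2-2]}_G$, so there is no common space on which to multiply them, and the cocycle identity does not provide one. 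Moreover, the claim that a ``discrepancy cocycle'' on a system of smaller $G$-order can be ``trivialized by Proposition~\ref{type}(ii)'' misapplies that result: Proposition~\ref{type}(ii) upgrades a type hypothesis on a cocycle to an order bound on the extension, it does not certify that a cocycle is a coboundary. The paper's actual fusion operates entirely at the $d^{[1]}$ (two-dimensional) level: one lifts $d^{[1]}_{H_1}\sigma$ and $d^{[1]}_{H_2}\sigma$ to the $L$-shape space $\Y^L_{H_1,H_2}$, divides them to obtain $\tilde\sigma(g,x_{00},x_{10},x_{01})=\sigma(g,x_{10})\overline{\sigma(g,x_{01})}$, applies \cite[Proposition 7.6]{hk} (type $\Rightarrow$ order of the extension) to $\Y^L_{H_1,H_2}\times_{\tilde\sigma}S^1$, passes to the factor $\Y^{[1]}_{H_1+H_2}\times_{d^{[1]}_{H_1+H_2}\sigma}S^1$ via Lemma~\ref{L-proj}, and then applies \cite[Proposition 6.4]{hk} (order $\Rightarrow$ type) to come back down. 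This order/type round-trip is essential and has no analogue in your description.

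There is a second missing ingredient. Your ``descend to a suitable joint characteristic factor'' step hides what the paper isolates as Proposition~\ref{type-concat-variant}, whose proof requires a non-trivial technical extension of \cite[Corollary 7.9]{hk}: a cocycle of $H_1$-type $d_1-2$ is cohomologous (by a $G$-coboundary) to one measurable over $\ZZ^{<d_1-1}_{H_1}(\Y)$, proved here without assuming $H_1$-ergodicity. Lemma~\ref{glob} and Corollary~\ref{locx} are not a substitute for this; the argument needs the cocycle itself to descend, not just the order hypotheses. Without this piece, the inductive invocation of Theorem~\ref{concat-uap-erg-weak} (which is where the actual induction lives, rather than in a self-contained induction on Proposition~\ref{type-concat}) does not go through.
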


\begin{remark}  This result may be compared with Proposition \ref{concat}.  A result close to the $d_1=d_2=2$ case of Proposition \ref{type-concat} was previously established in \cite[Proposition 2.1]{austin-alt}.
\end{remark}

\begin{example}  Let $d_1=d_2=2$, $G=\Z^2$, $H_1 = \Z \times \{0\}$, and $H_2 = \{0\} \times \Z$.  Let $\Y$ be the $G$-system consisting of the $2$-torus $(\R/\Z)^2$ with Lebesgue measure and the shift
$$ T^{(n,m)}(x,y) \coloneqq (x+n\alpha, y+m\alpha)$$
for some fixed irrational $\alpha$.  One easily verifies that $\Y$ os of $G$-order, $H_1$-order, and $H_2$-order $<2$.  If we define $\sigma\colon G \times \Y \to S^1$ by the formula
$$ \sigma( (n,m), (x,y) ) := e( ny + mx + nm\alpha )$$
where $e(x) := e^{2\pi i x}$, then $\sigma$ is a cocycle.  One can view $\Y^{[1]}_{H_1}$ as the space of pairs $((x,y),(x',y))$ with $x,y,x' \in \R/\Z$, with the product shift map.  We have
$$ d^{[1]}_{H_1} \sigma( (n,0), ((x,y), (x',y)) ) = 1$$
and so $\sigma$ is certainly of $H_1$-type $1$; it is similarly of $H_2$-type $1$.  The system $\Y \times_\sigma S^1$ is the system $\X$ in Example \ref{stex}, and is thus of $G$-order $<3$.  One can also check that $d^{[2]}_G \sigma$ is identically $1$, basically because the phase $ny + mx + nm \alpha$ is linear in $x,y$.
\end{example}

Assuming Proposition \ref{type-concat} for the moment, we combine it with Proposition \ref{type}(i) to conclude that $\chi \circ \rho$ is of $G$-type $d_1+d_2-2$ for all characters $\chi\colon K \to S^1$.
Applying Proposition \ref{type}(ii), we conclude that $Y \times_\rho K$ is of $G$-order ${<}d_1+d_2-1$, as required.

It remains to establish Proposition \ref{type-concat}.  We first need a technical extension of a result of Host and Kra:

\begin{proposition}  Let $\Y$ be an $G$-system that is $G$-ergodic, and let $\rho\colon G \times Y \to S^1$ be a cocycle which is of $H_1$-type $d_1-2$.  Then $\rho$ differs by a $G$-coboundary from a cocycle which is measurable with respect to $\ZZ^{{<}d_1-1}_{H_1}(\Y)$.
\end{proposition}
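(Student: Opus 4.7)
The plan is to reduce to the $H_1$-ergodic case, where the analogous statement is a standard consequence of the Host--Kra machinery (essentially \cite[Proposition 7.6]{hk}), and then to use $G$-ergodicity in place of $H_1$-ergodicity to reassemble the per-component data into a single globally defined coboundary.

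First I would unwind the definition: the hypothesis that $\rho$ is of $H_1$-type $d_1-2$ supplies a measurable $F\colon \Y^{[d_1-2]}_{H_1}\to S^1$ with $d^{[d_1-2]}_{H_1}\rho(h,\cdot) = (T^h F)/F$ for every $h\in H_1$. Next, since we are working in a compact metric $G$-system, I form the ergodic decomposition $\mu = \int_Z \mu_z\,d\nu(z)$ of $\mu$ under the $H_1$-action, giving $H_1$-ergodic components $\Y_z = (Y,{\mathcal B},\mu_z)$. The cubic measure $\mu^{[d_1-2]}_{H_1}$ disintegrates consistently with respect to $z$, so the coboundary identity for $F$ persists on $\nu$-a.e.\ fibre, and $\rho$ remains of $H_1$-type $d_1-2$ on $\nu$-a.e.\ component $\Y_z$. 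On each such component, the $H_1$-ergodic version of the result produces a measurable $\psi_z\colon Y\to S^1$ such that $(h,y) \mapsto \rho(h,y)\overline{\psi_z(T^h y)}\psi_z(y)$ is $\ZZ^{{<}d_1-1}_{H_1}(\Y_z)$-measurable for every $h\in H_1$. A standard Borel-selection argument then arranges $z \mapsto \psi_z$ to be jointly measurable, producing a measurable $\psi\colon Y \to S^1$ via $\psi(y)\coloneqq \psi_{z(y)}(y)$, where $z(y)$ labels the $H_1$-ergodic component of $y$.

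The main obstacle is that $\psi_z$ is unique only up to multiplication by $\ZZ^{{<}d_1-1}_{H_1}(\Y_z)$-measurable $S^1$-valued functions on each component, and I need to pin this ambiguity down so that the modified map $\rho\cdot\overline{\Delta\psi}$, with $\Delta\psi(g,y) \coloneqq \psi(T^g y)\overline{\psi(y)}$, is actually a $G$-cocycle that is measurable with respect to $\ZZ^{{<}d_1-1}_{H_1}(\Y)$---not merely along $H_1$ on each component. This is where $G$-ergodicity enters: for any $g\in G$, the shift $T^g$ carries $\Y_z$ isomorphically onto $\Y_{g\cdot z}$, and the cocycle identity $\rho(g+h,y) = \rho(g,T^h y)\rho(h,y)$ forces the map $y \mapsto \psi_{g\cdot z}(T^g y)\overline{\psi_z(y)}$ to be $\ZZ^{{<}d_1-1}_{H_1}$-measurable modulo a $G$-coboundary on each component. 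The plan is to select a Borel cross-section of the $G$-orbits in $(Z,\nu)$, fix $\psi_z$ arbitrarily on it, and propagate the choice under the $G$-action using the compatibility above; $G$-ergodicity then guarantees a unique globally well-defined $\psi$ on a conull $G$-invariant subset of $Y$.

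Finally, with $\psi$ constructed, the modified cocycle $\rho'(g,y)\coloneqq \rho(g,y)\overline{\psi(T^g y)}\psi(y)$ agrees on $\nu$-a.e.\ component with a $\ZZ^{{<}d_1-1}_{H_1}(\Y_z)$-measurable cocycle; by the compatibility of the characteristic factors with the $H_1$-ergodic decomposition (which follows from $\|f\|_{U^{d_1-1}_{H_1}(\Y)}^{2^{d_1-1}} = \int_Z \|f\|_{U^{d_1-1}_{H_1}(\Y_z)}^{2^{d_1-1}}\,d\nu(z)$ and \eqref{fuz}), this implies that $\rho'$ is $\ZZ^{{<}d_1-1}_{H_1}(\Y)$-measurable, as required.
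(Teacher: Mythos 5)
Your plan takes the route that the paper explicitly acknowledges (in the footnote to this proposition) but deliberately avoids: decompose $\Y$ into $H_1$-ergodic components, apply \cite[Corollary 7.9]{hk} componentwise, and reassemble via Mackey theory and measurable selection, using $G$-ergodicity to keep the Mackey data uniform. The paper instead gives an ``alternate arrangement'' that avoids the $H_1$-ergodic decomposition entirely: it works on the extension $\X \coloneqq \Y \times_\rho S^1$, exploits the $S^1$-action on $\X$ and Fourier decomposition to produce a nonzero $S^1$-frequency-one element $F$ of $L^\infty(\ZZ^{<d_1-1}_{H_1}(\X))$, then uses $G$-ergodicity only to glue countably many translates $T^g F$ into a single $u$ of magnitude one, and finally sets $\tilde\rho(g,\cdot) = (T^g u)\overline{u}$. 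The key structural advantage is that $\ZZ^{<d_1-1}_{H_1}(\X)$ is a $G$-invariant factor of $\X$, so $T^g u$ is automatically in it for every $g\in G$, and $\tilde\rho(g,\cdot)$ lands in $L^\infty(\Y)\cap L^\infty(\ZZ^{<d_1-1}_{H_1}(\X)) = L^\infty(\ZZ^{<d_1-1}_{H_1}(\Y))$ by Corollary~\ref{locx} --- the $G$-measurability is for free.

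Your sketch has a genuine gap at exactly the point your route is hard. Applying the $H_1$-ergodic case of \cite[Corollary 7.9]{hk} on a component $\Y_z$ only gives a $\psi_z$ for which $\rho(h,\cdot)\overline{\psi_z\circ T^h}\psi_z$ is $\ZZ^{<d_1-1}_{H_1}(\Y_z)$-measurable for $h\in H_1$; it says nothing about $\rho(g,\cdot)$ for $g\notin H_1$. You address this with the assertion that the cocycle identity ``forces the map $y\mapsto \psi_{g\cdot z}(T^g y)\overline{\psi_z(y)}$ to be $\ZZ^{<d_1-1}_{H_1}$-measurable modulo a $G$-coboundary on each component,'' but this is precisely what is not established. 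Chasing the cocycle identity only shows that the $H_1$-derivative $f_g(T^h y)/f_g(y)$ of the candidate function $f_g(y)\coloneqq \rho(g,y)\overline{\psi_{g\cdot z}(T^g y)}\psi_z(y)$ is $\ZZ^{<d_1-1}_{H_1}$-measurable; knowing that a function has $\ZZ^{<d_1-1}_{H_1}$-measurable $H_1$-derivative does not by itself make the function $\ZZ^{<d_1-1}_{H_1}$-measurable (even up to the residual freedom in choosing $\psi_z$, which is only by $\ZZ^{<d_1-1}_{H_1}(\Y_z)$-measurable phases). Closing this requires precisely the Mackey-theoretic input and component-uniformity of Mackey range groups that the paper flags, and your sketch does not supply it. The same issue recurs in your final paragraph: the conclusion that $\rho'$ is $\ZZ^{<d_1-1}_{H_1}(\Y)$-measurable ``on each component'' is only justified for the restriction of $\rho'$ to $H_1\times Y$, not for all of $G\times Y$. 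The route can presumably be completed, as the paper asserts, but as written it is not a proof of the $G$-measurability claim, which is the whole point of the proposition.
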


\begin{proof}  If $\Y$ was $H_1$-ergodic then this would be immediate from \cite[Corollary 7.9]{hk} (the argument there is stated for $H_1=\Z$, but extends to more general at most countable additive groups).  To extend this result to the $G$-ergodic case, we will give an alternate arrangement\footnote{One could also derive the non-ergodic case from the ergodic case treated in \cite[Corollary 7.9]{hk} by using the ergodic decomposition in $H_1$, Mackey theory, and some measurable selection lemmas, using the $G$-ergodicity to keep the Mackey range groups uniform over the ergodic components; we omit the details.} of the arguments in \cite{hk}, which does not rely on $H_1$-ergodicity.

Let $\X$ denote the $G$-system $\X \coloneqq  \Y \times_\rho S^1$, then $S^1$ acts on $\X$ by translation, with each element $\zeta$ of $S^1$ transforming a function $f\colon (y,z) \mapsto f(y,z)$ in $L^\infty(\X)$ to the translated function $\tau_\zeta f\colon (y,z) \mapsto f(y, \zeta^{-1} z)$.  As $\X$ is an abelian extension of $\Y$, the $S^1$-action commutes with the $G$-action and in particular with the $H_1$-action.  This implies that the factor $\ZZ^{{<}d_1-1}_{H_1}(\X)$ of $\X$ inherits an $S^1$-action (which by abuse of notation we will also call $\tau_\zeta$) which commutes with the $G$-action.

Let us say that a function $f \in L^\infty(\X)$ has \emph{$S^1$-frequency one} if one has $\tau_\zeta f = \zeta^{-1} f$ for all $\zeta \in S^1$, or equivalently if $f$ has the form $f(y,\zeta) = \tilde f(y) \zeta$ for some $\tilde f \in L^\infty(\Y)$.  We claim\footnote{In the language of \cite[Proposition 7.6]{hk}, this is the analogue of the group $W$ being trivial in the non-ergodic setting.} that there is a function $f$ of $S^1$-frequency one with non-vanishing $U^{d_1-1}_{H_1}(\X)$ seminorm.  Suppose for the moment that this were the case, then by \eqref{fuz} there is a function $f' \in L^\infty(\ZZ_{H_1}^{{<}d_1-1}(\X))$ which has a non-zero inner product with a function of $S^1$-frequency one.  Decomposing $f'$ into Fourier components with respect to the $S^1$ action, and recalling that this action preserves $L^\infty(\ZZ_{H_1}^{{<}d_1-1}(\X))$, we conclude that $L^\infty(\ZZ_{H_1}^{{<}d_1-1}(\X))$ contains a function $F$ of $S^1$-frequency one.  The absolute value $|F|$ of this function is $S^1$-invariant and lies in $L^\infty(\ZZ_{H_1}^{{<}d_1-1}(\X))$, hence by Corollary \ref{locx}, it lies in $L^\infty(\ZZ_{H_1}^{{<}d_1-1}(\Y))$.  The support of $|F|$ may not be all of $\Y$, but from $G$-ergodicity we can cover $\Y$ (up to null sets) by the support of countably many translates $|T^g F|$ of $|F|$.  By gluing these translates $T^g F$ together and then normalizing, we may thus find a function $u$ in $L^\infty(\ZZ_{H_1}^{{<}d_1-1}(\X))$ of $S^1$-frequency one which has magnitude one, that is to say it takes values in $S^1$ almost everywhere.  One can then check that the function $\tilde \rho\colon G \times Y \to S^1$ defined (almost everywhere) by $\tilde \rho(g,\cdot) \coloneqq  (T^g u) \overline{u}$ (where we embed $L^\infty(\ZZ_{H_1}^{{<}d_1-1}(\Y))$ in $L^\infty(\Y)$ as usual) is a cocycle that differs from $\rho$ by a $G$-coboundary, giving the claim.

It remains to prove the claim.  We use an argument similar to the one used to prove Proposition \ref{type}(ii).  Suppose for contradiction that all functions of $S^1$-frequency one had vanishing $U^{d_1-1}_{H_1}(\X)$ norm.  By the Cauchy-Schwarz-Gowers inequality \eqref{csg}, we then have
$$
\int_{\X^{[d_1-1]}_{H_1}} \prod_{\omega \in \{0,1\}^{d_1-1}} {\mathcal C}^{|\omega|}(f_\omega(y_\omega) z_\omega)\ d\mu^{[d_1-1]}_{H_1} = 0$$
for all $f_\omega \in L^\infty(\Y)$, where we parameterise $\X^{[d_1-1]}_{H_1}$ by $((y_\omega,z_\omega))_{\omega \in \{0,1\}^{d_1-1}}$ with $y_\omega \in Y$ and $z_\omega \in S^1$.  Approximating functions in $L^\infty(\Y^{[d_1-1]}_{H_1})$ by tensor products, we conclude that
\begin{equation}\label{nab}
\int_{\X^{[d_1-1]}_{H_1}} f( (y_\omega)_{\omega \in \{0,1\}^{d_1-1}}) \prod_{\omega \in \{0,1\}^{d_1-1}} {\mathcal C}^{|\omega|} z_\omega\ d\mu^{[d_1-1]}_{H_1} = 0
\end{equation}
for any $f \in L^\infty(\Y^{[d_1-1]}_{H_1})$.

Now recall that $\rho$ is of $H_1$-type $d_1-2$, so that one has an identity of the form
$$ d^{[d_1-2]}_{H_1} \rho(g, y) = F( T_g y ) \overline{F(y)}$$
for $g \in H_1$ and almost every $y$ in $\Y^{[d_1-2]}_{H_1}$, and some $F \in L^\infty(\Y^{[d_1-2]}_{H_1})$ taking values in $S^1$.  Since $T^g( y, z ) = (T^g y, \rho(g,y) z)$, this implies that
$$ \prod_{\omega \in \{0,1\}^{d_1-2}} {\mathcal C}^{|\omega|} (z'_\omega \overline{z_\omega}) = F(y') \overline{F(y)} $$
for almost every $(y,z)$ in $\X^{[d_1-2]}_{H_1}$ and $g \in H_1$, where $(y',z') \coloneqq  T^g (y,z)$ and $z = (z_\omega)_{\omega \in \{0,1\}^{d_1-2}}$, $z' = (z'_\omega)_{\omega \in \{0,1\}^{d_1-2}}$.  By \eqref{foa}, this implies that
$$ \prod_{\omega \in \{0,1\}^{d_1}} {\mathcal C}^{|\omega|} z_\omega = F(y') \overline{F(y)} $$
for almost every $((y',y),z)$ in $\X^{[d_1-1]}_{H_1}$, with $z = (z_\omega)_{\omega \in \{0,1\}^{d_1-1}}$.  But this contradicts \eqref{nab} with $f\colon (y',y) \mapsto F(y) \overline{F(y')}$.
\end{proof}

We use this result to obtain a variant of Proposition \ref{type-concat}:

\begin{proposition}[Concatenation of type, variant]\label{type-concat-variant}  Let $\Y$ be a $G$-system of $G$-order ${<}d_1+d_2-2$, of $H_1$-order ${<}d_1$, and $H_2$-order ${<}d_2$.  Let $\sigma\colon G \times Y \to S^1$ be a cocycle which is of $H_1$-type $d_1-2$ and $H_2$-type $d_2-1$.  Then $\sigma$ is of $G$-type $d_1+d_2-3$.  Similarly if one assumes instead that $\sigma$ has $H_1$-type $d_1-1$ and $H_2$-type $d_2-2$.
\end{proposition}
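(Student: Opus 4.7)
By symmetry it suffices to treat the case of $H_1$-type $d_1-2$ and $H_2$-type $d_2-1$; the other case follows by swapping the roles of $H_1$ and $H_2$. Following the reductions at the start of the proof of Theorem \ref{concat-uap-erg-weak}, I may assume $G = H_1+H_2$ and that $(\Y,T)$ is $G$-ergodic (via the ergodic decomposition on a compact-metric model, as the hypotheses on $\Y$ pass to almost every ergodic component). Applying the preceding Host--Kra descent proposition to $\sigma$, exploiting its $H_1$-type $d_1-2$ property, I obtain a cocycle $\sigma'\colon G\times\Y\to S^1$ measurable with respect to $\Y' \coloneqq \ZZ^{{<}d_1-1}_{H_1}(\Y)$ and differing from $\sigma$ by a $G$-coboundary. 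Types along any subgroup are preserved under multiplication by $G$-coboundaries, so $\sigma'$ inherits the $H_1$-type $d_1-2$ and $H_2$-type $d_2-1$ of $\sigma$, and it suffices to prove that $\sigma'$ is of $G$-type $d_1+d_2-3$.

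Viewing $\sigma'$ as a cocycle on $\Y'$, the factor $\Y'$ has $H_1$-order ${<}d_1-1$ by construction; by Corollary \ref{locx} applied to $\Y'\leq\Y$, it also has $H_2$-order ${<}d_2$ and $G$-order ${<}d_1+d_2-2$, and it inherits $G$-ergodicity from $\Y$. Form the extension $\X'\coloneqq \Y'\times_{\sigma'} S^1$. Every character of $S^1$ is a power $z\mapsto z^n$, and type is preserved under taking powers, so each $\chi\circ\sigma'$ is again of $H_1$-type $d_1-2$ and $H_2$-type $d_2-1$. Proposition \ref{type}(ii) at $(H,d)=(H_1,d_1-1)$ then shows $\X'$ is of $H_1$-order ${<}d_1-1$, and at $(H,d)=(H_2,d_2)$ it shows $\X'$ is of $H_2$-order ${<}d_2$.

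At this point I invoke the inductive hypothesis: Theorem \ref{concat-uap-erg-weak} is proved by induction on $d_1+d_2$, so I may apply it to $\X'$ with the strictly smaller parameters $(d_1',d_2')=(d_1-1,d_2)$ (whose sum is $d_1+d_2-1$). Since $\X' = \ZZ^{{<}d_1-1}_{H_1}(\X') = \ZZ^{{<}d_2}_{H_2}(\X')$, the inductive statement forces $\X'\leq \ZZ^{{<}d_1+d_2-2}_{H_1+H_2}(\X')$, i.e., $\X'$ is of $G$-order ${<}d_1+d_2-2$. Finally, Proposition \ref{type}(i) at $(H,d)=(G,d_1+d_2-2)$ (whose hypotheses on $\Y'$ were verified above) yields that $\chi\circ\sigma'$ is of $G$-type $d_1+d_2-3$ for every character $\chi$; taking $\chi=\id$ gives $\sigma'$ of $G$-type $d_1+d_2-3$, hence the same for $\sigma$.

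The chief technical point is the verification of the $G$-order bound on $\X'$: this genuinely requires the strictly smaller case of Theorem \ref{concat-uap-erg-weak}, so Proposition \ref{type-concat-variant} must be embedded in a joint induction on $d_1+d_2$ with the main theorem (and with Proposition \ref{type-concat}, which is deduced from Proposition \ref{type-concat-variant} within the same induction). The remaining manipulations---the descent of $\sigma$ off the $H_1$-characteristic factor, the double use of the Host--Kra type equation in both directions, and the preservation of type under $G$-coboundaries and under taking character powers---are routine once this inductive scaffolding is in place.
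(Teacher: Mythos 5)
Your packaging---forming $\X' := \Y' \times_{\sigma'}S^1$ directly over the smaller base $\Y' := \ZZ^{<d_1-1}_{H_1}(\Y)$---is a genuine variant of the paper's route, which forms $\X := \Y \times_{\sigma'}S^1$ over the full $\Y$, observes that the $S^1$-frequency-one function $f(y,u)=u$ lies in $L^\infty(\ZZ^{<d_1-1}_{H_1}(\X))\cap L^\infty(\ZZ^{<d_2}_{H_2}(\X))$, applies the inductive hypothesis to $f$ with parameters $(d_1-1,d_2)$, and then combines this with $L^\infty(\Y)\subset L^\infty(\ZZ^{<d_1+d_2-2}_G(\X))$ and Fourier expansion to get the $G$-order bound on $\X$ before invoking Proposition~\ref{type}(i). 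The two routes agree from the induction step onward.

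However, your two applications of Proposition~\ref{type}(ii) over the base $\Y'$ are not justified as stated. That proposition, applied with base $\Y'$, requires $\chi\circ\sigma'$ to have the relevant type \emph{as a cocycle on $\Y'$}: the coboundary witness for $d^{[d_2-1]}_{H_2}(\chi\circ\sigma')$ must be $(\Y')^{[d_2-1]}_{H_2}$-measurable. What your reductions yield---$\sigma$ of $H_2$-type $d_2-1$ on $\Y$, $\sigma'$ cohomologous to $\sigma$, character powers preserving type---gives the $H_2$-type of $\sigma'$ only \emph{on $\Y$}. A $\Y'$-measurable cocycle of a given type over $\Y$ is not automatically of that type over the factor $\Y'$, since coboundary-ness does not descend to factors in general (already at type $0$ this fails: a constant cocycle $e(n\alpha)$ on an irrational circle rotation is a coboundary on the circle but not on the trivial factor). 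The $H_1$-application can be salvaged structurally, because by the construction in the descent proposition one has $\Y'\times_{\sigma'}S^1\equiv\ZZ^{<d_1-1}_{H_1}(\Y\times_\sigma S^1)$, which is of $H_1$-order $<d_1-1$ without any type computation on $\Y'$. For the $H_2$-direction, the clean repair is the paper's: apply Proposition~\ref{type}(ii) over the \emph{full} base $\Y$ to conclude $\X = \Y\times_{\sigma'}S^1$ has $H_2$-order $<d_2$, and then transfer this to the factor $\X'\leq\X$ via Corollary~\ref{locx}. With these two substitutions, your induction applied to $\X'$ with parameters $(d_1-1,d_2)$ and the closing appeal to Proposition~\ref{type}(i) go through correctly.
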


\begin{proof} We just prove the first claim, as the second is similar.  Applying the preceding proposition to the restriction of $\sigma$ to $H \times Y$, we see that $\sigma$ differs by a $G$-coboundary from a cocycle $\sigma'\colon G \times Y \to S^1$ which is measurable with respect to $\ZZ^{{<}d_1-1}(\Y)$ when restricted to $H_1 \times Y$.  By Proposition \ref{type}(ii), we now conclude that the system $\ZZ^{{<}d_1-1}_{H_1}(\Y) \times_{\sigma'} S^1$ is of $H_1$-order ${<}d_1-1$.  If we let $\X \coloneqq  \Y \times_{\sigma'} S^1$, and introduce the function $f \in L^\infty(\X)$ by $f(y,u) \coloneqq  u$, then $f$ is measurable with respect to the factor $\ZZ^{{<}d_1-1}_{H_1}(\Y) \times_{\sigma'} S^1$, and thus lies in $L^\infty(\ZZ^{{<}d_1-1}_{H_1}(\X))$.

On the other hand, since $\sigma$ is of $H_2$-type $d_2-1$, $\sigma'$ is also.  Since $\Y$ is of $H_2$-order ${<}d_2$, we may apply Proposition \ref{type}(ii) to conclude that $\X$ is of $H_2$-order ${<}d_2$.  In particular $f$ also lies in $\ZZ^{{<}d_2-1}_{H_2}(\X)$.  Applying the induction hypothesis for Theorem \ref{concat-uap-erg-weak}, we conclude that $f$ lies in $L^\infty(\ZZ^{{<}d_1+d_2-2}_G(\X))$.  Since $\Y$ was already of $G$-order ${<}d_1+d_2-2$, $L^\infty(\Y)$ also lies in $L^\infty(\ZZ^{{<}d_1+d_2-2}_G(\X))$.  By Fourier analysis, any element of $L^\infty(\X)$ can be approximated in $L^2$ to arbitrary accuracy by polynomial combinations of $f$ and elements of $L^\infty(\Y)$, and hence  $L^\infty(\X)$ is contained in $L^\infty(\ZZ^{{<}d_1+d_2-2}_G(\X))$; that is to say, $\X$ has $G$-order ${<}d_1+d_2-2$.  By Proposition \ref{type}(i), this implies that  $\sigma'$ is of $G$-type $d_1+d_2-3$ on $\Y$.  Since $\sigma$ differs from $\sigma'$ by a $G$-coboundary, we conclude that $\sigma$ is of $G$-type $d_1+d_2-3$ also, as required.
\end{proof}

We now prove Proposition \ref{type-concat}, using an argument that is a heavily disguised analogoue of the argument used to prove Proposition \ref{concat}.  Let $\Y, \sigma$ be as in Proposition \ref{type-concat}.  Then by Lemma \ref{glob}, $\Y^{[1]}_{H_1}$ is of $G$-order ${<}d_1+d_2-2$, of $H_1$-order ${<}d_1$, and $H_2$-order ${<}d_2$.  The cocycle $d^{[1]}_{H_1} \sigma$ on $\Y^{[1]}_{H_1}$ is of $H_1$-type $d_1-2$ and $H_2$-type $d_2-1$.  Applying Proposition \ref{type-concat-variant}, we conclude that $d^{[1]}_{H_1} \sigma$ is of $G$-type $d_1+d_2-3$.  Similarly $d^{[1]}_{H_2} \sigma$ is of $G$-type $d_1+d_2-3$.

We now lift $\Y^{[1]}_{H_1}$ and $\Y^{[1]}_{H_2}$ up to the space $\Y^L_{H_1,H_2}$ to conclude that the cocycles $(g,x_{00}, x_{10}, x_{01}) \mapsto  \sigma(g,x_{00}) \overline{\sigma(g,x_{10})}$ and  $(g,x_{00}, x_{10}, x_{01}) \mapsto  \sigma(g,x_{00}) \overline{\sigma(g,x_{01})}$ are of $G$-type $d_1+d_2-3$ on $\Y^L_{H_1,H_2}$.  Dividing one function by the other, we conclude that $\tilde \sigma\colon (g,x_{00}, x_{10}, x_{01}) \mapsto  \sigma(g,x_{10}) \overline{\sigma(g,x_{01})}$ is also of $G$-type $d_1+d_2-3$ on $\Y^L_{H_1,H_2}$.  Since $\Y$ has $G$-order ${<}d_1+d_2-2$, we see from Lemma \ref{glob} that $\Y^L_{H_1,H_2}$ also has $G$-order ${<}d_1+d_2-2$, so by\footnote{The result in \cite{hk} is stated for the case $G=\Z$, but exactly the same argument works for more general countabe abelian $G$.  Similarly for \cite[Proposition 6.4]{hk}.} \cite[Proposition 7.6]{hk} we conclude that $\Y^L_{H_1,H_2} \times_{\tilde \sigma} S^1$ has $G$-order ${<}d_1+d_2-2$.  Now, by Lemma \ref{L-proj}, $\Y^L_{H_1,H_2}$ has $\Y^{[1]}_{H_1+H_2}$ as a factor, and $\tilde \sigma$ is a pullback of $d^{[1]}_{H_1+H_2} \sigma$.  Thus $\Y^{[1]}_{H_1+H_2} \times_{d^{[1]}_{H_1+H_2} \sigma} S^1$ is a factor of $\Y^L_{H_1,H_2} \times_{\tilde \sigma} S^1$ and thus has $G$-order ${<}d_1+d_2-2$.  By \cite[Proposition 6.4]{hk}, we conclude that $d^{[1]}_G \sigma = d^{[1]}_{H_1+H_2} \sigma$ has $G$-type $d_1+d_2-3$, and so $\sigma$ has $G$-type $d_1+d_2-2$, as required.

\section{Sketch of combinatorial concatenation argument}\label{sketch-sec}

In this section we give an informal sketch of how Theorem \ref{concat-uap} is proven, glossing over several technical issues that the nonstandard analysis formalism is used to handle.

We assume inductively that $d_1,d_2 > 1$, and that the theorem has already been proven for smaller values of $d_1+d_2$.
Let us informally call a function $f$ \emph{structured of order $<d_1$ along $H_1$} if it obeys bounds similar to (i), and similarly define the notion of \emph{structured of order $<d_2$ along $H_2$}; these notions can be made rigorous once one sets up the nonstandard analysis formalism.  Roughly speaking, Theorem \ref{concat-uap} then asserts that functions that are structured of order $<d_1$ along $H_1$ and structured of order $<d_2$ along $H_2$ are also structured of order $<d_1+d_2-1$ along $H_1+H_2$.  A key point (established using the machinery of dual functions) is that the class of functions that have a certain structure (e.g. being structured of order $<d_1$ along $H_1$) form a shift-invariant algebra, in that they are closed under addition, scalar multiplication, pointwise multiplication, and translation.  

By further use of the machinery of dual functions, one can show that if $f$ is structured of order $<d_1$ along $H_1$, then the shifts $T^{n} f$ of $f$ with $n \in H_1$ admit a representation roughly of the form
\begin{equation}\label{gah}
T^{n} f \approx \E_h c_{n,h} g_h
\end{equation}
where $\E_h$ represents some averaging operation\footnote{In the rigorous version of this argument, $\E_h$ will be an internal average over some internally finite multiset.  Crucially, the functions $c_{n,h}$ and $g_h$ will depend \emph{internally} on the parameters $n,h$, which when combined with the overspill and underspill properties of nonstandard analysis, will give us some important uniform control on error terms.  Such uniformity can also be obtained in the standard analysis setting, of course, but requires a surprisingly complicated amount of notation in order to manage all the epsilon-type parameters which appear.} with respect to some parameter $h$, the $g_h$ are bounded functions, and the $c_{n,h}$ are functions that are structured of order $<d_1-1$ along $H_1$; this type of ``higher order uniformly almost periodic'' representation of shifts of structured functions generalizes \eqref{pxh}, and first appeared in \cite{tao:ergodic}.  In particular, if $n' \in H_2$, we have
$$ T^{n+n'} f \approx \E_h T^{n'} c_{n,h} T^{n'} g_h.$$
A crucial point now (arising from the shift-invariant algebra property mentioned earlier) is that the structure one has on the original function $f$ is inherited by the functions $c_{n,h}$ and $g_h$.  Specifically, since $f$ is structured of order $<d_1$ along $H_1$ and of order $<d_2$ along $H_2$, the functions $g_h$ appearing in \eqref{gah} should also be structued in this fashion.  This implies that
\begin{equation}\label{tang}
 T^{n'} g_h = \E_{h'} c'_{n',h,h'} g_{h,h'}
\end{equation}
where $c'_{n',h,h'}$ is structured of order $<d_1$ along $H_1$ and of order $<d_2-1$ along $H_2$, and $g_{h,h'}$ is some bounded function.  This leads to a representation of the form
\begin{equation}\label{tnf}
 T^{n+n'} f \approx \E_{h,h'} c''_{n,n',h,h'} g_{h,h'}
\end{equation}
where $c''_{n,n',h,h'} = (T^{n'} c_{n,h}) c'_{n',h,h'}$.  But by the induction hypothesis as before one can show that $c'_{n',h,h'}$ is structured of order $<d_1+d_2-2$ along $H_1+H_2$, and then from the shift-invariant algebra property mentioned earlier, we see that $c''_{n,n',h,h'}$ is also structured of order $<d_1+d_2-2$ along $H_1+H_2$.  The representation \eqref{tnf} can then be used (basically by a Cauchy-Schwarz argument, similar to one used in \cite{tao:ergodic}) to establish that $f$ is structured of order $<d_1+d_2-1$ along $H_1+H_2$.

\begin{remark}  We were not able to directly adapt this argument to give a purely ergodic theory proof of Theorem \ref{concat-uap-erg} or Theorem \ref{concat-box-erg}, mainly due to technical problems defining the notion of ``uniform almost periodicity'' in the ergodic context, and in ensuring that this almost periodicity was uniformly controlled with respect to parameters such as $n,n',h,h'$.  Instead, the natural ergodic analogue of this argument appears to be the variant inclusion
$$ \mathbf{F}^{<{d_1}}_{H_1}(\X) \wedge \mathbf{F}^{<{d_2}}_{H_2}(\X) \leq \mathbf{F}^{<{d_1+d_2-1}}_{H_1+H_2}(\X)$$
under the hypotheses of Theorem \ref{concat-uap-erg-weak}, where the \emph{Furstenberg factors} ${\mathbf F}^{<d}_H(\X)$ \cite{furst-s} are defined recursively
 by setting $\mathbf{F}^{<1}_H(\X) = \X^H$ to be the invariant factor and ${\mathbf F}^{<d+1}_H(\X)$ to be the maximal compact extension of ${\mathbf F}^{<d}_H(\X)$.   This inclusion can be deduced from \cite[Proposition 2.3]{fk} (which was already used in Section \ref{erg2-sec}) and an induction on $d_1+d_2$; we leave the details to the interested reader.  
 We remark that the proof of \cite[Proposition 2.3]{fk} can be viewed as a variant of the arguments sketched in this section.  The Furstenberg factors ${\mathbf F}^{<d}_H(\X)$ are, in general, larger than the Host-Kra factors $\ZZ^{<d}_H(\X)$, because the cocycles in the latter must obey the type condition in Proposition \ref{type}, whereas the former factors have no such constraint.
\end{remark}

\section{Taking ultraproducts}\label{ultra-sec}

To prove our main combinatorial theorems rigorously, it is convenient to use the device of ultraproducts to pass to a nonstandard analysis formulation, in order to hide most of the ``epsilon management'', as well as to exploit infinitary tools such as countable saturation, Loeb measure and conditional expectation.  The use of nonstandard analysis to analyze Gowers uniformity norms was first introduced by Szegedy \cite{szegedy-old}, \cite{szegedy}, and also used by Green and the authors in \cite{gtz-uk}.  

We quickly set up the necessary formalism.  (See for instance \cite{goldblatt} for an introduction to the foundations of nonstandard analysis that is used here.)  We will need to fix a non-principal ultrafilter $\alpha \in \beta \N \backslash \N$ on the natural numbers, thus $\alpha$ is a collection of subsets of natural numbers such that the function $A \mapsto 1_{A \in \alpha}$ forms a finitely additive $\{0,1\}$-valued probability measure on $\N$, which assigns zero measure to every finite set.  The existence of such a non-principal ultrafilter is guaranteed by the axiom of choice.  We refer to elements of $\alpha$ as \emph{$\alpha$-large sets}.

We assume the existence of a \emph{standard universe} ${\mathfrak U}$ - a set that contains all the mathematical objects of interest to us, in particular containing all the objects mentioned in the theorems in the introduction.  Objects in this universe will be referred to as \emph{standard} objects.  A \emph{standard set} is a set consisting entirely of standard objects, and a \emph{standard function} is a function whose domain and range are standard sets.  The standard universe will not need to obey all of the usual ZFC set theory axioms (though one can certainly assume this if desired, given a suitable large cardinal axiom); however we will at least need this universe to be closed under the ordered pair construction $x,y \mapsto (x,y)$, so that the Cartesian product of finitely many standard sets is again a standard set.  

A \emph{nonstandard object} is an equivalence class of tuples $(x_n)_{n \in A}$ of standard objects indexed by an $\alpha$-large set $A$, with two tuples $(x_n)_{n \in A}$, $(y_n)_{n \in B}$ equivalent if they agree on an $\alpha$-large set.  We write $\lim_{n \to \alpha} x_n$ for the equivalence class associated with a tuple $(x_n)_{n \in A}$, and refer to this nonstandard object as the \emph{ultralimit} of the $x_n$.  Thus for instance a \emph{nonstandard natural number} is an ultralimit of standard natural numbers, a \emph{nonstandard real number} is an ultralimit of standard real numbers, and so forth.  If $(X_n)_{n \in A}$ is a sequence of standard sets indexed by an $\alpha$-large set $A$, we define the \emph{ultraproduct} $\prod_{n \to \alpha} X_n$ to be the collection of all ultralimits $\lim_{n \to \alpha} x_n$, where $x_n \in X_n$ for an $\alpha$-large set of $n$.  An \emph{internal set} is a set which is an ultraproduct of standard sets.  We use the term \emph{external set} to denote a set of nonstandard objects that is not necessarily internal.  Note that every standard set $X$ embeds into the nonstandard set ${}^* X \coloneqq  \prod_{n \to \alpha} X$, which we call the \emph{ultrapower} of $X$, by identifying every standard object $x$ with its nonstandard counterpart $\lim_{n \to \alpha} x$.  In particular, the standard universe ${\mathfrak U}$ embeds into the \emph{nonstandard universe} ${}^* {\mathfrak U}$ of all nonstandard objects.

If $X = \prod_{n \to \alpha} X_n$ and $Y = \prod_{n \to \alpha} Y_n$ are internal sets, we have a canonical isomorphism
$$ X \times Y \equiv \prod_{n \to \alpha} (X_n \times Y_n)$$
which (by abuse of notation) allows us to identify the Cartesian product of two internal sets as another internal set.  Similarly for Cartesian products of any (standard) finite number of internal sets.  We will implicitly use such identifications in the sequel without further comment.

An \emph{internally finite set} is an ultraproduct of finite sets (such sets are also known as \emph{hyperfinite} sets in the literature).  Similarly with ``set'' replaced by ``multiset''.  (The multiplicity of an element of an internally finite multiset will of course be a nonstandard natural number in general, rather than a standard natural number.)

Given a sequence $(f_n)_{n \in A}$ of standard functions $f_n\colon X_n \to Y_n$ indexed by an $\alpha$-large set $A$, we define the \emph{ultralimit} $\lim_{n \to \alpha} f_n\colon \prod_{n \to \alpha} X_n \to \prod_{n \to \alpha} Y_n$ to be the function
$$(\lim_{n \to \alpha} f_n)( \lim_{n \to \alpha} x_n) \coloneqq  \lim_{n \to \alpha} f_n(x_n).$$
This is easily seen to be a well-defined function.  Functions that are ultralimits of standard functions will be called \emph{internal functions}.  We use the term \emph{external function} to denote a function between external sets that is not necessarily an internal function.  We will use boldface symbols such as $\mathbf{f}$ to refer to internal functions, distinguishing them in particular from functions $f$ that take values in the standard complex numbers $\C$ rather than the nonstandard complex numbers ${}^* \C$.
  
Using the ultralimit construction, any ultraproduct $X = \prod_{n \to \alpha} X_n$ of structures $X_n$ for some first-order language ${\mathcal L}$, will remain a structure of that language ${\mathcal L}$; furthermore, thanks to the well-known theorem of {\L}os, any first-order sentence will hold in $X$ if and only if it holds in $X_n$ for an $\alpha$-large set of $n$.  For instance, if $G_n$ is an additive group for an $\alpha$-large set of $n$, then $\prod_{n \to \alpha} G_n$ will also be an additive group.

A crucial property of internal sets for us will be the following compactness-like property of internal sets.

\begin{theorem}[Countable saturation]\label{countable}\   
\begin{itemize}
\item[(i)]  Let $(X^{(i)})_{i \in \N}$ be a countable sequence of internal sets.  If the finite intersections $\bigcap_{i=1}^N X^{(i)}$ are non-empty for every (standard) natural number $N$, then $\bigcap_{i=1}^\infty X^{(i)}$ is also non-empty.
\item[(ii)]  Let $X$ be an internal set.  Then any countable cover of $X$ by internal sets has a finite subcover.
\end{itemize}
\end{theorem}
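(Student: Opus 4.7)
The plan is to prove (i) by a standard diagonalization over the $\alpha$-large sets produced by the hypothesis, and then deduce (ii) from (i) by taking relative complements in $X$. The entire argument is standard nonstandard analysis and relies only on the properties of the ultrafilter $\alpha$ already introduced.

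For (i), I would write each $X^{(i)}$ as an ultraproduct $X^{(i)} = \prod_{n \to \alpha} X^{(i)}_n$ of standard sets. For each standard $N \geq 1$, set
$$ A_N := \left\{ n \in \N : \bigcap_{i=1}^N X^{(i)}_n \neq \emptyset \right\}. $$
The hypothesis that $\bigcap_{i=1}^N X^{(i)}$ is nonempty, combined with {\L}o\'s's theorem (or unpacking directly from the definition of ultralimit), forces $A_N$ to be $\alpha$-large. Observe that $A_1 \supseteq A_2 \supseteq \cdots$. For each $n \in A_1$ set
$$ N(n) := \max\{\, N : 1 \leq N \leq n,\ n \in A_N \,\}, $$
and choose $x_n \in \bigcap_{i=1}^{N(n)} X^{(i)}_n$ (for $n \notin A_1$, $x_n$ may be taken arbitrarily in some fixed standard set, but this will be irrelevant). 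Let $x := \lim_{n \to \alpha} x_n$. For each fixed standard $i$, the set $A_i \cap \{n \geq i\}$ is $\alpha$-large, and on this set one has $N(n) \geq i$, whence $x_n \in X^{(i)}_n$. Therefore $x \in X^{(i)}$ for every $i$, and $x \in \bigcap_{i=1}^\infty X^{(i)}$ as required.

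For (ii), suppose $X = \bigcup_{i=1}^\infty Y^{(i)}$ with each $Y^{(i)}$ internal. Replacing $Y^{(i)}$ by $Y^{(i)} \cap X$ (which is internal as an ultraproduct of pointwise intersections), we may assume $Y^{(i)} \subseteq X$. Define $X^{(i)} := X \setminus Y^{(i)}$; writing $X = \prod_{n \to \alpha} X_n$ and $Y^{(i)} = \prod_{n \to \alpha} Y^{(i)}_n$, we have $X^{(i)} = \prod_{n \to \alpha}(X_n \setminus Y^{(i)}_n)$, so $X^{(i)}$ is internal. If no finite subcover exists, then for each $N$ the set $X \setminus \bigcup_{i=1}^N Y^{(i)} = \bigcap_{i=1}^N X^{(i)}$ is nonempty. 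Part (i) then produces an $x \in \bigcap_{i=1}^\infty X^{(i)}$, which is an element of $X$ belonging to no $Y^{(i)}$, contradicting the assumption that the $Y^{(i)}$ cover $X$.

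There is no serious obstacle here; the only point requiring a moment's care is the diagonalization in (i), where one must verify that the ``cutoff'' index $N(n)$ tends to $\infty$ along an $\alpha$-large set so that the resulting ultralimit $x$ actually lies in every $X^{(i)}$ simultaneously. This is immediate from the nestedness of the $A_N$ together with the fact that each individual $A_i$ is $\alpha$-large.
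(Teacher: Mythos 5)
Your proof is correct and follows essentially the same strategy as the paper: for (i), express each $X^{(i)}$ as an ultraproduct, observe that the sets $A_N$ on which $\bigcap_{i=1}^N X^{(i)}_n$ is nonempty form a nested sequence of $\alpha$-large sets, diagonalize to choose $x_n$, and pass to the ultralimit; and derive (ii) from (i) by taking complements. Your use of the cap $N(n) = \max\{N \leq n : n \in A_N\}$ is in fact slightly more careful than the paper's phrasing, which assigns $x_n$ only for $n \in A_N \setminus A_{N+1}$ and so tacitly requires $\bigcap_N A_N$ to be empty (achievable by further shrinking $A_N$ to $A_N \cap \{n \geq N\}$, which is what your cap accomplishes implicitly).
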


\begin{proof}  It suffices to prove (i), as the claim (ii) follows from taking complements and contrapositives.  Write $X^{(i)} = \prod_{n \to \alpha} X^{(i)}_n$.  Since $\bigcap_{i=1}^N X^{(i)}$ is non-empty, there is an $\alpha$-large set $A_N$ such that $\bigcap_{i=1}^N X_n^{(i)}$ is non-empty for all $n \in A_N$.  By shrinking the $A_N$ as necessary, we may assume that the $A_N$ are non-increasing in $N$.  If we then choose $x_n$ for any $n$ in  $A_N \backslash A_{N+1}$ to lie in $\bigcap_{i=1}^N X_n^{(i)}$ for all $N$, the ultralimit $\lim_{n \to \alpha} x_n$ lies in $\bigcap_{i=1}^\infty X^{(i)}$, giving the claim.
\end{proof}

A nonstandard complex number $z \in {}^* \C$ is said to be \emph{bounded} if one has $|z| \leq C$ for some standard $C$, and \emph{infinitesimal} if $|z| \leq \eps$ for all standard $\eps > 0$.  We write $z=O(1)$ when $z$ is bounded and $z=o(1)$ when $z$ is infinitesimal.  By modifying the proof of the Bolzano-Weierstrass theorem, we see that every bounded $z$ can be uniquely written as $z = \st z + o(1)$ for some standard complex number $\st z$, known as the \emph{standard part} of $z$.  

Countable saturation has the following important consequence:

\begin{corollary}[Overspill/underspill]\label{over}  Let $A$ be an internal subset of ${}^* \C$.
\begin{itemize}
\item[(i)]  If $A$ contains all standard natural numbers, then $A$ also contains an unbounded natural number.
\item[(ii)]  If all elements of $A$ are bounded, then $A$ is contained in $\{ z \in {}^* \C: |z| \leq C\}$ for some standard $C > 0$.
\item[(iii)]  If all elements of $A$ are infinitesimal, then $A$ is contained in $\{ z \in {}^* \C: |z| \leq \eps\}$ for some infinitesimal $\eps>0$.
\item[(iv)]  If $A$ contains all positive standard reals, then $A$ also contains a positive infinitesimal real.
\end{itemize}
\end{corollary}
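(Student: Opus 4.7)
The plan is to derive all four parts from Theorem \ref{countable}. Parts (i) and (iv) are overspill arguments: I will exhibit a nested countable family of nonempty internal sets and invoke Theorem \ref{countable}(i) to extract a nonstandard element from the intersection. Parts (ii) and (iii) are compactness-type arguments: part (ii) follows from Theorem \ref{countable}(ii) applied to an appropriate countable cover, while part (iii) is again obtained by overspill applied to the internal set of admissible ``witnesses.'' In each case the main task is to write down the right family and verify internality, which should be routine since every family member will be a finite Boolean combination of internal sets such as $A$, ${}^*\N$, and sublevel sets of $|\cdot|$.

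For (i), I will take $B_N \coloneqq A \cap {}^*\N \cap \{z \in {}^*\C : |z| \geq N\}$ for each standard $N \in \N$. Each $B_N$ contains the standard natural number $N$ (which lies in $A$ by hypothesis), and the family is descending in $N$, so all finite intersections are nonempty; Theorem \ref{countable}(i) then supplies an element of $\bigcap_N B_N$, which must be an unbounded natural number in $A$. Part (iv) is essentially identical: setting $D_n \coloneqq \{r \in A : 0 < r \leq 1/n\}$, each $D_n$ contains the standard positive real $1/n$ by hypothesis, so saturation produces a positive infinitesimal lying in $A$.

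For (ii), the hypothesis that every element of $A$ is bounded yields the countable cover $A = \bigcup_{N \in \N} (A \cap \{z \in {}^*\C : |z| \leq N\})$ of the internal set $A$ by internal subsets; Theorem \ref{countable}(ii) then produces a finite subcover and hence a uniform standard bound $C$. For (iii), I will use overspill on the space of witnesses: define $E_n \coloneqq \{\eps \in {}^*\R : 0 < \eps \leq 1/n \text{ and } A \subseteq \{z \in {}^*\C : |z| \leq \eps\}\}$ for each standard $n \in \N$. Infinitesimality of every element of $A$ forces $|z| \leq 1/n$ throughout $A$, so $1/n \in E_n$; the family is nested; and Theorem \ref{countable}(i) produces a positive infinitesimal $\eps$ with $A \subseteq \{|z| \leq \eps\}$.

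I do not anticipate any significant obstacle, since each of the four arguments is a direct bookkeeping application of saturation. The only delicate point is verifying that each auxiliary set is genuinely internal; for instance in (iii) the condition ``$A \subseteq \{|z| \leq \eps\}$'' must be internal as a function of $\eps$, which holds because $A$ is internal and the containment is expressible by a bounded quantification over $A$, so $E_n$ is cut out by an internal formula in the parameter $\eps$.
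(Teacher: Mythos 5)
Your proof is correct, and it differs in organizational structure from the paper's, though both rest on Theorem \ref{countable}. The paper proves (i) by contradiction via Theorem \ref{countable}(ii) (if (i) failed, $\N = A \cap {}^*\N$ would be internal, but the singleton cover $\{\{n\}\}_{n \in \N}$ has no finite subcover), and then derives (ii), (iii), (iv) as corollaries of (i) by feeding it a cleverly chosen auxiliary internal subset of ${}^*\N$ in each case: $\{n \in {}^*\N : |z| \geq n \text{ for some } z \in A\}$ for (ii), $\{n : |z| \leq 1/n \text{ for all } z \in A\}$ for (iii), and $\{n : 1/n \in A\}$ for (iv). You instead apply countable saturation independently to each part: nested chains and Theorem \ref{countable}(i) for (i), (iii), (iv), and the finite-subcover form Theorem \ref{countable}(ii) directly for (ii). The trade-off is that the paper's approach is more economical (a single nontrivial step plus three one-line reductions), while yours is more parallel and arguably more transparent, since each claim is produced by an explicit nested family rather than by threading through (i). Your verification that the witness set $E_n$ in (iii) is internal is the one place that needs care, and your reasoning there (bounded quantification over the internal set $A$) is correct; note also that your arguments for (ii) and (iii) handle the degenerate case $A = \emptyset$ without modification, since the relevant containments are then vacuous.
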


\begin{proof} If (i) failed, then we would have $\N = A \cap {}^* \N$, and hence $\N$ would be internal, which contradicts Theorem \ref{countable}(ii).  The claim (ii) follows from the contrapositive of (i) applied to the internal set $\{ n \in {}^* \N: |z| \geq n \hbox{ for some } z \in A \}$.  The claim (iii) similarly follows from (i) applied to the internal set $\{ n \in {}^* \N: |z| \leq 1/n \hbox{ for all } z \in A \}$.  Finally, the claim (iv) follows from (i) applied to the internal set $\{ n \in {}^*\N: 1/n \in A \}$. 
\end{proof}

An internal function $\mathbf{f}\colon X \to {}^* \C$ is said to be \emph{bounded} if it is bounded at every point, or equivalently (thanks to overspill or countable saturation) if there is a standard $C$ such that $|f(x)| \leq C$ for all $x \in X$, and we denote this assertion by $\mathbf{f}= O(1)$.  Similarly, an internal function $\mathbf{f}\colon X \to {}^* \C$ is said to be \emph{infinitesimal} if it is infinitesimal at every point, or equivalently (thanks to underspill or countable saturation) if there is an infinitesimal $\eps>0$ such that $|f(x)| \leq \eps$ for all $x \in X$, and we denote this assertion by $\mathbf{f}=o(1)$.

Let $\X_n = (X_n, {\mathcal B}_n, \mu_n)$ be a sequence of standard probability spaces, indexed by an $\alpha$-large set $A$.  In the ultraproduct $X \coloneqq  \prod_{n \to \alpha} X_n$, we have a Boolean algebra ${\mathcal B}$ of \emph{internally measurable sets} - that is to say, internal sets of the form $E = \prod_{n \to \alpha} E_n$, where $E_n \in {\mathcal B}_n$ for an $\alpha$-large set of $n$.  Similarly, we have a complex *-algebra ${\mathcal A}[\X]$ of bounded internally measurable functions - functions $\mathbf{f}\colon X \to {}^* \C$ that are ultralimits of measurable functions $f_n\colon X_n \to \C$, and which are bounded.  We have a finitely additive nonstandard probability measure ${}^* \mu \coloneqq  \lim_{n \to \alpha} \mu_n\colon {\mathcal B} \to {}^* [0,1]$, and a finitely additive nonstandard integral
$$ \int_X \mathbf{f}\ d{}^* \mu \coloneqq  \lim_{n \to \alpha} \int_{X_n} f_n\ d\mu_n $$
defined for bounded internally measurable functions $\mathbf{f} = \lim_{n \to \alpha} f_n \in {\mathcal A}[\X]$.  The standard part $\mu \coloneqq  \st {}^* \mu$ of the nonstandard measure ${}^* \mu$ is then an (external) finitely additive probability measure on ${\mathcal B}$.  From Theorem \ref{countable}(ii), this finitely additive measure is automatically a premeasure, and so by the Carath\'eodory extension theorem it may be extended to a countably additive measure $\mu\colon {\mathcal L} \to [0,1]$ to the $\sigma$-algebra ${\mathcal L}$ generated by ${\mathcal B}$.  The space $\X \coloneqq  (X, {\mathcal L}, \mu)$ is then known as the \emph{Loeb probability space} associated to the standard probability spaces $\X_n$. By construction, every Loeb-measurable set $E \in {\mathcal L}$ can be approximated up to arbitrarily small error in $\mu$ by an internally measurable set.  As a corollary, for any (standard) $1 \leq p < \infty$, then any function in $L^p(\X)$ can be approximated to arbitrary accuracy in $L^p(\X)$ norm by the standard part of a bounded internally measurable function, that is to say $\st {\mathcal A}[\X]$ is dense in $L^p(\X)$.  Indeed, we can say a little more:

\begin{lemma}\label{int}  If $f \in L^\infty(\X)$, then there exists $\mathbf{f} \in {\mathcal A}[\X]$ such that $f = \st \mathbf{f}$ almost everywhere.
\end{lemma}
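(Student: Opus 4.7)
The plan is to construct $\mathbf{f}$ by countable saturation from a well-chosen sequence of bounded internal approximants to $f$. Fix a standard $M$ with $|f| \leq M$ almost everywhere, and use the density of $\st {\mathcal A}[\X]$ in $L^1(\X)$ (noted just before the lemma) to choose, for each standard $k \in \N$, a function $\mathbf{g}_k \in {\mathcal A}[\X]$ with $\|\st \mathbf{g}_k - f\|_{L^1(\X)} \leq 4^{-k}$. Composing each $\mathbf{g}_k$ with the standard $1$-Lipschitz truncation $T_{M+1}\colon {}^*\C \to \{z \in {}^*\C : |z| \leq M+1\}$ and using $T_{M+1}(f) = f$ almost everywhere together with $\st T_{M+1}(\mathbf{g}_k) = T_{M+1}(\st\mathbf{g}_k)$ (valid because $\mathbf{g}_k$ is bounded by a standard constant), I may assume throughout that $|\mathbf{g}_k| \leq M+1$ pointwise while preserving the $L^1$ approximation bound.

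The key observation is that for bounded internal $\mathbf{h}$, the Loeb construction gives $\st \int_X |\mathbf{h}|\ d{}^*\mu = \int_X \st|\mathbf{h}|\ d\mu$, and since the standard part commutes with standard continuous maps on bounded nonstandard complex numbers, one has $\st |\mathbf{g}_{k+1} - \mathbf{g}_k| = |\st \mathbf{g}_{k+1} - \st \mathbf{g}_k|$ pointwise. Combined with the triangle inequality in the standard $L^1$ norm, this yields
$$ \st \int_X |\mathbf{g}_{k+1} - \mathbf{g}_k|\ d{}^*\mu = \int_X |\st \mathbf{g}_{k+1} - \st \mathbf{g}_k|\ d\mu \leq 2 \cdot 4^{-k}, $$
and hence $\int_X |\mathbf{g}_{k+1} - \mathbf{g}_k|\ d{}^*\mu \leq 2 \cdot 4^{-k} + \eta_k$ for some infinitesimal $\eta_k \geq 0$.

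With these approximants in hand, define for each standard $k$ the set
$$ C_k \coloneqq \left\{ \mathbf{h} \text{ internally measurable} : |\mathbf{h}| \leq M+1 \text{ and } \int_X |\mathbf{h} - \mathbf{g}_k|\ d{}^*\mu \leq 3 \cdot 4^{-k}\right\}. $$
The collection of internally measurable functions bounded by $M+1$ is the ultraproduct of the corresponding standard sets, and the integral inequality is an internal condition, so $C_k$ is an internal set. For any standard $N \geq k$, the element $\mathbf{g}_N$ belongs to $C_k$: by the triangle inequality and the estimate from the previous paragraph,
$$ \int_X |\mathbf{g}_N - \mathbf{g}_k|\ d{}^*\mu \leq \sum_{j=k}^{N-1}\bigl(2 \cdot 4^{-j} + \eta_j\bigr) \leq \frac{8}{3} \cdot 4^{-k} + o(1) \leq 3 \cdot 4^{-k}. $$
Every finite intersection $\bigcap_{k=1}^N C_k$ is therefore nonempty, and countable saturation (Theorem \ref{countable}) produces an $\mathbf{f} \in \bigcap_{k \in \N} C_k$, which automatically lies in ${\mathcal A}[\X]$. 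Taking standard parts gives $\int_X |\st \mathbf{f} - \st \mathbf{g}_k|\ d\mu \leq 3 \cdot 4^{-k}$ for every standard $k$, and letting $k \to \infty$ together with $\st \mathbf{g}_k \to f$ in $L^1(\X)$ yields $\st \mathbf{f} = f$ almost everywhere. The main point to keep track of is the internality check on $C_k$, which is what forces the uniform standard bound $M+1$ on the $\mathbf{g}_k$; without the initial truncation, the ambient nonstandard bounds could vary with $k$ and the domain of the saturation argument would cease to be an ultraproduct of standard sets.
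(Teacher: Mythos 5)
Your proof is correct and follows essentially the same route as the paper: obtain a sequence of uniformly bounded internal approximants via density of $\st{\mathcal A}[\X]$ in $L^1(\X)$, observe that their internal $L^1$ distances are small, and invoke countable saturation to produce a single internal function whose standard part equals $f$. You spell out two details the paper compresses — the truncation step justifying the uniform standard bound $M+1$ (the paper simply says the approximants can be taken bounded by $1$), and the explicit construction of the internal sets $C_k$ with a careful check of the finite intersection property allowing for infinitesimal errors — but these are elaborations, not a different argument.
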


\begin{proof}  We may normalise $f$ to lie in the closed unit ball of $L^\infty(\X)$.  By density, we may find a sequence $\mathbf{f}_n \in {\mathcal A}[\X]$ for $n \in \N$ bounded in magnitude by $1$, such that $\|f - \st \mathbf{f}_n \|_{L^1(\X)} < 1/n$ for all $n$.  In particular, $\| \mathbf{f}_N - \mathbf{f}_n \|_{{}^* L^1(\X)} \leq 2/n$ for all $n \leq N$.  By countable saturation, there thus exists an internally measurable function $\mathbf{f}$ bounded in magnitude by $1$ such that
$\| \mathbf{f} - \mathbf{f}_n \|_{{}^* L^1(\X)} \leq 2/n$ for all $n$.  Taking limits we see that $\|f-\st \mathbf{f} \|_{L^1(\X)}=0$, and the claim follows.
\end{proof}

By working first with simple functions and then taking limits, we easily establish the identity
$$ \int_X \st \mathbf{f}\ d\mu = \st \int_X \mathbf{f}\ d{}^* \mu $$
for any $\mathbf{f} \in {\mathcal A}[\X]$.  In particular we have
$$ \| \st \mathbf{f} \|_{L^p(\X)} = \st \|\mathbf{f}\|_{{}^* L^p(\X)}$$
for any $\mathbf{f} \in {\mathcal A}[\X]$ and $1 \leq p \leq \infty$ and
$$ \langle \st \mathbf{f}, \st \mathbf{g} \rangle_{L^2(\X)} = \st \langle \mathbf{f}, \mathbf{g} \rangle_{{}^* L^2(\X)}$$
for any $\mathbf{f}, \mathbf{g} \in {\mathcal A}[\X]$, where the internal inner product $\langle \mathbf{f}, \mathbf{g} \rangle_{{}^* L^2(\X)}$ is defined as
$$ \langle \mathbf{f}, \mathbf{g} \rangle_{{}^* L^2(\X)} \coloneqq  \int_X \mathbf{f} \overline{\mathbf{g}}\ d{}^* \mu$$
and the internal $L^p$ norm is defined as
$$ \|\mathbf{f}\|_{{}^* L^p(\X)} \coloneqq  \left(\int_X |\mathbf{f}|^p\ d{}^*\mu\right)^{1/p}$$
for finite $p$ and
$$ \|\mathbf{f}\|_{{}^* L^\infty(\X)} \coloneqq  {}^* \mu\!-\!\operatorname{ess} \sup_{x \in X} |\mathbf{f}(x)|$$
for $p=\infty$.

If $H = \prod_{n \to \alpha} H_n$ is an internally finite non-empty multiset, and $h \mapsto \mathbf{f}_h$ is an internal map from $H$ to ${\mathcal A}[\X]$,
then the internal function $h \mapsto \| \mathbf{f}_h \|_{{}^* L^\infty(\X)}$ is bounded, and hence (by Corollary \ref{over}) its image is bounded above by some standard $C$, so that $\{ \st \mathbf{f}_h: h \in H \}$ is bounded in $L^\infty(\X)$.  We can also define the internal average $\E_{h \in H} \mathbf{f}_h$ as
$$ \E_{h \in H} \mathbf{f}_h \coloneqq  \lim_{n \to \infty} \E_{h_n \in H_n} f_{h_n,n}$$
where $h \mapsto \mathbf{f}_h$ is the ultralimit of the maps $h_n \mapsto f_{h_n,n}$.  We have a basic fact about the location of this average:

\begin{lemma}\label{conv}  Let $H$ be an internally finite non-empty multiset, and let $h \mapsto \mathbf{f}_h$ be an internal map from $H$ to ${\mathcal A}[\X]$.  Then the average $\st \E_{h \in H} \mathbf{f}_h$ lies in the closed convex hull (in $L^2(\X)$) of $\{ \st \mathbf{f}_h: h \in H \}$.
\end{lemma}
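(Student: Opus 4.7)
The plan is to argue by contradiction using Hahn--Banach separation in $L^2(\X)$. Write $g \coloneqq \st \E_{h \in H} \mathbf{f}_h$ and let $C$ denote the closed convex hull in $L^2(\X)$ of $\{\st \mathbf{f}_h : h \in H\}$. If $g \notin C$, then since $L^2(\X)$ is a Hilbert space there exist $\phi \in L^2(\X)$ and reals $c < c'$ such that
$$ \operatorname{Re}\langle \st \mathbf{f}_h, \phi\rangle_{L^2(\X)} \leq c \text{ for every } h \in H, \qquad \operatorname{Re}\langle g, \phi\rangle_{L^2(\X)} \geq c'. $$
The goal is to derive a contradiction by transferring this pointwise separation inequality into the internal world and then averaging.

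First I would secure a uniform $L^\infty$ bound on the $\mathbf{f}_h$: the internal function $h \mapsto \|\mathbf{f}_h\|_{{}^*L^\infty(\X)}$ on $H$ takes only bounded nonstandard values (since each $\mathbf{f}_h \in \mathcal{A}[\X]$), so Corollary~\ref{over}(ii) produces a standard $M$ with $\|\mathbf{f}_h\|_{{}^*L^\infty(\X)} \leq M$ for every $h \in H$. Next, for a small standard $\delta > 0$ to be chosen, I would approximate $\phi$ in $L^2(\X)$ by the standard part of some $\boldsymbol{\phi} \in \mathcal{A}[\X]$, using density of $\st \mathcal{A}[\X]$ in $L^2(\X)$ (after truncating $\phi$ to $L^\infty$ and invoking Lemma~\ref{int}). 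Cauchy--Schwarz with the uniform bound then converts the separation inequality into
$$ \operatorname{Re}\langle \st \mathbf{f}_h, \st \boldsymbol{\phi}\rangle_{L^2(\X)} \leq c + M\delta \text{ for all } h \in H, \qquad \operatorname{Re}\langle g, \st \boldsymbol{\phi}\rangle_{L^2(\X)} \geq c' - \|g\|_{L^2(\X)}\delta. $$

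To move into the internal world I would use the identity $\langle \st \mathbf{u}, \st \mathbf{v}\rangle_{L^2(\X)} = \st \langle \mathbf{u},\mathbf{v}\rangle_{{}^*L^2(\X)}$ for $\mathbf{u},\mathbf{v} \in \mathcal{A}[\X]$ established earlier in the section. The first display then reads $\st \operatorname{Re}\langle \mathbf{f}_h, \boldsymbol{\phi}\rangle_{{}^*L^2(\X)} \leq c + M\delta$ for every $h \in H$; hence for any fixed standard $\eta > 0$, the \emph{internal} set $\{h \in H : \operatorname{Re}\langle \mathbf{f}_h, \boldsymbol{\phi}\rangle_{{}^*L^2(\X)} \leq c + M\delta + \eta\}$ contains all of $H$ and must equal $H$. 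Linearity of the internal hyperfinite average yields $\operatorname{Re}\E_{h \in H}\langle \mathbf{f}_h, \boldsymbol{\phi}\rangle_{{}^*L^2(\X)} \leq c + M\delta + \eta$; letting $\eta \to 0$ through standard reals and taking standard parts gives $\operatorname{Re}\langle g, \st \boldsymbol{\phi}\rangle_{L^2(\X)} \leq c + M\delta$, contradicting the lower bound once $\delta$ is chosen with $(M + \|g\|_{L^2(\X)})\delta < c' - c$. The main obstacle is this transfer step: the Hahn--Banach inequality is only a pointwise statement in $h$, and one must guarantee that it survives both the standard-part operation and the hyperfinite internal averaging uniformly in $h$. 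This is precisely where the uniform $L^\infty$ bound from overspill is essential, since it makes the Cauchy--Schwarz error $M\delta$ independent of $h$ and lets the internal-set argument collapse the pointwise bound into an averaged one.
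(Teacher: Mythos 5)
Your proposal is correct and follows essentially the same route as the paper's proof: argue by contradiction via Hahn--Banach separation in $L^2(\X)$, use the uniform $L^\infty$ bound on the $\mathbf{f}_h$ (from overspill, as noted just before the lemma) together with truncation and Lemma~\ref{int} to replace the separating functional by $\st\boldsymbol{\phi}$ for some $\boldsymbol{\phi}\in\mathcal{A}[\X]$, transfer the pointwise separation to an internal inequality, and then take the internal average over $h\in H$ to get a contradiction. The paper handles the error bookkeeping more tersely (shrinking the gap to $\eps/2$ so the internal inequality is strict for every $h$ and then averaging directly), whereas you introduce explicit constants $c<c'$, an error $M\delta$, and an auxiliary $\eta$, but the underlying mechanism — pointwise standard inequality, pass to internal inequality, average internally, take standard parts — is identical.
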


\begin{proof}  If this were not the case, then by the Hahn-Banach theorem there would exist $\eps>0$ and $g \in L^2(\X)$ such that
$$ \operatorname{Re} \langle \st \E_{h \in H} \mathbf{f}_h, g \rangle_{L^2(\X)} > \operatorname{Re} \langle \st \mathbf{f}_h, g \rangle_{L^2(\X)} + \eps$$
for all $h \in H$.  By truncation (and the boundedness of $\{ \st \mathbf{f}_h: h \in H \}$) we may assume (after shrinking $\eps$ slightly) that $g \in L^\infty(\X)$, and then by Lemma \ref{int} we may write $g = \st \mathbf{g}$ for some $\mathbf{g} \in {\mathcal A}[\X]$.  But then we have
$$ \operatorname{Re} \langle \E_{h \in H} \mathbf{f}_h, \mathbf{g} \rangle_{{}^* L^2(\X)} > \operatorname{Re} \langle \mathbf{f}_h, \mathbf{g} \rangle_{{}^* L^2(\X)} + \eps/2,$$
which on taking internal averages implies
$$ \operatorname{Re} \langle \E_{h \in H} \mathbf{f}_h, \mathbf{g} \rangle_{{}^* L^2(\X)} > \operatorname{Re} \langle \E_{h \in H} \mathbf{f}_h, \mathbf{g} \rangle_{{}^* L^2(\X)} + \eps/2,$$
which is absurd.
\end{proof}

\subsection{Translation to nonstandard setting}\label{nonst-sec}

We now translate Theorems \ref{concat-uap}, \ref{concat-box} to a nonstandard setting.

Let $G = \prod_{n \to \alpha} G_n$ be a nonstandard additive group, that is to say an ultraproduct of standard additive groups $G_n$.
Define a \emph{internal coset progression} $Q$ in $G$ to be an ultraproduct $Q = \prod_{n \to \alpha} Q_n$ of standard coset progressions $Q_n$ in $G_n$.  We will be interested in internal coset progressions of bounded rank, which is equivalent to $Q_n$ having bounded rank on an $\alpha$-large set of $n$.  Given a internal coset progression $Q$, we define the (external) set $o(Q)$ as $o(Q) \coloneqq  \bigcap_{\eps \in \R^+} (\eps Q)$, that is to say the set of all $x \in G$ such that $x \in \eps Q$ for all standard $\eps > 0$.  Here we are interpreting $\eps Q$ and $o(Q)$ as sets rather than multisets.

Given a sequence $(\X_n,T_n)$ of standard $G_n$-sytems, we can form a $G$-system $(\X,T)$ by setting $\X$ to be the Loeb probability space associated to the $\X_n$, and $T^g$ to be the ultralimit of the $T_n^{g_n}$ for any $g = \lim_{n\to \alpha} g_n$.  It is easy to verify that this is a $G$-system; we will refer to such systems as \emph{Loeb $G$-systems}.

Let $d$ be a standard positive integer, and let $Q = \prod_{n \to \alpha} Q_n$ be an internal coset progression.  If $\mathbf{f} = \lim_{n \to \alpha} f_n \in {\mathcal A}[\X]$ is a bounded internally measurable function, we can define the internal Gowers norm
$$ \|\mathbf{f}\|_{{}^* U^d_Q(\X)} \coloneqq  \lim_{n \to \alpha} \|f_n\|_{U^d_{Q_n}(\X_n)}.$$
From the H\"older and triangle inequalities one has
$$ \|f_n\|_{U^d_{Q_n}(\X_n)} \leq \|f_n\|_{L^{2^d}(\X_n)}$$
for each $n$, and hence on taking ultralimits
$$ \|\mathbf{f}\|_{{}^* U^d_{Q}(\X)} \leq \|\mathbf{f}\|_{{}^* L^{2^d}(\X)}$$
and then on taking standard parts
$$ \st \|\mathbf{f}\|_{{}^* U^d_{Q}(\X)} \leq \|\st \mathbf{f}\|_{L^{2^d}(\X)}.$$
We can thus uniquely define the \emph{external Gowers seminorm} $U^d_Q(\X)$ on $L^{2^d}(\X)$ such that
\begin{equation}\label{staf}
 \|\st \mathbf{f} \|_{U^d_Q(\X)} = \st \|\mathbf{f}\|_{{}^* U^d_Q(\X)}
\end{equation}
for all $\mathbf{f} \in {\mathcal A}[\X]$, and
$$ \| f \|_{U^d_Q(\X)} \leq \|f\|_{L^{2^d}(\X)}$$
for all $f \in L^{2^d}(\X)$.

In a similar fashion, if $Q^{(i)} = \prod_{n \to \alpha} Q^{(i)}_n$ are internal coset progressions, we can define the internal box norms
$$ \|\mathbf{f}\|_{{}^* \Box^d_{Q^{(1)},\dots,Q^{(d)}}(\X)} \coloneqq  \lim_{n \to \alpha} \|f_n\|_{\Box^d_{Q^{(1)}_n,\dots,Q^{(d)}_n}(\X_n)}$$
for $\mathbf{f} = \lim_{n \to \alpha} f_n \in {\mathcal A}[\X]$, and the external box seminorm $\Box^d_{Q^{(1)},\dots,Q^{(d)}}(\X)$ on $L^{2^d}(\X)$ such that
$$ \|\st \mathbf{f} \|_{\Box^d_{Q^{(1)},\dots,Q^{(d)}}(\X)} = \st \|\mathbf{f}\|_{{}^* \Box^d_{Q^{(1)},\dots,Q^{(d)}}(\X)}$$
for all $\mathbf{f} \in {\mathcal A}[\X]$, and
$$ \| f \|_{\Box^d_{Q^{(1)},\dots,Q^{(d)}}(\X)} \leq \|f\|_{L^{2^d}(\X)}$$
for all $f \in L^{2^d}(\X)$.

\begin{lemma}\label{lard}  Let $Q$ be a internal coset progression of bounded rank in a nonstandard additive group $G$, and let $(\X,T)$ be a Loeb $G$-system.  Then for any standard $d \geq 1$ and any $f \in L^{2^d}(\X)$, 
the limit
$$ \| f\|_{U^d_{o(Q)}(\X)} \coloneqq  \lim_{\eps \to 0} \|f\|_{U^d_{\eps Q}(\X)}$$
exists.  Furthermore one has
\begin{align*}
\|f\|_{U^d_{o(Q)}(\X)} &= \sup_{\eps \in \R: \eps > 0} \| f\|_{U^d_{\eps Q}(\X)} \\
&=\inf_{\eps = o(1): \eps > 0} \| f\|_{U^d_{\eps Q}(\X)}. 
\end{align*}

More generally, given internal coset progressions $Q^{(1)},\dots,Q^{(d)}$ of bounded rank in $G$, the limit
$$ \| f\|_{\Box^d_{o(Q^{(1)}),\dots,o(Q^{(d)}}(\X)} \coloneqq  \lim_{\eps \to 0} \|f\|_{\Box^d_{\eps Q^{(1)},\dots,\eps Q^{(d)}}(\X)}$$
exists, with
\begin{align*}
\| f\|_{\Box^d_{o(Q^{(1)}),\dots,o(Q^{(d)})}(\X)} &= \sup_{\eps \in \R: \eps > 0} \|f\|_{\Box^d_{\eps Q^{(1)},\dots, \eps Q^{(d)}}(\X)}  \\
&= \inf_{\eps = o(1): \eps > 0} \|f\|_{\Box^d_{\eps Q^{(1)},\dots, \eps Q^{(d)}}(\X)}.
\end{align*}
\end{lemma}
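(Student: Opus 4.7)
The plan is to prove this lemma via a monotonicity argument combined with an underspill/saturation step.  The key preliminary is the sumset contractivity $\|g\|_{\Box^d_{R_1+R_2, Q_2, \ldots, Q_d}(\X)} \leq \|g\|_{\Box^d_{R_2, Q_2, \ldots, Q_d}(\X)}$ valid for any multisets $R_1, R_2, Q_2, \ldots, Q_d$ in $G$ and any $g \in L^{2^d}(\X)$.  To prove it, one expands the $2^d$th power of the left-hand side (much as in the proof of Theorem \ref{gawd}) as $\E_{h_i, h'_i \in Q_i, i \geq 2} \|\E_{h_1 \in R_1+R_2} T^{h_1} G\|_{L^2(\X)}^2$ for a function $G$ depending on the outer variables, splits $h_1 = h_{1,1}+h_{2,1}$ with $h_{j,1} \in R_j$, and uses that $\E_{h_{1,1} \in R_1} T^{h_{1,1}}$ has operator norm at most $1$ on $L^2(\X)$.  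Iterating across the $d$ coordinates yields in particular $\|g\|_{U^d_{R_1+R_2}(\X)} \leq \|g\|_{U^d_{R_2}(\X)}$.

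To transfer this to monotonicity in $\eps$, for standard $\eps > \eps' > 0$ I would construct an internal multiset $R \subset G$ (essentially an internally finite grid inside $(\eps-\eps')Q$ with step sizes matched to $\eps' Q$) so that $\eps' Q + R$ induces a probability distribution on $G$ which agrees with the uniform measure on $\eps Q$ up to infinitesimal discrepancy.  This is possible because $Q$ has bounded rank and the dimensions $N_i$ are nonstandard infinite, making the boundary overlap of the tiling infinitesimal in relative size.  Combined with the sumset contractivity applied to $\eps' Q + R$, this yields $\|f\|_{U^d_{\eps Q}(\X)} \leq \|f\|_{U^d_{\eps' Q}(\X)}$.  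Consequently $\phi\colon \eps \mapsto \|f\|_{U^d_{\eps Q}(\X)}$ is non-increasing on $\R_{>0}$ and bounded above by $\|f\|_{L^{2^d}(\X)}$, so the limit $L \coloneqq \lim_{\eps \to 0^+} \phi(\eps) = \sup_{\eps > 0} \phi(\eps)$ exists.

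For the infimum identity, fix an internal lift $\mathbf{f} \in {\mathcal A}[\X]$ of $f$.  For any positive infinitesimal $\eps$ and any standard $\eps' > 0$, the internal form of the monotonicity gives $\|\mathbf{f}\|_{{}^* U^d_{\eps Q}(\X)} \geq \|\mathbf{f}\|_{{}^* U^d_{\eps' Q}(\X)}$, so $\|f\|_{U^d_{\eps Q}(\X)} \geq \phi(\eps')$; taking the supremum over $\eps'$ yields $\|f\|_{U^d_{\eps Q}(\X)} \geq L$, so the infimum over positive infinitesimal $\eps$ is at least $L$.  Conversely, for each standard $\delta > 0$, the internal set $A_\delta \coloneqq \{\eps \in {}^*\R_{>0} : \|\mathbf{f}\|_{{}^* U^d_{\eps Q}(\X)} < L+\delta\}$ contains every positive standard $\eps$ (since $\|\mathbf{f}\|_{{}^* U^d_{\eps Q}(\X)}$ has standard part $\phi(\eps) \leq L$ and thus differs from $\phi(\eps)$ by an infinitesimal less than $\delta$), so by Corollary \ref{over}(iv) it contains a positive infinitesimal $\eps_\delta$, giving $\|f\|_{U^d_{\eps_\delta Q}(\X)} \leq L+\delta$; letting $\delta \to 0$ shows the infimum is at most $L$.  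The box-norm case is handled by the same scheme, applying the monotonicity step in each of the $d$ factors separately.  The main obstacle is the construction of $R$ together with the accompanying multiplicity-discrepancy estimate: the convolution of uniform distributions on $\eps' Q$ and $R$ is not exactly uniform on $\eps Q$, and making the discrepancy infinitesimal in the Loeb limit requires careful use of the bounded-rank structure of $Q$ together with the infinite scale of the $N_i$.
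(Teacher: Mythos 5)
Your ``sumset contractivity'' lemma is correct (it is the same observation used in the paper's Theorem \ref{gawd} to get \eqref{dod}), and the underspill step at the end is also sound.  However, there is a genuine gap in the middle.  The claim that for \emph{standard} $\eps > \eps' > 0$ one can find an internal multiset $R$ so that $\eps' Q + R$ matches the uniform measure on $\eps Q$ up to infinitesimal discrepancy is false, and with it the claimed monotonicity of $\phi(\eps) = \|f\|_{U^d_{\eps Q}(\X)}$ on $\R_{>0}$.  When you tile a rank-$r$ coset progression $\eps Q$ by translates of $\eps' Q$, the uncovered/boundary remainder has relative size on the order of $r\eps'/\eps$; the fact that the $N_i$ are internally infinite makes the absolute remainder large but does nothing to the \emph{relative} size, which stays a standard positive real for standard $\eps,\eps'$.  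To see that exact monotonicity really fails, take $\X = \Z/N\Z$ with $N$ an unbounded natural number and translation action, $Q = \{a : |a| \leq N\}$, and $f = \st \mathbf{f}$ with $\mathbf{f}(x) = e^{2\pi i k x/N}$ for a fixed standard integer $k \geq 1$.  A direct computation gives
$$\|f\|_{U^1_{\eps Q}(\X)} = \frac{|\sin(2\pi k\eps)|}{2\pi k \eps},$$
which vanishes at $\eps = 1/(2k)$ but is strictly positive at (say) $\eps = 3/(4k)$; so $\phi$ is not non-increasing.  (Note that the lemma's conclusion is still true in this example --- the limit as $\eps\to 0^+$ is $1$, which is the supremum --- it is the monotonicity that fails, and thus your proof of the conclusion.)

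What \emph{is} true, by your tiling argument, is the approximate inequality $\|f\|_{U^d_{\eps Q}(\X)} \leq \|f\|_{U^d_{\eps' Q}(\X)} + O(r\eps'/\eps)\|f\|_{L^\infty(\X)}$, and this is enough to repair the proof: for each standard $\eps_0$ with $\phi(\eps_0) > L - \delta$, the inequality $\phi(\eps_0) \leq \phi(\eps) + O(\eps/\eps_0)$ forces $\phi(\eps) > L - 2\delta$ for all small enough standard $\eps$, which gives $\lim_{\eps\to 0^+}\phi(\eps) = L$; and when $\eps$ is infinitesimal and $\eps'$ standard, the error term $O(\eps/\eps')$ in the internal version is infinitesimal, so your direction-1 argument for the infimum goes through.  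The paper avoids the false detour altogether by proving the one-sided bound \eqref{suppose} only in the regime $\eps$ standard, $\eps'$ infinitesimal (where the shifting/boundary error is genuinely $o(1)$), and then deduces both identities and the existence of the limit by overspill and underspill.  Your plan is close to this, but as written the key monotonicity step is wrong and must be weakened to the $O(\eps'/\eps)$-error form.
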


\begin{proof} It suffices to prove the second claim.  By limiting arguments we may assume that $f$ lies in $L^\infty(\X)$; by Lemma \ref{int} we may then assume $f = \st \mathbf{f}$ for some $\mathbf{f} \in {\mathcal A}[\X]$, which we may normalize to be bounded in magnitude by $1$.

Suppose that we can show that
\begin{equation}\label{suppose}
\|\mathbf{f}\|_{{}^* \Box^d_{\eps Q^{(1)},\dots, \eps Q^{(d)}}(\X)}^{2^d} \leq \|\mathbf{f}\|_{{}^* \Box^d_{\eps' Q^{(1)},\dots, \eps' Q^{(d)}}(\X)}^{2^d} + o(1)
\end{equation}
whenever $\eps>0$ is standard and $\eps'>0$ is infinitesimal.  This clearly implies 
$$
\sup_{\eps \in \R: \eps>0} \|\mathbf{f}\|_{{}^* \Box^d_{\eps Q^{(1)},\dots, \eps Q^{(d)}}(\X)} \leq \inf_{\eps' = o(1): \eps'>0} \|\mathbf{f}\|_{{}^* \Box^d_{\eps' Q^{(1)},\dots, \eps' Q^{(d)}}(\X)} + \delta$$
for any standard $\delta>0$; on the other hand from underspill we cannot have
$$
\sup_{\eps \in \R: \eps>0} \|\mathbf{f}\|_{{}^* \Box^d_{\eps Q^{(1)},\dots, \eps Q^{(d)}}(\X)} > c > \inf_{\eps' = o(1): \eps'>0} \|\mathbf{f}\|_{{}^* \Box^d_{\eps' Q^{(1)},\dots, \eps' Q^{(d)}}(\X)}$$
for any standard $c > 0$.  Taking standard parts, we conclude that
$$
\sup_{\eps \in \R: \eps>0} \|f\|_{\Box^d_{\eps Q^{(1)},\dots, \eps Q^{(d)}}(\X)} = \inf_{\eps' = o(1): \eps'>0} \|f\|_{\Box^d_{\eps' Q^{(1)},\dots, \eps' Q^{(d)}}(\X)}.$$
Denoting this quantity by $\| f\|_{\Box^d_{o(Q^{(1)}),\dots,o(Q^{(d)})}(\X)}$, we see that for every standard $\delta > 0$, one has
$$ \|\mathbf{f}\|_{{}^* \Box^d_{\eps Q^{(1)},\dots, \eps Q^{(d)}}(\X)} \leq \| f\|_{\Box^d_{o(Q^{(1)}),\dots,o(Q^{(d)})}(\X)}+\delta$$
for all infinitesimal $\eps > 0$, and hence (by underspill) for all sufficiently small standard $\eps>0$ as well.  The claim then follows.

It remains to prove \eqref{suppose}. By \eqref{boxnorm-def}, the left-hand side expands as
$$ \E_{h_i \in \eps Q^{(i)}-\eps Q^{(i)} \forall i =1,\dots,d} \int_X \Delta_{0,h_1} \dots \Delta_{0,h_d} \mathbf{f}\ d{}^*\mu,$$
where the nonlinear operators $\Delta_{h,h'}$ were defined in \eqref{deltah-def}.
We can shift each $\eps Q^{(i)}-\eps Q^{(i)}$ by an element of $\eps' Q^{(i)} - \eps Q^{(i)}$ while only affecting this expression by $o(1)$.  Averaging, we can thus write this expression as
$$ \E_{h'_i \in \eps' Q^{(i)}-\eps' Q^{(i)} \forall i=1,\dots, d} \E_{h_i \in \eps Q^{(i)}-\eps Q^{(i)} + h'_i \forall i =1,\dots,d} \int_X \Delta_{0,h_1} \dots \Delta_{1,h_d} \mathbf{f}\ d{}^*\mu + o(1).$$
Writing $h_i = h'_i + k_i$, this becomes
$$ \E_{k_i \in \eps Q^{(i)}-\eps Q^{(i)} \forall i =1,\dots,d} 
\E_{h'_i \in \eps' Q^{(i)}-\eps' Q^{(i)} \forall i=1,\dots, d} \int_X \Delta_{0,h'_1+k_1} \dots \Delta_{1,h'_d+k_d} \mathbf{f}\ d{}^*\mu + o(1).$$
This can be rewritten in turn as
$$ \E_{k_i \in \eps Q^{(i)}-\eps Q^{(i)} \forall i =1,\dots,d} \langle ( T^{\omega_1 k_1 + \dots + \omega_d k_d} \mathbf{f} )_{\omega \in \{0,1\}^d} \rangle_{{}^* \Box^d_{\eps' Q^{(1)},\dots,\eps' Q^{(d)}}} + o(1).$$
The claim \eqref{suppose} now follows from the Cauchy-Schwarz-Gowers inequality \eqref{csg}.
\end{proof}

From this lemma we obtain seminorms $\| \|_{U^d_{o(Q)}(\X)}$ and $\| \|_{\Box^d_{o(Q^{(1)}),\dots,o(Q^{(d)})}(\X)}$ on $L^{2^d}(\X)$, and hence on $L^\infty(\X)$.

We can now state the nonstandard theorem that Theorem \ref{concat-uap} will be derived from.

\begin{theorem}[Concatenation theorem for anti-uniformity norms, nonstandard version]\label{concat-uap-nonst}  Let $Q_1, Q_2$ be internal coset progressions of bounded rank in a nonstandard additive group $G$, let $(\X,T)$ be a Loeb $G$-system, let $d_1,d_2$ be standard positive integers, and let $f$ lie in the closed unit ball of $L^\infty(\X)$.  We make the following hypotheses:
\begin{itemize}
\item[(i)] $f$ is orthogonal (in $L^2(\X)$) to $\{ g \in L^\infty(\X): \|g\|_{U^{d_1}_{o(Q_1)}(\X)} = 0 \}$.
\item[(ii)] $f$ is orthogonal to $\{ g \in L^\infty(\X): \|g\|_{U^{d_2}_{o(Q_2)}(\X)} = 0 \}$.
\end{itemize}
Then $f$ is orthogonal to $\{ g \in L^\infty(\X): \|g\|_{U^{d_1+d_2-1}_{o(Q_1+Q_2)}(\X)} = 0 \}$.
\end{theorem}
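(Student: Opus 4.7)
The plan is to proceed by induction on $d_1 + d_2$, closely following the informal sketch in Section \ref{sketch-sec}. The base case $\min(d_1, d_2) = 1$ can be handled directly: if $d_1 = 1$, hypothesis (i) forces $f$ to be (Loeb-)measurable with respect to the limiting $o(Q_1)$-invariant factor (via a mean ergodic argument applied to $U^1_{o(Q_1)}(\X)$), after which a direct Cauchy-Schwarz-Gowers computation converts the $o(Q_2)$-structure of $f$ into $o(Q_1+Q_2)$-structure. So assume $d_1, d_2 \geq 2$ and that the claim holds for all smaller values of $d_1 + d_2$.

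The first main step is to translate each orthogonality hypothesis into an approximate dual-function representation, working in the internal algebra ${\mathcal A}[\X]$. Let ${\mathcal S}_i$ denote the class of $f \in L^\infty(\X)$ orthogonal to the kernel of $\|\cdot\|_{U^{d_i}_{o(Q_i)}}$; by standard arguments (compare Proposition \ref{convex} and Theorem \ref{charfac}) this is a shift-invariant algebra closed under bounded $L^2$-limits. For $f \in {\mathcal S}_1$ and any $n \in o(Q_1)$, I would show that the translate $T^n f$ admits an internal representation
\[
T^n f \;\approx\; \E_{h \in \mathbf{H}}\, \mathbf{c}_{n,h}\,\mathbf{g}_h,
\]
with $\mathbf{H}$ an internally finite multiset, $\mathbf{c}_{n,h}$ uniformly structured of order $<d_1-1$ along $o(Q_1)$, and $\mathbf{g}_h \in {\mathcal A}[\X]$ uniformly bounded; this is the nonstandard analogue of the identities \eqref{px}--\eqref{phh}.

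By shift-invariance and the algebra property of ${\mathcal S}_2$, the $\mathbf{g}_h$ also inherit hypothesis (ii), so the same representation scheme applied to $T^{n'} \mathbf{g}_h$ for $n' \in o(Q_2)$ produces
\[
T^{n'} \mathbf{g}_h \;\approx\; \E_{h' \in \mathbf{H}'} \mathbf{c}'_{n',h,h'}\,\mathbf{g}_{h,h'},
\]
where each $\mathbf{c}'_{n',h,h'}$ lies in ${\mathcal S}_1$ and is additionally structured of order $<d_2-1$ along $o(Q_2)$. Composing yields
\[
T^{n+n'} f \;\approx\; \E_{h,h'} (T^{n'} \mathbf{c}_{n,h})\,\mathbf{c}'_{n',h,h'}\,\mathbf{g}_{h,h'}.
\]
By the inductive hypothesis applied to $\mathbf{c}'_{n',h,h'}$ at reduced total degree $d_1 + (d_2-1)$, each $\mathbf{c}'_{n',h,h'}$ is structured of order $<d_1+d_2-2$ along $o(Q_1+Q_2)$; and because that class is itself a shift-invariant algebra, the combined coefficient $\mathbf{c}''_{n,n',h,h'} := (T^{n'}\mathbf{c}_{n,h})\,\mathbf{c}'_{n',h,h'}$ inherits this property.

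To finish, I would feed the joint representation $T^{n+n'}f \approx \E_{h,h'}\mathbf{c}''_{n,n',h,h'}\mathbf{g}_{h,h'}$ (valid for $n+n' \in o(Q_1+Q_2)$) into a Cauchy-Schwarz / van der Corput iteration along the $d_1+d_2-1$ directions of $o(Q_1+Q_2)$, converting the algebraic decomposition into the anti-uniformity statement that $f$ is orthogonal to every $g \in L^\infty(\X)$ with $\|g\|_{U^{d_1+d_2-1}_{o(Q_1+Q_2)}(\X)} = 0$. The main obstacle I anticipate is executing the two nested representation steps rigorously, in particular verifying that $\mathbf{g}_h$ and $\mathbf{g}_{h,h'}$ inherit the structural hypotheses from $f$ \emph{uniformly} in the internal parameters $h,h'$ so that the outer averages remain compatible with the Cauchy-Schwarz closure. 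This uniformity is precisely what the nonstandard framework --- the internal algebra ${\mathcal A}[\X]$, countable saturation, and the convex-hull identification of Lemma \ref{conv} --- is designed to supply, concealing the multi-layered $\eps$-management a standard-analysis proof would need to carry out explicitly.
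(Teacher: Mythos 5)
Your proposal follows the paper's strategy faithfully: the paper proves the result (together with its box-norm twin, Theorem \ref{concat-box-nonst}) by exactly the double-dual-function expansion and Cauchy--Schwarz/van der Corput iteration you describe, with the inheritance of structure by the inner coefficients being the engine of the induction, and the nonstandard machinery (internal algebra, underspill, Lemma \ref{conv}) supplying the uniformity you correctly flag as the delicate point.

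Two small caveats worth noting. First, when you invoke the shift-invariant algebra property to conclude that $\mathbf{c}''_{n,n',h,h'} = (T^{n'}\mathbf{c}_{n,h})\,\mathbf{c}'_{n',h,h'}$ is structured of order ${<}d_1+d_2-2$ along $o(Q_1+Q_2)$, you have justified this only for the second factor. You need a matching application of the inductive hypothesis to $\mathbf{c}_{n,h}$ itself: it is structured of order ${<}d_1-1$ along $o(Q_1)$ (as you say) and, since its building blocks $f_{(0,\omega)}, f_{(1,\omega)}$ lie in $L^\infty(\ZZ^{{<}d_2}_{o(Q_2)}(\X))$ and that space is a shift-invariant algebra preserved by dual operators, also of order ${<}d_2$ along $o(Q_2)$; the inductive hypothesis then places $\st\mathbf{c}_{n,h}$ in $L^\infty(\ZZ^{{<}d_1+d_2-2}_{o(Q_1+Q_2)}(\X))$. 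Only with both factors in that algebra does the product lie there. This is exactly the ``similar argument'' applied to $\st \mathbf{c}'_{n',h'}$ in the paper's proof of Theorem \ref{concat-box-nonst}; you need it for $\st\mathbf{c}_{n,h}$ as well. Second, your base case $\min(d_1,d_2)=1$ is broader than the paper's, which takes only $d_1=d_2=1$ and absorbs the cases $d_1=1<d_2$ (or symmetrically) into the inductive step with $\mathbf{c}_{n,h}\equiv 1$. Your proposed ``direct Cauchy--Schwarz--Gowers computation'' for, say, $d_1=1$, $d_2\geq 2$ would in fact have to reproduce essentially the same van der Corput step, so folding those cases into the main induction is cleaner than treating them as base cases.
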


Let us assume this theorem for the moment, and show how it implies Theorem \ref{concat-uap}.  It suffices to show that for any $d_1,d_2,r_1,r_2,c_1,c_2$
as in Theorem \ref{concat-uap}, and any $\delta > 0$, there exists an $\eps > 0$, such that
$$ \| f \|_{U^{d_1+d_2-1}_{Q_1+Q_2}(\X)^*, \eps} \leq \delta$$
whenever $Q_1, Q_2, G, (\X,T), f$ are standard objects obeying the hypotheses of Theorem \ref{concat-uap} with the given values of $d_1,d_2,r_1,r_2,c_1,c_2$.

Suppose this were not the case.  Then there exists $d_1,d_2,r_1,r_2,c_1,c_2,\delta$ as above, together with a sequence $G_n$ of standard additive groups, sequences $Q_{1,n}, Q_{2,n}$ of standard coset progressions in $G_n$ of ranks $r_1,r_2$ respectively, a sequence $(\X_n,T_n)$ of $G_n$-systems, and a sequence $f_n$ of functions in the closed unit ball of $L^\infty(\X)$ such that
\begin{equation}\label{fand}
\| f_n \|_{U^{d_1}_{Q_{1,n}}(\X_n)^*, \eps} \leq c_1(\eps)
\end{equation}
and $\| f_n \|_{U^{d_2}_{Q_{2,n}}(\X_n)^*, \eps} \leq c_2(\eps)$ 
for all (standard) $\eps > 0$, but such that
\begin{equation}\label{fln}
 \| f_n \|_{U^{d_1+d_2-1}_{\eps_n Q_{1,n}+ \eps_n Q_{2,n}}(\X_n)^*, \eps_n} > \delta
\end{equation}
for some $\eps_n>0$ that goes to zero as $n \to \infty$.
Now we take ultraproducts, obtaining a nonstandard additive group $G$, internal coset progressions $Q_1,Q_2$ of bounded rank in $G$, a Loeb $G$-system $(\X,T)$, and a bounded internally measurable function $\mathbf{f} \coloneqq  \lim_{n \to \alpha} f_n$.  Since the $f_n$ lie in the closed unit ball of $L^\infty(\X_n)$, we see that $\st \mathbf{f}$ lies in the closed unit ball of $L^\infty(\X)$.

Now suppose that $g \in L^\infty(\X)$ is such that $\|g\|_{U^{d_1}_{o(Q_1)}(\X)} = 0$; we claim that $\st \mathbf{f}$ is orthogonal to $g$.  We may normalise $g$ to lie in the closed unit ball of $L^\infty(\X)$.  By Lemma \ref{lard} one has
$$\|g\|_{U^{d_1}_{\eps Q_1}(\X)} = 0$$
for all standard $\eps > 0$.  We can write $g$ as the limit in $L^{2^{d_1}}(\X)$ of $\st \mathbf{g}^{(i)}$ as $i \to \infty$, for some $\mathbf{g}^{(i)} \in {\mathcal A}[\X]$ that are bounded in magnitude by $1$.  Since the $L^{2^{d_1}}(\X)$ norm controls the $U^{d_1}_{\eps Q_1}(\X)$ semi-norm, we have for any standard $\delta > 0$ that
$$ \| \st \mathbf{g}^{(i)} \|_{U^{d_1}_{\eps Q_1}(\X)} \leq \delta $$
for sufficiently large $i$ and all $\eps > 0$.  In particular, writing $\mathbf{g}^{(i)} = \lim_{n \to \alpha} g^{(i)}_n$ and setting $\eps=2\delta$, we have for sufficiently large $i$ that
$$ \| g^{(i),n} \|_{U^{d_1}_{2 \delta Q_{1,n}}(\X_n)} \leq 2 \delta $$
for an $\alpha$-large set of $n$.   By \eqref{fand} we conclude that
$$ |\langle f_n, g^{(i)}_n \rangle_{L^2(\X_n)}| \leq c_1(2\delta) $$
and thus on taking ultralimits and then standard parts
$$ |\langle \st \mathbf{f}, \st \mathbf{g}^{(i)} \rangle_{L^2(\X)}| \leq c_1(2\delta) $$
and hence on sending $i$ to infinity
$$ |\langle \st \mathbf{f}, g \rangle_{L^2(\X)}| \leq c_1(2\delta).$$
Sending $\delta$ to zero, we obtain the claim.

For similar reasons, $\st \mathbf{f}$ is orthogonal to any $g \in L^\infty(\X)$ with $\|g\|_{U^{d_2}_{o(Q_2)}(\X)} = 0$.  Applying Theorem \ref{concat-uap-nonst}, we conclude that $\st \mathbf{f}$ is orthogonal to any $g \in L^\infty(\X)$ with $\|g\|_{U^{d_1+d_2-1}_{o(Q_1+Q_2)}(\X)} = 0$.  On the other hand, from \eqref{fln} and \eqref{upqd}, we can find a $g_n$ in the closed unit ball of $L^\infty(\X_n)$ for each $n$ such that
$$ \|g_n\|_{U^{d_1+d_2-1}_{\eps_n Q_{1,n}+\eps_n Q_{2,n}}(\X_n)} \leq \eps_n $$
and
$$ |\langle f_n, g_n \rangle_{L^2(\X_n)}| \geq \delta.$$
Setting $g \coloneqq  \st \lim_{n \to \alpha} g_n$, we conclude on taking ultralimits that $g$ is in the closed unit ball of $L^\infty(\X)$ with
$$ \|g\|_{U^{d_1+d_2-1}_{\eps Q_{1}+\eps Q_{2}}(\X)} = 0$$
for some infinitesimal $\eps>0$ and
$$ |\langle \st \mathbf{f}, g \rangle_{L^2(\X)}| \geq \delta.$$
But from Lemma \ref{lard} we have
$$ \|g\|_{U^{d_1+d_2-1}_{o(Q_{1}+Q_{2})}(\X)} = 0$$
and we contradict the previously established orthogonality properties of $\st \mathbf{f}$. This concludes the derivation of Theorem \ref{concat-uap} from Theorem \ref{concat-uap-nonst}.

An identical argument shows that Theorem \ref{concat-box} is a consequence of the following nonstandard version.

\begin{theorem}[Concatenation theorem for anti-box norms, nonstandard version]\label{concat-box-nonst}  Let $d_1,d_2$ be standard positive integers, let $Q_{1,1},\dots,Q_{1,d_1}, Q_{2,1},\dots,Q_{2,d_2}$ be internal coset progressions of bounded rank in a nonstandard additive group $G$, let $(\X,T)$ be a Loeb $G$-system, and let $f$ lie in the closed unit ball of $L^\infty(\X)$.  We make the following hypotheses:
\begin{itemize}
\item[(i)] $f$ is orthogonal (in $L^2(\X)$) to $\{ g \in L^\infty(\X): \|g\|_{\Box^{d_1}_{o(Q_{1,1}),\dots,o(Q_{1,d_1})}(\X)} = 0 \}$.
\item[(ii)] $f$ is orthogonal to $\{ g \in L^\infty(\X): \|g\|_{\Box^{d_2}_{o(Q_{2,1}),\dots,o(Q_{2,d_2})}(\X)} = 0 \}$.
\end{itemize}
Then $f$ is orthogonal to $\{ g \in L^\infty(\X): \|g\|_{\Box^{d_1 d_2}_{(o(Q_{1,i}+Q_{2,j}))_{1 \leq i \leq d_1; 1 \leq j \leq d_2}}(\X)} = 0 \}$.
\end{theorem}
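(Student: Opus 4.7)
The plan is to mirror the inductive, dual-function-based argument sketched in Section \ref{sketch-sec} for Theorem \ref{concat-uap-nonst}, with the bookkeeping of directions generalized in the manner that Proposition \ref{concat-lowrank} generalizes Proposition \ref{concat}. The induction is on $d_1 + d_2$. The base case, where $d_1 = 1$ or $d_2 = 1$, is handled directly: the rank-$1$ hypothesis forces $f$ to be approximately invariant under $o(Q_{i,1})$-translations, and this invariance can be absorbed into the opposite box norm via an averaging argument of the same flavor as \eqref{suppose} in the proof of Lemma \ref{lard}.

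For the inductive step with $d_1, d_2 \geq 2$, fix $j_0 \in \{1, \ldots, d_1\}$ and $k_0 \in \{1, \ldots, d_2\}$, and let $g \in L^\infty(\X)$ satisfy $\|g\|_{\Box^{d_1 d_2}_{(o(Q_{1,i}+Q_{2,k}))_{ik}}(\X)} = 0$; the goal is $\langle f, g \rangle_{L^2(\X)} = 0$. The nonstandard $o(Q)$-box-norm analogues of Theorem \ref{charfac} and Proposition \ref{convex} show that the class of functions orthogonal to the null space of a given box seminorm is a shift-invariant $*$-subalgebra of $L^\infty(\X)$, which (after $L^2$-closure and taking standard parts) is generated by internal dual functions $\mathcal{D}^d_{\eps Q^{(1)}, \ldots, \eps Q^{(d)}}$ for suitable infinitesimal $\eps > 0$. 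Unfolding this dual-function representation of $f$ in the $j_0$-th direction produces an internal higher-order uniform almost periodicity expansion
$$T^n f \approx \E_h \mathbf{c}_{n,h}\, \mathbf{g}_h \qquad (n \in o(Q_{1,j_0})),$$
in which each $\mathbf{c}_{n,h}$ lies in the structured class for $\Box^{d_1 - 1}_{(o(Q_{1,i}))_{i \neq j_0}}$, while each $\mathbf{g}_h$ inherits the full $Q_2$-structure of $f$ supplied by hypothesis (ii), thanks to the shift-invariant algebra property.

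Applying the same unfolding in the $k_0$-th direction to each $\mathbf{g}_h$ yields, for $n' \in o(Q_{2,k_0})$,
$$T^{n+n'} f \approx \E_{h,h'} \mathbf{c}''_{n,n',h,h'}\, \mathbf{g}_{h,h'}, \qquad \mathbf{c}''_{n,n',h,h'} = (T^{n'} \mathbf{c}_{n,h})\, \mathbf{c}'_{n',h,h'},$$
where $T^{n'} \mathbf{c}_{n,h}$ is structured simultaneously along $(o(Q_{1,i}))_{i \neq j_0}$ and $(o(Q_{2,k}))_{k}$, and $\mathbf{c}'_{n',h,h'}$ is structured along $(o(Q_{1,i}))_{i}$ and $(o(Q_{2,k}))_{k \neq k_0}$. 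The induction hypothesis applied to $T^{n'} \mathbf{c}_{n,h}$ (at parameters $d_1 - 1,\, d_2$) produces structure along the smaller tuple $(o(Q_{1,i}+Q_{2,k}))_{i \neq j_0,\, k}$; since enlarging a tuple of progressions only weakens the structured-class condition (by the monotonicity \eqref{mono} of box norms), this entails structure along the larger target tuple $(o(Q_{1,i}+Q_{2,k}))_{(i,k) \neq (j_0,k_0)}$. A symmetric application to $\mathbf{c}'_{n',h,h'}$ (at parameters $d_1,\, d_2 - 1$) gives the same target structure, and the algebra property passes this structure to the product $\mathbf{c}''_{n,n',h,h'}$.

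The argument is closed by a van der Corput / Cauchy--Schwarz maneuver in the spirit of Section \ref{sketch-sec}: substituting the representation of $T^{n+n'} f$ into $\langle f, g\rangle_{L^2(\X)}$ and averaging $n$ over $o(Q_{1,j_0})$ and $n'$ over $o(Q_{2,k_0})$ supplies the missing $(j_0,k_0)$ direction, reducing the inner product to an expression controlled by $\|g\|_{\Box^{d_1 d_2}_{(o(Q_{1,i}+Q_{2,k}))_{ik}}(\X)} = 0$. The main obstacle is the notational and combinatorial bookkeeping of ranks at each recursive step, together with verifying that the higher-order uniform almost periodicity representations really do transfer structured-class membership from $f$ down to the internal parameters $\mathbf{c}_{n,h},\, \mathbf{g}_h,\, \mathbf{c}'_{n',h,h'}$ in a uniform way; the nonstandard Loeb formalism, countable saturation (Theorem \ref{countable}), and Lemma \ref{lard} are what allow a single infinitesimal scale $\eps$ to be chosen that works for both the $Q_1$- and $Q_2$-direction representations simultaneously.
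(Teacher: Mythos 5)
Your proposal follows the paper's argument in Section \ref{boxproof} essentially step-for-step: induction on $d_1+d_2$, approximation of $f$ by nested internal dual functions via the characteristic-factor machinery (the relevant nonstandard tool is Theorem \ref{charf}, the analogue of Theorem \ref{charfac} you allude to), the two-level higher-order almost-periodicity expansion $T^{n+n'}\mathbf{f}\approx\E_{h,h'}\mathbf{c}''_{n,n',h,h'}\mathbf{g}'_{h,h'}$, structured-class membership of $\mathbf{c}''$ via the induction hypothesis combined with Corollary \ref{local-concat} and the monotonicity \eqref{mono}, and a closing Cauchy--Schwarz plus underspill argument.

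The one place you diverge is the base case. You designate $d_1=1$ or $d_2=1$ as base cases and sketch an averaging argument that "absorbs the invariance into the opposite box norm." The paper instead only treats $d_1=d_2=1$ as the base case, where $f$ is jointly $o(Q_{1,1}+Q_{2,1})$-invariant and the shift-averaging argument is immediate; for $d_1=1$, $d_2>1$ it runs the inductive step as written, noting that $\mathbf{c}_{n,h}$ degenerates to the constant $1$ (and the required measurability is trivial). Your broader base case is probably salvageable, but for $d_1=1$, $d_2\geq 2$ the function $f$ is only $o(Q_{1,1})$-invariant and not $o(Q_{1,1}+Q_{2,j})$-invariant, so "absorbing" the $Q_{1,1}$-direction into a rank-$d_2$ box norm means propagating the approximate $o(Q_{1,1})$-invariance of each constituent $f_\omega$ through a $d_2$-fold dual-function average uniformly in the shift parameters, which needs an underspill argument that your sketch does not actually supply. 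The paper's parametrization of base versus inductive cases avoids this extra work; it would be simpler for you to adopt it.
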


It remains to establish Theorem \ref{concat-uap-nonst} and Theorem \ref{concat-box-nonst}.

\section{Nonstandard dual functions}\label{dual-sec}

We now develop some nonstandard analogues of the machinery in Section \ref{erg-sec}, and use this machinery to prove Theorem \ref{concat-uap-nonst} and Theorem \ref{concat-box-nonst} (and hence Theorem \ref{concat-uap} and Theorem \ref{concat-box} respectively).

Let $(\X,T)$ be a Loeb $G$-system for some nonstandard abelian group $G$, and let $Q_1,\dots,Q_d$ be internal coset progressions of bounded rank in $G$ for some standard $d \geq 0$.  Given bounded internally measurable functions $\mathbf{f}_\omega \in {\mathcal A}[\X]$ for $\omega \in \{0,1\}^d \backslash \{0\}^d$, we define the \emph{internal dual function} ${}^* {\mathcal D}^d_{Q_1,\dots,Q_d}( \mathbf{f}_\omega )_{\omega \in \{0,1\}^d \backslash \{0\}^d}$ to be the bounded internally measurable function
$$ 
{}^* {\mathcal D}^d_{Q_1,\dots,Q_d}( \mathbf{f}_\omega )_{\omega \in \{0,1\}^d \backslash \{0\}^d}
= \E_{h_1 \in Q_1-Q_1,\dots,h_d \in Q_d-Q_d} \prod_{\omega \in \{0,1\}^d \backslash \{0\}^d} {\mathcal C}^{|\omega|-1} T^{\omega_1 h_1 + \dots + \omega_d h_d} \mathbf{f}_\omega $$
where $\omega = (\omega_1,\dots,\omega_d)$, $|\omega| \coloneqq  \omega_1+\dots+\omega_d$, and ${\mathcal C}\colon f \mapsto \overline{f}$ is the complex conjugation operator.  (When $d=0$, we adopt the convention that ${}^* {\mathcal D}^0() = 1$.) This is a multilinear map from ${\mathcal A}[\X]^{2^d-1}$ to ${\mathcal A}[\X]$, and from the definition of the internal box norm ${}^* \Box^d_{Q_1,\dots,Q_d}(\X)$ we have the identity
\begin{equation}\label{dual-ident}
 \|\mathbf{f}\|_{{}^* \Box^d_{Q_1,\dots,Q_d}(\X)}^{2^d} = 
\langle \mathbf{f}, {}^* {\mathcal D}^d_{Q_1,\dots,Q_d}( \mathbf{f} )_{\omega \in \{0,1\}^d \backslash \{0\}^d} \rangle_{{}^* L^2(\X)}
\end{equation}
for any $\mathbf{f} \in {\mathcal A}[\X]$.

From H\"older's inequality and the triangle inequality we see that
$$
\| {}^* {\mathcal D}^d_{Q_1,\dots,Q_d}( \mathbf{f}_\omega )_{\omega \in \{0,1\}^d \backslash \{0\}^d} \|_{{}^* L^{2^d/(2^d-1)}(\X)}
\leq \prod_{\omega \in \{0,1\}^d \backslash \{0\}^d} \|\mathbf{f}_\omega \|_{{}^* L^{2^d}(\X)}.$$
By a limiting argument and multilinearity, we may thus uniquely define a bounded multilinear \emph{dual operator} ${\mathcal D}^d_{Q_1,\dots,Q_d}\colon L^{2^d}(\X)^{2^d-1} \to L^{2^d/(2^d-1)}(\X)$ such that
$$
\| {\mathcal D}^d_{Q_1,\dots,Q_d}( f_\omega )_{\omega \in \{0,1\}^d \backslash \{0\}^d} \|_{L^{2^d/(2^d-1)}(\X)}
\leq \prod_{\omega \in \{0,1\}^d \backslash \{0\}^d} \|f_\omega \|_{L^{2^d}(\X)}$$
for all $f_\omega \in L^{2^d}(\X)$, and such that
\begin{equation}\label{dstar}
 {\mathcal D}^d_{Q_1,\dots,Q_d}( \st \mathbf{f}_\omega )_{\omega \in \{0,1\}^d \backslash \{0\}^d}
= \st  {}^* {\mathcal D}^d_{Q_1,\dots,Q_d}( \mathbf{f}_\omega )_{\omega \in \{0,1\}^d \backslash \{0\}^d} 
\end{equation}
whenever $\mathbf{f}_\omega \in {\mathcal A}[\X]$ for $\omega \in \{0,1\}^d \backslash \{0\}^d$.

An important fact about dual functions, analogous to Theorem \ref{charfac}, is that the dual operator maps factors to characteristic subfactors for the associated Gowers norm:

\begin{theorem}[Dual functions and characteristic factors]\label{charf}  Let $(\Y,T)$ be a factor of $(\X,T)$, and let $Q_1,\dots,Q_d$ be internal coset progressions of bounded rank.  Let $V$ be the linear span of the space
$$ \{ {\mathcal D}^d_{\eps Q_1,\dots,\eps Q_d}( f_\omega )_{\omega \in \{0,1\}^d \backslash \{0\}^d}: f_\omega \in L^\infty(\Y); \eps \in \R; \eps > 0 \},$$
thus $V$ is the space of finite linear combinations of dual functions of functions in $L^\infty(\Y)$ (for various dilates $\eps Q_1,\dots,\eps Q_d$ of $Q_1,\dots,Q_d$).  Then there exists a unique subfactor $(\ZZ^{{<}d}_{o(Q_1),\dots,o(Q_d)}(\Y), T)$ of $(\Y,T)$ with the following properties:
\begin{itemize}
\item[(i)]  $V$ is contained in $L^\infty(\ZZ^{{<}d}_{o(Q_1),\dots,o(Q_d)}(\Y))$.
\item[(ii)] Conversely, every element of $L^\infty(\ZZ^{{<}d}_{o(Q_1),\dots,o(Q_d)}(\Y))$ is the limit (in $L^{2^d/(2^d-1)}$) of a sequence in $V$ that is uniformly bounded in $L^\infty$ norm.
\item[(iii)]  If $f \in L^\infty(\Y)$, then $\|f\|_{\Box^d_{o(Q_1),\dots,o(Q_d)}(\X)}=0$ if and only if $f$ is orthogonal (in the $L^2$ sense) to $L^\infty(\ZZ^{{<}d}_{o(Q_1),\dots,o(Q_d)}(\Y))$.
\end{itemize}
\end{theorem}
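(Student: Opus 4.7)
The plan is to adapt the proof of Theorem \ref{charfac} to the nonstandard $o(Q)$ setting, with the dilation limit $\eps \to 0$ playing the role of the F{\o}lner-type limit $Q \to H$. First I would define $B$ to be the closed convex hull in $L^2(\X)$ of all dual functions ${\mathcal D}^d_{\eps Q_1,\dots,\eps Q_d}(f_\omega)_{\omega \neq 0^d}$ with each $f_\omega$ in the closed unit ball of $L^\infty(\Y)$ and $\eps>0$ standard, and set ${\mathcal Z}^{{<}d}_{o(Q_1),\dots,o(Q_d)}(\Y)$ to be the $\sigma$-algebra of those measurable sets $E$ for which $1_E$ is the $L^2$-limit of a uniformly bounded sequence in $\R B \coloneqq \bigcup_{\lambda > 0} \lambda B$. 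Continuity of shifts on dual functions gives that this $\sigma$-algebra is $G$-invariant, yielding the candidate factor; its uniqueness is immediate from (i) and (ii).

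The central technical step is the analog of Proposition \ref{convex}: showing that $B$ is closed under pointwise multiplication. Given two dual functions $F = {\mathcal D}^d_{\eps Q_1,\dots,\eps Q_d}(f_\omega)$ and $F' = {\mathcal D}^d_{\eps' Q_1,\dots,\eps' Q_d}(f'_\omega)$, I would write $FF'$ as a standard part of a product of internal averages, and exploit that an internal average over $\eps Q_i - \eps Q_i$ is insensitive up to $o(1)$ to translations by any element of $\eps' Q_i$ when $\eps'$ is much smaller than $\eps$. A change of variables $h_i \mapsto h_i + k_i$ then recombines $FF'$ as an average of dual functions at the smaller scale, placing it in $B$. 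Corollary \ref{over} (overspill/underspill) supplies the uniformity in the scale parameters needed to make this work across the choices of $\eps,\eps'$.

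Once multiplicative closure is established, the remainder of the proof follows the template of Theorem \ref{charfac}. Applying the Weierstrass approximation theorem to functions in $\R B$ shows that their level sets are measurable in the factor for almost every level, hence $\R B \subset L^\infty(\ZZ^{{<}d}_{o(Q_1),\dots,o(Q_d)}(\Y))$, which in particular gives (i) since $V \subset \R B$. For (ii), any bounded measurable function on the factor can be $L^{2^d/(2^d-1)}$-approximated by simple functions built from level sets in $\R B$, and these simple functions in turn are uniformly $L^\infty$-bounded limits of elements of $V$ by construction of ${\mathcal Z}$. For (iii), if $\|f\|_{\Box^d_{o(Q_1),\dots,o(Q_d)}(\X)} > 0$ then by Lemma \ref{lard} the internal box norm at some standard scale $\eps$ is positive, and \eqref{dual-ident} shows that $f$ is non-orthogonal to its own dual function at that scale (which lies in $V$); conversely, vanishing of the $o(Q)$ norm combined with the Cauchy-Schwarz-Gowers inequality \eqref{csg} yields orthogonality of $f$ to every dual function at every positive $\eps$, hence to $V$ and to its $L^\infty$-bounded $L^{2^d/(2^d-1)}$-closure, namely $L^\infty(\ZZ^{{<}d}_{o(Q_1),\dots,o(Q_d)}(\Y))$.

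The main obstacle I anticipate is the multiplicative closure step. The proof of Proposition \ref{convex} in the ergodic setting depended on a single F{\o}lner limit $Q \to H$, whereas here we must simultaneously juggle two independent scales $\eps,\eps'$ and then pass to a common smaller scale. The cleanest path seems to be to first replace the standard parameters $\eps,\eps'$ by internal parameters of comparable (possibly infinitesimal) size, carry out the shift-invariance manipulation at the internal level where averages over $\eps Q_i$ and $\eps Q_i + h$ for $h \in \eps'' Q_i$ with $\eps'' \ll \eps$ agree up to $o(1)$, and only then take standard parts. The required uniformity in scale is precisely what Corollary \ref{over} supplies.
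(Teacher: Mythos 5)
Your proposal is essentially correct and follows the same route as the paper: adapt the proof of Theorem~\ref{charfac} to the nonstandard setting, with multiplicative closure of the dual-function class as the central lemma, level sets via Weierstrass approximation to build the $\sigma$-algebra, and the $\eqref{dual-ident}$/Cauchy--Schwarz--Gowers pair for (iii). The cosmetic difference is that you work with the closed convex hull $B$ and $\R B$ (mirroring Theorem~\ref{charfac} verbatim) whereas the paper proves directly that products $ff'$ with $f,f' \in V$ are uniformly bounded $L^{2^d/(2^d-1)}$-limits of elements of $V$, and then passes to the corresponding space $W$; these are equivalent formulations. One thing to watch: your initial phrasing of the multiplicative closure step, ``insensitive up to $o(1)$ to translations by any element of $\eps' Q_i$ when $\eps'$ is much smaller than $\eps$,'' would be false if both $\eps,\eps'$ are fixed standard scales, since the shift error is then $O(\eps'/\eps)$, a standard nonzero quantity. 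The paper instead keeps $\eps,\eps'$ standard, shifts both $\eps Q_i - \eps Q_i$ and $\eps' Q_i - \eps' Q_i$ by elements of $\eps_* Q_i - \eps_* Q_i$ for a \emph{third, infinitesimal} scale $\eps_*$, recombines into ${}^* {\mathcal D}^d_{\eps_* Q_1,\dots,\eps_* Q_d}$-averages, and then uses overspill to trade the infinitesimal $\eps_*$ for a small standard one so that Lemma~\ref{conv} puts the result in the convex hull. Your final paragraph does identify exactly this mechanism (third scale plus Corollary~\ref{over}), so the argument as you ultimately envision it matches the paper's; just be sure the write-up uses the third-scale/overspill device rather than the initial two-scale phrasing.
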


We now prove this theorem.  From (i) and (ii) it is clear that $\ZZ^{{<}d}_{o(Q_1),\dots,o(Q_d)}(\Y)$ is unique; the difficulty is to establish existence.
We first observe

\begin{lemma} We have $V \subset L^\infty(\Y)$.
\end{lemma}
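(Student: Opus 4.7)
The plan is to verify the claim by unwinding everything at the internal level and then taking standard parts, using Lemma \ref{int} to supply internal representatives and Lemma \ref{conv} to control the interchange of $\st$ with an internal average.

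By multilinearity it suffices to fix a standard $\eps>0$ and functions $f_\omega\in L^\infty(\Y)$ for $\omega\in\{0,1\}^d\setminus\{0\}^d$, set $M:=\max_\omega\|f_\omega\|_{L^\infty(\X)}$, and prove that
$$F \;:=\; {\mathcal D}^d_{\eps Q_1,\dots,\eps Q_d}(f_\omega)_{\omega\in\{0,1\}^d\setminus\{0\}^d}$$
lies in $L^\infty(\Y)$. First I would apply Lemma \ref{int} (to $\X$) to obtain, for each $\omega$, a lift $\mathbf{f}_\omega\in{\mathcal A}[\X]$ internally bounded in magnitude by $M$ with $\st\mathbf{f}_\omega=f_\omega$ almost everywhere (note that these lifts need not themselves be $\Y$-measurable in any internal sense). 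By the defining identity \eqref{dstar}, $F=\st\bigl({}^*{\mathcal D}^d_{\eps Q_1,\dots,\eps Q_d}(\mathbf{f}_\omega)\bigr)$. The internal dual function is, by definition, an internal average of products of $2^d-1$ translates of the $\mathbf{f}_\omega$, hence internally bounded in magnitude by $M^{2^d-1}$. Taking standard parts, $F\in L^\infty(\X)$ with $\|F\|_{L^\infty(\X)}\le M^{2^d-1}$.

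It then remains to check that $F$ is $\Y$-measurable. Write
$$
{}^*{\mathcal D}^d_{\eps Q_1,\dots,\eps Q_d}(\mathbf{f}_\omega)
=\E_{h_1\in\eps Q_1-\eps Q_1,\dots,h_d\in\eps Q_d-\eps Q_d}\mathbf{G}_h,\qquad
\mathbf{G}_h:=\prod_{\omega}{\mathcal C}^{|\omega|-1}T^{\omega_1 h_1+\dots+\omega_d h_d}\mathbf{f}_\omega.
$$
For each internal $h=(h_1,\dots,h_d)$, $\st$ commutes with the shifts $T^g$ and with products of uniformly bounded internal functions, so $\st\mathbf{G}_h=\prod_\omega{\mathcal C}^{|\omega|-1}T^{\omega_1 h_1+\dots+\omega_d h_d}f_\omega$. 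Because $\Y$ is a $G$-invariant factor, each $T^{\omega_1 h_1+\dots+\omega_d h_d}f_\omega$ remains in $L^\infty(\Y)$, and hence $\st\mathbf{G}_h\in L^\infty(\Y)\subset L^2(\Y)$. Applying Lemma \ref{conv} to the internal map $h\mapsto\mathbf{G}_h$, we get that $F$ lies in the closed convex hull (in $L^2(\X)$) of $\{\st\mathbf{G}_h\}$. Since $L^2(\Y)$ is a closed (hence convex-closed) subspace of $L^2(\X)$, this convex hull is contained in $L^2(\Y)$, so $F\in L^2(\Y)$. Together with the $L^\infty(\X)$ bound from the first step, this yields $F\in L^\infty(\Y)$.

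The only substantive point is the use of Lemma \ref{conv} to move $\st$ past the internal average; everything else (the product, translation, and boundedness estimates at the internal level) is a mechanical unwinding of the definition of ${}^*{\mathcal D}^d$ and of what it means for $\Y$ to be a $G$-invariant factor. I do not expect any real obstacle.
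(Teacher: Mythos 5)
Your proof is correct and follows essentially the same route as the paper's: apply Lemma \ref{int} to lift each $f_\omega$ to an internal $\mathbf{f}_\omega$, use \eqref{dstar} to identify the dual function with the standard part of the internal dual average, and then invoke Lemma \ref{conv} to place it in the closed convex hull of $\{\st\mathbf{G}_h\}\subset L^\infty(\Y)$. The only cosmetic difference is that you split the conclusion into a boundedness step and a $\Y$-measurability step before recombining, whereas the paper states the endgame in one line; the substance is identical.
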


\begin{proof}  It suffices by linearity to show that
$$
{\mathcal D}^d_{\eps Q_1,\dots,\eps Q_d}( f_\omega )_{\omega \in \{0,1\}^d \backslash \{0\}^d} \in L^\infty(\Y)$$
whenever $f_\omega \in L^\infty(\Y)$.  From Lemma \ref{int} we may write $f_\omega = \st F_\omega$ for some $F_\omega \in {\mathcal A}[\X]$.  From \eqref{dstar} and Lemma \ref{conv}, the dual function ${\mathcal D}^d_{\eps Q_1,\dots,\eps Q_d}( f_\omega )_{\omega \in \{0,1\}^d \backslash \{0\}^d}$ then lies in the closed convex hull of the functions
$$
\st \prod_{\omega \in \{0,1\}^d \backslash \{0\}^d} {\mathcal C}^{|\omega|-1} T^{\omega_1 h_1 + \dots + \omega_d h_d} F_\omega 
=
\prod_{\omega \in \{0,1\}^d \backslash \{0\}^d} {\mathcal C}^{|\omega|-1} T^{\omega_1 h_1 + \dots + \omega_d h_d} f_\omega $$
for various $h_1,\dots,h_d \in G$.  But such functions lie in a bounded subset of $L^\infty(\Y)$, and the claim follows.
\end{proof}

Next, we observe that $V$ is almost closed under multiplication (cf. Proposition \ref{convex}):

\begin{lemma}  If $f, f' \in V$, then $ff'$ is the limit (in $L^2(\X)$, or equivalently in $L^{2^d/(2^d-1)}(\X)$) of a sequence in $V$ that is uniformly bounded in $L^\infty$ norm.
\end{lemma}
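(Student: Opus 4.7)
The plan is to mimic the proof of Proposition~\ref{convex} in the nonstandard setting, using Lemma~\ref{conv} in place of the iterated $Q\to H$ limits together with a scale-reduction (``tiling'') maneuver that exploits the bounded-rank hypothesis on the coset progressions $Q_1,\dots,Q_d$. By multilinearity of ${\mathcal D}^d_{\eps Q_1,\dots,\eps Q_d}$ and linearity of $V$, it suffices to treat single dual functions $f={\mathcal D}^d_{\eps Q_1,\dots,\eps Q_d}((f_\omega))$ and $f'={\mathcal D}^d_{\eps' Q_1,\dots,\eps' Q_d}((f'_\omega))$ with $f_\omega,f'_\omega\in L^\infty(\Y)$; after swapping the roles of $f$ and $f'$ if necessary I would assume $\eps\le\eps'$, and use Lemma~\ref{int} to pick internal representatives $\mathbf{f}_\omega,\mathbf{f}'_\omega\in{\mathcal A}[\Y]$ with $\st\mathbf{f}_\omega=f_\omega$ and $\st\mathbf{f}'_\omega=f'_\omega$.

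The technical heart is the following product-through-shift identity, to be established for any standard $\eps_1>0$ and any bounded internal tuple $(\mathbf{g}_\omega)_{\omega\ne 0^d}\in{\mathcal A}[\Y]^{2^d-1}$: the function
\[
G((\mathbf{g}_\omega))\;:=\;\st\E_{k_1\in\eps' Q_1-\eps' Q_1,\,\dots,\,k_d\in\eps' Q_d-\eps' Q_d}\;{}^*{\mathcal D}^d_{\eps_1 Q_1,\dots,\eps_1 Q_d}\!\big((\mathbf{g}_\omega\cdot T^{\omega_1 k_1+\dots+\omega_d k_d}\mathbf{f}'_\omega)_\omega\big)
\]
lies in the closed convex hull of $V$, hence in its $L^2$-closure $\overline V$ (as $V$ is a linear subspace), by Lemma~\ref{conv}, and approximates ${\mathcal D}^d_{\eps_1 Q_1,\dots,\eps_1 Q_d}((\st\mathbf{g}_\omega))\cdot f'$ in $L^\infty$ with error $O(r\eps_1/\eps')$. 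Indeed, expanding the inner internal dual function and substituting $k'_i=h_i+k_i$ (with $h_i\in\eps_1 Q_i-\eps_1 Q_i$ the outer shift variables), the inner $k'$-average over $\eps' Q_i-\eps' Q_i+h_i$ differs from the unshifted one by the F{\o}lner-type symmetric-difference bound $O(r\eps_1/\eps')$, valid uniformly for $h_i\in\eps_1 Q_i-\eps_1 Q_i$ because of the bounded-rank hypothesis on $Q_i$. Complementing this, a standard tiling of each $\eps Q_i-\eps Q_i$ by internally finitely many translates of $\eps_1 Q_i-\eps_1 Q_i$ yields
\[
f\;\approx\;\st\E_j\,{}^*{\mathcal D}^d_{\eps_1 Q_1,\dots,\eps_1 Q_d}\!\big((T^{\omega_1 m_1^{(j)}+\dots+\omega_d m_d^{(j)}}\mathbf{f}_\omega)_\omega\big)
\]
in $L^\infty$ with error $O(r\eps_1/\eps)$, where $j$ ranges over an internally finite set of tile-center tuples $(m_1^{(j)},\dots,m_d^{(j)})$.

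Multiplying the tiling approximation by $f'$ and applying the product-through-shift identity summand-by-summand (with $\mathbf{g}_\omega=T^{\omega_1 m_1^{(j)}+\dots+\omega_d m_d^{(j)}}\mathbf{f}_\omega$) then expresses $ff'$ as an $L^\infty$-limit, as $\eps_1\to 0^+$ through standard positives, of functions $\st\E_j G((T^{\omega_1 m_1^{(j)}+\dots+\omega_d m_d^{(j)}}\mathbf{f}_\omega))\in\overline V$ with total error $O(r\eps_1(\eps^{-1}+\eps'^{-1}))\to 0$. Since $\overline V$ is $L^2$-closed, $ff'\in\overline V$, and extracting a sequence in $V$ converging to $ff'$ in $L^2$ and uniformly bounded in $L^\infty$ by $\|f\|_{L^\infty}\|f'\|_{L^\infty}$ is routine given that all dual functions appearing in the construction inherit uniform $L^\infty$-bounds from the bounded test functions. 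The principal obstacle is controlling the two error terms $O(r\eps_1/\eps)$ and $O(r\eps_1/\eps')$ in $L^\infty$ uniformly in the internal parameters, and carefully verifying the F{\o}lner and tiling bounds for bounded-rank coset progressions in the ultraproduct setting.
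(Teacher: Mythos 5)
Your argument is in essence the same shift-and-regroup maneuver the paper uses, just factored into two passes instead of one. The paper writes $ff'$ directly as a double average over $(h,h')$ with $h_i\in\eps Q_i-\eps Q_i$ and $h'_i\in\eps' Q_i-\eps' Q_i$, shifts \emph{both} $h$ and $h'$ by a single new variable $k_i\in\eps_* Q_i-\eps_* Q_i$ with $\eps_*$ infinitesimal, regroups the $k$-average into an internal dual function ${}^*{\mathcal D}^d_{\eps_* Q_1,\dots,\eps_* Q_d}(\cdot)$ whose arguments are indexed by $(h,h')$, and applies Lemma~\ref{conv}; overspill then promotes the infinitesimal statement to all sufficiently small standard $\eps_*$, so no explicit $O(r\eps_1(\eps^{-1}+\eps'^{-1}))$ bookkeeping is required. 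Your version treats $f$ and $f'$ asymmetrically: first re-scale $f$ down to scale $\eps_1$, then fold in $f'$ via the product-through-shift identity. After unwinding, the resulting expression is structurally the same average-of-internal-dual-functions the paper reaches in a single step, and the same appeal to Lemma~\ref{conv} finishes the argument.

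One point to repair: the ``tiling'' of $\eps Q_i-\eps Q_i$ by internally finitely many translates of $\eps_1 Q_i-\eps_1 Q_i$ cannot be taken literally, because these are \emph{multisets} with nontrivial multiplicity — for a rank-one progression the difference set $Q-Q$ has a tent-shaped multiplicity profile, and translated smaller tents cannot partition a larger one. What you actually want is the \emph{average} over all translates,
$$
f=\st\,\E_{m_i\in\eps Q_i-\eps Q_i\ \forall i}\,{}^*{\mathcal D}^d_{\eps_1 Q_1,\dots,\eps_1 Q_d}\bigl((T^{\omega_1 m_1+\dots+\omega_d m_d}\mathbf{f}_\omega)_\omega\bigr)+O(r\eps_1/\eps),
$$
which you verify by the same near-invariance-under-small-shifts computation that drives your product-through-shift identity. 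Once ``tiling'' is replaced by this averaging, your two-step construction collapses into precisely the paper's single shift, and the two proofs agree up to presentation; the remaining difference is stylistic (your explicit error estimates and limit $\eps_1\to 0$ through standard positives versus the paper's overspill).
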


\begin{proof}  By linearity and density, as well as Lemma \ref{int} and \eqref{dstar}, we may assume that
$$f = \st {}^* {\mathcal D}^d_{\eps Q_1,\dots,\eps Q_d}( \mathbf{f}_\omega )_{\omega \in \{0,1\}^d \backslash \{0\}^d}$$
and
$$f' = \st {}^* {\mathcal D}^d_{\eps' Q_1,\dots,\eps' Q_d}( \mathbf{f}'_\omega )_{\omega \in \{0,1\}^d \backslash \{0\}^d}$$
for some standard $\eps,\eps' > 0$ and some $\mathbf{f}_\omega,\mathbf{f}'_\omega \in {\mathcal A}[\X]$, which we may take to be real-valued.  
We can thus write
$$ ff' = \st \E_{h_i \in \eps Q_i - \eps Q_i, h'_i \in \eps' Q_i - \eps' Q_i \forall i=1,\dots,d} \prod_{\omega \in \{0,1\}^d \backslash \{0\}^d} T^{\omega_1 h_1 + \dots + \omega_d h_d} \mathbf{f}_\omega T^{\omega_1 h'_1 + \dots + \omega_d h'_d} \mathbf{f}'_\omega.$$
Let $\eps_* > 0$ be infinitesimal.  We can shift $\eps Q_i-\eps Q_i$ or $\eps' Q_i - \eps'Q_i$ by an element of $\eps_* Q_i - \eps_* Q_i$ while only affecting the above average by $o(1)$.  We conclude that
\begin{align*} ff' &= \st \E_{k_i \in \eps_* Q_i - \eps_* Q_i, h_i \in \eps Q_i - \eps Q_i, h'_i \in \eps' Q_i - \eps' Q_i} \\
&\quad \prod_{\omega \in \{0,1\}^d \backslash \{0\}^d} T^{\omega_1 (h_1+k_1) + \dots + \omega_d (h_d+k_d)} \mathbf{f}_\omega T^{\omega_1 (h'_1+k_1) + \dots + \omega_d (h'_d+k_d)} \mathbf{f}'_\omega.
\end{align*}
Performing the $k_1,\dots,k_d$ average first, we conclude that
$$ ff' = \st \E_{h_i \in \eps Q_i - \eps Q_i, h'_i \in \eps' Q_i - \eps' Q_i} {}^* {\mathcal D}^d_{\eps_* Q_1,\dots,\eps_* Q_d}( (T^{\omega_1 h_1 + \dots + \omega_d h_d} \mathbf{f}_\omega T^{\omega_1 h'_1 + \dots + \omega_d h'_d} \mathbf{f}'_\omega)_{\omega \in \{0,1\}^d \backslash \{0\}^d} ).$$
This bound holds for all infinitesimal $\eps_*>0$.  By overspill, we conclude that for any standard $\delta > 0$ we have
$$
\| ff' - \st \E_{h_i \in \eps Q_i - \eps Q_i, h'_i \in \eps' Q_i - \eps' Q_i} {}^* {\mathcal D}^d_{\eps_* Q_1,\dots,\eps_* Q_d}( (T^{\omega_1 h_1 + \dots + \omega_d h_d} \mathbf{f}_\omega T^{\omega_1 h'_1 + \dots + \omega_d h'_d} \mathbf{f}'_\omega)_{\omega \in \{0,1\}^d \backslash \{0\}^d} ) \|_{L^\infty(\X)} \leq \delta$$
for all sufficiently small standard $\eps_* > 0$.  By Lemma \ref{conv}, the second term inside the norm is the limit in $L^2$ of a bounded sequence in $V$, and the claim follows.
\end{proof}

Let $W$ denote the space of all functions in $L^\infty(\X)$ which are the limit (in $L^{2^d/(2^d-1)}$) of a sequence in $V$ that is uniformly bounded in $L^\infty$ norm.  From the previous two lemmas we see that $W$ is a subspace of $L^\infty(\Y)$ that is closed under multiplication, and which is also closed with respect to limits in $L^{2^d/(2^d-1)}$ of sequences uniformly bounded in $L^\infty$.  If we let ${\mathcal Z}$ denote the set of all measurable sets $E$ in ${\mathcal Y}$ such that $1_E$ lies in $W$, we then see that ${\mathcal Z}$ is a $\sigma$-algebra.  From the translation invariance identity
$$
T^n {\mathcal D}^d_{\eps Q_1,\dots,\eps Q_d}( f_\omega )_{\omega \in \{0,1\}^d \backslash \{0\}^d}
= {\mathcal D}^d_{\eps Q_1,\dots,\eps Q_d}( T^n f_\omega )_{\omega \in \{0,1\}^d \backslash \{0\}^d}$$
we see that $V$, and hence $W$ and ${\mathcal Z}$, are invariant with respect to shifts $T^n$, $n \in G$.  If we set
$\ZZ^{{<}d}_{o(Q_1),\dots,o(Q_d)}(\Y) \coloneqq  (X, {\mathcal Z}, \mu)$, then $(\ZZ^{{<}d}_{o(Q_1),\dots,o(Q_d)}(\Y),T)$ is a factor of $(\Y, T)$ obeying properties (i) and (ii).

It remains to establish (iii).  Firstly, suppose that $f \in L^\infty(\Y)$ is such that
$\|f\|_{\Box^d_{o(Q_1),\dots,o(Q_d)}(\X)}>0$.  By Lemma \ref{lard}, we thus have
$$\|f\|_{\Box^d_{\eps Q_1,\dots,\eps Q_d}(\X)} > 0$$
for some standard $\eps>0$.  From\eqref{dual-ident} (and a limiting argument) we thus have
$$
\langle f, {\mathcal D}^d_{Q_1,\dots,Q_d}( f )_{\omega \in \{0,1\}^d \backslash \{0\}^d} \rangle_{L^2(\X)} > 0.$$
In particular, $f$ is not orthogonal to $V$, and hence to $L^\infty(\ZZ^{{<}d}_{o(Q_1),\dots,o(Q_d)}(\Y))$.

Conversely, suppose that 
$$\|f\|_{\Box^d_{o(Q_1),\dots,o(Q_d)}(\X)} = 0,$$
then by Lemma \ref{lard} we have
$$\|f\|_{\Box^d_{\eps Q_1,\dots,\eps Q_d}(\X)} = 0$$
for any standard $\eps>0$.
By Lemma \ref{int}, we may write $f = \st \mathbf{f}$ for some $\mathbf{f} \in {\mathcal A}[\X]$, then
$$\|\mathbf{f}\|_{{}^* \Box^d_{\eps Q_1,\dots,\eps Q_d}(\X)} = o(1).$$
By the (internal) Cauchy-Schwarz-Gowers inequality \eqref{csg}, we thus have
$$ \langle \mathbf{f}, {}^* {\mathcal D}^d_{\eps Q_1,\dots,\eps Q_d}( \mathbf{f}_\omega )_{\omega \in \{0,1\}^d \backslash \{0\}^d} \rangle_{{}^* L^2(\X)} = o(1)$$
for all $\mathbf{f}_\omega \in {\mathcal A}[\X]$.  We conclude that $f$ is orthogonal to $V$, and hence to $L^\infty(\ZZ^{{<}d}_{o(Q_1),\dots,o(Q_d)}(\Y))$.
This concludes the proof of Theorem \ref{charf}.

We record a basic consequence of Theorem \ref{charf} (cf. Corollary \ref{locx} or \cite[Proposition 4.6]{hk}):

\begin{corollary}[Localisation]\label{local} Let $\X$ be a Loeb $G$-system for some nonstandard additive group $G$, 
let $\Y$ be a factor of $\X$, and let $Q_1,\dots,Q_d$ be internal coset progressions of bounded rank in $G$ for some standard positive integer $d$.  Then
$$\ZZ^{{<}d}_{o(Q_1),\dots,o(Q_d)}(\Y) \equiv \Y \wedge \ZZ^{{<}d}_{o(Q_1),\dots,o(Q_d)}(\X).$$
\end{corollary}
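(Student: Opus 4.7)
The plan is to mirror exactly the derivation of Corollary~\ref{locx} from Theorem~\ref{charfac}, substituting Theorem~\ref{charf} for Theorem~\ref{charfac} throughout. Both inclusions will be shown at the level of $L^\infty$-algebras.

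For the forward inclusion $\ZZ^{{<}d}_{o(Q_1),\dots,o(Q_d)}(\Y) \leq \Y \wedge \ZZ^{{<}d}_{o(Q_1),\dots,o(Q_d)}(\X)$: any $f \in L^\infty(\ZZ^{{<}d}_{o(Q_1),\dots,o(Q_d)}(\Y))$ can, by Theorem~\ref{charf}(ii) applied to $\Y$, be written as the $L^{2^d/(2^d-1)}$-limit of a uniformly $L^\infty$-bounded sequence $(f_n)$ lying in the linear span $V_\Y$ of dual functions ${\mathcal D}^d_{\eps Q_1,\dots,\eps Q_d}( g_\omega )$ with $g_\omega \in L^\infty(\Y)$. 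Since $L^\infty(\Y) \subset L^\infty(\X)$, the same sequence lies in the corresponding span $V_\X$ for $\X$, and hence by Theorem~\ref{charf}(i) applied to $\X$ lies in $L^\infty(\ZZ^{{<}d}_{o(Q_1),\dots,o(Q_d)}(\X))$. Taking limits, and using that $L^\infty$ of a factor is closed under bounded $L^{2^d/(2^d-1)}$-convergence, gives $f \in L^\infty(\Y) \cap L^\infty(\ZZ^{{<}d}_{o(Q_1),\dots,o(Q_d)}(\X))$.

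For the reverse inclusion, take $f \in L^\infty(\Y) \cap L^\infty(\ZZ^{{<}d}_{o(Q_1),\dots,o(Q_d)}(\X))$, and decompose $f = f_1 + f_2$ where $f_1 \coloneqq \E(f \mid \ZZ^{{<}d}_{o(Q_1),\dots,o(Q_d)}(\Y))$; since conditional expectation contracts $L^\infty$, both $f_1$ and $f_2$ lie in $L^\infty(\Y)$. By construction $f_2$ is orthogonal to $L^\infty(\ZZ^{{<}d}_{o(Q_1),\dots,o(Q_d)}(\Y))$, so Theorem~\ref{charf}(iii) applied to $\Y$ gives $\|f_2\|_{\Box^d_{o(Q_1),\dots,o(Q_d)}(\X)} = 0$. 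Theorem~\ref{charf}(iii) applied to $\X$ then says that every such function is orthogonal to $L^\infty(\ZZ^{{<}d}_{o(Q_1),\dots,o(Q_d)}(\X))$, which contains $f$. Combined with the orthogonality $f_2 \perp f_1$, we obtain
$$\|f_2\|_{L^2(\X)}^2 = \langle f_2, f \rangle_{L^2(\X)} - \langle f_2, f_1 \rangle_{L^2(\X)} = 0,$$
so $f_2 = 0$ and $f = f_1 \in L^\infty(\ZZ^{{<}d}_{o(Q_1),\dots,o(Q_d)}(\Y))$.

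No serious obstacle is expected: the argument is purely formal once Theorem~\ref{charf} is in hand. The only point worth checking carefully is that the space $V_\Y$ used to define $\ZZ^{{<}d}_{o(Q_1),\dots,o(Q_d)}(\Y)$ is genuinely a subspace of the analogous space $V_\X$ for $\X$; this is immediate because the internal dual operator ${}^* {\mathcal D}^d_{\eps Q_1,\dots,\eps Q_d}$ is defined by the same internal-averaging formula in either ambient system, and the standard-part operation commutes with this identification.
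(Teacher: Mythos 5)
Your proof is correct and follows essentially the same route as the paper's: the forward inclusion via Theorem~\ref{charf}(i)--(ii) applied to $\Y$ and then $\X$, and the reverse inclusion via two applications of Theorem~\ref{charf}(iii). The paper compresses the reverse inclusion into a terse ``by second application of Theorem~\ref{charf},'' whereas you make the underlying mechanism explicit through the decomposition $f=f_1+f_2$ and the vanishing of $\|f_2\|_{L^2}$; this is a faithful unpacking rather than a different argument.
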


\begin{proof}  If $f \in L^\infty( \ZZ^{{<}d}_{o(Q_1),\dots,o(Q_d)}(\Y))$, then by Theorem \ref{charf} $f \in L^\infty(\Y)$, and $f$ can be approximated by finite linear combinations of dual functions, which then also lie in $L^\infty(\ZZ^{{<}d}_{o(Q_1),\dots,o(Q_d)}(\X))$, giving one inclusion.  To obtain the other inclusion, observe that if $f \in L^\infty(\Y) \cap L^\infty(\ZZ^{{<}d}_{o(Q_1),\dots,o(Q_d)}(\X))$, then by Theorem \ref{charf} $f$ is orthogonal to all functions in $L^\infty(\X)$ with vanishing $\Box^d_{o(Q_1),\dots,o(Q_d)}(\X)$ norm, and by second application of Theorem \ref{charf} we thus have
$f \in L^\infty( \ZZ^{{<}d}_{o(Q_1),\dots,o(Q_d)}(\Y))$.
\end{proof}

\begin{remark} Following the informal sketch in Section \ref{sketch-sec}, functions that are measurable with respect to $\ZZ^{{<}d}_{o(Q_1),\dots,o(Q_d)}(\X)$ should be viewed as being ``structured'' along the directions $Q_1,\dots,Q_d$.  The factor $\Y$ represents some additional structure, perhaps along some directions unrelated to $Q_1,\dots,Q_d$.  Corollary \ref{local} then guarantees that this additional structure is preserved when one performs such operations as orthogonal projection to $L^2( \ZZ^{{<}d}_{o(Q_1),\dots,o(Q_d)}(\X) )$.
\end{remark}

For the purpose of concatenation theorems, the following special case of Corollary \ref{local} is crucial (cf. Corollary \ref{concat-uap-split}):

\begin{corollary}\label{local-concat} Let $(\X,T)$ be a Loeb $G$-system for some nonstandard additive group $G$, and let
 $Q_{1,1},\dots,Q_{1,d_1}, Q_{2,1},\dots,Q_{2,d_2}$ be internal coset progressions of bounded rank.  Then
\begin{align*}
\ZZ^{{<}d_1}_{o(Q_{1,1}),\dots,o(Q_{1,d_1})}(\X) \wedge \ZZ^{{<}d_2}_{o(Q_{2,1}),\dots,o(Q_{2,d_2})}(\X) &\equiv 
 \ZZ^{{<}d_1}_{o(Q_{1,1}),\dots,o(Q_{1,d_1})}( \ZZ^{{<}d_2}_{o(Q_{2,1}),\dots,o(Q_{2,d_2})}(\X)  ) \\
&\equiv \ZZ^{{<}d_2}_{o(Q_{2,1}),\dots,o(Q_{2,d_2})}( \ZZ^{{<}d_1}_{o(Q_{1,1}),\dots,o(Q_{1,d_1})}(\X)  ).
\end{align*}
Furthermore, the spaces $L^\infty( \ZZ^{{<}d_1}_{o(Q_{1,1}),\dots,o(Q_{1,d_1})}(\X) )$ and $L^\infty( \ZZ^{{<}d_2}_{o(Q_{2,1}),\dots,o(Q_{2,d_2})}(\X) )$ are relatively orthogonal, in the sense that if $f_1 \in L^\infty( \ZZ^{{<}d_1}_{o(Q_{1,1}),\dots,o(Q_{1,d_1})}(\X) )$ and $f_2 \in L^\infty( \ZZ^{{<}d_2}_{o(Q_{2,1}),\dots,o(Q_{2,d_2})}(\X) )$ are both orthogonal to $L^\infty( \ZZ^{{<}d_1}_{o(Q_{1,1}),\dots,o(Q_{1,d_1})}(\X) ) \cap 
L^\infty( \ZZ^{{<}d_2}_{o(Q_{2,1}),\dots,o(Q_{2,d_2})}(\X) )$, then they are orthogonal to each other.
\end{corollary}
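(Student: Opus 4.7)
The strategy is to derive the two factor equivalences as direct applications of Corollary \ref{local}, and then to extract the relative orthogonality by a two-step invocation of Theorem \ref{charf}(iii) that uses the first equivalence as a bridge. For the first equivalence, I will apply Corollary \ref{local} with base factor $\Y \coloneqq \ZZ^{{<}d_2}_{o(Q_{2,1}),\dots,o(Q_{2,d_2})}(\X)$ and with the internal coset progressions $Q_{1,1},\dots,Q_{1,d_1}$; this yields
$$\ZZ^{{<}d_1}_{o(Q_{1,1}),\dots,o(Q_{1,d_1})}(\Y) \equiv \Y \wedge \ZZ^{{<}d_1}_{o(Q_{1,1}),\dots,o(Q_{1,d_1})}(\X),$$
which is exactly the first displayed equivalence. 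The second equivalence is obtained symmetrically by swapping the roles of the two parameter blocks in the same application.

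For the relative orthogonality, abbreviate $\mathcal{Z}_i \coloneqq \ZZ^{{<}d_i}_{o(Q_{i,1}),\dots,o(Q_{i,d_i})}(\X)$ and take $f_i \in L^\infty(\mathcal{Z}_i)$ with each $f_i$ orthogonal to $L^\infty(\mathcal{Z}_1) \cap L^\infty(\mathcal{Z}_2) = L^\infty(\mathcal{Z}_1 \wedge \mathcal{Z}_2)$. Using the equivalence just proved, I will rewrite this intersection as $L^\infty(\ZZ^{{<}d_1}_{o(Q_{1,1}),\dots,o(Q_{1,d_1})}(\mathcal{Z}_2))$, so that $f_2 \in L^\infty(\mathcal{Z}_2)$ is orthogonal to $L^\infty(\ZZ^{{<}d_1}_{o(Q_{1,1}),\dots,o(Q_{1,d_1})}(\mathcal{Z}_2))$. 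Next I will invoke Theorem \ref{charf}(iii) with base factor $\Y = \mathcal{Z}_2$ and the directions $Q_{1,1},\dots,Q_{1,d_1}$ to convert this orthogonality into the vanishing statement $\|f_2\|_{\Box^{d_1}_{o(Q_{1,1}),\dots,o(Q_{1,d_1})}(\X)} = 0$. Finally I will reapply Theorem \ref{charf}(iii), now with base factor $\Y = \X$ and the same directions, to turn that seminorm vanishing into orthogonality of $f_2$ against all of $L^\infty(\mathcal{Z}_1)$; since $f_1 \in L^\infty(\mathcal{Z}_1)$, this immediately yields $\langle f_1, f_2 \rangle_{L^2(\X)} = 0$, as required.

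I do not anticipate a serious obstacle here: the whole argument is a short chain of reductions through Corollary \ref{local} and Theorem \ref{charf}(iii), and the content is essentially bookkeeping. The one point that will need care is correctly matching each invocation of Theorem \ref{charf}(iii) to its base factor $\Y$, so that the first application (with $\Y = \mathcal{Z}_2$) converts an orthogonality hypothesis \emph{internal to $\mathcal{Z}_2$} into the vanishing of a Gowers box seminorm \emph{measured on all of $\X$}, and the second application (with $\Y = \X$) converts that seminorm vanishing back into an orthogonality statement on $\X$ against $\mathcal{Z}_1$. In effect, the characteristic-factor identity supplied by Corollary \ref{local} serves as a pivot to transport orthogonality from the small common subfactor $\mathcal{Z}_1 \wedge \mathcal{Z}_2$ up to the larger factor $\mathcal{Z}_1$, using the vanishing of the box seminorm as an intermediate certificate.
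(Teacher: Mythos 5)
Your proposal is correct and follows essentially the same route as the paper: the first two equivalences fall out of Corollary \ref{local}, and the relative orthogonality is extracted by passing through the vanishing of a Gowers box seminorm via two applications of Theorem \ref{charf}(iii). The only cosmetic difference is that you establish the seminorm vanishing for $f_2$ along the $Q_{1,j}$ directions whereas the paper does it for $f_1$ along the $Q_{2,j}$ directions; by the symmetry of the statement this is immaterial.
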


\begin{proof} The first claim is immediate from Corollary \ref{local}.  For the second claim, note from the first claim and Theorem \ref{charf} that $f_1$ has vanishing $\Box^{d_2}_{o(Q_{2,1}),\dots,o(Q_{2,d_2})}(\X)$ norm, and the claim follows from a second application of Theorem \ref{charf}.
\end{proof}

\section{Proof of Theorems \ref{concat-uap-nonst} and \ref{concat-box-nonst}}\label{boxproof}

We now prove Theorem \ref{concat-box-nonst}.  Let $d_1, d_2, G, Q_{1,1}, \dots, Q_{2,d_2}, \X, T$ be as in that theorem.  By Theorem \ref{charf}, it suffices to show that $f,g$ are orthogonal whenever
$$ f \in L^\infty( \ZZ^{{<}d_1}_{o(Q_{1,1}),\dots,o(Q_{1,d_1})}(\X) ) \cap L^\infty( \ZZ^{{<}d_2}_{o(Q_{2,1}),\dots,o(Q_{2,d_2})}(\X) ) $$
and
\begin{equation}\label{fun}
 \|g\|_{\Box^{d_1 d_2}_{(o(Q_{1,i}+Q_{2,j}))_{1 \leq i \leq d_1; 1 \leq j \leq d_2}}(\X)} = 0.
\end{equation}
We first deal with the base case $d_1=d_2=1$.  In this case we have
$$ f \in L^\infty( \ZZ^{{<}1}_{o(Q_{1,1})}(\X)) \cap L^\infty( \ZZ^{{<}1}_{o(Q_{2,1})}(\X))$$
and
\begin{equation}\label{go}
 \|g\|_{\Box^1_{o(Q_{1,1}+Q_{2,1})}(\X)} = 0.
\end{equation}
We may normalise $f,g$ to lie in the closed unit ball of $L^\infty(\X)$.  Observe that for any $\mathbf{h} \in {\mathcal A}[\X]$, the dual function
$$
{}^* {\mathcal D}^1_{\eps Q_{1,1}} \mathbf{h} = \E_{h \in \eps Q_{1,1} - \eps Q_{1,1}} T^h \mathbf{h} $$
obeys the translation invariance
$$ \st T^n {}^* {\mathcal D}^1_{\eps Q_{1,1}} \mathbf{h}  = \st {}^* {\mathcal D}^1_{\eps Q_{1,1}} \mathbf{h}$$
for all $n \in o(Q_{1,1})$.  By Theorem \ref{charf}, Lemma \ref{int}, and \eqref{dstar}, we conclude that $f$ is invariant with respect to shifts $T^n$ with $n \in o(Q_{1,1})$.  Similarly for $n \in o(Q_{2,1})$, and so $f$ is invariant with respect to shifts $T^n$ with $n \in o(Q_{1,1} + Q_{2,1})$.  In particular, for any infinitesimal $\eps>0$, and using Lemma \ref{int} to write $f = \st \mathbf{f}$ and $g = \st \mathbf{g}$, we have
\begin{align*}
|\langle f, g \rangle_{L^2(\X)}| &= \st |\langle \mathbf{f}, \mathbf{g} \rangle_{{}^* L^2(\X)}| \\
&= \st |\E_{n \in \eps Q_{1,1} + \eps Q_{2,1}} \langle T^n \mathbf{f}, T^n \mathbf{g} \rangle_{{}^* L^2(\X)}| \\
&= \st |\langle \mathbf{f}, \E_{n \in \eps Q_{1,1} + \eps Q_{2,1}} T^n \mathbf{g} \rangle_{{}^* L^2(\X)}| \\
&\leq \st \|\E_{n \in \eps Q_{1,1} + \eps Q_{2,1}} T^n \mathbf{g} \|_{{}^* L^2(\X)} \\
&= \st \| \mathbf{g} \|_{{}^* \Box^1_{\eps Q_{1,1} + \eps Q_{2,1}}(\X)}\\
&= \| g \|_{\Box^1_{\eps Q_{1,1} + \eps Q_{2,1}}(\X)}.
\end{align*}
Applying Lemma \ref{lard} and \eqref{go}, we obtain $\langle f, g \rangle_{L^2(\X)} = 0$ as required.

Now  assume inductively that $d_1+d_2 > 2$, and the claim has already been proven for smaller values of $d_1+d_2$.
Write
$$
L^\infty( \ZZ^{{<}d_1}_{o(Q_{1,1}),\dots,o(Q_{1,d_1})}(\X) ) \cap L^\infty( \ZZ^{{<}d_2}_{o(Q_{2,1}),\dots,o(Q_{2,d_2})}(\X) ) = L^\infty(\ZZ)$$
for some factor $(\ZZ,T)$.  By Corollary \ref{local}, one has
$$
L^\infty(\ZZ) = L^\infty( \ZZ^{{<}d_1}_{o(Q_{1,1}),\dots,o(Q_{1,d_1})}(\ZZ) ) =  L^\infty( \ZZ^{{<}d_2}_{o(Q_{2,1}),\dots,o(Q_{2,d_2})}(\ZZ) ).$$
By Theorem \ref{charf}, one can thus approximate $f$ (in $L^2$) by finite linear combinations of functions of the form
$$ {\mathcal D}^{d_1}_{\eps Q_{1,1},\dots,\eps Q_{1,d_1}}( f_\omega )_{\omega \in \{0,1\}^{d_1} \backslash \{0\}^{d_1}} $$
with $f_\omega \in L^\infty(\ZZ)$ and $\eps>0$ standard.  Without loss of generality, we may thus assume that
$$ f = {\mathcal D}^{d_1}_{\eps Q_{1,1},\dots,\eps Q_{1,d_1}}( f_\omega )_{\omega \in \{0,1\}^{d_1} \backslash \{0\}^{d_1}}.$$
A similar approximation argument (using the multilinearity and continuity properties of ${\mathcal D}^{d_1}_{\eps Q_{1,1},\dots,\eps Q_{1,d_1}}$) allows to assume without loss of generality that for each $\omega \in \{0,1\}^{d_1} \backslash \{0\}^{d_1}$, one has
$$ f_\omega = {\mathcal D}^{d_2}_{\eps_\omega Q_{2,1},\dots,\eps_\omega Q_{2,d_1}}( f_{\omega,\omega'} )_{\omega' \in \{0,1\}^{d_2} \backslash \{0\}^{d_2}}$$
for some $f_{\omega,\omega'} \in L^\infty(\ZZ)$ and $\eps_\omega > 0$ standard.  We may normalise so that all the functions $f, f_\omega, f_{\omega,\omega'}$ lie in the closed unit ball of $L^\infty$.

By Lemma \ref{int}, we may write $f_{\omega,\omega'} = \st \mathbf{f}_{\omega,\omega'}$ for some $\mathbf{f}_{\omega,\omega'} \in {\mathcal A}[\X]$, which implies that $f_\omega = \st \mathbf{f}_\omega$ and $f = \st \mathbf{f}$ where
$$ \mathbf{f}_\omega \coloneqq  {}^* {\mathcal D}^{d_2}_{\eps_\omega Q_{2,1},\dots,\eps_\omega Q_{2,d_1}}( \mathbf{f}_{\omega,\omega'} )_{\omega' \in \{0,1\}^{d_2} \backslash \{0\}^{d_2}}$$
and
$$ \mathbf{f} \coloneqq  {}^* {\mathcal D}^{d_1}_{\eps Q_{1,1},\dots,\eps Q_{1,d_1}}( \mathbf{f}_\omega )_{\omega \in \{0,1\}^{d_1} \backslash \{0\}^{d_1}}.$$
From the definition of dual function we have the recursive identity
$$
\mathbf{f} = \E_{h \in \eps Q_{1,1} - \eps Q_{1,1}} T^h \mathbf{f}_{(1,0^{d_1-1})} {}^* {\mathcal D}^{d_1-1}_{\eps Q_{1,2},\dots,\eps Q_{1,d_1}}( 
\mathbf{f}_{(0,\omega)} T^h \overline{\mathbf{f}_{(1,\omega)}} )_{\omega \in \{0,1\}^{d_1-1} \backslash \{0\}^{d_1-1}}$$
(recalling the convention that the dual function in this expression is $1$ if $d_1-1=0$).  If we shift this by any $n \in G$, we obtain
$$
T^n \mathbf{f} = \E_{h \in \eps Q_{1,1} - \eps Q_{1,1}} T^{h+n} \mathbf{f}_{(1,0^{d_1-1})} {}^* {\mathcal D}^{d_1-1}_{\eps Q_{1,2},\dots,\eps Q_{1,d_1}}( 
T^n \mathbf{f}_{(0,\omega)} T^{h+n} \overline{\mathbf{f}_{(1,\omega)}} )_{\omega \in \{0,1\}^{d_1-1} \backslash \{0\}^{d_1-1}}$$
or equivalently
$$
T^n \mathbf{f} = \E_{h \in \eps Q_{1,1} - \eps Q_{1,1} + n}  \mathbf{c}_{n,h} \mathbf{g}_h,$$
where
\begin{equation}\label{gah-def}
\mathbf{g}_h \coloneqq T^{h} \mathbf{f}_{(1,0^{d_1-1})}
\end{equation}
and
\begin{equation}\label{sas}
 \mathbf{c}_{n,h} \coloneqq  {}^* {\mathcal D}^{d_1-1}_{\eps Q_{1,2},\dots,\eps Q_{1,d_1}}( 
T^n \mathbf{f}_{(0,\omega)} T^{h} \overline{\mathbf{f}_{(1,\omega)}} )_{\omega \in \{0,1\}^{d_1-1} \backslash \{0\}^{d_1-1}}.
\end{equation}
If $n \in o( Q_{1,1} )$, then $\eps Q_{1,1}+n$ differs from $Q_{1,1}$ by $o( |Q_{1,1}|)$ elements (since $Q_{1,1}$ has bounded rank); since all functions here are uniformly bounded, we thus have the representation
$$
T^n \mathbf{f} = \E_{h \in \eps Q_{1,1} - \eps Q_{1,1}} \mathbf{c}_{n,h} \mathbf{g}_h + o(1)$$
for $n \in o(Q_{1,1})$ (cf. \eqref{gah}).

A similar argument gives
$$ T^{n'} \mathbf{f}_{(1,0^{d_1-1})} = 
\E_{h' \in \eps Q_{2,1} - \eps Q_{2,1}} \mathbf{c}'_{n',h'} \mathbf{g}'_{h'} + o(1)
$$
for $n' \in o(Q_{2,1})$, where
$$ \mathbf{g}'_{h'} \coloneqq T^{h'} \mathbf{f}_{(1,0^{d_1-1}),(1,0^{d_2-1})}.$$
and
\begin{equation}\label{sas-2}
 \mathbf{c}'_{n',h'} \coloneqq  
{}^* {\mathcal D}^{d_2-1}_{\eps Q_{2,2},\dots,\eps Q_{2,d_2}}( 
T^{n'} \mathbf{f}_{(1,0^{d_1-1}),(0,\omega)} T^{h'} \overline{\mathbf{f}_{(1,0^{d_1-1}),(1,\omega)}} )_{\omega \in \{0,1\}^{d_2-1} \backslash \{0\}^{d_2-1}}.
\end{equation}
In particular, from \eqref{gah-def} one has
$$ T^{n'} \mathbf{g}_h = \E_{h' \in \eps Q_{2,1} - \eps Q_{2,1}} \mathbf{c}'_{n',h,h'} \mathbf{g}'_{h,h'} + o(1)$$
where
$$ \mathbf{c}'_{n',h,h'} \coloneqq T^h \mathbf{c}'_{n',h'}$$
and
$$ \mathbf{g}'_{h,h'} := T^h \mathbf{g}'_{h'};$$
compare with \eqref{tang}.
Putting this together, we see that
\begin{equation}\label{stam}
T^{n+n'} \mathbf{f} = \E_{h \in \eps Q_{1,1} - \eps Q_{1,1}} \E_{h' \in \eps Q_{2,1} - \eps Q_{2,1}} 
\mathbf{c}''_{n,n',h,h'} \mathbf{g}'_{h,h'}  + o(1) 
\end{equation}
for $n \in o(Q_{1,1})$ and $n' \in o(Q_{2,1})$, where
\begin{equation}\label{tchn}
 \mathbf{c}''_{n,n',h,h'} \coloneqq  (T^{n'} \mathbf{c}_{n,h}) \mathbf{c}'_{n',h,h'};
\end{equation}
compare with \eqref{tnf}.

Let $\kappa,\kappa' > 0$ be standard quantities to be chosen later.  From \eqref{fun} and Lemma \ref{lard}, we may find an infinitesimal $\delta>0$ such that
$$
 \|g\|_{\Box^{d_1 d_2}_{(\delta Q_{1,i}+\delta Q_{2,j})_{1 \leq i \leq d_1; 1 \leq j \leq d_2}}(\X)} < \kappa'.
$$
By Lemma \ref{int}, we may write $g = \st \mathbf{g}$ for some $\mathbf{g} \in {\mathcal A}[\X]$, thus by \eqref{staf} we have
\begin{equation}\label{geo}
 \|\mathbf{g}\|_{{}^* \Box^{d_1 d_2}_{(\delta Q_{1,i}+\delta Q_{2,j})_{1 \leq i \leq d_1; 1 \leq j \leq d_2}}(\X)} < \kappa'.
\end{equation}
By shift invariance and \eqref{stam}, we have
\begin{align*}
|\langle f, g \rangle_{L^2(\X)}| &= \st |\langle \mathbf{f}, \mathbf{g} \rangle_{{}^* L^2(\X)}| \\
&= \st | \E_{n \in \delta Q_{1,1}, n' \in \delta Q_{2,1}} \langle T^{n+n'} \mathbf{f}, T^{n+n'} \mathbf{g} \rangle_{{}^* L^2(\X)} |\\
&= \st |
\E_{h \in \eps Q_{1,1} - \eps Q_{1,1}} \E_{h' \in \eps Q_{2,1} - \eps Q_{2,1}} 
\langle \mathbf{g}'_{h,h'},\E_{n \in \delta Q_{1,1}, n' \in \delta Q_{2,1}}  \overline{\mathbf{c}''_{n,n',h,h'}} T^{n+n'} \mathbf{g} \rangle_{{}^* L^2(\X)}|.
\end{align*}
We abbreviate $\E_{h \in \eps Q_{1,1} - \eps Q_{1,1}} \E_{h' \in \eps Q_{2,1} - \eps Q_{2,1}}$ as $\E_{h,h'}$, and $\E_{n \in \delta Q_{1,1}, n' \in \delta Q_{2,1}}$ as $\E_{n,n'}$.
The functions $\mathbf{g}'_{h,h'}$ are bounded (almost everywhere) by $1$ by construction, so by the Cauchy-Schwarz inequality we have
\begin{align*}
|\langle f, g \rangle_{L^2(\X)}|^2 
&\leq \st \E_{h,h'} \| \E_{n,n'} \overline{\mathbf{c}''_{n,n',h,h'}} T^{n+n'} \mathbf{g} \|_{{}^* L^2(\X)}^2 \\
&\leq \st \E_{h,h'} \E_{n_1,n'_1} \E_{n_2,n'_2} |\langle \mathbf{c}''_{n_1,n'_1,h,h'} \overline{\mathbf{c}''_{n_2,n'_2,h,h'}}, T^{n_1+n'_1} G T^{n_2+n'_2} \mathbf{g} \rangle_{{}^* L^2(\X)}|.
\end{align*}
where $n_1,n'_1$ and $n_2,n'_2$ are subject to the same averaging conventions as $n,n'$.
Now we analyse the factor $\mathbf{c}''_{n_1,n'_1,h,h'} \overline{\mathbf{c}''_{n_2,n'_2,h,h'}}$.  From \eqref{sas} we have
$$ \st \mathbf{c}_{n,h} =  {\mathcal D}^{d_1-1}_{\eps Q_{1,2},\dots,\eps Q_{1,d_1}}( 
T^n f_{(0,\omega)} T^{h} \overline{f_{(1,\omega)}} )_{\omega \in \{0,1\}^{d_1-1} \backslash \{0\}^{d_1-1}}$$
for any $h,n \in G$.
Let us temporarily assume that $d_1>1$.  Since $f_{(0,\omega)}, f_{(1,\omega)}$ are measurable in $\ZZ^{{<}d_2}_{o(Q_{2,1}),\dots,o(Q_{2,d_2})}(\X)$, we see from Theorem \ref{charf} that $\st \mathbf{c}_{n,h}$ is measurable with respect to
$$ \ZZ^{{<}d_1-1}_{o(Q_{1,2}),\dots,o(Q_{1,d_1})}( \ZZ^{{<}d_2}_{o(Q_{2,1}),\dots,o(Q_{2,d_2})}(\X) ).$$
Applying Corollary \ref{local-concat} and Theorem \ref{charf}, we conclude that $\st \mathbf{c}_{n,h}$ is orthogonal to any function in $L^\infty(\X)$ that vanishes in either $\Box^{{<}d_1-1}_{o(Q_{1,2}),\dots,o(Q_{1,d_1})}$ or $\Box^{{<}d_2}_{o(Q_{2,1}),\dots,o(Q_{2,d_2})}$ norms.  By induction hypothesis and Theorem \ref{charf}, we conclude that $\st \mathbf{c}_{n,h}$ is also orthogonal to any function in $L^\infty(\X)$ that vanishes in $\Box^{{<}(d_1-1)d_2}_{(o(Q_{1,i}+Q_{2,j}))_{2 \leq i \leq d_1, 1 \leq j \leq d_2}}$ norm.  By the monotonicity property \eqref{mono} of the Gowers box norms, $\st \mathbf{c}_{n,h}$ is therefore orthogonal to any function in $L^\infty(\X)$ that vanishes in $\Box^{{<}d_1 d_2-1}_{(o(Q_{1,i}+Q_{2,j}))_{1 \leq i \leq d_1, 1 \leq j \leq d_2; (i,j) \neq (1,1)}}$.  By Theorem \ref{charf}, we conclude that $\st \mathbf{c}_{n,h}$ is measurable with respect to $\ZZ^{{<}d_1 d_2-1}_{(o(Q_{1,i}+Q_{2,j}))_{1 \leq i \leq d_1, 1 \leq j \leq d_2; (i,j) \neq (1,1)}}(\X)$.  This argument was established for $d_1>1$, but when $d_1=1$, $\mathbf{c}_{n,h}$ is simply $1$, and the claim is still true.

A similar argument starting from \eqref{sas-2} shows that $\st \mathbf{c}'_{n',h'}$ is also measurable with respect to $\ZZ^{{<}d_1 d_2-1}_{(o(Q_{1,i}+Q_{2,j}))_{1 \leq i \leq d_1, 1 \leq j \leq d_2; (i,j) \neq (1,1)}}(\X)$ for any $h',n' \in G$.  From \eqref{tchn} we conclude that
$$ \st \mathbf{c}''_{n_1,n'_1,h,h'} \overline{\mathbf{c}''_{n_2,n'_2,h,h'}} \in L^\infty( \ZZ^{{<}d_1 d_2-1}_{(o(Q_{1,i}+Q_{2,j}))_{1 \leq i \leq d_1, 1 \leq j \leq d_2; (i,j) \neq (1,1)}}(\X) )$$
and hence by Theorem \ref{charf} and Lemma \ref{lard} we have, for any infinitesimal $\nu>0$, that
$$
|\langle \mathbf{c}''_{n_1,n'_1,h,h'} \overline{\mathbf{c}''_{n_2,n'_2,h,h'}}, \mathbf{h} \rangle_{{}^* L^2(\X)}| \leq \kappa$$
for any $h,h',n_1,n'_1,n_2,n'_2 \in G$, whenever $\mathbf{h}$ is an internally measurable function bounded in magnitude by $1$ with 
\begin{equation}\label{sa}
\| \mathbf{h} \|_{{}^* \Box^{d_1d_2-1}_{(\nu Q_{1,i}+\nu Q_{2,j}))_{1 \leq i \leq d_1, 1 \leq j \leq d_2; (i,j) \neq (1,1)}}(\X)} \leq \nu.
\end{equation}
By the underspill principle, this claim must also hold for some standard $\nu>0$, which depends on $\kappa$ but (crucially) does not depend on $\kappa'$ or $\delta$.  If \eqref{sa} fails, then we have instead the trivial bound
$$
|\langle \mathbf{c}''_{n_1,n'_1,h,h'} \overline{\mathbf{c}''_{n_2,n'_2,h,h'}}, \mathbf{h} \rangle_{{}^* L^2(\X)}| \leq 1.$$
Inserting these bounds into the previous estimation of $|\langle f, g \rangle_{L^2(\X)}|$, we conclude that
$$
|\langle f, g \rangle_{L^2(\X)}|^2 \leq \kappa + 
\st \E_{n_1,n'_1} \E_{n_2,n'_2} 1_{A(n_1,n'_1,n_2,n'_2)} $$
where $A(n_1,n'_1,n_2,n'_2)$ is the (internal) assertion that
$$
\| T^{n_1+n'_1} \mathbf{g} T^{n_2+n'_2} \mathbf{g} \|_{{}^* \Box^{d_1d_2-1}_{(\nu Q_{1,i}+\nu Q_{2,j}))_{1 \leq i \leq d_1, 1 \leq j \leq d_2; (i,j) \neq (1,1)}}(\X)} > \nu.
$$
However, we have the identity
$$
 \|\mathbf{g}\|_{{}^* \Box^{d_1 d_2}_{(\delta Q_{1,i}+\delta Q_{2,j})_{1 \leq i \leq d_1; 1 \leq j \leq d_2}}(\X)}^{2^{d_1 d_2}}
= 
\E_{n_1,n'_1} \E_{n_2,n'_2}
 \|T^{n_1+n'_1} \mathbf{g} T^{n_2+n'_2} \mathbf{g}\|_{{}^* \Box^{d_1 d_2}_{(\delta Q_{1,i}+\delta Q_{2,j})_{1 \leq i \leq d_1; 1 \leq j \leq d_2; (i,j) \neq (1,1)}}(\X)}^{2^{d_1 d_2-1}}$$
which by \eqref{geo} and Lemma \ref{lard} gives
$$
\E_{n_1,n'_1} \E_{n_2,n'_2}
 \|T^{n_1+n'_1} \mathbf{g} T^{n_2+n'_2} \mathbf{g}\|_{{}^* \Box^{d_1 d_2}_{(\nu Q_{1,i}+\nu Q_{2,j})_{1 \leq i \leq d_1; 1 \leq j \leq d_2; (i,j) \neq (1,1)}}(\X)}^{2^{d_1 d_2-1}} \leq (\kappa')^{2^{d_1d_2}} + o(1).$$
From Markov's inequality, we conclude that
$$
\E_{n_1,n'_1} \E_{n_2,n'_2} 1_{A(n_1,n'_1,n_2,n'_2)} \leq \nu^{-2^{d_1d_2-1}} (\kappa')^{2^{d_1d_2}} + o(1) $$
and thus
$$
|\langle f, g \rangle_{L^2(\X)}|^2 \leq \kappa + \nu^{-2^{d_1d_2-1}} (\kappa')^{2^{d_1d_2}}.$$
Sending $\kappa'$ to zero, and then sending $\kappa$ to zero, we obtain $\langle f, g \rangle_{L^2(\X)}=0$ as required.  This completes the proof of 
Theorem \ref{concat-box-nonst}.

We now briefly discuss the proof of Theorem \ref{concat-uap-nonst}, which is proven very similarly to Theorem \ref{concat-box-nonst}.  With $d_1,d_2,G,Q_1,Q_2,\X,T$ as in that theorem, our task is to show that $f,g$ are orthogonal whenever
$$ f \in L^\infty( \ZZ^{{<}d_1}_{o(Q_1),\dots,o(Q_1)}(\X) ) \cap L^\infty( \ZZ^{{<}d_2}_{o(Q_2),\dots,o(Q_2)}(\X) ) $$
and
$$
 \|g\|_{U^{d_1+d_2-1}_{o(Q_1+Q_2)}} = 0.$$
The base case $d_1=d_2=1$ is treated as before, so assume inductively that $d_1+d_2>2$ and the claim has already been proven for smaller values of $d_1+d_2$.  As before, we reduce to the case where $f$ is expressible in terms of $f_\omega, f_{\omega,\omega'}, \mathbf{f}, \mathbf{f}_\omega, \mathbf{f}_{\omega,\omega'}$ as in the preceding argument (with $Q_{1,i} = Q_1$ and $Q_{2,j} = Q_2$), and again arrive at the representation \eqref{stam}.  Repeating the previous analysis of $\st \mathbf{c}_{n,h}$, but now using the inductive hypothesis for Theorem \ref{concat-uap-nonst} rather than Theorem \ref{concat-box-nonst} (and not using the monotonicity of Gowers box norms), we see that $\st \mathbf{c}_{n,h}$ is measurable with respect to $\ZZ^{{<}d_1+d_2-2}_{(o(Q_1+Q_2), \dots, o(Q_1+Q_2)}$ for any $n,h \in G$; similarly for $\st \mathbf{c}'_{n',h'}$ and $\st \mathbf{c}''_{n_1,n'_1,h,h'} \overline{\mathbf{c}''_{n_2,n'_2,h,h'}}$.  Repeating the previous arguments (substituting Gowers box norms by Gowers uniformity norms as appropriate), we conclude Theorem \ref{concat-uap-nonst}.

\section{Proof of qualitative Bessel inequality}\label{bes-sec}

We now prove Theorem \ref{besu}.  The proof of Theorem \ref{besu-box} is very similar and is left to the interested reader.  Our arguments will be parallel to those used to prove Corollary \ref{bescor}, but using a nonstandard limit rather than an ergodic limit.

It will be convenient to reduce to a variant of this theorem in which the index set $I$ has bounded size.  More precisely, we will derive Theorem \ref{besu} from

\begin{theorem}\label{besu-2}  Let $M \geq 1$, and let $Q_1,\dots,Q_M$ be a finite sequence of coset progressions $Q_i$, all of rank at most $r$, in an additive group $G$. Let $\X$ be a $G$-system, and let $d$ be a positive integer.  Let $f$ lie in the unit ball of $L^\infty(\X)$, and suppose that
$$
\frac{1}{M^2} \sum_{1 \leq i < j \leq M} \|f\|_{U^{2d-1}_{\eps Q_i+\eps Q_j}(\X)} \leq \eps $$
for some $\eps > 0$.  If $\eps$ is sufficiently small depending on $M,r,d$, then
$$
\frac{1}{M} \sum_{i =1}^M \|f\|_{U^{d}_{Q_i}(\X)} \leq 2 M^{-1/2^d}.
$$
\end{theorem}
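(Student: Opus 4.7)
The plan is to follow the ultraproduct contradiction template of Section~\ref{ultra-sec}. Assuming Theorem~\ref{besu-2} fails for some fixed $M, r, d$, we extract a sequence of counterexamples with parameters $\eps_n \to 0$ and take ultraproducts to obtain internal coset progressions $Q_1, \dots, Q_M$ of bounded rank in a nonstandard additive group $G$, a Loeb $G$-system $(\X, T)$, and $f \in L^\infty(\X)$ in the closed unit ball. Pigeonholing the averaged hypothesis over the $\binom{M}{2}$ pairs $(i,j)$ shows that each individual norm $\|\mathbf{f}\|_{{}^*U^{2d-1}_{\eps^*(Q_i+Q_j)}(\X)}$ is $o(1)$ for an infinitesimal $\eps^*$, and Lemma~\ref{lard} upgrades this to the exact vanishing $\|f\|_{U^{2d-1}_{o(Q_i+Q_j)}(\X)} = 0$ for every $1 \leq i < j \leq M$; Theorem~\ref{charf} then converts these into the orthogonalities $f \perp L^\infty(\ZZ^{{<}2d-1}_{o(Q_i+Q_j)}(\X))$. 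Meanwhile, the negated conclusion combined with the power-mean inequality produces the contradiction target $\sum_{i=1}^{M}\|f\|_{U^d_{Q_i}(\X)}^{2^d} > 2^{2^d}$, so the goal is to establish the reverse inequality in the nonstandard setting.

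Following the dual function framework of Section~\ref{dual-sec}, introduce $F_i := \mathcal{D}^d_{Q_i,\dots,Q_i}(f,\dots,f)$; by Theorem~\ref{charf} applied with the standard dilation parameter $\eps=1$, each $F_i$ lies in $L^\infty(\ZZ^{{<}d}_{o(Q_i)}(\X))$ with $\|F_i\|_{L^\infty(\X)} \leq 1$, and the duality identity~\eqref{dual-ident} reads $\|f\|_{U^d_{Q_i}}^{2^d} = \langle f, F_i\rangle_{L^2(\X)}$. A Bessel-type Cauchy--Schwarz manipulation then gives
\[
\sum_{i=1}^{M} \|f\|_{U^d_{Q_i}}^{2^d} \;=\; \Bigl\langle f, \sum_i F_i\Bigr\rangle_{L^2(\X)} \;\leq\; \Bigl\|\sum_i F_i\Bigr\|_{L^2(\X)},
\]
whose squared right-hand side expands as $\sum_i \|F_i\|_2^2 + 2\operatorname{Re}\sum_{i<j} \langle F_i, F_j\rangle$, with the diagonal contribution bounded by $M$.

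The main obstacle is the vanishing of the off-diagonal cross terms $\langle F_i, F_j\rangle$ for $i \neq j$. The concatenation input enters here: Corollary~\ref{local-concat} gives the relative orthogonality of $L^\infty(\ZZ^{{<}d}_{o(Q_i)}(\X))$ and $L^\infty(\ZZ^{{<}d}_{o(Q_j)}(\X))$ over their wedge, and Theorem~\ref{concat-uap-nonst} identifies this wedge as a subfactor of $\ZZ^{{<}2d-1}_{o(Q_i+Q_j)}(\X)$---precisely the factor to which $f$ has already been shown to be orthogonal. Because $F_i$ is a multilinear expression of degree $2^d - 1$ in $f$, the linear orthogonality of $f$ does not directly annihilate $\langle F_i, F_j\rangle$; instead this cross term must be expanded as a Gowers box inner product in the $2d$ shift parameters ranging over $(Q_i - Q_i)^d$ and $(Q_j - Q_j)^d$, with $2(2^d - 1)$ of the factors drawn from $\{f, \overline{f}\}$ and the remaining factors set equal to $1$, and then dominated via the Cauchy--Schwarz--Gowers inequality~\eqref{csg} and the box-norm monotonicity~\eqref{mono} by a positive power of $\|f\|_{U^{2d-1}_{o(Q_i+Q_j)}(\X)} = 0$. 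With the off-diagonal terms eliminated, one arrives at $\sum_i \|f\|_{U^d_{Q_i}}^{2^d} \leq \sqrt{M}$ in the nonstandard limit, which after an appropriate quantitative choice of $\eps$ depending on $M, r, d$ delivers the required bound $\leq 2^{2^d}$, contradicting the earlier lower bound and thereby establishing Theorem~\ref{besu-2}.
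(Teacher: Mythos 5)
Your opening moves match the paper's: the ultraproduct/Loeb reduction, the deduction $\|f\|_{U^{2d-1}_{o(Q_i+Q_j)}(\X)}=0$ and the orthogonality of $f$ to $L^\infty(\ZZ^{{<}2d-1}_{o(Q_i+Q_j)}(\X))$ for all $i<j$, and the use of the concatenation theorem to identify $\ZZ^{{<}d}_{o(Q_i)}(\X)\wedge\ZZ^{{<}d}_{o(Q_j)}(\X)$ as a subfactor of $\ZZ^{{<}2d-1}_{o(Q_i+Q_j)}(\X)$. The divergence is your choice to run the Bessel argument on the dual functions $F_i:=\mathcal{D}^d_{Q_i,\dots,Q_i}(f,\dots,f)$ rather than on the conditional expectations $f_i := \E(f\mid \ZZ^{{<}d}_{o(Q_i)}(\X))$, and this change introduces two real gaps.

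First, the claimed vanishing of the cross terms $\langle F_i,F_j\rangle$ is not supported by the tools you cite. Expanding $\langle F_i,F_j\rangle$ as a $\Box^{2d}_{Q_i,\dots,Q_i,Q_j,\dots,Q_j}$ Gowers inner product and applying the Cauchy--Schwarz--Gowers inequality \eqref{csg} produces a power of $\|f\|_{\Box^{2d}_{Q_i,\dots,Q_i,Q_j,\dots,Q_j}(\X)}$; neither that inequality nor the monotonicity \eqref{mono}, which only allows dropping coordinates of the box norm, relates this quantity to $\|f\|_{U^{2d-1}_{o(Q_i+Q_j)}(\X)}$. The shift scales are genuinely different (the dual-function average is over $Q_i-Q_i$ and $Q_j-Q_j$, whereas the vanishing hypothesis is at the infinitesimal scale $o(Q_i+Q_j)$), so there is no direct norm domination. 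The paper avoids this by using Corollary~\ref{local-concat}: the relative orthogonality of $L^\infty(\ZZ^{{<}d}_{o(Q_i)})$ and $L^\infty(\ZZ^{{<}d}_{o(Q_j)})$ over their wedge kills $\langle f_i,f_j\rangle$ because each $f_i$, being a \emph{conditional expectation}, inherits the orthogonality $\langle f_i,g\rangle=\langle f,g\rangle=0$ for $g$ in the wedge. The nonlinear dual functions $F_i$ do not inherit this orthogonality from the linear orthogonality of $f$, so that route is unavailable.

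Second, even granting $\langle F_i,F_j\rangle = 0$, the diagonal bound $\|F_i\|_{L^2}\leq 1$ only gives $\sum_i \|f\|_{U^d_{Q_i}(\X)}^{2^d}=\langle f,\sum_i F_i\rangle\leq\|\sum_i F_i\|_{L^2}\leq\sqrt{M}$, which does not contradict the target $\sum_i\|f\|_{U^d_{Q_i}(\X)}^{2^d}>2^{2^d}$ once $M$ is large; no choice of $\eps$ fixes this, since the $\sqrt{M}$ comes from the diagonal and is $\eps$-independent. The paper's choice $f_i=\E(f\mid\ZZ^{{<}d}_{o(Q_i)})$ is essential precisely because $\langle f,f_i\rangle=\|f_i\|_{L^2}^2$, making the Cauchy--Schwarz step self-referential and yielding the correct $\sum_i\|f_i\|_{L^2}^2\leq 1$; combined with $\|f\|_{U^d_{Q_i}}\leq\|f\|_{U^d_{o(Q_i)}}=\|f_i\|_{U^d_{o(Q_i)}}\leq\|f_i\|_{L^{2^d}}\leq\|f_i\|_{L^2}^{1/2^{d-1}}$ and the power-mean inequality, this produces $\frac1M\sum_i\|f\|_{U^d_{Q_i}}\leq M^{-1/2^d}$, which is the contradiction.
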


To see how Theorem \ref{besu-2} implies Theorem \ref{besu}, we use a random sampling argument.  Let $I, Q_i, r, G, \X, T, d, f, \eps$ be as in Theorem \ref{besu}.  Let $M$ be a positive integer to be chosen later, and select $M$ indices $a_1,\dots,a_M$ uniformly and independently from $I$ (with replacement).  Then from linearity of expectation we have
$$
\E \sum_{1 \leq i < j \leq M} \|f\|_{U^{2d-1}_{\eps Q_{a_i}+\eps Q_{a_j}}(\X)} = \frac{M(M-1)}{2}
\frac{1}{|I|^2} \sum_{i,j \in I} \|f\|_{U^{2d-1}_{\eps Q_{a_i}+\eps Q_{a_j}}(\X)} $$
and hence by \eqref{bes-1} and Markov's inequality we have
\begin{equation}\label{samp}
\frac{1}{M^2} \sum_{1 \leq i < j \leq M} \|f\|_{U^{2d-1}_{\eps Q_i+\eps Q_j}(\X)} \leq \eps 
\end{equation}
with probability greater than $1/2$.  On the other hand, since $\|f\|_{U^d_{Q_i}(\X)}$ lies between $0$ and $1$, the expression $\frac{1}{M} \sum_{i =1}^M \|f\|_{U^{d}_{Q_{a_i}}(\X)}$ has mean $\frac{1}{|I|} \sum_{i \in I} \|f\|_{U^{d}_{Q_i}(\X)}$ and variance at most $1/M$.  By Chebyshev's inequality, we conclude that
\begin{equation}\label{samp-2}
\left|\frac{1}{M} \sum_{i =1}^M \|f\|_{U^{d}_{Q_{a_i}}(\X)} - \frac{1}{|I|} \sum_{i \in I} \|f\|_{U^{d}_{Q_i}(\X)}\right| \leq \frac{\sqrt{2}}{\sqrt{M}} 
\end{equation}
with probability at most $1/2$.  Thus there exists a choice of $a_1,\dots,a_M$ obeying both \eqref{samp} and \eqref{samp-2}.  Applying Theorem \ref{besu-2}, we conclude that
$$ \frac{1}{|I|} \sum_{i \in I} \|f\|_{U^{d}_{Q_i}(\X)} \leq 2 M^{-1/2^d} + \frac{\sqrt{2}}{\sqrt{M}}$$
if $\eps$ is sufficiently small depending on $M,r,d$.  The right-hand side goes to zero as $M \to \infty$; optimizing $M$ in terms of $\eps$, we obtain Theorem \ref{besu} as required.

It remains to prove Theorem \ref{besu-2}.  As in the proof of Theorems \ref{concat-uap} and \ref{concat-box}, it will be convenient to pass to a nonstandard formulation, in order to access the tool of conditional expectation with respect to a characteristic factor.  Suppose for sake of contradiction that Theorem \ref{besu-2} failed.  Then there exists $M, r, d$, a sequence of real numbers $\eps_n > 0$ going to zero, a sequence $G_n$ of additive groups, a sequence $(\X_n,T_n)$ of $G_n$-systems, and a sequence $f_n$ of elements of $L^\infty(\X_n)$, and sequences $Q_{1,n},\dots,Q_{M,n}$ of coset progressions in $G_n$ of rank at most $r$, such that
$$
\frac{1}{M^2} \sum_{1 \leq i < j \leq M} \|f_n\|_{U^{2d-1}_{\eps_n Q_{i,n}+\eps_n Q_{j,n}}(\X_n)} \leq \eps_n $$
but
$$
\frac{1}{M} \sum_{i =1}^M \|f_n\|_{U^{d}_{Q_{i,n}}(\X_n)} > 2 M^{-1/2^d}. 
$$
We take ultraproducts as before, forming an internal additive group $G$, a Loeb system $(\X,T)$, a function $f \in L^\infty(\X_n)$, an infinitesimal $\eps>0$, and internal coset progressions $Q_1,\dots,Q_M$ of bounded rank such that
\begin{equation}\label{moo}
\frac{1}{M^2} \sum_{1 \leq i < j \leq M} \|f\|_{U^{2d-1}_{\eps Q_{i}+\eps Q_{j}}(\X)} = 0
\end{equation}
and
\begin{equation}\label{oom}
\frac{1}{M} \sum_{i =1}^M \|f\|_{U^{d}_{Q_{i}}(\X)} \geq 2 M^{-1/2^d}.
\end{equation}
From \eqref{moo}, Lemma \ref{lard}, and Theorem \ref{charfac}, we see that $f$ is orthogonal to $L^\infty( \ZZ^{{<}2d-1}_{Q_i+Q_j,\dots,Q_i+Q_j}(\X) )$ for any $1 \leq i < j \leq M$.  By Theorem \ref{concat-uap-nonst} and Theorem \ref{charfac}, this implies that $f$ is orthogonal to
$$
L^\infty( \ZZ^{{<}d}_{Q_i,\dots,Q_i}(\X) ) \cap L^\infty( \ZZ^{{<}d}_{Q_j,\dots,Q_j}(\X) ).$$
Thus, if we define
$$ f_i \coloneqq  \E( f | \ZZ^{{<}d}_{Q_i,\dots,Q_i}(\X)  )$$
then the $f_1,\dots,f_M$ are pairwise orthogonal.  We now compute
\begin{align*}
\sum_{i=1}^M \|f_i\|_{L^2(\X)}^2 &= \sum_{i=1}^M \langle f, f_i \rangle_{L^2(\X)} \leq \| \sum_{i=1}^M f_i \|_{L^2(\X)} = \left(\sum_{i=1}^M \|f_i\|_{L^2(\X)}^2\right)^{1/2}
\end{align*}
and hence
$$ \sum_{i=1}^M \|f_i\|_{L^2(\X)}^2 \leq 1;$$
since $\|f_i\|_{L^\infty(\X)} \leq 1$, we conclude from Lemma \ref{lard}, Theorem \ref{charfac} and H\"older's inequality that
\begin{align*}
\frac{1}{M} \sum_{i=1}^M \|f\|_{U^d_{Q_i}(\X)}  
&\leq \frac{1}{M} \sum_{i=1}^M \|f\|_{U^d_{o(Q_i)}(\X)} \\
&= \frac{1}{M} \sum_{i=1}^M \|f_i\|_{U^d_{o(Q_i)}(\X)}  \\
&\leq \frac{1}{M} \sum_{i=1}^M \|f_i\|_{L^{2^d}(\X)} \\
&\leq \frac{1}{M} \sum_{i=1}^M \|f_i\|_{L^2(\X)}^{1/2^{d-1}} \\
 &\leq \left(\frac{1}{M} \sum_{i=1}^M \|f_i\|_{L^2(\X)}^2\right)^{1/2^d} \\
&\leq M^{-1/2^d},
\end{align*}
contradicting \eqref{oom}.  This completes the proof of Theorem \ref{besu}.  The proof of Theorem \ref{besu-box} is analogous (using Theorem \ref{concat-box-nonst} in place of Theorem \ref{concat-uap-nonst}) and is left to the reader.

\section{Controlling a multilinear average}\label{cprop-sec}

In this section we establish Proposition \ref{cprop}.    Let $N, M, C, f_1,\dots,f_4$ be as in that proposition.  By linearity we may take $f_1,f_2,f_3,f_4$ to be real-valued. Suppose that
$$
|A_{N,M}(f_1,f_2,f_3,f_4)| \geq \delta$$
for some $\delta > 0$.  It will suffice to show that
$$
\| f_i \|_{U^3_{\Z/N\Z}(\Z/N\Z)} \geq \eps $$
for all $i=1,2,3,4$ and some $\eps>0$ depending only on $\delta,C$.  

Fix $i$. We have
$$ \left|\E_{n,m,k \in [M]} \int_X f_1 T^{-nm} f_2 T^{-nk} f_3 T^{-n(m+k)} f_4\ d\mu\right| \geq \delta.$$
where $X$ is $\Z/N\Z$ with uniform measure $\mu$ and the usual shift map $T$.  Let $Q$ be the coset progression $Q \coloneqq  \{ a: |a| \leq \kappa \sqrt{N}\}$ for some small $\kappa>0$ to be chosen shortly.  If $\kappa$ is small enough depending on $\delta,C$, we can shift $m$ and $k$ in the above average by any elements $a'-a, b'-b$ of $Q-Q$ without changing the average by more than $\delta/2$.  From this and some rearranging and averaging we see that
$$ |\E_{n,m,k \in [M]} \E_{a,b,a',b' \in Q} \int_X T^{na+nb} f_1 T^{-nm+na+nb'} f_2 T^{-nk+na'+nb} f_3 T^{-n(m+k)+na'+nb'} f_4\ d\mu| \geq \delta/2.$$
From the Cauchy-Schwarz-Gowers inequality \eqref{csg} we have
$$ |\E_{a,b,a',b' \in Q} \int_X T^{na+nb} f_1 T^{-nm+na+nb'} f_2 T^{-nk+na'+nb} f_3 T^{-n(m+k)+na'+nb'} f_4\ d\mu|
\leq \| f_i \|_{U^2_{n \cdot Q}(\X)} $$
for any $i=1,\dots,4$, where $n \cdot Q \coloneqq  \{ na: a \in Q\}$.  We thus have
$$ \E_{n \in [M]} \| f_i \|_{U^2_{n \cdot Q}(\X)} \geq \delta/2.$$
For the rest of this section, we adopt the asympotic notation $X \ll Y$ or $Y \gg X$ to denote the bound $X \leq C'(C,\delta) Y$ for some $C'(C,\delta)$ depending on $C,\delta$ (or on $\kappa$, since $\kappa$ can be chosen to depend only on $\delta$ and $C$).  Applying Theorem \ref{besu} in the contrapositive, we thus have
\begin{equation}\label{mam}
 \E_{n,m \in [M]} \| f_i \|_{U^3_{n \cdot Q + m \cdot Q}(\X)} \gg 1.
\end{equation}
If $A$ is a sufficiently large quantity depending on $\delta$, we may thus find $M/A \leq n,m \leq M$ with greatest common divisor $(n,m)$ at most $A$ such that
\begin{equation}\label{fao}
 \| f_i \|_{U^3_{n \cdot Q + m \cdot Q}(\X)}  \geq \frac{1}{A},
\end{equation}
since the contribution to \eqref{mam} of all other pairs $(n,m)$ can be shown to be $O( 1/A )$.

Fix this choice of $n,m$.  By hypothesis, the multiset $n \cdot Q + m \cdot Q$ has cardinality $\gg N$ (counting multiplicity), and has multiplicity $\ll 1$ at each point, and hence $(n \cdot Q + m \cdot Q) - (n \cdot Q + m \cdot Q)$ has cardinality $\gg N^2$ and multiplicity $\ll N$ at each point.  From \eqref{fao} we thus have
\begin{align*} \left |\E_{x,a,b,c \in \Z/N\Z} \left(\prod_{\omega_1,\omega_2,\omega_3 \in \{0,1\}} f_i(x + \omega_1 a + \omega_2 b + \omega_3 c)\right) \nu_2(a) \nu_2(b) \nu_2(c)\right|  \gg 1
\end{align*}
where  $\nu_2(a)$ is $\frac{1}{N}$ times the multiplicity of $(n \cdot Q + m \cdot Q) - (n \cdot Q + m \cdot Q)$ at $a$; equivalently, one has
$$ \nu_2(a) = \E_{b \in \Z/N\Z} \nu(b) \nu(a-b)$$
where $\nu(a)$ is the multiplicity of $n \cdot Q + m \cdot Q$ at $a$.  By the previous discussion, we have
$$ \E_{a \in \Z/N\Z} \nu(a)^2 \ll 1$$
which by Fourier expansion and the Plancherel identity allows us to write
$$ \nu_2(a) = \sum_{\xi \in \Z/N\Z} c_\xi e^{2\pi i a \xi / N}$$
for some coefficients $c_\xi$ with $\sum_{\xi \in \Z/N\Z} |c_\xi| \ll 1$.  By the pigeonhole principle, we may thus find $\xi_1,\xi_2,\xi_3 \in \Z/N\Z$ such that
$$ \left|\E_{x,a,b,c \in \Z/N\Z} \left(\prod_{\omega_1,\omega_2,\omega_3 \in \{0,1\}} f_i(x + \omega_1 a + \omega_2 b + \omega_3 c)\right) e^{2\pi i (\xi_1 a + \xi_2 b + \xi_3 c)/N}\right| \gg 1.$$
Writing $e^{2\pi i \xi_1 a/N} = e^{2\pi i \xi_1 (x+a)/N} e^{-2\pi i x/N}$, and similarly for the other two phases in the above expression, we thus have
$$ |\E_{x,a,b,c \in \Z/N\Z}  f_{i,0}(x) f_{i,1}(x+a) f_{i,2}(x+b) f_{i,3}(x+c) f_i(x+a+b) f_i(x+a+c) f_i(x+b+c) f_i(x+a+b+c)| \gg 1$$
for some functions $f_{i,0}, f_{i,1}, f_{i,2}, f_{i,3}\colon \Z/N\Z \to \C$ bounded in magnitude by $1$.  Applying the Cauchy-Schwarz-Gowers inequality \eqref{csg}, we conclude that
$$ \|f_i \|_{U^3_{\Z/N\Z}(\Z/N\Z)} \gg 1$$
and the claim follows.

\section{Reducing the complexity}\label{cprop2-sec}

In this section we sketch how the $U^3$ norm in Proposition \ref{cprop} can be lowered to $U^2$.  

The situation here is closely analogous to that of \cite[Theorem 7.1]{gt-reg}, and we will assume familiarity here with the notation from that paper.  That theorem was proven by combining an ``arithmetic regularity lemma'' that decomposes an arbitrary function into an ``irrational'' nilsequence, plus a uniform error, together with a ``counting lemma'' that controls the contribution of the irrational nilsequence.  The part of the proof of \cite[Theorem 7.1]{gt-reg} that involves the regularity lemma goes through essentially unchanged when establishing Proposition \ref{cprop}; the only difficulty is to establish a suitable counting lemma for the irrational nilsequence, which in this context can be taken to a nilsequence of degree $\leq 2$.  More precisely, we need to show

\begin{proposition}[Counting lemma]\label{stairs}  Let $(G/\Gamma,G_\bullet)$ be a filtered nilmanifold of degree $\leq 2$ and complexity $\leq M$ (as defined in \cite[Definitions 1.3,1.4]{gt-reg}.  Let $g\colon \Z \to G$ be an $(A,N)$-irrational polynomial sequence adapted to $G_\bullet$ (as defined in \cite[Definition A.6]{gt-reg}).  Let $F\colon (G/\Gamma)^4 \to \R$ be a function of Lipschitz norm at most $M$, and assume that $F$ has mean zero along $G_{(2)}^4$ in the sense that
\begin{equation}\label{fao2}
 \int_{g_4 G_{(2)}^4/\Gamma_{(2)}^4} F = 0
\end{equation}
for all $g_4 \in G^4$.  Suppose that $C^{-1} \sqrt{N} \leq M_1 \leq C \sqrt{N}$ for some $C>0$.  Then 
$$ \E_{x \in [N]} \E_{n,m,k \in [M_1]} F( (g(x), g(x+nm), g(x+nk), g(x+n(m+k))) \Gamma^4 ) = o_{A \to \infty; C,M}(1) + o_{N \to \infty; C,M}(1),$$
using the asymptotic notation conventions from \cite{gt-reg}.
\end{proposition}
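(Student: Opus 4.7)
The strategy is a vertical Fourier decomposition that reduces the counting lemma to an oscillatory integral estimate. Since the filtration has degree $\leq 2$, the subgroup $G_{(2)}$ is central in $G$ and $K := G_{(2)}/\Gamma_{(2)}$ is a compact abelian Lie group. I would expand $F$ on $G^4/\Gamma^4$ in a vertical Fourier series
$$F = \sum_{\vec\chi \in \widehat{K^4}} F_{\vec\chi},$$
where each $F_{\vec\chi}$ transforms by $\vec\chi$ under translation by $G_{(2)}^4$. The Lipschitz bound on $F$ truncates this sum to $O_M(1)$ characters up to an arbitrarily small uniform error, and the mean-zero hypothesis \eqref{fao2} kills the trivial-character piece $F_{\vec 0}$. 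So it suffices to estimate the contribution of each fixed non-trivial $\vec\chi = (\chi_1, \chi_2, \chi_3, \chi_4)$ and show it is $o_{A \to \infty}(1) + o_{N \to \infty}(1)$ uniformly.

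The key geometric observation is that the four time shifts $0, nm, nk, n(m+k)$ form a parallelogram in $\Z$: $0 + n(m+k) = nm + nk$. Consequently the quadruple $\Phi(x,n,m,k) := (g(x), g(x+nm), g(x+nk), g(x+n(m+k)))$ lies in the Host--Kra $2$-cube subgroup of $G^4$ for all $(x,n,m,k)$, and its image modulo $G_{(2)}^4$ is determined by $(g(x), g(x+nm), g(x+nk))$ via the parallelogram law in the abelianized nilmanifold $G/G_{(2)}\Gamma$. The ``vertical defect'' of the fourth vertex is a central element $\sigma(n,m,k) \in G_{(2)}$ that is \emph{independent of $x$} (the second multiplicative difference of a polynomial sequence of degree $\leq 2$ is a constant in $x$, taking values in $G_{(2)}$). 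A direct Taylor expansion of $g$ in the degree-$2$ generator shows that, up to lower-order terms, $\sigma$ has the form $g_2^{n^2 mk}$ for some $g_2 \in G_{(2)}$; for example the alternating sum $\binom{x+n(m+k)}{2} + \binom{x}{2} - \binom{x+nm}{2} - \binom{x+nk}{2}$ equals $n^2 mk$. Thus the contribution of $F_{\vec\chi}$ factors as a ``horizontal'' Lipschitz function of $(g(x), g(x+nm), g(x+nk))\Gamma^3$ on the torus $(G/G_{(2)}\Gamma)^3$, multiplied by the vertical phase $\chi_4(\sigma(n,m,k))$.

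The remaining estimate proceeds in two steps: first apply the quantitative Leibman equidistribution theorem as in the arithmetic regularity framework \cite{gt-reg} for each fixed $(n,m,k)$ to evaluate the $x$-average of the horizontal factor, reducing the problem to a scalar-weighted $(n,m,k)$-average of the vertical phase over $[M_1]^3$; then invoke a Weyl estimate to show this average is small. Here the $(A,N)$-irrationality of $g$ translates into a Diophantine lower bound on the denominators of any rational approximations to the real parameter $\alpha$ arising from $\chi_4(g_2) = e^{2\pi i \alpha}$. The main obstacle I anticipate is the multiplicative form of the polynomial $n^2 mk$: for each fixed $n$, the inner average $\E_{m,k \in [M_1]} e^{2\pi i \alpha n^2 mk}$ is a product of two Weyl sums with multiplier $\alpha n^2$ and is small only when $\alpha n^2$ admits no rational approximation of denominator $\leq $ some slowly growing function of $A$. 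One must thus show that the set of ``bad'' $n \in [M_1]$ has density $o_{A \to \infty}(1)$, which via a Dirichlet-type sieve reduces to the irrationality of $\alpha$ in the $(A, M_1^2)$ sense; the scaling $M_1 \asymp \sqrt{N}$ is precisely what converts the $(A,N)$-irrationality hypothesis on $g$ into an $(A, M_1^2)$-irrationality statement on $\alpha$, after which summing the resulting bound over $n \in [M_1]$ and combining with the horizontal equidistribution yields the claim.
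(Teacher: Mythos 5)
Your approach is genuinely different from the paper's. The paper fixes $n$, observes that the resulting linear system $\Psi(x,m,k)=(x,x+m,x+k,x+m+k)$ has Leibman group containing $G_{(2)}^4$ (so that the mean-zero hypothesis \eqref{fao2} forces $\int_{g_4 G^\Psi/\Gamma^\Psi} F = 0$ for every $g_4$), applies the counting lemma \cite[Theorem 1.11]{gt-reg} as a black box to conclude that the restricted polynomial sequences $m\mapsto g(nm+a)$ fail to be $(A',M_1)$-irrational for $\gg M_1$ values of $n$, and then converts this into a Diophantine constraint on $\xi_i(g_i)$ via the identity $\xi_i(g_{n,a,i})=n^i\xi_i(g_i)$ together with Vinogradov's lemma; the square-root scaling $M_1\asymp\sqrt N$ then contradicts the $(A,N)$-irrationality of $g$. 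You instead try to unpack the counting lemma from scratch: decompose $F$ vertically, isolate the vertical defect $\sigma(n,m,k)=g_2^{n^2mk}$ of the Host--Kra parallelepiped, and reduce to a Weyl estimate for $\E_{m,k}e^{2\pi i\alpha n^2mk}$. Both endgames converge on a Vinogradov-type argument, and your scaling remark is exactly the paper's last line.

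There is, however, a real gap in your factorization step. You assert that, after extracting the vertical phase $\chi_4(\sigma(n,m,k))$, what remains is a ``horizontal Lipschitz function of $(g(x),g(x+nm),g(x+nk))\Gamma^3$'' on $(G/G_{(2)}\Gamma)^3$. Writing $u_1,u_2,u_3$ for the three lifts and $v_4:=u_2u_1^{-1}u_3$, it is true that $F_{\vec\chi}(u_1,u_2,u_3,u_4)=\chi_4(\sigma)\,F_{\vec\chi}(u_1,u_2,u_3,v_4)$, but the remaining factor $F_{\vec\chi}(u_1\Gamma,u_2\Gamma,u_3\Gamma,v_4\Gamma)$ does \emph{not} descend to a function on $(G/\Gamma)^3$, let alone on the abelianization: replacing $u_1$ by $u_1\gamma_1$ with $\gamma_1\in\Gamma$ changes $v_4\Gamma$ by the vertical shift $[\gamma_1^{-1},u_1^{-1}u_3]$ modulo $\Gamma_{(2)}$, against which $F_{\vec\chi}$ picks up a non-trivial phase. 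A second difficulty is that the proposed ``$x$-average for fixed $(n,m,k)$'' is not merely a scalar weight: the closure of $x\mapsto(g(x),g(x+nm),g(x+nk))\Gamma^3$ is a subnilmanifold whose position depends on $(nm,nk)$, and the shifts $nm,nk$ have size comparable to $N$ while $x$ only ranges over $[N]$, so obtaining a uniform Leibman estimate here requires an extra argument. Both issues are precisely the content that \cite[Theorem 1.11]{gt-reg} is designed to handle, which is why the paper applies it directly (after fixing $n$, which linearizes the forms) rather than re-deriving it. Your strategy can likely be repaired by running the vertical Fourier analysis on the Host--Kra cube nilmanifold $HK^2(G)/HK^2(\Gamma)$ itself rather than on $(G/\Gamma)^4$, but as written the ``horizontal factor'' is not a well-posed object.
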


By repeating the proof of \cite[Theorem 7.1]{gt-reg}, using the above proposition as a substitute for \cite[Theorem 1.11]{gt-reg}, one obtains Theorem \ref{cprop-2}.  In more detail: we first use the Cauchy-Schwarz-Gowers inequality to reduce to the case $f_1=f_2=f_3=f_4=f$.  Then we split $f$ using the regularity lemma as $f = f_{\operatorname{nil}} + f_{\operatorname{sml}} + f_{\operatorname{unf}}$, where $f_{\operatorname{nil}}$ is a nilsequence of degree $\leq 2$, $f_{\operatorname{small}}$ is small in $L^2$, and $f_{\operatorname{unf}}$ is very small in $U^3$ norm.  As in \cite[\S 7]{gt-reg}, the contributions of the latter two terms are easily seen to be small, and by partitioning $[N]$ into arithmetic progressions, one can essentially reduce to the case where the nilsequence $f_{\operatorname{nil}}$ is highly irrational.  Writing each $f_{\operatorname{nil}}(n) = F(g(n)\Gamma)$, the hypothesis that $f$ is small in $U^2$, together with \cite[Proposition 7.2]{gt-reg} show that $F$ has essentially mean zero along $G_{(2)}$, in the sense that $\int_{g_4 G_{(2)}/\Gamma_{(2)}} F$ is close to zero for most $g_4$.  By modifying $F$ slightly we may assume that $F$ has \emph{exactly} mean zero, in which case one can use Proposition \ref{stairs} to conclude the argument. 

It remains to establish Proposition \ref{stairs}.  Suppose that
$$ |\E_{x \in [N]} \E_{n,m,k \in [M_1]} F( (g(x), g(x+nm), g(x+nk), g(x+n(m+k))) \Gamma^4 )| \geq \eps$$
for some $\eps>0$; our objective is to obtain a contradiction when $A$ and $N$ are sufficiently large depending on $\eps,C,M$.  In this section we use $X \ll Y$, $Y \gg X$, or $X=O(Y)$ to denote the bound $|X| \leq C'(\eps,C,M) Y$ for some $C'(\eps,C,M)$ depending on $\eps,C,M$.  As the quantity $|\E_{x \in [N]} \E_{m,k \in [M_1]} F( (g(x), g(x+nm), g(x+nk), g(x+n(m+k))) \Gamma^4 )|$ is $O(1)$ for any $n$, we have
\begin{equation}\label{map}
|\E_{x \in [N]} \E_{m,k \in [M_1]} F( (g(x), g(x+nm), g(x+nk), g(x+n(m+k))) \Gamma^4 )| \gg 1
\end{equation}
for $\gg M_1$ choices of $n \in [M_1]$.  On the other hand, using the linear forms $\Psi\colon (x,m,k) \mapsto (x,x+m,x+k,x+m+k)$, one can compute (in the notation of \cite[Definition 1.10]{gt-reg})	that the Leibman group $G^\Psi$ (which is also the second Host-Kra group of $G$) contains $G_{(2)}^4$, basically because the quadratic polynomials $x^2$, $(x+m)^2$, $(x+k)^2$, $(x+m+k)^2$ are linearly independent, and so the linear space $\Psi^{[2]}$ appearing in \cite[Definition 1.10]{gt-reg} is all of $\R^4$.  Applying \eqref{fao2}, we conclude that
$$ \int_{g_4 G^\Psi/\Gamma^\Psi} F = 0$$
for any $g_4 \in G^4$.  From this and a change of variables, we see from \eqref{map} and \cite[Theorem 1.11]{gt-reg} that for $\gg M_1$ choices of $n \in [M_1]$, there exists an integer $a$ such that the map $g_{n,a}\colon m \mapsto g(nm+a)$ is \emph{not} $(A', M_1)$-irrational, for some $A' = O(1)$.  Unpacking the definition from \cite[Definition A.6]{gt-reg}, this means that there is an $i=1,2$ and an $i$-horizontal character $\xi_i\colon G_{(i)} \to \R$ of complexity $O(1)$ such that
$$ \| \xi_i( g_{n,a,i} )\|_{\R/\Z} \ll M_1^{-i} $$
for $\gg M_1$ choices of $n \in [M_1]$, 
where $g_{n,a,i}$ is the $i^{\operatorname{th}}$ Taylor coefficient of $g_{n,a}$.  But we have the identity
$$ \xi_i( g_{n,a,i} ) = n^i \xi_i( g_i ) $$
(see the proof of \cite[Lemma A.8]{gt-reg})
and hence
$$ \| n^i \xi_i( g_i )\|_{\R/\Z} \ll M_1^{-i} $$
for $\gg M_1$ choices of $n \in [M_1]$.  Applying the Vinogradov lemma (see e.g. \cite[Lemma 3.2]{gt-mobius}), we conclude that there exists a natural number $q=O(1)$ such that
$$ \| q \xi_i(g_i) \|_{\R/\Z} \ll M_i^{-2i} \ll N^{-i}.$$
But this contradicts the $(A,N)$-irrationality of $g$, and the claim follows.



\begin{dajauthors}
\begin{authorinfo}[tt]
  Terence Tao\\
  Department of Mathematics, UCLA\\
  405 Hilgard Ave\\
  tao\imageat{}math.ucla.edu\\
	\url{https://www.math.ucla.edu/~tao}
\end{authorinfo}

\begin{authorinfo}[tz]
  Tamar Ziegler\\
	Einstein Institute of Mathematics \\
Edmond J. Safra Campus, Givat Ram \\
The Hebrew University of Jerusalem \\
Jerusalem, 91904, Israel \\
  tamarz\imageat{}math.huji.ac.il\\
	\url{http://www.ma.huji.ac.il/~tamarz/}
\end{authorinfo}
\end{dajauthors}

\end{document}